\documentclass[reqno,11pt]{amsart}
\usepackage{amsmath,amssymb,mathrsfs,amsthm,amsfonts}
\usepackage[inline]{enumitem} 
\usepackage[usenames,dvipsnames]{xcolor}
\usepackage{hyperref}
\usepackage{comment}
\usepackage{filecontents}
\usepackage{array,longtable,booktabs,caption}
\DeclareMathAlphabet{\mathpzc}{OT1}{pzc}{m}{it}
\usepackage{stmaryrd}

\hypersetup{%
	colorlinks=true, linkcolor=blue,
	citecolor=ForestGreen
}
\usepackage[paper=letterpaper,margin=0.9in, top=1in]{geometry}
\usepackage{graphicx}
\usepackage{mathrsfs}
\usepackage{listings}
\newcommand{\ind}{\mathbf{1}}
\usepackage{bbm}
\usepackage{tikz}
\usetikzlibrary{arrows}

\newtheorem{thm}{Theorem}[section]
\newtheorem{cor}[thm]{Corollary}
\newtheorem{prop}[thm]{Proposition}
\newtheorem{lem}[thm]{Lemma}
\newtheorem{lemma}[thm]{Lemma}

\theoremstyle{definition}
\newtheorem{defn}[thm]{Definition}

\newtheorem{rk}[thm]{Remark}

\numberwithin{equation}{section}


\newcommand{\sd}{\textcolor{black}}
\newcommand{\R}{\mathbb{R}}
\newcommand{\V}{\mathcal{L}}
\renewcommand{\hat}{\widehat}
\newcommand{\til}{\widetilde}
\renewcommand{\bar}{\overline}

\renewcommand{\Pr}{\mathbb{P}}
	
\newcommand{\Ex}{\mathbb{E}}
\newcommand{\E}{\mathbb{E}}
\newcommand{\Con}{\mathrm{C}}
\newcommand{\mE}{\mathbf{E}}

\newcommand{\e}{\varepsilon}
\newcommand{\sig}{\tau}
\newcommand{\stt}{skewed}


\begin{filecontents}{ref.bib}

@article{timo2,
	doi = {10.1007/s00220-006-0036-y},
  
	url = {https://doi.org/10.1007
  
	year = 2006,
	month = {may},
  
	publisher = {Springer Science and Business Media {LLC}
},
  
	volume = {266},
  
	number = {2},
  
	pages = {499--545},
  
	author = {M{\'{a}}rton Bal{\'{a}}zs and Firas Rassoul-Agha and Timo Seppäläinen},
  
	title = {The Random Average Process and Random Walk in a Space-Time Random Environment in One Dimension},
  
	journal = {Communications in Mathematical Physics}
}

@incollection {timo,
    AUTHOR = {Joseph, Mathew and Rassoul-Agha, Firas and
              Sepp\"{a}l\"{a}inen, Timo},
     TITLE = {Independent particles in a dynamical random environment},
 BOOKTITLE = {Probability and analysis in interacting physical systems},
    SERIES = {Springer Proc. Math. Stat.},
    VOLUME = {283},
     PAGES = {75--121},
 PUBLISHER = {Springer, Cham},
      YEAR = {2019},
      ISBN = {978-3-030-15338-0; 978-3-030-15337-3},
   MRCLASS = {60K35 (60K37)},
  MRNUMBER = {3968509},
MRREVIEWER = {Stephen\ Muirhead},
       DOI = {10.1007/978-3-030-15338-0\_4},
       URL = {https://doi.org/10.1007/978-3-030-15338-0_4},
}
@article {KMT,
    AUTHOR = {Koml\'{o}s, J. and Major, P. and Tusn\'{a}dy, G.},
     TITLE = {An approximation of partial sums of independent {RV}'s, and
              the sample {DF}. {II}},
   JOURNAL = {Z. Wahrscheinlichkeitstheorie und Verw. Gebiete},
  FJOURNAL = {Zeitschrift f\"{u}r Wahrscheinlichkeitstheorie und Verwandte
              Gebiete},
    VOLUME = {34},
      YEAR = {1976},
    NUMBER = {1},
     PAGES = {33--58},
   MRCLASS = {60F15},
  MRNUMBER = {402883},
MRREVIEWER = {J.\ Kiefer},
       DOI = {10.1007/BF00532688},
       URL = {https://doi.org/10.1007/BF00532688},
}

@article{dov,
      title={The directed landscape}, 
      author={Duncan Dauvergne and Janosch Ortmann and Balint Virag},
      year={2022},
      journal={Acta Mathematica},
      volume={229(2)},
}
@article{MQR,
	doi = {10.4310/acta.2021.v227.n1.a3},
  
	url = {https://doi.org/10.4310
  
	year = 2021,
	publisher = {International Press of Boston},
  
	volume = {227},
  
	number = {1},
  
	pages = {115--203},
  
	author = {Konstantin Matetski and Jeremy Quastel and Daniel Remenik},
  
	title = {The {KPZ} fixed point},
  
	journal = {Acta Mathematica}
}
@misc{CSZ_2d,
  title={The critical 2d stochastic heat flow},
  author={Caravenna, Francesco and Sun, Rongfeng and Zygouras, Nikos},
  journal={Inventiones mathematicae},
  volume={233},
  number={1},
  pages={325--460},
  year={2023},
  publisher={Springer}
}

@article {sss,
    AUTHOR = {Schertzer, Emmanuel and Sun, Rongfeng and Swart, Jan},
     TITLE = {Stochastic flows in the {B}rownian web and net},
   JOURNAL = {Mem. Amer. Math. Soc.},
  FJOURNAL = {Memoirs of the American Mathematical Society},
    VOLUME = {227},
      YEAR = {2014},
    NUMBER = {1065},
     PAGES = {vi+160},
      ISSN = {0065-9266},
      ISBN = {978-0-8218-9088-2},
   MRCLASS = {60K35 (60D05 60K37)},
  MRNUMBER = {3155782},
MRREVIEWER = {Andrew R. Wade},
       DOI = {10.1090/s0065-9266-2013-00687-9},
       URL = {https://doi.org/10.1090/s0065-9266-2013-00687-9},
}
@article{mue91,
  title={On the support of solutions to the heat equation with noise},
  author={Mueller, Carl},
  journal={Stochastics: An International Journal of Probability and Stochastic Processes},
  volume={37},
  number={4},
  pages={225--245},
  year={1991},
  publisher={Taylor \& Francis}
}
@article{supp,
  title={Supplement to ``KPZ equation limit of sticky Brownian motion''},
  author={Das, Sayan and Drillick, Hindy and Parekh, Shalin},
  year={2023}
}
@article{bigmac,
  title={Macdonald processes},
  author={Borodin, Alexei and Corwin, Ivan},
  journal={Probability Theory and Related Fields},
  volume={158},
  number={1-2},
  pages={225--400},
  year={2014},
  publisher={Springer}
}
@article{ito1963brownian,
  title={Brownian motions on a half line},
  author={It{\^o}, Kiyoshi and McKean, H.P.},
  journal={Illinois journal of mathematics},
  volume={7},
  number={2},
  pages={181--231},
  year={1963},
  publisher={Duke University Press}
}
@article{WM,
  title={Global well-posedness of the dynamic $\Phi^{4}$ model in the plane},
  author={Mourrat, Jean-Christophe and Weber, Hendrik},
  journal={The Annals of Probability},
  volume={45},
  number={4},
  pages={2398--2476},
  year={2017},
  publisher={Institute of Mathematical Statistics}
}
@article{gu,
  title={{Kardar--Parisi--Zhang equation and large deviations for random walks in weak random environments}},
  author={Corwin, Ivan and Gu, Yu},
  journal={Journal of Statistical Physics},
  volume={166},
  pages={150--168},
  year={2017},
  publisher={Springer}
}
@article{marginal,
author = {Francesco Caravenna and Rongfeng Sun and Nikos Zygouras},
title = {{Universality in marginally relevant disordered systems}},
volume = {27},
journal = {The Annals of Applied Probability},
number = {5},
publisher = {Institute of Mathematical Statistics},
pages = {3050 -- 3112},
keywords = {Directed polymer, Disordered system, Fourth moment theorem, marginal disorder relevance, pinning model, polynomial chaos, Stochastic heat equation},
year = {2017},
doi = {10.1214/17-AAP1276},
URL = {https://doi.org/10.1214/17-AAP1276}
}

@article{ew1,
  title={{Turbulent diffusion in Markovian flows}},
  author={Fannjiang, Albert and Komorowski, Tomasz},
  journal={Annals of Applied Probability},
  pages={591--610},
  year={1999},
  publisher={JSTOR}
}
@article{ew2,
  title={On homogenization of time-dependent random flows},
  author={Komorowski, Tomasz and Olla, Stefano},
  journal={Probability theory and related fields},
  volume={121},
  pages={98--116},
  year={2001},
  publisher={Springer}
}
@article{ew3,
  title={An almost sure invariance principle for random walks in a space-time random environment},
  author={Rassoul-Agha, Firas and Sepp{\"a}l{\"a}inen, Timo},
  journal={Probability theory and related fields},
  volume={133},
  number={3},
  pages={299--314},
  year={2005},
  publisher={Springer}
}
@article{ew4,
  title={Random walks in quenched iid space-time random environment are always as diffusive},
  author={Boldrighini, Carlo and Minlos, Robert A and Pellegrinotti, Alessandro},
  journal={Probability theory and related fields},
  volume={129},
  number={1},
  pages={133},
  year={2004},
  publisher={Springer Nature BV}
}
@book{ew5,
  title={{Fluctuations in Markov processes: time symmetry and martingale approximation}},
  author={Komorowski, Tomasz and Landim, Claudio and Olla, Stefano},
  volume={345},
  year={2012},
  publisher={Springer Science \& Business Media}
}
@article{ew6,
  title={A quenched local limit theorem for stochastic flows},
  author={Dunlap, Alexander and Gu, Yu},
  journal={Journal of Functional Analysis},
  volume={282},
  number={6},
  pages={109372},
  year={2022},
  publisher={Elsevier}
}
@article{mark,
  title={Large deviations for sticky {B}rownian motions},
  author={Barraquand, Guillaume and Rychnovsky, Mark},
  journal={Electronic Journal of Probability},
  volume={25},
  year={2020}
}
@article{dom,
  title={At the edge of a cloud of {B}rownian particles},
  author={Brockington, Dom and Warren, Jon},
  journal={arXiv preprint arXiv:2208.11952},
  year={2022}
}
@article{bld,
  title={Moderate deviations for diffusion in time dependent random media},
  author={Barraquand, Guillaume and Le Doussal, Pierre},
  journal={Journal of Physics A: Mathematical and Theoretical},
  volume={53},
  number={21},
  pages={215002},
  year={2020},
  publisher={IOP Publishing}
}
@article{ldt,
  title={{Diffusion in time-dependent random media and the Kardar-Parisi-Zhang equation}},
  author={Le Doussal, Pierre and Thiery, Thimoth{\'e}e},
  journal={Physical Review E},
  volume={96},
  number={1},
  pages={010102},
  year={2017},
  publisher={APS}
}
@article{ldt2,
	doi = {10.1088/1751-8121/50/4/045001},
  
	url = {https://doi.org/10.1088
  
	year = 2016,
	month = {dec},
  
	publisher = {{IOP} Publishing},
  
	volume = {50},
  
	number = {4},
  
	pages = {045001},
  
	author={Thiery, Thimoth{\'e}e and Le Doussal, Pierre},
  
	title = {Exact solution for a random walk in a time-dependent 1D random environment: the point-to-point Beta polymer},
  
	journal = {Journal of Physics A: Mathematical and Theoretical}
}
@article{bc,
  title={Random-walk in beta-distributed random environment},
  author={Barraquand, Guillaume and Corwin, Ivan},
  journal={Probability Theory and Related Fields},
  volume={167},
  number={3-4},
  pages={1057--1116},
  year={2017},
  publisher={Springer}
}

@article{HL16,
  title={A simple construction of the continuum parabolic {A}nderson model on $\mathbb{R}^2$},
  author={Hairer, Martin and Labb{\'e}, Cyril},
  journal={Electronic Communications in Probability},
  volume={20},
  pages={1--11},
  year={2015},
  publisher={Institute of Mathematical Statistics and Bernoulli Society}
}
@article{kpz,
  title={Dynamic scaling of growing interfaces},
  author={Kardar, Mehran and Parisi, Giorgio and Zhang, Yi-Cheng},
  journal={Physical Review Letters},
  volume={56},
  number={9},
  pages={889},
  year={1986},
  publisher={APS}
}

@article{car,
  title={{G}aussian limits for subcritical chaos},
  author={Caravenna, Francesco and Cottini, Francesca},
  journal={Electronic Journal of Probability},
  volume={27},
  pages={1--35},
  year={2022},
  publisher={The Institute of Mathematical Statistics and the Bernoulli Society}
}

@Article{Qua11,
  author    = {Quastel, Jeremy},
  title     = {Introduction to {KPZ}},
  journal   = {Current developments in mathematics},
  year      = {2011},
  volume    = {2011},
  number    = {1},
  publisher = {International Press of Boston},
}
@Article{Cor12,
  author    = {Corwin, Ivan},
  title     = {The {K}ardar{--P}arisi{--Z}hang equation and universality class},
  journal   = {Random Matrices: Theory Appl.},
  year      = {2012},
  volume    = {1},
  number    = {01},
  pages     = {1130001},
  publisher = {World Scientific},
}
@InCollection{Wal86,
  author    = {Walsh, John B},
  title     = {An introduction to stochastic partial differential equations},
  booktitle = {{\'E}cole d'{\'E}t{\'e} de Probabilit{\'e}s de Saint Flour XIV-1984},
  publisher = {Springer},
  year      = {1986},
  pages     = {265--439},
}
@article{CS20,
  title={Some recent progress in singular stochastic partial differential equations},
  author={Corwin, Ivan and Shen, Hao},
  journal={Bulletin of the American Mathematical Society},
  volume={57},
  number={3},
  pages={409--454},
  year={2020}
}
@Article{QS15,
  author    = {Quastel, Jeremy and Spohn, Herbert},
  title     = {The one-dimensional {KPZ} equation and its universality class},
  journal   = {J. Stat. Phys.},
  year      = {2015},
  volume    = {160},
  number    = {4},
  pages     = {965--984},
  publisher = {Springer},
}
@inproceedings{CW17,
	title={Stochastic {PDE}s, regularity structures, and interacting particle systems},
	author={Chandra, Ajay and Weber, Hendrik},
	booktitle={Annales de la facult{\'e} des sciences de Toulouse Math{\'e}matiques},
	volume={26(4)},
	pages={847--909},
	year={2017}
}
@Article{FS10,
  author  = {Ferrari, Patrik L and Spohn, Herbert},
  title   = {Random growth models},
  journal = {arXiv:1003.0881},
  year    = {2010},
}
@article{flo,
  title={On the (strict) positivity of solutions of the stochastic heat equation},
  author={Flores, Gregorio R Moreno},
  journal={The Annals of Probability},
  pages={1635--1643},
  year={2014},
  publisher={JSTOR}
}
@article{ss,
  title={Exact height distributions for the {KPZ} equation with narrow wedge initial condition},
  author={Sasamoto, Tomohiro and Spohn, Herbert},
  journal={Nuclear Physics B},
  volume={834},
  number={3},
  pages={523--542},
  year={2010},
  publisher={Elsevier}
}
@article{tsir,
  title={Nonclassical stochastic flows and continuous products},
  author={Tsirelson, Boris},
  journal={Probability Surveys},
  volume={1},
  pages={173--298},
  year={2004},
  publisher={Citeseer}
}
@article {lejan,
    AUTHOR = {Le Jan, Yves and Raimond, Olivier},
     TITLE = {Flows, coalescence and noise},
   JOURNAL = {Ann. Probab.},
  FJOURNAL = {The Annals of Probability},
    VOLUME = {32},
      YEAR = {2004},
    NUMBER = {2},
     PAGES = {1247--1315},
      ISSN = {0091-1798},
   MRCLASS = {60H10 (60G51 60H40 76F05 76M35)},
  MRNUMBER = {2060298},
MRREVIEWER = {Luis G. Gorostiza},
       DOI = {10.1214/009117904000000207},
       URL = {https://doi.org/10.1214/009117904000000207},
}
@book{ks,
  title={Brownian motion and stochastic calculus},
  author={Karatzas, Ioannis and Shreve, Steven},
  volume={113},
  year={2012},
  publisher={Springer Science \& Business Media}
}		

@book {ry99,
    AUTHOR = {Revuz, Daniel and Yor, Marc},
     TITLE = {Continuous martingales and {B}rownian motion},
    SERIES = {Grundlehren der mathematischen Wissenschaften [Fundamental
              Principles of Mathematical Sciences]},
    VOLUME = {293},
   EDITION = {Third},
 PUBLISHER = {Springer-Verlag, Berlin},
      YEAR = {1999},
     PAGES = {xiv+602},
      ISBN = {3-540-64325-7},
   MRCLASS = {60G44 (60G07 60H05)},
  MRNUMBER = {1725357},
       DOI = {10.1007/978-3-662-06400-9},
       URL = {https://doi.org/10.1007/978-3-662-06400-9},
}

@incollection {pitman,
    AUTHOR = {Pitman, Jim},
     TITLE = {The distribution of local times of a {B}rownian bridge},
 BOOKTITLE = {S\'{e}minaire de {P}robabilit\'{e}s, {XXXIII}},
    SERIES = {Lecture Notes in Math.},
    VOLUME = {1709},
     PAGES = {388--394},
 PUBLISHER = {Springer, Berlin},
      YEAR = {1999},
   MRCLASS = {60J55 (60J65)},
  MRNUMBER = {1768012},
       DOI = {10.1007/BFb0096528},
       URL = {https://doi.org/10.1007/BFb0096528},
}

@article {BG97,
    AUTHOR = {Bertini, Lorenzo and Giacomin, Giambattista},
     TITLE = {Stochastic {B}urgers and {KPZ} equations from particle
              systems},
   JOURNAL = {Comm. Math. Phys.},
  FJOURNAL = {Communications in Mathematical Physics},
    VOLUME = {183},
      YEAR = {1997},
    NUMBER = {3},
     PAGES = {571--607},
      ISSN = {0010-3616},
   MRCLASS = {60K35 (60H15 82C22 82C24)},
  MRNUMBER = {1462228},
MRREVIEWER = {Ellen Saada},
       DOI = {10.1007/s002200050044},
       URL = {https://doi.org/10.1007/s002200050044},
}

@article {ACQ,
    AUTHOR = {Amir, Gideon and Corwin, Ivan and Quastel, Jeremy},
     TITLE = {Probability distribution of the free energy of the continuum
              directed random polymer in {$1+1$} dimensions},
   JOURNAL = {Comm. Pure Appl. Math.},
  FJOURNAL = {Communications on Pure and Applied Mathematics},
    VOLUME = {64},
      YEAR = {2011},
    NUMBER = {4},
     PAGES = {466--537},
      ISSN = {0010-3640},
   MRCLASS = {60K35 (60B20 60F05 60H15 82C22 82C44)},
  MRNUMBER = {2796514},
MRREVIEWER = {Timo Sepp\"{a}l\"{a}inen},
       DOI = {10.1002/cpa.20347},
       URL = {https://doi.org/10.1002/cpa.20347},
}

@article {P19,
    AUTHOR = {Parekh, Shalin},
     TITLE = {The {KPZ} limit of {ASEP} with boundary},
   JOURNAL = {Comm. Math. Phys.},
  FJOURNAL = {Communications in Mathematical Physics},
    VOLUME = {365},
      YEAR = {2019},
    NUMBER = {2},
     PAGES = {569--649},
      ISSN = {0010-3616},
   MRCLASS = {82C22 (60H15 60K35)},
  MRNUMBER = {3907953},
       DOI = {10.1007/s00220-018-3258-x},
       URL = {https://doi.org/10.1007/s00220-018-3258-x},
}

@article{itaru83tight,
  title={Tightness of probabilities on ${C}([0,1]; \mathcal{S}')$ and ${D}([0,1];\mathcal{S}')$},
  author={Mitoma, Itaru},
  journal={The Annals of Probability},
  pages={989--999},
  year={1983},
  publisher={JSTOR}
}

@article {EP14,
    AUTHOR = {Engelbert, Hans-J\"{u}rgen and Peskir, Goran},
     TITLE = {Stochastic differential equations for sticky {B}rownian
              motion},
   JOURNAL = {Stochastics},
  FJOURNAL = {Stochastics. An International Journal of Probability and
              Stochastic Processes},
    VOLUME = {86},
      YEAR = {2014},
    NUMBER = {6},
     PAGES = {993--1021},
      ISSN = {1744-2508},
   MRCLASS = {60H10 (60H20 60J50 60J55 60J60 60J65)},
  MRNUMBER = {3271518},
MRREVIEWER = {Chang-Song Deng},
       DOI = {10.1080/17442508.2014.899600},
       URL = {https://doi.org/10.1080/17442508.2014.899600},
}
@article{lda,
  title={Dynamics at the edge for independent diffusing particles},
  author={Le Doussal, Pierre},
  journal={arXiv preprint arXiv:2308.16709},
  year={2023}
}
@article{ldb,
  title={Probing the large deviations for the {B}eta random walk in random medium},
  author={Hartmann, Alexander K and Krajenbrink, Alexandre and Le Doussal, Pierre},
  journal={arXiv preprint arXiv:2307.15041},
  year={2023}
}

@article {BC95,
    AUTHOR = {Bertini, Lorenzo and Cancrini, Nicoletta},
     TITLE = {The stochastic heat equation: {F}eynman-{K}ac formula and
              intermittence},
   JOURNAL = {J. Statist. Phys.},
  FJOURNAL = {Journal of Statistical Physics},
    VOLUME = {78},
      YEAR = {1995},
    NUMBER = {5-6},
     PAGES = {1377--1401},
      ISSN = {0022-4715},
   MRCLASS = {60H15 (60J50 82C31)},
  MRNUMBER = {1316109},
       DOI = {10.1007/BF02180136},
       URL = {https://doi.org/10.1007/BF02180136},
}
@Article{Hai14,
  author    = {Hairer, Martin},
  title     = {A theory of regularity structures},
  journal   = {Invent Math.},
  year      = {2014},
  volume    = {198},
  number    = {2},
  pages     = {269--504},
  publisher = {Springer},
}
@article{ss00,
  title={Scalar turbulence},
  author={Shraiman, Boris I and Siggia, Eric D},
  journal={Nature},
  volume={405},
  number={6787},
  pages={639--646},
  year={2000},
  publisher={Nature Publishing Group UK London}
}
@article{BGK98,
  title={Slow modes in passive advection},
  author={Bernard, Denis and Gawedzki, Krzysztof and Kupiainen, Antti},
  journal={Journal of Statistical Physics},
  volume={90},
  pages={519--569},
  year={1998},
  publisher={Springer}
}
@article{GK95,
  title={Anomalous scaling of the passive scalar},
  author={Gawedzki, Krzysztof and Kupiainen, Antti},
  journal={Physical review letters},
  volume={75},
  number={21},
  pages={3834},
  year={1995},
  publisher={APS}
}
@article{dom2,
  title={The {B}ethe ansatz for sticky {B}rownian motions},
  author={Brockington, Dom and Warren, Jon},
  journal={arXiv preprint arXiv:2104.06482},
  year={2021}
}
@article{war,
  title={Sticky particles and stochastic flows},
  author={Warren, Jon},
  journal={In Memoriam Marc Yor-S{\'e}minaire de Probabilit{\'e}s XLVII},
  pages={17--35},
  year={2015},
  publisher={Springer}
}
@article{akq,
author = {Tom Alberts and Konstantin Khanin and Jeremy Quastel},
title = {{The intermediate disorder regime for directed polymers in dimension $1+1$}},
volume = {42},
journal = {The Annals of Probability},
number = {3},
publisher = {Institute of Mathematical Statistics},
pages = {1212 -- 1256},
keywords = {$U$-statistics, Directed polymers, KPZ equation, near-critical scaling limits},
year = {2014},
doi = {10.1214/13-AOP858},
URL = {https://doi.org/10.1214/13-AOP858}
}

@article{yang23b,
  title={Hairer-{Q}uastel universality in non-stationarity via energy solution theory},
  author={Yang, Kevin},
  journal={Electronic Journal of Probability},
  volume={28},
  pages={1--26},
  year={2023},
  publisher={The Institute of Mathematical Statistics and the Bernoulli Society}
}
@article{cha22,
  title={Local {KPZ} behavior under arbitrary scaling limits},
  author={Chatterjee, Sourav},
  journal={Communications in Mathematical Physics},
  pages={1--28},
  year={2022},
  publisher={Springer}
}
@article{yang22,
  title={{KPZ equation from non-simple variations on open ASEP}},
  author={Yang, Kevin},
  journal={Probability Theory and Related Fields},
  volume={183},
  number={1-2},
  pages={415--545},
  year={2022},
  publisher={Springer}
}
@article{cst18,
author = {Ivan Corwin and Hao Shen and Li-Cheng Tsai},
title = {{$\operatorname{ASEP}(q,j)$ converges to the KPZ equation}},
volume = {54},
journal = {Annales de l'Institut Henri Poincaré, Probabilités et Statistiques},
number = {2},
publisher = {Institut Henri Poincaré},
pages = {995 -- 1012},
keywords = {$\operatorname{ASEP}(q,j)$, Gärtner transformation, KPZ equation},
year = {2018},
doi = {10.1214/17-AIHP829},
URL = {https://doi.org/10.1214/17-AIHP829}
}

@article{cgst20,
  title={{Stochastic PDE limit of the six vertex model}},
  author={Corwin, Ivan and Ghosal, Promit and Shen, Hao and Tsai, Li-Cheng},
  journal={Communications in Mathematical Physics},
  volume={375},
  number={3},
  pages={1945--2038},
  year={2020},
  publisher={Springer}
}
@article{gj17,
  title={Stochastic {B}urgers equation from long range exclusion interactions},
  author={Gon{\c{c}}alves, Patr{\'\i}cia and Jara, Milton},
  journal={Stochastic Processes and their Applications},
  volume={127},
  number={12},
  pages={4029--4052},
  year={2017},
  publisher={Elsevier}
}
@article{yang23,
  title={{Kardar--Parisi--Zhang equation from long-range exclusion processes}},
  author={Yang, Kevin},
  journal={Communications in Mathematical Physics},
  pages={1--129},
  year={2023},
  publisher={Springer}
}
@article{ct17,
author = {Ivan Corwin and Li-Cheng Tsai},
title = {{KPZ equation limit of higher-spin exclusion processes}},
volume = {45},
journal = {The Annals of Probability},
number = {3},
publisher = {Institute of Mathematical Statistics},
pages = {1771 -- 1798},
keywords = {Exclusion processes, higher-spin, Hopf–Cole transform, Kardar–Parisi–Zhang equation, Stochastic heat equation},
year = {2017},
doi = {10.1214/16-AOP1101},
URL = {https://doi.org/10.1214/16-AOP1101}
}
@article{dembo,
  title={{Weakly asymmetric non-simple exclusion process and the Kardar--Parisi--Zhang equation}},
  author={Dembo, Amir and Tsai, Li-Cheng},
  journal={Communications in Mathematical Physics},
  volume={341},
  pages={219--261},
  year={2016},
  publisher={Springer}
}
@article{ac22,
  title={{An invariance principle for the 1D KPZ equation}},
  author={Adhikari, Arka and Chatterjee, Sourav},
  journal={arXiv preprint arXiv:2208.02492},
  year={2022}
}
@inproceedings{hq18,
  title={{A class of growth models rescaling to KPZ}},
  author={Hairer, Martin and Quastel, Jeremy},
  booktitle={Forum of Mathematics, Pi},
  volume={6},
  pages={e3},
  year={2018},
  organization={Cambridge University Press}
}

@article{kar68,
  title={Small-scale structure of a scalar field convected by turbulence},
  author={Kraichnan, Robert H},
  journal={The Physics of Fluids},
  volume={11},
  number={5},
  pages={945--953},
  year={1968},
  publisher={American Institute of Physics}
}
@incollection{GK96,
  title={{University in turbulence: An exactly solvable model}},
  author={Gawedzki, Krzysztof and Kupiainen, Antti},
  booktitle={Low-Dimensional Models in Statistical Physics and Quantum Field Theory: Proceedings of the 34. Internationale Universit{\"a}tswochen f{\"u}r Kern-und Teilchenphysik Schladming, Austria, March 4--11, 1995},
  pages={71--105},
  year={2006},
  publisher={Springer}
}
@article{GV00,
  title={Phase transition in the passive scalar advection},
  author={Gawedzki, Krzysztof and Vergassola, Massimo},
  journal={Physica D: Nonlinear Phenomena},
  volume={138},
  number={1-2},
  pages={63--90},
  year={2000},
  publisher={Elsevier}
}
@article{CFKL95,
  title={Normal and anomalous scaling of the fourth-order correlation function of a randomly advected passive scalar},
  author={Chertkov, Misha and Falkovich, Gregory and Kolokolov, Igor and Lebedev, Vladmir},
  journal={Physical Review E},
  volume={52},
  number={5},
  pages={4924},
  year={1995},
  publisher={APS}
}
@article{dot,
  title={Bethe ansatz derivation of the Tracy-Widom distribution for one-dimensional directed polymers},
  author={Dotsenko, Victor},
  journal={Europhysics Letters},
  volume={90},
  number={2},
  pages={20003},
  year={2010},
  publisher={IOP Publishing}
}
@article{poly,
  title={Polynomial chaos and scaling limits of disordered systems},
  author={Caravenna, Francesco and Sun, Rongfeng and Zygouras, Nikos},
  journal={Journal of the European Mathematical Society},
  volume={19},
  number={1},
  pages={1--65},
  year={2016}
}
@article{cldr,
  title={Free-energy distribution of the directed polymer at high temperature},
  author={Calabrese, Pasquale and Le Doussal, Pierre and Rosso, Alberto},
  journal={Europhysics Letters},
  volume={90},
  number={2},
  pages={20002},
  year={2010},
  publisher={IOP Publishing}
}
@article{hass23,
  title={Anomalous fluctuations of extremes in many-particle diffusion},
  author={Hass, Jacob B and Carroll-Godfrey, Aileen N and Corwin, Ivan and Corwin, Eric I},
  journal={Physical Review E},
  volume={107},
  number={2},
  pages={L022101},
  year={2023},
  publisher={APS}
}
@article{hass2023b,
  title={First Passage Time for Many Particle Diffusion in Space-Time Random Environments},
  author={Hass, Jacob B and Corwin, Ivan and Corwin, Eric I},
  journal={arXiv preprint arXiv:2308.01267},
  year={2023}
}

@book{kup10,
  title={Lessons for turbulence},
  author={Kupiainen, Antti},
  year={2010},
  publisher={Springer}
}
@article{gaw,
  title={Sticky behavior of fluid particles in the compressible {K}raichnan model},
  author={Gawedzki, Krzysztof and Horvai, P{\'e}ter},
  journal={Journal of statistical physics},
  volume={116},
  pages={1247--1300},
  year={2004},
  publisher={Springer}
}
@article{hl81,
  title={Sticky {B}rownian motion as the limit of storage processes},
  author={Harrison, John Michael and Lemoine, Austin J},
  journal={Journal of Applied Probability},
  volume={18},
  number={1},
  pages={216--226},
  year={1981},
  publisher={Cambridge University Press}
}
@article{rs15,
  title={Multidimensional sticky {B}rownian motions as limits of exclusion processes},
  author={R{\'a}cz, Mikl{\'o}s Z and Shkolnikov, Mykhaylo},
  journal={Annals of Applied Probability},
  volume={25},
  number={3},
  pages={1155--1188},
  year={2015},
  publisher={Institute of Mathematical Statistics}
}
@article{ami91,
  title={Sticky {B}rownian motion as the strong limit of a sequence of random walks},
  author={Amir, Madjid},
  journal={Stochastic processes and their applications},
  volume={39},
  number={2},
  pages={221--237},
  year={1991},
  publisher={Elsevier}
}
@article{hw09b,
  title={Dynamics for the {B}rownian web and the erosion flow},
  author={Howitt, Chris and Warren, Jon},
  journal={Stochastic Processes and their Applications},
  volume={119},
  number={6},
  pages={2028--2051},
  year={2009},
  publisher={Elsevier}
}
@article{sss0,
author = {Emmanuel Schertzer and Rongfeng Sun and Jan Swart},
title = {{Special points of the Brownian net}},
volume = {14},
journal = {Electronic Journal of Probability},
number = {none},
publisher = {Institute of Mathematical Statistics and Bernoulli Society},
pages = {805 -- 864},
keywords = {branching-coalescing point set, Brownian net, Brownian web},
year = {2009},
doi = {10.1214/EJP.v14-641},
URL = {https://doi.org/10.1214/EJP.v14-641}
}

@article{sss2,
  title={{The Brownian web, the Brownian net, and their universality}},
  author={Schertzer, Emmanuel and Sun, Rongfeng and Swart, Jan},
  journal={Advances in disordered systems, random processes and some applications},
  pages={270--368},
  year={2017},
  publisher={Cambridge University Cambridge}
}
@article{feller,
  title={The parabolic differential equations and the associated semi-groups of transformations},
  author={Feller, William},
  journal={Annals of Mathematics},
  pages={468--519},
  year={1952},
  publisher={JSTOR}
}
@Article{Hai13,
  author    = {Hairer, Martin},
  title     = {Solving the {KPZ} equation},
  journal   = {Annals of Mathematics},
  year      = {2013},
  pages     = {559--664},
  publisher = {JSTOR},
}
@InProceedings{GIP15,
  author       = {Gubinelli, Massimiliano and Imkeller, Peter and Perkowski, Nicolas},
  title        = {Paracontrolled distributions and singular {PDE}s},
  booktitle    = {Forum of Mathematics, Pi},
  year         = {2015},
  volume       = {3},
  organization = {Cambridge University Press},
}
@Article{GJ14,
  author    = {Gon{\c{c}}alves, Patr{\'\i}cia and Jara, Milton},
  title     = {Nonlinear fluctuations of weakly asymmetric interacting particle systems},
  journal   = {Arch. Ration. Mech. Anal.},
  year      = {2014},
  volume    = {212},
  number    = {2},
  pages     = {597--644},
  publisher = {Springer},
}
@Article{GP17,
  author    = {Gubinelli, Massimiliano and Perkowski, Nicolas},
  title     = {{KPZ} reloaded},
  journal   = {Commun. Math. Phys.},
  year      = {2017},
  volume    = {349},
  number    = {1},
  pages     = {165--269},
  publisher = {Springer},
}
@Article{GP18,
  author  = {Gubinelli, Massimiliano and Perkowski, Nicolas},
  title   = {Energy solutions of {KPZ} are unique},
  journal = {J. Amer. Math. Soc.},
  year    = {2018},
  volume  = {31},
  number  = {2},
  pages   = {427--471},
}

@article{HW09,
  doi = {10.1214/08-aop431},
  url = {https://doi.org/10.1214/08-aop431},
  year = {2009},
  month = {jul},
  publisher = {Institute of Mathematical Statistics},
  volume = {37},
  number = {4},
  author = {Chris Howitt and Jon Warren},
  title = {Consistent families of {B}rownian motions and stochastic flows of kernels},
  journal = {The Annals of Probability}
}

@article{Par19,
  doi = {10.1007/s00220-018-3258-x},
  url = {https://doi.org/10.1007/s00220-018-3258-x},
  year = {2018},
  month = {sep},
  publisher = {Springer Science and Business Media {LLC}},
  volume = {365},
  number = {2},
  pages = {569--649},
  author = {Shalin Parekh},
  title = {The {KPZ} Limit of {ASEP} with Boundary},
  journal = {Communications in Mathematical Physics}
}
@article{konno,
  title={Stochastic partial differential equations for some measure-valued diffusions},
  author={Konno, Nuri and Shiga, Tokuzo},
  journal={Probability theory and related fields},
  volume={79},
  number={2},
  pages={201--225},
  year={1988},
  publisher={Springer}
}

@article{yu,
title={{Edwards-Wilkinson fluctuations in the Howitt-Warren flows}},
  author={Yu, Jinjiong},
  journal={ Stochastic Processes and their Applications},
  volume={126},
  number={3},
  pages={948--982},
  year={2016}
}
@article{das,
  title={Temporal increments of the {KPZ} equation with general initial data},
  author={Das, Sayan},
  journal={arXiv preprint arXiv:2203.00666},
  year={2022}
}

@article{alex,
	doi = {10.1103/physreve.107.014137},
  
	url = {https://doi.org/10.1103
  
	year = 2023,
	month = {jan},
  
	publisher = {American Physical Society ({APS})},
  
	volume = {107},
  
	number = {1},
  
	author = {Krajenbrink, Alexandre and Le Doussal, Pierre},
  
	title = {{Crossover from the macroscopic fluctuation theory to the Kardar-Parisi-Zhang equation~controls the large deviations beyond Einstein{\textquotesingle}s diffusion}},
  
	journal = {Physical Review E}
}

@article{DDP23,
  title={{KPZ} equation limit of sticky {B}rownian motion},
  author={Das, Sayan and Drillick, Hindy and Parekh, Shalin},
  journal={Journal of Functional Analysis},
  volume={287},
  number={10},
  pages={110609},
  year={2024},
  publisher={Elsevier}
}
@article{u1,
  title={{The Edwards--Wilkinson limit of the random heat equation in dimensions three and higher}},
  author={Gu, Yu and Ryzhik, Lenya and Zeitouni, Ofer},
  journal={Communications in Mathematical Physics},
  volume={363},
  pages={351--388},
  year={2018},
  publisher={Springer}
}
@article{u2,
  title={Fluctuations of the solutions to the {KPZ} equation in dimensions three and higher},
  author={Dunlap, Alexander and Gu, Yu and Ryzhik, Lenya and Zeitouni, Ofer},
  journal={Probability Theory and Related Fields},
  volume={176},
  number={3-4},
  pages={1217--1258},
  year={2020},
  publisher={Springer}
}
@article{u3,
  title={The scaling limit of the {KPZ} equation in space dimension 3 and higher},
  author={Magnen, Jacques and Unterberger, J{\'e}r{\'e}mie},
  journal={Journal of Statistical Physics},
  volume={171},
  pages={543--598},
  year={2018},
  publisher={Springer}
}
@article{u4,
  title={Gaussian fluctuations from the {2D KPZ} equation},
  author={Gu, Yu},
  journal={Stochastics and Partial Differential Equations: Analysis and Computations},
  volume={8},
  pages={150--185},
  year={2020},
  publisher={Springer}
}
@article{u5,
  title={The two-dimensional {KPZ} equation in the entire subcritical regime},
  author={Caravenna, Francesco and Sun, Rongfeng and Zygouras, Nikos},
  year={2020},
    journal = {Ann. Probab.},
    volume={48(3)},
}
@article{u6,
  title={Weak coupling limit of the anisotropic {KPZ} equation},
  author={Cannizzaro, Giuseppe and Erhard, Dirk and Toninelli, Fabio},
  journal={arXiv preprint arXiv:2108.09046},
  year={2021}
}
\end{filecontents}

\title[Multiplicative SHE limit of RWRE]{Multiplicative SHE limit of random walks in space-time random environments}

\author[S.\ Das]{Sayan Das}
\address{S.\ Das,
	Department of Mathematics, University of Chicago,
	\newline\hphantom{\quad \ \ S. Das}
	5734 S.~University Avenue, Chicago, Illinois 60637 USA
}
\email{sayan.das@columbia.edu}

\author[H.\ Drillick]{Hindy Drillick}
\address{H.\ Drillick,
	Department of Mathematics, Columbia University,
	\newline\hphantom{\quad \ \ H. Drillick}
	2990 Broadway, New York, NY 10027 USA
}
\email{hindy.drillick@columbia.edu}

\author[S.\ Parekh]{Shalin Parekh}
\address{S.\ Parekh,
	Department of Mathematics, University of Maryland,
	\newline\hphantom{\quad \ \ S. Parekh}
	4176 Campus Dr, College Park, MD 20742 USA
}
\email{parekh@umd.edu}

\subjclass[2020]{
	Primary 60K37, 
        82B21,	
        82C22,	
	Secondary 60G70. 
}

\keywords{
	Kardar--Parisi--Zhang equation, stochastic heat equation, random walk in random environments, local time of Brownian motion, stochastic flows.
}

\begin{document}
	
	\begin{abstract} We show that under a certain moderate deviation scaling, the multiplicative-noise stochastic heat equation (SHE) arises as the fluctuations of the quenched density of a 1D random walk whose transition probabilities are iid [0,1]-valued random variables. In contrast to the case of directed polymers in the intermediate disorder regime, the variance of our weights is fixed rather than vanishing under the diffusive rescaling of space-time. Consequently, taking a naive limit of the chaos expansion fails for this model, and a nontrivial noise coefficient is observed in the limit. Rather than using chaos techniques, our proof instead uses the fact that in this regime the quenched density solves a discrete SPDE which resembles the SHE. As a byproduct of our techniques, it is shown that independent noise is generated in the limit, in the sense that the prelimiting noise field does not converge to the driving noise of the limiting SPDE.
	\end{abstract}

\maketitle
{
		\hypersetup{linkcolor=black}
		\setcounter{tocdepth}{1}
		\tableofcontents
	}

\section{Introduction}

The (1+1) dimensional Kardar–Parisi–Zhang (KPZ) equation \cite{kpz} is a stochastic partial differential equation (SPDE) given by 
\begin{align}
    \label{def:kpz} \tag{KPZ}
\partial_t\mathcal{H}=\tfrac12\partial_{xx}\mathcal{H}+\tfrac12(\partial_x\mathcal{H})^2+\gamma \cdot \xi, \qquad \mathcal{H}=\mathcal{H}_t(x),
\end{align}
where $\gamma>0$ and $\xi=\xi(t,x)$ is a space-time white noise. The KPZ equation has been studied intensively over the past few decades in both the mathematics and physics literature.
We refer to \cite{FS10, Qua11, Cor12, QS15, CW17, CS20} for some surveys of the mathematical studies of the KPZ equation. 

As an SPDE, \eqref{def:kpz} is ill-posed due to the presence of the non-linear term $(\partial_x\mathcal{H})^2$. One way to make sense of the equation is to consider $\mathcal{U}:= e^{\mathcal{H}}$ which formally solves the stochastic heat equation (SHE) with multiplicative noise:
\begin{equation}\label{she0} \tag{SHE}\partial_t \mathcal{U} = \tfrac12 \partial_{xx} \mathcal{U} + \gamma \cdot \mathcal{U} \xi, \qquad \mathcal{U}=\mathcal{U}_t(x).\end{equation}
The SHE is known to be well-posed and has a well-developed solution theory based on the It\^o integral and chaos expansions \cite{Wal86, BC95, Qua11, Cor12}. 
In this paper, we will consider the solution to \eqref{she0} started with Dirac delta initial data $\mathcal{U}_0(x) = \delta_0(x)$. For this initial data, \cite{flo} established that $\mathcal{U}_t(x) > 0$
for all $t > 0$ and $x \in \R$ almost surely (see also \cite{mue91}). Thus $\mathcal{H}=\log \mathcal{U}$ is well-defined and is called the Cole-Hopf solution of the KPZ equation. This is the notion of solution that we will work with in this paper, and it coincides with other existing notions of solutions \cite{Hai13, Hai14, GIP15, GP17, GJ14, GP18}, under suitable assumptions.

The subject of the present work will be to show that \eqref{she0} arises as the fluctuations limit of a simple model of random walks in random environments (RWRE), with a nontrivial noise coefficient $\gamma$ that has an interesting story behind it.  Consider a family of iid $[0,1]$-valued random variables $\omega_{r,y}$ with $r \in \mathbb{Z}_{\ge 0}$ and $y\in \mathbb Z$, drawn from a common law $\nu$. A random walk $(R(r))_{r\ge 0}$ in the environment $\omega$ starts from $(r,y)=(0,0)$ and goes from $(r,R(r))\to (r+1,R(r)+1)$ with probability $\omega_{r,R(r)}$ and from $(r,R(r))\to (r+1,R(r)-1)$ with probability $1-\omega_{r,R(r)}$.

We will consider the (quenched) random transition probabilities 
\begin{align}
    \label{pwtx}
  \mathsf  P^\omega(r,y) := \mathsf P^\omega(R(r)=y).
\end{align}
{We will show that in a window of size $r^{1 / 2}$ around the spatial location $y\sim r^{3 / 4}$, which we call the moderate deviation regime in this paper,} these transition kernels have fluctuations that are described by the KPZ equation. 
We define for $t\ge 0$ and $x\in \mathbb R$ and $N\in \mathbb N$ the constant 
\begin{align}
\label{cntx}
  C_{N,t,x}:= e^{N^{1/4}x +(N^{1/2}-N\log\cosh(N^{-1/4}))t}  
\end{align}
and then {we define for each $N\ge 1$ and $t\in  N^{-1}\mathbb Z_{\ge 0}$ a random measure $\mathscr U_N(t,\cdot)$ on $\mathbb R$, given by a superposition of Dirac masses as follows }\begin{equation}\label{field}\mathscr U_N(t,\cdot):=  \sum_{x\in N^{-1/2}\mathbb Z-N^{1/4}t} C_{N,t,x} \cdot \mathsf P^{\omega}(Nt, N^{3/4}t + N^{1/2}x)\cdot  \delta_x.
\end{equation}
Here $\delta_x$ denotes a Dirac mass at spatial position $x$, and $a\mathbb Z+b := \{ax+b:x\in\mathbb Z\}$. {The definition of $\mathscr U_N(t,\cdot)$ is extended to $t\in \mathbb R_+$ by linearly interpolating, i.e., taking an appropriate convex combination of the two measures at the two nearest points of $N^{-1}\mathbb Z_{\ge 0}$. We call $\mathscr U_N$ the \textbf{quenched density field}.} In all of our results below, we fix a terminal time $T>0$ which is arbitrary. Our main result is as follows. 

\begin{thm}\label{main}
Let $\omega$ denote an environment as above with $\mathbb E[\omega_{t,x}] = \frac12$ and $ \operatorname{Var}(\omega_{t,x})=\sigma^2\in [0,1/4).$ There is an explicit Banach space $X$ of distributions on $\mathbb R$, which is continuously embedded in $\mathcal S'(\mathbb R)$, such that the collection $\{\mathscr U_N\}_{N \ge 1}$ is tight with respect to the topology of $C([0,T], X)$. Furthermore any limit point as $N\to \infty$ lies in $C((0,T],C(\mathbb R))$ and coincides with the law of the It\^o solution of the SPDE \begin{equation}\label{she}\partial_t \mathcal U(t,x) = \frac12\partial_x^2 \mathcal U(t,x) + \sqrt{\frac{8\sigma^2}{1-4\sigma^2}} \cdot \mathcal U(t,x)\xi(t,x),\;\;\;\;\;\;\;\;\;\;\;\;\;\;\;\;t\ge 0 ,x\in\mathbb R
\end{equation}started with $\delta_0$ initial condition at $t=0$. Here $\xi$ is a standard (Gaussian) space-time white noise on $\mathbb R_+\times \mathbb R.$ 
\end{thm}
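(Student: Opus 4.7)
My plan is to follow a tightness plus martingale-problem strategy, exploiting the fact that the quenched density $\mathsf P^\omega$ itself satisfies an exact discrete analogue of the SHE. Writing $\omega_{r,y}=\tfrac12+\bar\omega_{r,y}$, the Markov property gives
\begin{equation*}
\mathsf P^\omega(r+1,y) = \tfrac12\bigl(\mathsf P^\omega(r,y-1)+\mathsf P^\omega(r,y+1)\bigr) + \bar\omega_{r,y-1}\mathsf P^\omega(r,y-1) - \bar\omega_{r,y+1}\mathsf P^\omega(r,y+1),
\end{equation*}
a discrete heat equation driven by a martingale increment measurable with respect to the $r$-th row of $\omega$. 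Conjugating by the tilt $C_{N,t,x}$ and applying the spatial shift $y\mapsto N^{3/4}t+N^{1/2}x$, the rescaled field satisfies
\begin{equation*}
\mathscr U_N(t+N^{-1},x) = \mathcal L_N \mathscr U_N(t,\cdot)(x) + \Xi_N(t,x),
\end{equation*}
where $\mathcal L_N$ is a discrete Markov-type operator whose generator, after using the expansion $\log\cosh(N^{-1/4}) = \tfrac12 N^{-1/2}-\tfrac{1}{12}N^{-1}+O(N^{-3/2})$ to absorb the drift induced by the tilt, converges to $\tfrac12\partial_x^2$; the term $\Xi_N$ is a martingale increment in the filtration generated by the rows of $\omega$.

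For tightness in $C([0,T],X)$, I would use the discrete SPDE together with iterated Burkholder--Davis--Gundy to derive uniform-in-$N$ moment bounds of the form $\mathbb E[|\mathscr U_N(t,x)|^p] \lesssim G_N(t,x)$ with $G_N$ a suitable Gaussian-type profile, which justifies the Gaussian-weighted Banach space $X$ appearing in the statement. Combining these with the smoothing of $\mathcal L_N$ yields Kolmogorov-type Hölder moment bounds with exponents strictly less than $1/2$ in space and $1/4$ in time (away from $t=0$), giving tightness in $C([0,T],X)\cap C((0,T],C(\R))$; near $t=0$ the Dirac initial data is handled by short-time comparison to the annealed heat kernel.

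To identify any subsequential limit $\mathscr U$, I would show that for each $\varphi\in C_c^\infty(\R)$ the process
\begin{equation*}
M_N(t,\varphi) := \mathscr U_N(t,\varphi)-\mathscr U_N(0,\varphi)-\int_0^t \mathscr U_N(s,\tfrac12\varphi'')\,ds
\end{equation*}
is an approximate martingale that converges to a continuous limit martingale with predictable quadratic variation $\tfrac{8\sigma^2}{1-4\sigma^2}\int_0^t\!\int_\R \varphi(x)^2 \mathscr U_s(x)^2\,dx\,ds$. Together with the uniform moment bounds and positivity, uniqueness of the martingale problem for \eqref{she} with $\delta_0$ initial data (in the Mueller positivity framework) then pins down $\mathscr U$ as the Itô solution.

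The main obstacle -- and the reason a direct chaos-expansion approach fails -- is producing the correct quadratic variation in the limit. A direct computation of $\mathbb E[\Xi_N(t,x)^2\mid \mathcal F_{t}]$ yields only a contribution that would naively pass to $\gamma^2=8\sigma^2$, short of the required $8\sigma^2/(1-4\sigma^2)$. The missing geometric enhancement arises from a two-copy mechanism: $\mathbb E[\mathscr U_N(t,x)\mathscr U_N(t,y)]$ satisfies a closed equation for two independent walks in a \emph{shared} environment, and whenever those walks sit at a common site they pick up a multiplicative factor $1+4\sigma^2$ over the independent case (since $2\omega-1$ is centered with variance $4\sigma^2$). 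Summing this attractive interaction over the intersection local time of the pair produces $\sum_{k\ge 0}(4\sigma^2)^k=(1-4\sigma^2)^{-1}$. Making this rigorous calls for a replacement lemma in the spirit of hydrodynamic-limit arguments for KPZ universality, in which the bare conditional variance is replaced by its effective coarse-grained form carrying the renormalized coefficient, together with sharp intersection-local-time estimates for the tilted walk -- this is the technical heart of the argument.
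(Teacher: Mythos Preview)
Your overall architecture---discrete SHE, martingale problem, and the two-point/intersection mechanism as the source of the renormalized coefficient---is exactly the paper's strategy. But two points in your proposal are not just imprecise, they conflict with what is actually true for this model.

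\textbf{Tightness in $C((0,T],C(\mathbb R))$ is impossible.} You write that Kolmogorov bounds give ``tightness in $C([0,T],X)\cap C((0,T],C(\mathbb R))$''. The paper proves that $\mathscr U_N$ \emph{cannot} converge in any space of continuous functions: only the limit point lies in $C((0,T],C(\mathbb R))$, while the prelimiting fields are sums of Dirac masses that concentrate on favored lattice sites with macroscopic density $1-4\sigma^2$. In particular, your pointwise moment bound $\mathbb E[|\mathscr U_N(t,x)|^p]\lesssim G_N(t,x)$ does not make sense as stated, since $\mathscr U_N(t,\cdot)$ is a measure. The space $X$ is a weighted H\"older space of \emph{negative} exponent $\alpha<-3$ with polynomial weight, and tightness is obtained only in $C([0,T],X)$ via a Kolmogorov criterion for distribution-valued processes; the $C((0,T],C(\mathbb R))$ regularity is established for the limit \emph{after} identification, using a Schauder estimate on the limiting Duhamel equation.

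\textbf{The coefficient mechanism.} Your geometric-series heuristic $\sum_{k\ge 0}(4\sigma^2)^k=(1-4\sigma^2)^{-1}$ is suggestive but is not how the paper produces the coefficient, and a hydrodynamic ``replacement lemma'' is not used. The predictable variation of the discrete martingale is (up to a vanishing error) $4\sigma^2 N^{-1/2}\sum_r\sum_x \phi(N^{-1/2}x)^2 Z_N^\omega(r,x)^2$, which is an annealed observable of the \emph{two-point motion} evaluated on the collision set $\{R^1=R^2\}$. The paper analyzes such expectations by a discrete Girsanov tilt that converts the exponential factor $C_{N,t,x}$ into an $O(1)$ Radon--Nikodym density with respect to a walk of mean $\tanh(N^{-1/4})$, under which the rescaled pair converges to a $2$-dimensional Brownian motion. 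The key point is a discrete Tanaka identity: $|R^1(r)-R^2(r)| = 4(\tfrac14-\sigma^2)V^{12}(r) - \mathcal M(r)$ for a martingale $\mathcal M$, so the rescaled intersection time $N^{-1/2}V^{12}(Nt)$ converges to $(1-4\sigma^2)^{-1}L_0^{U^1-U^2}(t)$. This factor, not a geometric resummation, is what transforms the bare $4\sigma^2$ appearing in the conditional variance into $\tfrac{4\sigma^2}{1-4\sigma^2}$ in the limiting quadratic variation. Crucially, this same mechanism is what forbids convergence in $C(\mathbb R)$: if the convergence held pointwise, the predictable variation would limit to $8\sigma^2\int \mathcal U^2\phi^2$ (the $2$ from lattice parity), contradicting the key estimate. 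So your first gap and the coefficient question are two sides of the same phenomenon.
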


{Note that tightness and uniqueness of the limit point means that $\mathscr U_N$ \textit{converges in distribution} to $\mathcal U$ in the aforementioned topology, as $N\to \infty$.}
{The explicit Banach space $X$ is given by a weighted H\"older space $C^{\alpha,\tau}(\mathbb{R})$ with negative exponent $\alpha<-3$ and polynomial weight of degree $\tau>1$, see Definition \ref{ehs}. These exponents are non-optimal but the result is strong enough to imply e.g. convergence in law of $(\mathscr U_N(t,\phi))_{t\in [0,T]}$ to $\big( \int_\mathbb R \mathcal U_t(x)\phi(x)dx\big)_{t\in [0,T]}$ in the topology of $C[0,T]$ for any fixed $\phi\in C_c^\infty(\mathbb R)$, where $\mathscr U_N(t,\phi):=\int_\mathbb R \phi(x)\mathscr U_N(t,dx)$.}

{While our main result above is formulated in terms of the quenched density field, there is another (somewhat weaker) formulation in terms of the \textbf{quenched tail field} which is concerned with the behavior of the tail probability $\mathsf{P}^\omega\big(R(r) \geq y\big)$. We obtain the following:
\begin{thm}[Multi-point convergence of the quenched tail field to the SHE]\label{unifconv} Suppose that the deterministic sequence of vectors $(x_{N,1},\ldots,x_{N,m}) \in \mathbb R^m$ converges as $N\to \infty$ to $(x_1,\ldots,x_m)\in \R^m$. Fix $t_1,\ldots,t_m>0$ and suppose we have sequences $t_{N,i}\in N^{-1}\mathbb Z_{\ge 0}$ such that $t_{N,i}\to t_i$ as $N\to\infty$. Then we have the joint convergence $$\big( N^{1/4} C_{N,t,x} \mathsf{P}^\omega\big(R(Nt) \geq N^{3/4}t_{N,i}+N^{1/2}x_{N,i}\big)\big)_{i=1}^m \stackrel{d}{\to} \big( \mathcal U_{t_i}(x_i)\big)_{i=1}^m,$$ with $\mathcal U$ as in \eqref{she}.
\end{thm}
This will be proved in Section 7.}

\begin{rk}\label{mainrk}
Besides the fact that the limit is given by the multiplicative-noise stochastic heat equation \eqref{she}, two other aspects of the above result are striking.
\begin{enumerate}[leftmargin=18pt]
    \item As \eqref{field} suggests, these KPZ fluctuations are obtained by probing the tail of the probability distribution $\mathsf P^\omega(r,y)$ near spatial location $y= N^{3/4} t+N^{1/2}x$  at time $r = Nt$, where $N$ is very large. It is not so surprising that the spatial variable $x$ needs a factor $N^{1/2}$ at time of order $N$, since this scaling respects the parabolic structure of the limiting equation. However the factor $N^{3/4}$ in front of $t$ is more mysterious and eluded a rigorous understanding until recently. It is now known that $3/4$ is the \textit{unique} exponent at which one expects the KPZ equation to arise. For exponents strictly smaller than $3/4$ one instead obtains a Gaussian field as the limit, as evidenced by works \cite{timo2, yu, timo}. {On the other hand, when the exponent is equal to $1$ (i.e., at the large deviation scale) it was shown in \cite{bc} that under a Beta-distributed random environment, one obtains the Tracy-Widom GUE distribution as the one-point fluctuations. The Tracy-Widom GUE distribution is the one-point marginal of the directed landscape \cite{MQR,dov}, a recently constructed universal scaling limit of models in the KPZ universality class. We expect that for all exponents strictly larger than $3/4$, we have the directed landscape as the scaling limit.} In this sense $3/4$ is the \textbf{crossover exponent} of this model, that is, the unique exponent where one observes the crossover from Gaussian to non-Gaussian behavior. 
    
    \item There is a nontrivial coefficient $\sqrt\frac{8\sigma^2}{1-4\sigma^2}$ appearing in front of the noise, which depends on the weights only through their variance $\sigma^2$. {Note that this coefficient can be factored as $\sqrt{2}\sqrt\frac{4\sigma^2}{1-4\sigma^2}$ where $\sqrt2$ can be viewed as a consequence of the periodicity of the random walk.} We call this coefficient the \textbf{environmental variance coefficient}. 
    {We shall see later in our proofs that this coefficient arises in a nontrivial way through intersection processes of the two-point motion for the RWRE model.} 
     To give some idea for why this is the correct coefficient, note that $1-4\sigma^2$ is the long-term \textit{quenched} diffusion rate of the random walk given the realization of the environment $\omega$. Consequently, if one samples two independent walks in a fixed realization of the environment, then their pairwise intersection process converges in law (upon diffusive scaling) to the local time of Brownian motion \textit{divided} by a factor of $1-4\sigma^2$. This is precisely why we repeatedly see this factor arise in our analysis and in the limiting SPDE.
\end{enumerate}
\end{rk}

One may ask why we have used Dirac masses to define the field in \eqref{field}, instead of linear interpolation. Indeed since the limiting field in \eqref{she} is a continuous function of space and time, one could ask about convergence in a space of continuous functions, rather than tempered distributions. While Dirac masses are computationally easier to work with, this is \textit{not} the only reason for our choice. 
Heuristically, the $\mathscr{U}_N$ density field has a tendency to concentrate more heavily on certain favored sites microscopically, such that macroscopically these favored sites have a Lebesgue density equal to $1-4\sigma^2>0$ (see the discussions at the end of Section \ref{iden} for a more detailed explanation of this fact).
Therefore, it is hopeless to ask for convergence in a topology of continuous functions. 
Indeed, we can rigorously show that it is impossible for convergence to occur in any space of continuous functions, as it would contradict one of the key estimates of the paper. See the discussion just after Proposition \ref{4.1} below.

Given these discontinuity phenomena, a natural question to ask is what happens when one keeps track of the rescaled field $\mathscr U_N$ \textit{together} with the noise variables $\omega_{t,x}$. This leads us to define the prelimiting noise field, {given by a superposition of Dirac masses on space-time:}
\begin{align}
    \label{xi_field}
    \Xi_N(\cdot,\cdot):= (2N^{3/2}\sigma^2)^{-1/2}\sum_{t\in N^{-1}\mathbb Z\cap [0,T]} \; \sum_{\substack{x\in N^{-1/2}\mathbb Z - N^{1/4}t\\\mathrm{same\;parity}}} \big(\omega_{Nt,N^{1/2}x+N^{3/4}t} -\tfrac12\big)\cdot \delta_{(t,x)}.
\end{align}
where the sum is understood as being over those points $(t,x)$ such that $Nt$ and $N^{1/2}x+N^{3/4}t$ are both integers of the same parity. We now address the question of taking a \textbf{joint} limit of $(\Xi_N,\mathscr U_N)$. In principle $\Xi_N$ just tracks the driving noise of the discrete system, so one might be led to believe that it will converge to the driving noise $\xi$ of $\mathcal U$. This is indeed what happens for other systems such as directed polymers when one takes the limit to the stochastic heat equation. {This can be deduced from the methods used in \cite{akq,poly} (those papers prove term-by-term convergence of the respective chaos series, and in particular the first term of the series which encodes the full information of the noise, will converge jointly with the rest of the series)}. In fact, this is not the case in our model, and we have the following refinement of Theorem \ref{main}.

\begin{thm}[Creation of independent noise in the limit] \label{xi2} Let $X\subset \mathcal S'(\mathbb R)$ be as in Theorem \ref{main}. There exists an explicit Banach space $Y\subset \mathcal S'(\mathbb R^2)$ of distributions supported on $[0,T]\times \mathbb R$, such that $(\Xi_N,\mathscr U_N)$ are tight with respect to the topology of $Y\times C([0,T],X)$. Any limit point is of the form $(\xi_1\;,\; \mathcal U),$ where $\xi_1$ is a standard space-time white noise, and $$\partial_t\mathcal U(t,x) = \frac12 \partial_x^2 \mathcal U(t,x) + \bigg(\sqrt{8\sigma^2}\;\xi_1(t,x) + \sqrt\frac{32\sigma^4}{1-4\sigma^2} \;\xi_2(t,x)\bigg)\mathcal U(t,x),\;\;\;\;\;\;\;\;\;\;\; t\ge 0, x\in \mathbb R,$$
with $\mathcal U(0,x) = \delta_0(x)$. Here $\xi_2$ is another space-time white noise,\textbf{ independent} of $\xi_1.$
\end{thm}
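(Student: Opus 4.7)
The plan is to extend the strategy used for Theorem \ref{main} to a joint martingale problem for $(\Xi_N,\mathscr U_N)$ and then extract the independent noise $\xi_2$ via a standard martingale representation argument. First, I would define $Y$ as a suitable weighted negative-H\"older space on $[0,T]\times\mathbb R$. Since $\Xi_N$ is a correctly normalized sum of iid bounded centered random variables placed on a space-time lattice, a Kolmogorov-type moment estimate for its primitive gives tightness in $Y$, while a direct CLT against smooth test functions identifies every limit point as a standard space-time white noise $\xi_1$. Combined with the $C([0,T],X)$-tightness of $\mathscr U_N$ supplied by Theorem \ref{main}, this yields joint tightness of $(\Xi_N,\mathscr U_N)$ on $Y\times C([0,T],X)$.

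For the identification of any subsequential limit, I would extract a semimartingale decomposition from the discrete recursion $\mathsf P^\omega(r+1,y)=\omega_{r,y-1}\mathsf P^\omega(r,y-1)+(1-\omega_{r,y+1})\mathsf P^\omega(r,y+1)$. For a smooth $\phi$ this gives
\begin{equation*}
\mathscr U_N(t,\phi)=\mathscr U_N(0,\phi)+\tfrac12\!\int_0^t\!\mathscr U_N(s,\phi'')\,ds+M^N_t(\phi)+o_{\mathbb P}(1),
\end{equation*}
where $M^N_t(\phi)$ is a martingale whose increments are explicit linear functions of $\omega_{r,y}-\tfrac12$ with prefactors involving $\mathsf P^\omega$ and $\phi$. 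The proof of Theorem \ref{main} already delivers the quadratic variation asymptotic
\begin{equation*}
\langle M^N(\phi)\rangle_t\;\longrightarrow\;\tfrac{8\sigma^2}{1-4\sigma^2}\int_0^t\!\!\int_\mathbb R \mathcal U_s(x)^2\phi(x)^2\,dx\,ds,
\end{equation*}
the factor $(1-4\sigma^2)^{-1}$ being generated by the two-point intersection local time of the RWRE. The crucial new ingredient is the cross-bracket with $\Xi_N$: because a single increment of $M^N$ is linear in the noise $\omega-\tfrac12$, its covariance with an increment of $\Xi_N$ tested against a smooth $\psi$ reduces to a single diagonal sum that depends on $\mathscr U_N$ \emph{linearly} and so does not see the pairwise intersection of two walks. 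One therefore expects
\begin{equation*}
\langle M^N(\phi),\Xi_N(\psi)\rangle_t\;\longrightarrow\;\sqrt{8\sigma^2}\!\int_0^t\!\!\int_\mathbb R \mathcal U_s(x)\phi(x)\psi(s,x)\,dx\,ds,
\end{equation*}
with coefficient $\sqrt{8\sigma^2}$ strictly smaller than the noise coefficient $\sqrt{8\sigma^2/(1-4\sigma^2)}$ of Theorem \ref{main}, precisely because the intersection mechanism is absent from a single sum.

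These bracket asymptotics completely determine the joint limit. Writing $M_t(\phi)=\sqrt{8\sigma^2}\int_0^t\!\!\int\phi(x)\mathcal U_s(x)\,\xi_1(ds,dx)+R_t(\phi)$ defines $R_t(\phi)$ as a continuous square-integrable martingale orthogonal to $\xi_1$ with quadratic variation $\tfrac{32\sigma^4}{1-4\sigma^2}\int_0^t\!\!\int\mathcal U_s^2\phi^2\,dx\,ds$. A martingale representation theorem, applied on a suitably enlarged probability space, then produces a space-time white noise $\xi_2$ independent of $\xi_1$ with $R_t(\phi)=\sqrt{32\sigma^4/(1-4\sigma^2)}\int_0^t\!\!\int\phi\,\mathcal U\,\xi_2(ds,dx)$, which yields the SPDE in the statement. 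The principal obstacle is the rigorous justification of the cross-bracket asymptotic above: although the discrete identity is clean, both $\Xi_N$ and $\mathscr U_N$ converge only in distributional senses, so one must combine uniform second-moment bounds on $\mathscr U_N$ (available from the proof of Theorem \ref{main}) with mollification of $\Xi_N$ against the smooth factor $\mathscr U_N\phi$, and with tightness of the quadratic-covariation process itself, in order to upgrade the formal single-sum calculation to a genuine convergence of semimartingales.
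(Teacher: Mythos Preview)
Your proposal is correct and follows essentially the same approach as the paper's proof (Theorem~\ref{create}): tightness of the time-integrated noise field, computation of the cross-bracket between $M_N(\phi)$ and the noise, and the observation that this cross-bracket is \emph{linear} in $\mathscr U_N$ so that no two-point intersection factor $(1-4\sigma^2)^{-1}$ appears, yielding the coefficient $\sqrt{8\sigma^2}$ rather than $\sqrt{8\sigma^2/(1-4\sigma^2)}$.

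The only notable difference is in the final extraction of $\xi_2$. The paper first \emph{inverts} the martingale $\mathscr M$ by $\mathcal U^{-1}$ (using strict positivity of $\mathcal U$ from \cite{mue91}) to recover the full driving noise $\xi=\partial_s\mathcal B$ of \eqref{she} as a cylindrical Wiener process; it then applies L\'evy's criterion to the \emph{pair} $(\mathcal B,\bar{\mathcal W})$, which are both continuous martingales with deterministic quadratic and cross variations, to conclude they are jointly Gaussian with correlation $(1-4\sigma^2)^{1/2}$, whence the algebraic decomposition $\xi=(1-4\sigma^2)^{1/2}\xi_1+2\sigma\xi_2$ with $\xi_2$ independent. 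You instead subtract the $\xi_1$-component from $M$ first and then invoke a martingale representation theorem on the remainder $R$. This also works, but note that to conclude \emph{independence} (not just orthogonality) of $\xi_2$ from $\xi_1$ you still implicitly need the inversion by $\mathcal U^{-1}$ inside the representation theorem together with L\'evy's criterion for the pair $(\xi_1,\xi_2)$; the paper's ordering makes this step more transparent and avoids any need to enlarge the probability space.
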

This will be proved as Theorem \ref{create} in the main body of the paper. {The proof of the theorem will be non-constructive, and it remains an open problem to identify at the microscopic scale precisely where the independent part $\xi_2$ comes from}. Note that this recovers the earlier result since the sum of squares of the two noise coefficients is exactly the squared noise coefficient from Theorem \ref{main}. As far as we know, this phenomenon of creating independent noise in the limit is quite new. In Section \ref{sec:fail}, we give an intuitive interpretation of the independent part $\xi_2$ in terms of the failure of a naive chaos expansion.
 
Our main results confirm and generalize a physics prediction of \cite{bld}. They conjecture the convergence of the RWRE to the KPZ equation in the moderate deviations regime, but only for the specific choice of Beta-distributed weights $\omega_{t,x}$ which makes the model integrable. Their field is defined slightly differently, but it agrees with the ``quenched tail field" that we study in Section \ref{sec7}. Note that when $\omega \sim $ Beta$(\alpha,\alpha)$ one has $8\sigma^2/(1-4\sigma^2) = 1/\alpha$ which agrees precisely with the prediction of \cite[Section 4.3]{bld}. {Their predictions were based on moment computations using contour integral formulas that are only available when the weights are Beta-distributed. As moments of \eqref{she0} grow too fast to determine the distribution uniquely, the convergence of moments in this case does not imply weak convergence as in Theorem \ref{main}.} Our proof of Theorem \ref{main} will instead be based purely on stochastic analysis and will not involve any exact solvability. The denominator $1-4\sigma^2$ of the noise coefficient will instead be seen to arise naturally from local time considerations coming from pairwise interactions in the 2-point motions, see Theorem \ref{converge}.

Note that the extreme case $\sigma^2=0$, which occurs when all $\omega_{t,x}$ are deterministically $1/2$, corresponds to deterministic $\mathsf P^\omega$ satisfying a discrete heat equation. The other extreme case $\sigma^2=1/4$ occurs {when the $\omega_{t,x}$ are Bernoulli$(1/2)$ which gives coalescing random walks. Consequently, the limit of the $\mathscr U_N$ will be zero \textit{almost surely}, since the probability of a simple random walk path being at location of order $N^{3/4}$ at time of order $N$ is superpolynomially small.} Both of these extreme cases make sense at the level of the limiting SPDE. In the $\sigma^2=0$ case, \eqref{she} becomes the heat equation. The $\sigma^2\to 1/4$ case is slightly more difficult to see: in this case the renormalization effect inherent in the Ito equation \eqref{she} becomes so strong that $\mathcal{U}$ becomes zero everywhere at positive times, and this is indeed the correct limit for the coalescing walks under the above scaling regime.

{Let us now give an overview of the literature related to the fluctuations in RWRE and related models. The model of random walks in dynamical random media gained attention in the context of stochastic flows and their discrete counterparts, see e.g. \cite{lejan, timo2, hw09, hw09b, sss0, sss,sss2,timo}, which studied existence, uniqueness, continuity, and construction of invariant measures for these types of stochastic processes. The question arose naturally to study the fluctuations of the quenched density and related quantities in such models. Gaussian fluctuations in the bulk region were proved in works such as \cite{timo2, yu, timo}. Tracy-Widom fluctuations for the large deviation regime were proved in \cite{bc,mark} for certain exactly solvable models (the Beta-distributed RWRE and a continuous counterpart),} though it still remains wide-open for general weights. The physics papers \cite{ldt2, ldt,bld} conjectured that an explicit crossover from Gaussian to Tracy-Widom fluctuations occurs at spatial location $N^{3/4}$ in these types of stochastic flows and that the KPZ equation should, in fact, appear at this crossover location. In \cite{DDP23} and the present work, we develop general methods to tackle the problem of studying the quenched density at this crossover exponent, for the continuous and discrete models of random walks in random environments respectively. Forthcoming work will explore how to rigorously find the crossover exponent and environmental variance coefficient for more general models of stochastic flows.

{In Theorem \ref{main} we did not aim for the greatest possible generality, but let us briefly describe the sort of generalizations that can be easily proved by adapting our methods, while also commenting on results that are out of reach with our methods. First, rather than assuming that the weights have mean $1/2$, we can assume they have mean $\mu \in (0,1)$. In this case, one needs to change the definition of the constant $C_{N,t,x}$, but one still has weak convergence to \eqref{she0} in the moderate deviation regime. However in the scalings, one will observe an additional shearing of space-time $(t,x) \mapsto (t,x+Vt)$ for appropriate $V\in \mathbb R$, and the stochastic PDE limit will now be given by the solution of $$\partial_t \mathcal U(t,x) = \sqrt{\mu(1-\mu)} \partial_x^2 \mathcal U(t,x)+ \sqrt{\tfrac{2\sigma^2}{\mu(1-\mu) -\sigma^2}}\;\mathcal U(t,x) \xi(t,x),\;\;\;\;\;\;\;\; t\ge 0, x\in \mathbb R.$$
A further generalization is if the weight distribution is not fixed with $N$, but varies in such a way that the mean stays within a window of size $N^{-1/4}$ of some fixed value $\mu$ and the variance converges to some $\sigma^2 \in [0,\mu(1-\mu))$. Then our methods can still show weak convergence to \eqref{she0} with a different shearing coefficient $V$ of space-time. }
   
   {Still another generalization is to look at $\mathsf{P}^\omega(Nt, \beta N^{3/4} t +N^{1/2}x)$ where $\beta >0$, and then look at how the limit depends on $\beta.$ In this case our methods still work, and in fact this is actually equivalent to our result by substituting $(N,t,x) \to (\beta^{-4}N,\beta^4t,\beta^2x)$. The limit will thus be
   $$\partial_t \mathcal U_\beta (t,x) = \frac{1}2 \partial_x^2 \mathcal U_\beta(t,x) + \sqrt\frac{8\beta^2 \sigma^2}{1-4\sigma^2} \;\mathcal U_\beta(t,x) \xi(t,x),\;\;\;\;\;\;\;\; t\ge 0, x\in \mathbb R.$$
   This indicates that one should observe a ``smooth crossover" from Gaussian to Tracy-Widom fluctuations as $\beta$ goes from 0 to $+\infty$, see Remark \ref{mainrk} just above.} 
   
 { Finally, we can also allow jumps of size larger than one. This seems like a more interesting problem because the correct noise coefficient would now depend on more than just the mean and variance of the weights. The latter is \textit{not} something we can easily do by adapting the methods of our paper. In particular, when the jumps have unbounded range, our methods break down completely as one no longer has the power of Azuma's inequality which we repeatedly use. This is something we plan to explore in some depth in a future work.}

\subsection{A failure of the chaos expansion} \label{sec:fail} In this section we discuss interpretations of Theorem \ref{xi2}. One interesting feature of Theorem \ref{main} is the noise coefficient $\sqrt\frac{8\sigma^2}{1-4\sigma^2}$. This coefficient is quite different from the situation of directed polymers, in which one also sees the KPZ equation appear in the fluctuations of a physically complex system driven by iid weights on a discrete space-time lattice. The present result is different for two reasons. Firstly, our weights are \textbf{fixed} with $N$, they are not being tuned so that their variance approaches zero as in the directed polymer setting. Secondly, in the polymer setting if the iid weights have variance $\sigma^2$ before rescaling, then the noise coefficient in the limiting KPZ equation is simply $\sigma$, as opposed to something more complicated as we see in the present work.

In the polymer setting, the method used to prove convergence to \eqref{she0} is the method of \textit{chaos expansion} \cite{akq, poly}. One expands the partition function as a multilinear polynomial of the weights $\omega_{t,x}$ and then proves term-by-term joint convergence of the resultant expression to the term-by-term expansion of the solution of \eqref{she0} given by the Gaussian Fock space structure generated by the noise $\xi$. The natural question to ask for our model is if a chaos expansion exists and if it can be used to prove the convergence stated in Theorem \ref{main}. It turns out that a polynomial chaos expansion is indeed available for the model considered herein, but it is \textbf{{not}} useful in proving the above result if applied naively. We will now illustrate its failure.

Let $p_{\mathsf{RW}}(n,y)$ denote the transition density of a simple symmetric random walk on $\mathbb Z$. Let $\hat \omega_{t,x} = 2\omega_{t,x}-1$. By Lemma 2.1 in \cite{gu} for $n,y$ of the same parity we have

	\begin{align*}
		\mathsf P^{\omega}(n,y)=p_{\mathsf{RW}}(n,y)+\sum_{k=1}^n \sum_{(z_1,z_2,\ldots,z_k)} \phi_k^{(n,y)}(z_1,z_2,\ldots,z_k)\hat \omega_{z_1}\hat \omega_{z_2}\cdots\hat \omega_{z_k}
	\end{align*}
where the summation $\sum_{(z_1,z_2,\ldots,z_k)}$ is over all possible $z_\ell = (i_\ell,j_\ell)$ with $0\le i_1< i_2 < \cdots < i_k \le n-1$ and $j_1,j_2,\ldots,j_k \in \mathbb Z$, and the expansion coefficients are given by
\begin{align*}
	\phi_k^{(n,y)}(z_1,\ldots,z_k) & =  \tfrac1{2^k} p_{\mathsf{RW}}(i_1,j_1) \times \left[p_{\mathsf{RW}}(i_{2}-i_{1}-1,j_{2}-j_{1}-1)-p_{\mathsf{RW}}(i_{2}-i_{1}-1,j_{2}-j_{1}+1)\right]\\ & \hspace{1.5cm} \times \left[p_{\mathsf{RW}}(i_{3}-i_{2}-1,j_{3}-j_{2}-1)-p_{\mathsf{RW}}(i_{3}-i_{2}-1,j_{3}-j_{2}+1)\right] \\ & \hspace{1.5cm} \times \cdots 
	\\ & \hspace{1.5cm} \times \left[p_{\mathsf{RW}}(n-i_{k}-1, y-j_{k}-1)-p_{\mathsf{RW}}(n-i_{k}-1, y-j_{k}+1)\right].
\end{align*}

Now fix $t>0, x\in \R$, and $k \in \mathbb N$. Set $t_0=0, t_{k+1}=t, x_0=0,$ and  $x_{k+1}=x$. Then for \textbf{fixed} $k\in\mathbb N$ one has the convergence in law as $N\to \infty$,

\begin{align*}
	& \frac{C_{N,t,x}}{N^{k/2}}\sum_{(z_1,z_2,\ldots,z_k)} \phi_k^{(Nt,N^{1/2}x+N^{3/4}t)}(z_1,\ldots,z_k)\hat \omega_{z_1}\cdots\hat \omega_{z_k}
 \stackrel{d}{\longrightarrow}(8\sigma^2)^{k/2} \int_{\mathbb R^k \times \Delta_k(t)} \prod_{\ell=1}^{k+1} p_{}(\bar{t}_{\ell},\bar{x}_{\ell}) \xi^{\otimes k}(d\mathbf t, d\mathbf x)
\end{align*}	
where $\bar{t}_{\ell}:=t_{\ell}-t_{\ell-1}$, $\bar{x}_{\ell}:=x_{\ell}-x_{\ell-1}$, and $p(t,x)$ is the continuum heat kernel. The right-hand side is a $k$-fold iterated Wiener-Ito integral where $\xi$ is a standard Gaussian space-time white noise and $\Delta_k(0,t)$ is the simplex of ordered times $0\le t_1\leq \cdots \leq t_k \leq t$. 

The above convergence is fairly straightforward to verify since the weights $\hat \omega$ have variance $4\sigma^2$ and one picks up an extra factor of $2$ from the parity considerations of the discrete lattice on which the $\phi_k$ are supported. In particular the above convergence would suggest that if the limit (in distribution as $N\to\infty$) of the individual terms commuted with the infinite sum (over $k\in\mathbb N$) of those terms, then the coefficient $\sqrt{8\sigma^2/(1-4\sigma^2)}$ in Theorem \ref{main} is incorrect, and that the more naive guess of $\sqrt{8\sigma^2}$ should be the correct one. 

Since Theorem \ref{main} states that this is \textit{not} the case, it suggests that taking the limit of the above chaos expansion does not work. More precisely, it formally converges to the correct equation but with an incorrect noise coefficient that is strictly smaller than the correct one. The reason that this happens is that a nonzero proportion of the $L^2$-mass in the terms of the above chaos expansion escapes into the tails of the series (i.e., terms in the expansion indexed by large values of $k$ that are of unbounded order as $N\to\infty$), which causes a failure of the infinite sum over $k$ to commute with the limit in distribution of each term of the chaos series as $N\to\infty$. In the context of the generalized polynomial chaos convergence result of \cite{poly}, this exactly means that condition (iii) fails when trying to apply their main result Theorem 2.3. Note that the naive limit of the expansion is exactly the limiting object in Theorem \ref{xi2} \textit{without} the $\xi_2$ term. Thus a heuristic interpretation of that theorem is that the amount of extra noise that has escaped into the tails of the chaos expansion is exactly the $\xi_2$ part in Theorem \ref{xi2}.

As a reality check for the above calculation, note that in the extreme case $\sigma^2=1/4$, the weights are Bernoulli distributed. This means that independent random walkers in the environment will coalesce forever upon collision. In particular, the quenched transition densities $\mathsf P^\omega(n,\cdot)$ have all of their mass concentrated at a single point for all $n$ almost surely. Consequently, there will \textit{not} be any nontrivial fluctuation behavior. Nonetheless, the above discrete chaos expansion is valid and formally converges to the stochastic heat equation with noise coefficient $\sqrt{2}$ if one believes that the sum over $k$ commutes with the limit $N\to\infty$. This is already a clear contradiction.

The reason why the above chaos expansion fails is that the variance of the weights is fixed and positive, rather than tending to zero. In other words, we are not in the ``weak noise regime" of \cite{akq, poly}. Note however that when $\sigma$ is very close to $0$, the correct coefficient of $\sqrt{8\sigma^2/(1-4\sigma^2)}$ agrees to first order with the more naive guess of $\sqrt{8\sigma^2}$, which makes sense at a non-rigorous level since $\sigma\to 0$ is a ``weak noise limit."

There are results in higher spatial dimensions where we suspect that independent noise is also created in the limit, see for example \cite{marginal,u1,u3,u2,u4,u5,u6,car,csz_2d}. However, Theorem \ref{xi2} seems to be the first precise result along these lines. To the best of our knowledge, this also seems to be the first result where this phenomenon was observed for a model with just one spatial dimension. A related question is that given the failure of Theorem \ref{main} to hold in a space of continuous functions, it is natural to ask about probing additional \textit{pointwise} fluctuations of the field $\mathscr U_N(t,x)$ as $N\to \infty$, similar to the quenched local limit theorem proved in \cite{ew6}. These pointwise fluctuations would give some clue as to where exactly the additional noise is generated.  We leave this as a future work.

 The fact that the distribution of weights is fixed as a function of $N$ is in stark contrast with the results of \cite{gu} where the authors use a weak noise scaling and consider the large deviation regime. In contrast to the present work, their rescaled field does not concentrate on certain favorite sites. In fact, that regime is more similar to the polymers, because the weights converge as $N\to\infty$ to $\delta_{1/2}$ in a certain precise way, and the variance therefore tends to zero under the diffusive scaling.  Consequently, the chaos expansion method works to prove convergence, and the convergence occurs in the space of continuous functions. Our method of proof is completely different, as the regime considered here seems not to be conducive to such chaos expansion methods as shown above. The work of \cite{dom} studies the weak-noise regime of a related continuum model of transport SPDEs, and Figure 1 therein illustrates an interesting phase transition. This transition clearly distinguishes the weak and strong noise regimes. Our results fall into the upper right quadrant of the figure, which corresponds to strong noise, whereas the results of \cite{dom,gu} fall into the upper left quadrant, which corresponds to weak noise. Independent noise does not appear in the latter regime when taking the limit to \eqref{she0}.

\subsection{Extrema of sticky random walks}\label{extrema}

 Results about the behavior of a probability distribution at its extreme ends can often be translated into information about the extremal behavior of a large number of independent particles sampled from that distribution. Theorem \ref{main} is a result about the tails of the probability distribution $\mathsf P^\omega(Nt,\cdot)$ at a specific location in the tail near $N^{3/4}t$. Theorem \ref{main} combined with some additional calculations thus leads to the following result.

 \begin{thm}[Extremal particle limit theorem] \label{t:max0}
Fix $c,T>0$ and $d\in \mathbb R$. Let $(R^1(r),\ldots,R^k(r))_{r\ge 0}$ be sampled according to the $\omega$-averaged law of $k$ independent random walk particles in the environment $\omega$. Set the number of particles $k=k(N):= \lfloor \exp(\frac12cN^{1/2} +dN^{1/4}+r_N)\rfloor$ where $r_N$ can be any sequence satisfying $r_N=o(N^{1/4})$. Consider any sequence $t_N\in N^{-1}\mathbb Z_{\ge 0}$ such that $t_N\to t>0$ as $N\to \infty$. Then we have convergence in distribution as $N\to \infty$
\begin{align*}
    \max_{1\le i\le k(N)} \big\{N^{-\frac14} R^i(Nt_N)\big\} -a_N(c,d,t_N) \stackrel{d}{\to} \sqrt{\tfrac{t}{c}} \big( G+\log \mathcal{U}_{c}(d)\big),
\end{align*}
where
\begin{align*}
   a_N(c,d,s):= \sqrt{csN} - dN^{\frac14}\sqrt{\tfrac{s}{c}} -\sqrt{\tfrac{c}{s}} \big(r_N-\tfrac14 \log N\big)-\frac{c^{3/2}}{6\sqrt{s}}.
\end{align*}
Here $G$ is a standard Gumbel random variable which is independent of $\mathcal U$, and $(t,x)\mapsto \mathcal U_t(x)$ solves \eqref{she} but with noise coefficient replaced by $\sqrt{\frac{8\sigma^2c}{(1-4\sigma^2)t}}$.
 \end{thm}
This will be proved as Theorem \ref{t:max}. The Gumbel distribution appearing in the maximum is not that surprising, since it often arises in extreme value theory. But the question of how the solution $\mathcal U$ of \eqref{she} appears is more mysterious. Unlike in the case of \textit{independent} random walkers, the $k$ separate particles in the random environment model may see a nontrivial\textit{ interaction} upon collision, and this nontrivial interaction is precisely what gives rise to the non-Gumbel term in the above limit theorem. The nontrivial interaction is sometimes referred to as ``stickiness," since particles that are together tend to stay together for a while before separating.
 
Taking $d=r_N=0$ and $t=1$, we see that the above statement is a result of the maximum of $e^{\frac12c\sqrt{N}}$ many sticky random walk particles at time $N$, {where $N$ is very large}. This is the same as understanding the maximum of $M$ sticky particles when time is of the order $(\log M)^2,$ which then leads to the question of what happens when one looks at the maximum of \sd{$M$ particles at timescales different from $(\log M)^2$. At timescales smaller than $(\log M)^2$, one expects Tracy-Widom fluctuations, with no Gumbel term at all as $M\to \infty$. However this has only been proved when the timescale is $\log M$ in certain exactly solvable cases, see \cite{bc,mark}. At timescales larger than $(\log M)^2$, one expects \textit{purely} Gumbel fluctuations in the $M\to\infty$ limit, as the particles begin to behave independently.} This is evidenced by the Gaussian fluctuation behavior below the crossover exponent, which was explained above. There are numerous open questions related to obtaining a more complete picture in this area, see the physics papers \cite{bld,hass23,ldb,hass2023b,lda}.

From a physical standpoint, the sticky model of particles can be used to describe a certain turbulent flow of particles caused by a nontrivial viscosity of the fluid in which they evolve, see e.g. \cite{gaw}. The physics works \cite{hass23,hass2023b} demonstrate through numerics that the edge of the cloud of particles will exhibit a certain behavior that is different from that of the classical Einstein's model of diffusion. Our main results (Theorems \ref{main} and \ref{t:max0}) thus \sd{serve to make some rigorous progress in this area by showing the correct crossover exponent, but there are still a number of open conjectures.} 

\subsection{Main ideas of the proof}\label{pfidea}

The main technique used in our analysis will be a discrete Girsanov's formula. We illustrate this idea here by computing the first moment of $\mathscr{U}_N$ as defined in \eqref{field} and showing that it converges to the first moment of the SHE.  This computation contains the core idea that will be used in many of the later proofs. For a simple symmetric random walk $R(r)$ on $\mathbb Z$ starting from the origin, {the process} $$e^{\lambda R(r) - r\log\cosh(\lambda)} = \frac{e^{\lambda R(r)}}{\mathbb E[e^{\lambda R(r)}]}$$ is a martingale in $r$, simply because it is a product of $r$ iid strictly positive mean-one random variables $e^{\lambda \xi_i}/\mathbb E[e^{\lambda \xi_i}]$ where the $\xi_i$ are the increments of the process $(R(r))_{r\ge 0}$. Given this fact, let us now fix some Schwartz function $\phi$ and compute the first moment of the pairing $(\mathscr U_N(t,\cdot),\phi)_{L^2(\mathbb R)}=: \mathscr U_N(t,\phi).$ Note that for $t\in \mathbb Z/N$
\begin{align}\notag \mathscr U_N(t,\phi) &= \sum_{x\in N^{-1/2}\mathbb Z-N^{1/4}t} C_{N,t,x} \cdot \mathsf P^{\omega}(Nt, N^{3/4}t + N^{1/2}x)\cdot  \phi(x) \\ &= \notag \sum_{x \in \mathbb Z} C_{N,t,N^{-1/2}(x- N^{3/4} t)}\mathsf P^\omega(Nt,x) \phi\big( N^{-1/2}(x- N^{3/4} t)\big) \\ \label{1stmoment}&= \mathsf E^\omega[ C_{N,t,N^{-1/2}(R(Nt)- N^{3/4} t)}\phi\big( N^{-1/2}(R(Nt)- N^{3/4} t)\big)],
\end{align}where $\mathsf E^\omega$ denotes a quenched expectation operator given the realization of the environment $\omega$. Thus after taking the \textit{annealed} expectation (that is, averaging over all possible environments $\omega$) we have $$\mathbb E[\mathscr U_N(t,\phi)] = \mathbf E[C_{N,t,N^{-1/2}(R(Nt)- N^{3/4} t)}\phi\big( N^{-1/2}(R(Nt)-N^{3/4} t)\big)],$$ where now the expectation on the right is with respect to a simple symmetric random walk path on $\mathbb Z$ {(recall $\mathbb{E}[\omega]=\frac{1}{2}$)}.

Notice that $$C_{N,t,N^{-1/2}(R(Nt)- N^{3/4} t)} = e^{N^{-1/4}R(Nt) - Nt\log\cosh(N^{-1/4})}.$$
We already know that $e^{N^{-1/4}R(Nt) - Nt\log\cosh(N^{-1/4})}$ is a martingale in the $Nt$ variable and thus has mean 1, so it can be interpreted as a change of measure where the annealed law of the increments of $R(r)$ has changed from the usual symmetric law $\frac12(\delta_1+\delta_{-1})$ to the new law given by $\frac1{2\cosh(N^{-1/4})}(e^{N^{-1/4}}\delta_1+ e^{-N^{-1/4}}\delta_{-1})$. 

This new law has mean asymptotically given by $N^{-1/4}+O(N^{-3/4})$. Consequently, under the new law the process $N^{-1/2}(R(Nt)-N^{3/4}t)$ is centered up to a vanishing error term, and by Donsker's principle will converge in law to a standard Brownian motion. In summary, denoting $\widetilde{\mathbf E}$ as \sd{the tilted expectation on the path space of the random walk}, we have that
$$\mathbb E[\mathscr U_N(t,\phi)] = \widetilde{\mathbf E}[ \phi(N^{-1/2}(R(Nt)- N^{3/4}t))]\stackrel{N\to\infty}{\longrightarrow} \mathbf E_{BM}[\phi(B_t)]= \int_{\mathbb R} p(t,x)\phi(x)dx$$ for a standard Brownian motion $B$. Here $p(t,x) = (2\pi t)^{-1/2}e^{-x^2/2t}$ is the standard heat kernel. The right-hand side is indeed equal to $\mathbb E \big[ \int_\mathbb R \mathcal U_t(x)\phi(x)dx \big],$
where $(t,x)\mapsto \mathcal U_t(x)$ solves \eqref{she} with initial condition $\delta_0$. This shows that the first moment of $\mathscr U_N(t,\cdot)$ converges to the first moment of the SHE as desired. While the calculation for higher moments is more complicated, the core idea of using a ``discrete Girsanov transform" is rather similar, and these higher moments will be computed in Section \ref{mom}.

Once the moments are computed using this method, in Section \ref{hopf} we derive a discrete Hopf-Cole transform for the field in \eqref{field}. The martingale observables arising from the Hopf-Cole transform can also be analyzed using the above discrete Girsanov transforms. In particular, this Girsanov trick yields moments and regularity estimates for our observables which eventually leads to tightness estimates for our field. Sufficiently strong estimates using this method will eventually lead us to the proof of Theorem \ref{main}, by showing that any limit point must satisfy the martingale problem for \eqref{she}. \sd{This martingale-based approach circumvents all of the issues with the chaos expansion, as the quadratic variations arising in the martingales automatically take into account the singular behavior of the model on small scales.} Remarkably, this model has the property that the error terms appearing in the discrete martingale equation behave very nicely in relation to the original object itself, which is very rare among KPZ-related models where martingale characterizations have been used, see e.g. \cite{BG97, dembo, GJ14, yang23} where extremely careful analysis was needed to show vanishing of error terms for exclusion-type models.

\begin{rk}[Comparison to sticky Brownian motion] Sticky Brownian motion is obtained by a diffusive scaling of sticky random walks where the law of the weights converges as $N\to\infty$ to $\frac12(\delta_0+\delta_1)$ in a certain precise way \cite[Theorem 2.10]{sss}, which is in a sense the opposite of the weak-noise regime considered in \cite{gu}. Since in the present work the weights are not being scaled, the discrete kernel $\mathsf P^\omega$ is not converging to a sticky Brownian motion kernel. Therefore, our result is \textit{not} simply a ``commutation of limits" with our other work \cite{DDP23} on sticky Brownian motion (just as it is not a corollary of \cite{gu} as explained earlier in Section \ref{sec:fail}). Indeed, the noise amplitude of $\frac{8\sigma^2}{1-4\sigma^2}$ does not correspond in any natural way to the noise amplitude of $2\mu([0,1])^{-1} $ appearing in that paper. However, the overall approach we use here is similar to the one that we developed in the sticky Brownian motion case \cite{DDP23}. Nonetheless, a number of new challenges arise, and several ideas need to be introduced that were not present in that paper. For instance, in Section \ref{sec:girt} the Radon-Nikodym derivatives on the path space are substantially more complicated than that of the continuous case, involving the stochastic exponential of entropy-type observables of the $k$-point motion rather than just simple linear functions of the process. In Section \ref{hopf} we find a discrete Hopf-Cole transform, which is more difficult than the continuous case where the usual heat operator applied to the field immediately yielded a martingale. In Section \ref{iden}, we need to develop several estimates that bound the difference between discrete and continuum heat operators, which was not needed in the previous work.
\end{rk}

\subsection*{Organization} The rest of the paper is organized as follows. In Section \ref{sec:girtt} we describe the discrete Girsanov transform and prove estimates related to the $k$-point motion of sticky random walks. In Section \ref{sec:tilt}, we prove a general weak convergence theorem under the tilted measure. In Section \ref{hopf}, we show that our prelimiting field satisfies a discrete version of the SHE. Section \ref{sec5} is devoted to the analysis of the quadratic variation of the martingale observables in the prelimiting field. Theorems \ref{main} and  \ref{xi2} are proven in Section \ref{iden} using the estimates from the previous sections. Finally, in Section \ref{sec7}, we prove the results related to the extrema of sticky random walks discussed in Section \ref{extrema}.
There is a detailed glossary at the end of this paper that recalls and points to the definitions
of much of the notation introduced elsewhere.

\subsection*{Notations and conventions} Throughout this paper we use $\Con = \Con(a, b, c, \ldots) > 0$ to denote a generic deterministic positive finite
constant depending on $a, b, c, \ldots$ that may change from line to line. We respectively write $\Ex$ and $\mathsf{E}^{\omega}$ for annealed and quenched expectations in the context of random motions in random environments. We use $\mathbf{E}$ to denote expectation under path measures such as Brownian motion. 

\subsection*{Acknowledgements} \sd{We thank Ivan Corwin for suggesting the problem, for his feedback on the paper, and for many insightful discussions about the project. We thank Yu Gu and Li-Cheng Tsai for illuminating discussions that eventually led us to derive Theorem \ref{xi2}. We thank Guillaume Barraquand, Jeremy Quastel, Rongfeng Sun, and B\'alint Vir\'ag for useful discussions about the context and possible refinements of our results.} {We are grateful to the two anonymous referees for their many valuable suggestions.} The project was initiated during the authors’ participation in the “Universality and Integrability in Random Matrix Theory and Interacting Particle Systems” semester program at MSRI in the fall of 2021. The authors thank the program organizers for their hospitality and acknowledge the support from NSF DMS-1928930. SD and HD’s research was partially supported by Ivan Corwin’s NSF grant DMS-1811143, the Fernholz Foundation’s “Summer Minerva Fellows” program, and also the W.M. Keck Foundation Science and Engineering Grant on “Extreme diffusion.” HD was also supported by the NSF Graduate Research Fellowship under Grant No. DGE-2036197.

\section{Transformations and estimates for the $k$-point motion} \label{sec:girtt}

In this section, we develop the basic framework of our proof. As mentioned in the introduction, our proof relies on a certain Girsanov-type transform for random walks. In Section \ref{sec:girt} we describe this transform that we will use repeatedly in our later analysis. In Section \ref{sec:sbmest}, we collect several estimates related to the random walks.

	\subsection{Girsanov transforms} \label{sec:girt} We begin with some necessary notation and definitions. Throughout this paper, we assume $\nu$ is a probability measure on $[0,1]$. We define $\mu:=\int_{[0,1]} x\nu(dx)$. We set $\sigma^2 = \int_{[0,1]}(x-\mu)^2\nu(dx),$ and sometimes we may write $\mu_\nu,\sigma_\nu^2$ to emphasize the dependence on $\nu$.  While we take $\mu_{\nu} = 1/2$ in the setting of Theorem \ref{main}, we will need this more general framework for our proofs. In our analysis, we shall often consider a special sequence of measures obtained by small perturbation of $\nu$. We introduce this below.

\begin{defn}[Skewed sequence of measures] \label{def:stt}
    Fix any probability measure $\nu$ on $[0,1]$ of mean 1/2. Assume $\omega$ has law $\nu$. Let $\nu_N^*$ be the law of $\min\{1, \omega +d_N\}$ where $d_N= \frac12 N^{-1/4}+o(N^{-1/4})$ are constants chosen so that the expectation $\mu_{\nu_N^*}$ is exactly equal to $\rho_N$ where 
    \begin{align}\label{def:rhon} \rho_N:= \frac{e^{N^{-1/4}}}{2\cosh(N^{-1/4})}=\frac12 + \frac12 N^{-1/4} +O(N^{-3/4}).\end{align}
     The choice of $d_N$ implies that $\lim_{N\to \infty} \sigma_{\nu_N^*}^2 =\sigma_\nu^2$. We shall call  $\{\nu_N^*\}_{N\ge 1}$ the \stt\ sequence of measures corresponding to $\nu$.
\end{defn}

 \begin{defn}\label{def:prw}
      For each $k\in \mathbb N$ we denote $\mathbf P_{RW_\nu^{(k)}}$ to be the annealed law on the canonical space $(\mathbb Z^k)^{\mathbb Z_{\ge 0}}$ of $k$ independent walks sampled from the environment $\omega$ whose weights are distributed according to $\nu$, all started from 0. 
 \end{defn}

 	\begin{itemize}[leftmargin=18pt]
		\setlength\itemsep{0.5em}

            \item We denote by $\mathbf R = (R^1,\ldots,R^k)$ the canonical process on $(\mathbb Z^k)^{\mathbb Z_{\ge 0}}$. We will usually index such discrete-time processes by a time-variable $r\in \mathbb N$, e.g., $\mathbf R = (\mathbf R(r))_{r\ge 0}.$    We further define
            \begin{align}\label{def:overlap}
    \Gamma^{(v)}:=\big\{r\in \mathbb{Z}_{\ge0}  \mid \#\{R^1(r),\ldots,R^k(r)\}=v\big\}, \qquad \Gamma^{(\le v)}:=\bigcup_{u=1}^{v} \Gamma^{(u)}
\end{align}
to be the sets of times that we have exactly $v$ and no more than $v$ distinct particles, respectively.
\item For each $k \in \mathbb N$ and $1\leq i\neq j \leq k$ we define the functional $V^{ij}:(\mathbb Z^k)^{\mathbb Z_{\ge 0}} \to \mathbb Z^{\mathbb Z_{\ge 0}}$ by 
  \begin{align}
      \label{vijai}
      V^{ij}(r):= \sum_{s=0}^{r-1} \ind_{\{R^i(s)=R^j(s)\}}.
  \end{align}
 We remark that the processes $V^{ij}$ are predictable with respect to~the canonical filtration on $(\mathbb Z^k)^{\mathbb Z_{\ge 0}}$. 
            \item For a martingale $(M(r))_{r\ge 0}$ adapted to the canonical filtration of $(\mathbb Z^k)^{\mathbb Z_{\ge 0}},$ we will denote by $[M](r):= \sum_{s=1}^{r} (M(s)-M(s-1))^2$ its optional quadratic variation.
            \item All objects in this section should implicitly be understood as being adapted path functionals of $\mathbf R$, though we will not write this out explicitly, e.g. $V^{ij}(r) = V^{ij}(\mathbf R;r), \mathpzc M^\lambda(r) = \mathpzc M^\lambda(\mathbf R; r).$
	\end{itemize}

  \begin{figure}[ht]
    \centering
    
    			\begin{tikzpicture}[scale = 2.5]
            \begin{scope}
			\draw[thick, ->](0.1,0.1) --(.5,.5);
			\draw[thick, ->](0.1, -0.1) -- 
			(0.5,- 0.5);
            \draw[thick, ->](-0.1, -0.1) -- 
			(-0.5,- 0.5);
            \draw[thick, ->](-0.1, 0.1) -- 
			(-0.5, 0.5);

			\node at (0, 0) {$(y,y)$};
            \node at (0.6, 0.8) {$(y+1,y+1)$};
            \node at (-0.6, -0.8) {$(y-1,y-1)$};
            \node at (-0.6, 0.8) {$(y-1,y+1)$};
            \node at (0.6, -0.8) {$(y+1,y-1)$};

            \node at (1.5, 0.2) {$\E[\omega^2] = \mu^2 + \sigma^2$};
            \node at (-1.5, -0.2) {$\E[(1-\omega)^2] =(1- \mu)^2 + \sigma^2$};
            \node at (-1.5, 0.2) {$\E[\omega(1-\omega)] = \mu(1- \mu) - \sigma^2$};
             \node at (1.5, -0.2) {$\E[\omega(1-\omega)] = \mu(1- \mu) - \sigma^2$};
        \end{scope}

         	\end{tikzpicture}
			
    \caption{Markov chain transition diagram of the two-point motion $(R^i, R^j)$ at points along the diagonal of $\mathbb Z^2$, where $\omega$ denotes a random variable sampled from $\nu$.} 
\label{fig:markov_transitions}
\end{figure}
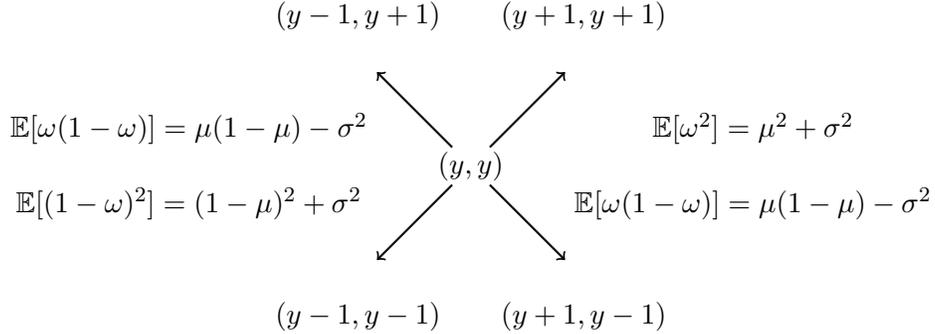

Note that when $\nu=\delta_{a}$ for some $a\in [0,1]$, the path measure $\mathbf P_{RW_\nu^{(k)}}$ is just the law of $k$ independent random walks on $\mathbb Z$ where the increments have mean $2a-1$. However, for $\nu\neq \delta_a$, the law is more complicated: while the $k$ particles still behave independently when they are apart, there can be a nontrivial interaction between them when they collide. As an extreme case, consider $\nu = p\delta_0 + (1-p) \delta_1$ with $p\in [0,1]$. In this setting, the particles coalesce forever upon collision.

If we only look at a single particle, then its trajectory is just that of a nearest-neighbor random walk with right jump rate given by $\E[\omega] = \mu_{\nu}$ and left jump rate given by $\E[1- \omega] = 1- \mu_{\nu}$. So for all $1 \leq i \leq k$, the one-point motion $R^i(r)$ is a random walk with mean $(2\mu_\nu - 1)$ under $\mathbf P_{RW^{(k)}_{\nu}}$. It follows that the process $R^i(r) - (2\mu_\nu - 1)r$ is a mean-zero random walk. 

Next, we examine the interactions between two particles. For $1 \leq i  <  j \leq k $, the two-point motion $(R^i, R^j)$ is a random walk whose transition probabilities at time $t$ differ depending on whether $R^i(t) = R^j(t)$ or not. If  $R^i(t) \neq R^j(t)$, then $R^i$ and $R^j$ behave as two independent random walks. If $R^i(t) = R^j(t) = y$, then the two walks are no longer independent since they are sampled using the same environment variable $\omega_{t, y}$. See Figure \ref{fig:markov_transitions} for the transition probabilities in this case. 

Using these transition probabilities, one can check that the following process \begin{equation}\label{m} \mathcal M(r):= 4\big[\mu_\nu(1-\mu_\nu)-\sigma_\nu^2\big]V^{ij}(r) - |R^i(r)-R^j(r)|
     \end{equation}
is a $\mathbf P_{RW_\nu^{(k)}}$-martingale. Rewriting this as 

\begin{equation} |R^i(r)-R^j(r)| =  4\big[\mu_\nu(1-\mu_\nu)-\sigma_\nu^2\big]V^{ij}(r) - \mathcal M(r),
\end{equation}
we can view this as a discrete version of Tanaka's formula, where the term $4\big[\mu_\nu(1-\mu_\nu)-\sigma_\nu^2\big]V^{ij}(r)$ describes the intersection times of the two walks and corresponds to the local time term in the continuous version of Tanaka's formula. If we take $\mu_{\nu} = 1/2$, as we will in the setting of Theorem \ref{main}, the coefficient in front of $V^{ij}(r)$ becomes $1- 4 \sigma^2$, which as discussed in the introduction will show up in the denominator of the noise coefficient in \eqref{she}.

 Note that if $(X(r))_{r\ge 0}$ is any sequence of random variables with finite exponential moments defined on some probability space $(\Omega,\mathcal F,\mathbf P)$ and adapted to a filtration $(\mathcal F_r)_{r\ge 0}$ on this space, then the process \begin{equation}\label{aft}\exp\bigg( X(r)- \sum_{s=1}^{r} \log \mathbf E[e^{X(s)-X(s-1)}|\mathcal F_{s-1}]\bigg)\end{equation} is a martingale in $r\ge 0$, with respect to~the same filtration. Henceforth, our probability space will always be $\Omega =(\mathbb Z^k)^{\mathbb Z_{\ge 0}},$ equipped with its canonical filtration. For $\lambda>0$, if we apply \eqref{aft} to the process $X:=\lambda(R^1+\cdots+R^k)$, we see that for each $\lambda>0$ 
 \begin{align}\label{m_n} \mathpzc M^{\lambda}(r)&:=\exp\bigg( \lambda\sum_{j=1}^k R^j(r) - \sum_{\ell =0}^{r-1} f_{\ell}^{\lambda,k}\bigg)
\end{align}
 is a $\mathbf P_{RW_\nu^{(k)}}$-martingale for the $k$-point motion where 
 \begin{align}
     \label{def:flk}
     f_\ell^{\lambda,k} := f^{\lambda,k,\nu}\big(R^1(\ell),\ldots,R^k(\ell)\big), \text{ and } f^{\lambda,k,\nu}(x^1,\ldots, x^k) := \log\mathbf E_{RW_\nu^{(k)}}^{(x^1,\ldots,x^k)}[e^{\lambda\sum_{j=1}^k(R^j(1)-x^j)}].
 \end{align}
Here $\mathbf E_{RW_\nu^{(k)}}^{(x^1,\ldots, x^k)}$ denotes expectation of the Markov chain when started from $(x^1,\ldots,x^k).$  $f_{\ell}^{\lambda,k}$ admits a simple expression for $\ell\in \Gamma^{(k)}, \Gamma^{(k-1)}$ where $\Gamma^{(v)}$ is defined in \eqref{def:overlap}. Indeed,  for $\ell\in \Gamma^{(k)}$, $f_{\ell}^{\lambda ,k}=k\log \cosh(\lambda)$, whereas for $\ell\in \Gamma^{(k-1)}$ we have
$$f_{\ell}^{\lambda,k}=k\log\cosh(\lambda)-g(\lambda,\sigma^2).$$
where $g(\lambda,\sigma^2):=\log (\frac{1+4\sigma^2}{2} \cosh(2\lambda)+\frac{1-4\sigma^2}{2})-2\log\cosh (\lambda)$, as may be verified by an inspection of Figure \ref{fig:markov_transitions}. This fact will be very useful in many calculations below.

{Before discussing the importance of the martingale defined in \eqref{m_n}, we record a useful estimate for the $f^{\lambda,\kappa,\nu}$.

\begin{lem}\label{prop:bound}
Assume $\mu_\nu=1/2$. For any $k\in\mathbb N$, we have the bound
\begin{equation*}\sup_{(x^1,\ldots,x^k)\in \mathbb Z^k}\;\;\;\sup_{|\lambda|<1}\;\;\;\;
\lambda^{-2}|f^{\lambda,k,\nu}(x^1,\ldots,x^k)|<\infty.
\end{equation*}  
\end{lem}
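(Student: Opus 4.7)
The plan is to exploit two structural facts: the translation-invariance of the transition kernel, and the vanishing of the first two terms in the Taylor expansion of $f^{\lambda,k,\nu}$ around $\lambda = 0$.

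First, observe that under $\mathbf P_{RW_\nu^{(k)}}^{(x^1,\ldots,x^k)}$, the joint law of the increment vector $(R^1(1)-x^1,\ldots,R^k(1)-x^k)$ depends on $(x^1,\ldots,x^k)$ only through the set partition $\pi$ of $\{1,\ldots,k\}$ induced by the equivalence relation $i \sim j \iff x^i = x^j$, because particles at distinct sites use independent environment variables while particles at the same site use a common one. Consequently $f^{\lambda,k,\nu}(x^1,\ldots,x^k) = g_\pi(\lambda)$ depends only on $\pi$, and there are only finitely many (namely the Bell number $B_k$ many) such partitions. Thus the first supremum reduces to a finite maximum, and it suffices to prove that for each fixed $\pi$, $\sup_{|\lambda|<1}\lambda^{-2}|g_\pi(\lambda)| < \infty$.

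For a fixed partition $\pi$, let $Z_\pi := \sum_{j=1}^k (R^j(1)-x^j)$ for any representative starting configuration, so that $g_\pi(\lambda) = \log \mathbf E[e^{\lambda Z_\pi}]$. Since $|Z_\pi|\le k$ almost surely, the moment generating function $M_\pi(\lambda):= \mathbf E[e^{\lambda Z_\pi}]$ is a strictly positive entire function of $\lambda$, so $g_\pi = \log M_\pi$ is real-analytic on all of $\mathbb R$. Next I would check the first two derivatives at $\lambda = 0$: clearly $g_\pi(0) = 0$, and
\[
g_\pi'(0) = \mathbf E[Z_\pi] = \sum_{j=1}^k \mathbf E[R^j(1)-x^j] = k(2\mu_\nu - 1) = 0,
\]
using the one-point marginal noted earlier together with the hypothesis $\mu_\nu = 1/2$.

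Since $g_\pi$ is smooth on the compact interval $[-1,1]$ with $g_\pi(0) = g_\pi'(0) = 0$, Taylor's theorem with remainder gives
\[
|g_\pi(\lambda)| \le \tfrac12 \lambda^2 \sup_{|\eta|\le 1} |g_\pi''(\eta)| \qquad \text{for all } |\lambda|<1,
\]
and the right-hand side is finite because $g_\pi''$ is continuous on $[-1,1]$. Therefore $\sup_{|\lambda|<1}\lambda^{-2}|g_\pi(\lambda)|$ is finite for each $\pi$, and taking the maximum over the finitely many partitions $\pi$ of $\{1,\ldots,k\}$ yields the claimed bound. The only potentially tricky step is the partition-reduction argument at the outset; once that is in place, the rest is a direct Taylor expansion. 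No obstacle is expected here beyond being careful that the $\pi$-reduction genuinely collapses the uncountable family into finitely many functions of $\lambda$.
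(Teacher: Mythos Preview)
Your proof is correct and follows essentially the same approach as the paper: reduce the supremum over $\mathbb Z^k$ to finitely many cases via the partition structure of the starting configuration, then use the centering hypothesis $\mu_\nu=1/2$ to see that the cumulant generating function vanishes to second order at $\lambda=0$. Your use of Taylor's theorem with remainder is a slightly more explicit way of packaging the paper's observation that $\lambda^{-2}g_\pi(\lambda)$ has a finite limit as $\lambda\to 0$ (namely $\tfrac12\mathbf E[Z_\pi^2]$), but the underlying idea is identical.
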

\begin{proof}
Note that the function $f^{\lambda,k,\nu}:\mathbb Z^k\to \mathbb R$ is defined using only the first step of the Markov chain, so that $f^{\lambda,k,\nu}(x^1,\ldots,x^k)$ depends only on the first $k$ moments of the measure $\nu$ and the number of particle clusters in the set $\{x^1,\ldots,x^k\}$, as well as the exact number of particles present in each cluster. This means that the supremum over $\mathbb Z^k$ is really a supremum over some \textbf{finite} set of partitions of $k$. Thus it suffices to show for each \textit{individual} $(x^1,\ldots,x^k)\in\mathbb Z^k$ that the sup over $\lambda$ is finite. Since the $R^j-x^j$ are centered if $\mu_\nu=1/2$, we have that $\lambda^{-2}\log\mathbf E_{RW_\nu^{(k)}}^{(x^1,\ldots,x^k)}[e^{\lambda\sum_{j=1}^k(R^j_1-x^j)}]$ actually converges as $\lambda\to 0$ to the quantity $\frac12\mathbf E_{RW_\nu^{(k)}}^{(r^1,\ldots,r^k)}\big[\big(\sum_{j=1}^k(R^j(1)-r^j)\big)^2\big]$. 
\end{proof}

We use the martingale $\mathpzc M^{\lambda}(r)$  in \eqref{m_n} to define the following measure.

\begin{defn}\label{q}
    Define the measure $\mathbf Q^\lambda_{RW^{(k)}_\nu}$ on the canonical space $(\mathbb Z^k)^{\mathbb Z_{\ge 0}}$ to be given by $$\frac{d\mathbf Q^\lambda_{RW^{(k)}_\nu}}{d\mathbf P_{RW^{(k)}_\nu}}\bigg|_{\mathcal F_r} =  \mathpzc M^{\lambda}(r),$$ where $\mathcal F_r$ is the $\sigma$-algebra on $(\mathbb Z^k)^{\mathbb Z_{\ge 0}}$ generated by the first $r$ coordinate maps, and $ \mathpzc M^\lambda(r)$ is the martingale given by \eqref{m_n}.
\end{defn}

It turns out that proving KPZ convergence for the quenched kernels will amount to studying the canonical process under the measures $\mathbf Q^\lambda_{RW^{(k)}_\nu},$ with $\lambda=N^{-1/4}$. The following proposition is the main result of this section. It essentially tells us that $\mathbf Q^{N^{-1/4}}_{RW^{(k)}_\nu}$ is similar to the measure $\mathbf P_{RW^{(k)}_{ \nu_N}}$, albeit with a different underlying measure $\nu_N$ that has a different mean and variance than the original measure $\nu$, {but which is close enough for our purposes in studying the diffusive limits and the intersection times.}

\begin{prop}\label{g.exists}
    Consider any probability measure $\nu$ on $[0,1]$ with $\mu_\nu=1/2$ and $\sigma_\nu^2 \in [0,1/4)$. Fix $k\in \mathbb N$, and let $\mathbf Q^\lambda_{RW^{(k)}_\nu}$ be as in Definition \ref{q}. Let $\{\nu_N^*\}_{N\ge 1}$ be the \stt\ sequence of measures corresponding to $\nu$ defined in Definition \ref{def:stt}. For all $r\ge 0$, one has \begin{equation}\label{g}\frac{d\mathbf Q^{N^{-1/4}}_{RW^{(k)}_\nu}}{d\mathbf P_{RW^{(k)}_{\nu_N^*}}}\bigg|_{\mathcal F_r}(\mathbf R) = \exp\left(\widetilde{\mathcal{G}_N}(r)\right):=\exp \bigg(\mathcal G_N(r) -\sum_{s\leq r} \log \mathbf E_{RW^{(k)}_{\nu_N^*}}[e^{\mathcal G_N(s)-\mathcal G_N(s-1)}|\mathcal F_{s-1}] \bigg),\end{equation} where $\mathcal G_N$ is a family of $\mathbf P_{RW^{(k)}_{\nu_N^*}}$-martingales satisfying the optional variation bound
    \begin{align}
        \label{g1}
        [\mathcal G_N](r+1) -[\mathcal G_N](r) \leq CN^{-1/2} \sum_{i<j} \big(V^{ij}(r+1)-V^{ij}(r)\big),
    \end{align}
		for all $r \ge 0$, where $V^{ij}$ are defined in \eqref{vijai}, and $C$ is a constant independent of $N,r$.
\end{prop}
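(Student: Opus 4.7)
The plan is to prove the proposition by explicitly computing the one-step conditional Radon-Nikodym derivative between $\mathbf Q^{N^{-1/4}}_{RW^{(k)}_\nu}$ and $\mathbf P_{RW^{(k)}_{\nu_N^*}}$, showing that its logarithm vanishes away from intersection times and is $O(N^{-1/4})$ at intersection times, and then adding a predictable compensator to obtain the desired martingale $\mathcal G_N$.

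Writing $\lambda := N^{-1/4}$ and factoring the Radon-Nikodym derivative over the one-step transitions, I set
\begin{equation*}
Z_s := \log \frac{\mathbf Q^\lambda_{RW^{(k)}_\nu}(\mathbf R(s)\mid \mathbf R(s-1))}{\mathbf P_{RW^{(k)}_{\nu_N^*}}(\mathbf R(s)\mid \mathbf R(s-1))} = \lambda \sum_{j=1}^k (R^j(s)-R^j(s-1)) - f^{\lambda,k,\nu}_{s-1} + \log \frac{\mathbf P_{RW^{(k)}_\nu}(\mathbf R(s)\mid \mathbf R(s-1))}{\mathbf P_{RW^{(k)}_{\nu_N^*}}(\mathbf R(s)\mid \mathbf R(s-1))}.
\end{equation*}
The crucial observation is that $Z_s \equiv 0$ on $\{\mathbf R(s-1) \in \Gamma^{(k)}\}$: when the $k$ particles are at distinct sites, the annealed one-step kernels of both $\mathbf P_{RW^{(k)}_\nu}$ and $\mathbf P_{RW^{(k)}_{\nu_N^*}}$ factor as products over the $k$ particles, with respective marginals $\tfrac{1}{2}(\delta_1+\delta_{-1})$ and $\rho_N\delta_1+(1-\rho_N)\delta_{-1}$. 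Since $\rho_N = e^\lambda/(2\cosh\lambda)$ and $f^{\lambda,k,\nu}_{s-1} = k\log\cosh\lambda$ on $\Gamma^{(k)}$, substituting these into the display yields exact cancellation. This is the structural reason underlying the very definition of $\nu_N^*$.

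For $\mathbf R(s-1)\notin\Gamma^{(k)}$, both kernels factor over the coincidence clusters rather than over individual particles, and the cluster-wise joint laws under $\nu$ and $\nu_N^*$ differ by an $O(\lambda)$ amount in their parameters, since $\mu_{\nu_N^*}-\mu_\nu = O(\lambda)$ and $\sigma^2_{\nu_N^*} \to \sigma^2$. Combining a direct cluster-by-cluster comparison of transition probabilities with Lemma \ref{prop:bound} (which gives $|f^{\lambda,k,\nu}_{s-1}| \leq C\lambda^2$ uniformly), I obtain
\begin{equation*}
|Z_s| \leq C\lambda \cdot \mathbf 1_{\mathbf R(s-1)\notin\Gamma^{(k)}} \leq C\lambda \cdot \sum_{i<j}\big(V^{ij}(s)-V^{ij}(s-1)\big),
\end{equation*}
with $C$ independent of $N$ (for $N$ large enough), using that $V^{ij}(s)-V^{ij}(s-1) = \mathbf 1_{R^i(s-1)=R^j(s-1)}$.

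Finally, I set $\mathcal G_N(r) := \sum_{s=1}^r (Z_s - D(s))$ with $D(s) := \mathbf E_{RW^{(k)}_{\nu_N^*}}[Z_s\mid \mathcal F_{s-1}]$; this is a $\mathbf P_{RW^{(k)}_{\nu_N^*}}$-martingale by construction. Since the full Radon-Nikodym derivative $e^{\sum_{s'\le s} Z_{s'}}$ is a $\mathbf P_{RW^{(k)}_{\nu_N^*}}$-martingale of conditional mean one, we have $\mathbf E[e^{Z_s}\mid\mathcal F_{s-1}]=1$, and pulling out the $\mathcal F_{s-1}$-measurable factor $e^{-D(s)}$ gives $\log \mathbf E[e^{Z_s - D(s)}\mid \mathcal F_{s-1}] = -D(s)$, from which $\widetilde{\mathcal G_N}(r) = \sum_{s=1}^r Z_s$, proving \eqref{g}. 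The quadratic variation increment satisfies $(\mathcal G_N(r+1)-\mathcal G_N(r))^2 = (Z_{r+1}-D(r+1))^2 \leq 2Z_{r+1}^2 + 2D(r+1)^2 \leq CN^{-1/2}\cdot \mathbf 1_{\mathbf R(r)\notin\Gamma^{(k)}}$, using the conditional triangle inequality to control $|D(r+1)|$ by $C\lambda$, which yields \eqref{g1}. The main obstacle is the uniform $O(\lambda)$ bound on $Z_s$ at coincidence sites: since Definition \ref{def:stt} only provides the qualitative control $\sigma^2_{\nu_N^*}\to \sigma^2$, making the constant uniform in $N$ requires a careful cluster decomposition together with basic moment estimates for $\min(1,\omega + d_N)$.
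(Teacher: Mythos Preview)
Your proof is correct and takes essentially the same approach as the paper: both arguments hinge on the exact vanishing of the one-step log-Radon--Nikodym increment on $\Gamma^{(k)}$ (which is precisely why $\nu_N^*$ is chosen with mean $\rho_N$), together with an $O(N^{-1/4})$ bound at coincidence times coming from moment comparison between $\nu$ and $\nu_N^*$. The paper organizes the computation by first splitting the derivative into an exponential-tilt part $\widetilde{\mathcal H}_N$ and an entropy part $\widetilde{\mathcal D}_N$ and then showing they cancel on $\Gamma^{(k)}$, whereas you work directly with the combined increment $Z_s$; unpacking your $Z_s - D(s)$ recovers exactly the paper's $\mathcal G_N = \widetilde{\mathcal H}_N + \widetilde{\mathcal D}_N$, and the ``basic moment estimates for $\min(1,\omega+d_N)$'' you flag at the end are exactly the bounds $|\int x^j\nu_N^*(dx)-\int x^j\nu(dx)|\le CN^{-1/4}$ (for all $j\le k$, not just $j=1,2$) that the paper uses to control the cluster ratios $m_{b,n-b}$.
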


\begin{proof} For clarity the proof is divided into two steps. 

\medskip

\textbf{Step 1.} In this step, we will construct a family of $\mathbf{P}_{RW^{(k)}_{\nu_N}}$ martingales $\mathcal{G}_N$ satisfying \eqref{g}. Let $\{\nu_N\}_N$ be any arbitrary sequence of probability measures on $[0,1]$. Write the Radon-Nikodym derivative in \eqref{g} as a product of two terms: \begin{equation}\label{2terms}\frac{d\mathbf Q^{N^{-1/4}}_{RW^{(k)}_\nu}}{d\mathbf P_{RW^{(k)}_\nu}}\bigg|_{\mathcal F_r}(\mathbf R) \cdot \frac{d\mathbf P_{RW^{(k)}_{\nu}}}{d\mathbf P_{RW^{(k)}_{\nu_N}}}\bigg|_{\mathcal F_r}(\mathbf R).\end{equation}
    
    The second term in \eqref{2terms} is the quotient of two Markov chain transition probabilities. This can be written in the form $\exp \big(\mathcal D_N(r)\big)$ where $\mathcal D_N(0)=0$ and where $\mathcal D_N(r+1)-\mathcal D_N(r)$ can be described deterministically from the path $\mathbf R$ as follows: 
    
    Assume that at time $r$, the $k$ particles of $\mathbf R(r)\in \mathbb Z^k$ form $v=v(r)$ disjoint groups, with all particles in each distinct group at the same site in $\mathbb Z$. Assume that the $v$ respective groups contain $n_1,\ldots,n_v$ respective particles, where $n_1+\cdots+n_v=k$. Suppose that from time $r$ to $r+1$, $b_j$ of the $n_j$ particles in the $j^{th}$ group go up one step and $n_j-b_j$ particles go down one step for each $1\le j \le v$.
    Then we have the entropy formula: \begin{align}
        \label{def:diff}
        \mathcal D_N(r+1)-\mathcal D_N(r) = \sum_{j=1}^v \log\left[m_{b_j,n_j-b_j}\right], \quad \mbox{where }m_{b,n-b}:=\frac{\int_{[0,1]} x^{b} (1-x)^{n-b}\nu(dx)}{\int_{[0,1]} x^{b} (1-x)^{n-b}\nu_N(dx)}.
    \end{align}
    The exponential of the r.h.s.~of the above equation is precisely the quotient of the two relevant Markov chain transition probabilities from $\mathbf R(r) \to \mathbf R(r+1)$. Although $\mathcal D_N$ is not a $\mathbf P_{RW_{\nu_N}^{(k)}}$-martingale, we can define another process $\widetilde{\mathcal D}_N$ with $\widetilde{\mathcal D}_N(0)=0,$ and 
    \begin{align}
        \label{def:diff2}
        \widetilde{\mathcal D}_N(r+1)-\widetilde{\mathcal D}_N(r):= \mathcal D_N(r+1)-\mathcal D_N(r)- \mathbf E_{RW_{\nu_N}^{(k)}} [\mathcal D_N(r+1)-\mathcal D_N(r)|\mathcal F_r].
    \end{align}
     By construction, $\widetilde{\mathcal D}_N$ is a $\mathbf P_{RW_{\nu_N}^{(k)}}$-martingale. Utilizing the above relation, we observe that
    \begin{align*}
        \log \mathbf E_{RW^{(k)}_{\nu_N}}\left[e^{\widetilde{\mathcal D}_N(r+1)-\widetilde{\mathcal D}_N(r)}|\mathcal F_{r}\right]  & =\log \mathbf E_{RW^{(k)}_{\nu_N}}\left[e^{{\mathcal D}_N(r+1)-{\mathcal D}_N(r)-\mathbf E_{RW_{\nu_N}^{(k)}} [\mathcal D_N(r+1)-\mathcal D_N(r)|\mathcal F_r]}|\mathcal F_{r}\right] \\ & = \log \mathbf E_{RW^{(k)}_{\nu_N}}\left[e^{{\mathcal D}_N(r+1)-{\mathcal D}_N(r)}|\mathcal F_{r}\right]- \mathbf E_{RW_{\nu_N}^{(k)}} [\mathcal D_N(r+1)-\mathcal D_N(r)|\mathcal F_r] \\ & = - \mathbf E_{RW_{\nu_N}^{(k)}} [\mathcal D_N(r+1)-\mathcal D_N(r)|\mathcal F_r],
    \end{align*}
    where in the last line we used that $e^{\mathcal{D}_N}$ is a $\mathbf P_{RW_{\nu_N}^{(k)}}$-martingale. This implies 
    \begin{align}
        \label{terma}
        \frac{d\mathbf P_{RW^{(k)}_{\nu}}}{d\mathbf P_{RW^{(k)}_{\nu_N}}}\bigg|_{\mathcal F_r}(\mathbf R) =\exp \big(\mathcal D_N(r)\big) = \exp\bigg( \widetilde{\mathcal D}_N(r) - \sum_{s\leq r} \log \mathbf E_{RW^{(k)}_{\nu_N}}[e^{\widetilde{\mathcal D}_N(s)-\widetilde{\mathcal D}_N(s-1)}|\mathcal F_{s-1}] \bigg).
    \end{align}
 On the other hand, by Definition \ref{q}, we have
 \begin{align}
     \label{termb}
     \frac{d\mathbf Q^{N^{-1/4}}_{RW^{(k)}_\nu}}{d\mathbf P_{RW^{(k)}_\nu}}\bigg|_{\mathcal F_r}(\mathbf R)=\exp \bigg(\mathcal H_N(r) -\sum_{s\leq r} \log \mathbf E_{RW^{(k)}_{\nu}}[e^{\mathcal H_N(s)-\mathcal H_N(s-1)}|\mathcal F_{s-1}] \bigg)
 \end{align}
 where $\mathcal H_N(r) = N^{-1/4}(R^1_r +\cdots+ R^k_r).$ Note that the process $\widetilde{\mathcal H}_N(r):= \mathcal H_N(r) - rk N^{-1/4} (2\mu_{\nu_N}-1)$ is a $\mathbf P_{RW^{(k)}_{\nu_N}}$-martingale since each of the processes $R^i(r) - r(2\mu_{\nu_N}-1)$ for $1\le i \le k$ are mean-zero random walks under $\mathbf P_{RW^{(k)}_{\nu_N}}$ and replacing $\mathcal H$ by $\widetilde{\mathcal H}$ leaves the above expression unchanged. Let us set $\mathcal G_N := \widetilde{\mathcal H}_N + \widetilde{\mathcal D}_N.$  Clearly 
 $\mathcal{G}_N$ is a $\mathbf P_{RW^{(k)}_{\nu_N}}$-martingale. 
Multiplying both sides of \eqref{terma} and \eqref{termb} we get
\begin{align*}
  & \frac{d\mathbf Q^{N^{-1/4}}_{RW^{(k)}_\nu}}{d\mathbf P_{RW^{(k)}_{\nu_N}}}\bigg|_{\mathcal F_r}(\mathbf R) \\ & = \exp\left(\mathcal{G}_N(r)-\sum_{s\le r} \log \bigg[\mathbf{E}_{RW^{(k)}_\nu}\big[e^{\widetilde{\mathcal{H}}_N(s)-\widetilde{\mathcal{H}}_{N}(s-1)}\mid \mathcal{F}_{s-1}\big]\mathbf{E}_{RW^{(k)}_{\nu_N}}\big[e^{\widetilde{\mathcal{D}}_N(s)-\widetilde{\mathcal{D}}_N({s-1})}\mid \mathcal{F}_{s-1}\big]\bigg)\right) \\ & = \exp \bigg(\mathcal G_N(r) -\sum_{s\leq r} \log \mathbf E_{RW^{(k)}_{\nu_N}}[e^{\mathcal G_N(s)-\mathcal G_N(s-1)}|\mathcal F_{s-1}] \bigg),
\end{align*}
    where the last equality follows by observing that for each $s\le r$, by tilting we have
\begin{align*}
& \mathbf{E}_{RW^{(k)}_\nu}\left[e^{\widetilde{\mathcal{H}}_N(s)-\widetilde{\mathcal{H}}_{N}(s-1)}\mid \mathcal{F}_{s-1}\right] \\ & = \mathbf{E}_{RW^{(k)}_{\nu_N}}\left[e^{\widetilde{\mathcal D}_N(s) - \widetilde{\mathcal D}_N(s-1)- \log \mathbf E_{RW^{(k)}_{\nu_N}}[e^{\widetilde{\mathcal D}_N(s)-\widetilde{\mathcal D}_N(s-1)}|\mathcal F_{s-1}]+\widetilde{\mathcal{H}}_N(s)-\widetilde{\mathcal{H}}_{N}(s-1)}\mid \mathcal{F}_{s-1}\right] \\ & = \frac{\mathbf{E}_{RW^{(k)}_{\nu_N}}\left[e^{{\mathcal{G}}_N(s)-{\mathcal{G}}_N(s-1)}\mid \mathcal{F}_{s-1}\right]}{\mathbf E_{RW^{(k)}_{\nu_N}}[e^{\widetilde{\mathcal D}_N(s)-\widetilde{\mathcal D}_N(s-1)}|\mathcal F_{s-1}]}.
\end{align*}
  This verifies \eqref{g}. 
  
  \medskip

  \textbf{Step 2.} In this step, we will show \eqref{g1}. This is where we set $\nu_N=\nu_N^*$, the \stt\ sequence of probability measures corresponding to $\nu$ (see Definition \ref{def:stt}). With this choice, we claim that for all $r$
\begin{align}\label{claim1}
   & [\mathcal G_N](r+1) -[\mathcal G_N](r) = \ind_{r \in \Gamma^{(\le k-1)}} \big(\mathcal G_N(r+1)-\mathcal G_N(r)\big)^2, \\ & \big(\mathcal G_N(r+1)-\mathcal G_N(r)\big)^2 \le CN^{-1/2}, \label{claim2}
\end{align}
  where $\Gamma^{(\le k-1)}$ is defined in \eqref{def:overlap}. From the definition of the $V^{ij}$ functionals from  \eqref{vijai}, we have $\ind_{r \in \Gamma^{(\le k-1)}} \leq \sum_{i<j} \big(V^{ij}(r+1)- V^{ij}(r)\big)$. Inserting this bound and the bound in \eqref{claim2} in the right-hand side ~of \eqref{claim1} leads to \eqref{g1}.

    Recall the sets $\Gamma^{(v)}$ from \eqref{def:overlap}.
  We claim that 
  \begin{align}
      \label{dheq}
      \widetilde{\mathcal D}_N(r+1)-\widetilde{\mathcal D}_N(r) 
  =-\frac{N^{1/4}}2\log \frac{\mu_{\nu_N^*}}{1-\mu_{\nu_N^*}}(\widetilde{\mathcal H}_N(r+1)-\widetilde{\mathcal H}_N(r)),
  \end{align}
  for all $r\in \Gamma^{(k)}$. We will prove \eqref{dheq} below, but we first complete the rest of the proof, assuming \eqref{dheq} to be true.
  Since the $\nu_N^*$ have mean exactly equal to $\mu_{\nu_N^*} = \rho_N=e^{N^{-1/4}}(2\cosh(N^{-1/4}))^{-1}$, we get that $\frac{N^{1/4}}2\log \frac{\mu_{\nu_N^*}}{1-\mu_{\nu_N^*}}=1$ {which in turn forces
  $$\widetilde{\mathcal D}_N(r+1)-\widetilde{\mathcal D}_N(r) 
  =-(\widetilde{\mathcal H}_N(r+1)-\widetilde{\mathcal H}_N(r)).$$ Since $\mathcal{G}_N=\til{\mathcal{H}}_N+\til{\mathcal{D}}_N$}, this implies that $\mathcal G_N(r+1)-\mathcal G_N(r)=0$ for all $r\in \Gamma^{(k)}$, in turn implying \eqref{claim1}. 
  
  We now turn to the proof of \eqref{claim2}. By the  Cauchy-Schwarz inequality, we have
  \begin{align}
      \label{rn0}
      \big(\mathcal G_N(r+1)-\mathcal G_N(r)\big)^2\leq 2\big(\widetilde{\mathcal H}_N(r+1)-\widetilde{\mathcal H}_N(r)\big)^2+2\big(\widetilde{\mathcal D}_N(r+1)-\widetilde{\mathcal D}_N(r)\big)^2.
  \end{align}
    By definition 
    \begin{equation}
    \big(\widetilde{\mathcal H}_N(r+1)-\widetilde{\mathcal H}_N(r)\big)^2 = N^{-1/2} \bigg( \sum_{j=1}^k (R^j(r+1) - R^j(r) - (2\mu_{\nu_N^*}-1))\bigg)^2\le 4k^2N^{-1/2} \label{rn1}
    \end{equation}
     as $|R^i(r+1) - R^i(r) - (2\mu_{\nu_N^*}-1)|\leq 2.$  Recall the definition of $\widetilde{\mathcal D}_N({r+1})-\widetilde{\mathcal D}_N(r)$ from \eqref{def:diff} and \eqref{def:diff2}. We have
\begin{align}\nonumber\big(\widetilde{\mathcal D}_N(r+1)-\widetilde{\mathcal D}_N(r)\big)^2 &= \bigg( \sum_{j=1}^v\big( \log(m_{b_j,n_j-b_j}) - \mathbf E_{RW_{\nu_N^*}^{(k)}} [ \log(m_{b_j,n_j-b_j})|\mathcal F_r]\big)\bigg)^2 \\& \leq 2k \sum_{j=1}^v\left[\big( \log(m_{b_j,n_j-b_j})\big)^2 + \mathbf E_{RW_{\nu_N^*}^{(k)}} \big[\big( \log(m_{b_j,n_j-b_j})\big)^2\big|\mathcal F_r\big]\right], \label{rn2}
    \end{align}
    where we used Cauchy-Schwarz and $v\le k$. Our particular choice of $\nu_N$ implies that
    $$\left|\int_{[0,1]} x^j\nu_{N}^*(dx)-\int_{[0,1]}x^j\nu(dx)\right| \leq C N^{-1/4} \text{ for all } 1\le j \le k.$$  
    This implies
    $$\left|\int_{[0,1]} x^b(1-x)^{n-b}\nu_{N}^*(dx)-\int_{[0,1]}x^b(1-x)^{n-b}\nu(dx)\right| \leq C N^{-1/4} \text{ for all } 0\le b\le n \le k.$$
    Since $\sigma_\nu^2 <\frac14$, we have that $\int_{[0,1]}x^b(1-x)^{n-b}\nu(dx)>0$ for all $0\le b\le n\le k$ and hence $|m_{b,n-b}^{-1}-1|\leq CN^{-1/4}$ for $0\le b\le n \le k$, where $C=C(\nu,k)$ is independent of $N,b,n$. In turn, this is enough to imply that $|\log(m_{b,n-b})|\leq CN^{-1/4},$ for a deterministic constant $C$, so that $(\log(m_{b,n-b}))^2\leq CN^{-1/2}$. Inserting this bound back in \eqref{rn2} and combining it with the bound in \eqref{rn1}, in view of \eqref{rn0}, we have the desired bound of \eqref{claim2}.

        \medskip
        
    \noindent\textbf{Proof of \eqref{dheq}.} Recall the sets $\Gamma^{(v)}$ from \eqref{def:overlap} and the notation in \eqref{def:diff}.   Note that for $r\in \Gamma^{(k)}$, we have $v=k$, $n_j=1$, and $b_j=\frac12(R_{r+1}^j-R^j(r)+1)$ for $1\le j\le k$. Note that $b_j$ is either one or zero depending on whether the particle jumps right or left.  We can therefore compute 
   \begin{align*}
        m_{b_j, 1-b_j} &= \frac{\int_{[0,1]} x^{b_j} (1-x)^{1-b_j}\nu(dx)}{\int_{[0,1]} x^{b_j} (1-x)^{1-b_j}\nu_N(dx)} \\
       &= \frac{\ind_{R_{r+1}^j - R^j(r) = 1}\mu_{\nu} + \ind_{R_{r+1}^j - R^j(r) = -1}(1-\mu_{\nu})}{\ind_{R_{r+1}^j - R^j(r) = 1}\mu_{\nu_N} + \ind_{R_{r+1}^j - R^j(r) = -1}(1-\mu_{\nu_N})}=\frac{1/2}{1/2 + (\mu_{\nu_N} - 1/2)(R_{r+1}^j - R^j(r))}.
    \end{align*}
We observe that for $x \in \{-1,1\}$ the following relation holds true:
$$\log (1+Ax)= \frac12\left[x\log \left(\frac{1+A}{1-A}\right)+\log(1-A^2)\right].$$
Since $R_{r+1}^j-R^j(r)\in \{-1,1\}$, utilizing the above identity we have
\begin{align*}
    \log [m_{b_j,1-b_j}] & =-\log \left[1+(2\mu_{\nu_N}-1)(R_{r+1}^j-R^j(r))\right]\\ & =-\frac12\left[(R_{r+1}^j-R^j(r))\log \left(\frac{\mu_{\nu_N}}{1-\mu_{\nu_N}}\right)+\log(1-(2\mu_{\nu_N}-1)^2)\right].
\end{align*}
    
       Thus for $r\in \Gamma^{(k)}$, \eqref{def:diff} boils down to
   \begin{align*}
       \mathcal D_N(r+1)-\mathcal D_N(r) 
       =  -\frac12\log \left(\frac{\mu_{\nu_N}}{1-\mu_{\nu_N}}\right)\sum_{j=1}^k(R_{r+1}^j-R^j(r)) -\frac{k}2\log(1-(2\mu_{\nu_N}-1)^2).
   \end{align*}
   This implies for $r\in \Gamma^{(k)}$
\begin{align*}
    \mathbf E_{RW_{\nu_N}^{(k)}} [\mathcal D_N(r+1)-\mathcal D_N(r)|\mathcal F_r]=  -\frac12\log \left(\frac{\mu_{\nu_N}}{1-\mu_{\nu_N}}\right)\sum_{j=1}^k(2\mu_{\nu_N}-1) -\frac{k}2\log(1-(2\mu_{\nu_N}-1)^2).
\end{align*}
 Plugging these identities back in \eqref{def:diff2} we get   
  \begin{align*}
      \widetilde{\mathcal D}_N(r+1)-\widetilde{\mathcal D}_N(r) & = -\frac12\log \left( \frac{\mu_{\nu_N}}{1-\mu_{\nu_N}}\right)\sum_{j=1}^k (R_{r+1}^j-R^j(r) - (2\mu_{\nu_N}-1))
 \\ & =-\frac{N^{1/4}}2\log \left(\frac{\mu_{\nu_N}}{1-\mu_{\nu_N}}\right)(\widetilde{\mathcal H}_N(r+1)-\widetilde{\mathcal H}_N(r)),
  \end{align*}
  where the second equality follows from the definition of $\widetilde{\mathcal{H}}_N(r)$. This verifies \eqref{dheq}.
\end{proof}

\subsection{$k$-point motion moment estimates}\label{sec:sbmest} In this subsection, we collect various moment estimates that will be used in our later analysis. Recall the functionals $V^{ij}$ from \eqref{vijai}. 
	\begin{lem}\label{traps}
  There exists an absolute constant $\Con>0$ such that uniformly over all $k,N >1$, $\theta>0$ and all probability measures $\nu$ on $[0,1]$, we have 
  \begin{align}
      \label{e.traps}
      \mathbf E_{RW_\nu^{(k)}}\bigg[\exp{\bigg(\theta \sum_{i=1}^k \big[\big(\max_{r\leq N}|R^i(r)-(2\mu_\nu - 1)r|\big) +(\mu_\nu(1-\mu_\nu)-\sigma_\nu^2)\sum_{j\neq i}V^{ij}(N) }\big]\bigg)\bigg] \le \Con \cdot e^{\Con \cdot k^4\theta^2 N}.
  \end{align}
	\end{lem}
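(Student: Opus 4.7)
The plan is to reduce the exponential moment in \eqref{e.traps} to two standard martingale concentration bounds---Doob's maximal inequality for the centered one-point motions and Azuma-Hoeffding for the pairwise martingale \eqref{m}---using the discrete Tanaka-type identity $4[\mu_\nu(1-\mu_\nu)-\sigma_\nu^2]V^{ij}(r)=|R^i(r)-R^j(r)|+\mathcal{M}^{ij}(r)$ noted right after \eqref{m} (setting $\mathcal{M}^{ij}:=\mathcal{M}^{ji}$ for $i>j$). Applying this identity together with the triangle inequality $|R^i(N)-R^j(N)|\le \max_{r\le N}|R^i(r)-(2\mu_\nu-1)r|+\max_{r\le N}|R^j(r)-(2\mu_\nu-1)r|$, and summing over $i$ and $j\neq i$, the exponent of \eqref{e.traps} is bounded above by
\begin{equation*}
\theta\cdot\tfrac{k+1}{2}\sum_{i=1}^k \max_{r\le N}|R^i(r)-(2\mu_\nu-1)r|+\tfrac{\theta}{2}\sum_{1\le i<j\le k}\mathcal{M}^{ij}(N).
\end{equation*}
By Cauchy-Schwarz it then suffices to control the exponential moments of each summand separately, after doubling the respective coefficients.

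For the first summand, the process $r\mapsto R^i(r)-(2\mu_\nu-1)r$ is a $\mathbf{P}_{RW_\nu^{(k)}}$-martingale with increments bounded by $2$ uniformly in $\nu$. Doob's submartingale inequality applied to $e^{\pm\lambda(R^i(r)-(2\mu_\nu-1)r)}$ combined with the Azuma-Hoeffding bound $\mathbf{E}_{RW_\nu^{(k)}}[e^{\pm\lambda(R^i(N)-(2\mu_\nu-1)N)}]\le e^{2\lambda^2 N}$ yields the sub-Gaussian tail $\mathbf{P}_{RW_\nu^{(k)}}(\max_{r\le N}|R^i(r)-(2\mu_\nu-1)r|\ge a)\le 2e^{-a^2/(8N)}$; integrating against $\theta e^{\theta t}$ gives $\mathbf{E}_{RW_\nu^{(k)}}[e^{\lambda\max_{r\le N}|R^i(r)-(2\mu_\nu-1)r|}]\le Ce^{C\lambda^2 N}$ with absolute $C$. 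A generalized H\"older product over $i=1,\ldots,k$ applied at $\lambda=k\theta(k+1)$ then controls the exponential moment of the first summand by $Ce^{Ck^4\theta^2 N}$.

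For the second summand, the key observation is that $\mathcal{M}^{ij}$ has increments bounded by an absolute constant: the local-time piece changes by at most $1$ per step since $\mu_\nu(1-\mu_\nu)-\sigma_\nu^2\le \tfrac14$, while $|R^i-R^j|$ changes by at most $2$ per step. Azuma-Hoeffding therefore gives $\mathbf{E}_{RW_\nu^{(k)}}[e^{\lambda \mathcal{M}^{ij}(N)}]\le e^{C\lambda^2 N}$ with absolute $C$, and a second application of H\"older with $n=\binom{k}{2}$ factors at $\lambda=n\theta$ produces the bound $e^{Ck^4\theta^2 N}$. Combining the two halves via the Cauchy-Schwarz step closes the proof. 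The main delicacy is simply constant-tracking: one must verify that the increment bounds entering Doob and Azuma are truly absolute (in particular independent of $\nu$, so that the estimate does not degenerate as $\sigma_\nu^2\to \tfrac14$), and that the two rounds of H\"older do not inflate the $k$-dependence beyond $k^4$ in the exponent.
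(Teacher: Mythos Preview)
Your proof is correct and uses essentially the same ingredients as the paper's proof: the Tanaka-type identity \eqref{m}, H\"older's inequality together with the projective property of the $k$-point laws, Doob's inequality for the centered one-point walk, and Azuma--Hoeffding for the bounded-increment martingales. The only difference is organizational: the paper first applies Cauchy--Schwarz to separate the maximal term from the intersection term and then invokes Tanaka inside the intersection piece (incurring a second Cauchy--Schwarz there), whereas you apply Tanaka first, absorb the $|R^i(N)-R^j(N)|$ contribution into the maximal term via the triangle inequality, and then split once. Both routes land on the same $e^{Ck^4\theta^2 N}$ bound.
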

We remark that this bound will be most useful after replacing $N\to Nt$ and $\theta \to \theta N^{-1/2}$
. The bound is optimal in the sense that the constant $\mu_\nu(1-\mu_\nu)-\sigma_\nu^2$ cannot be improved in a manner that is uniform over all probability laws on $[0,1]$. Indeed this factor vanishes precisely if $\nu$ is the law of Bernoulli$(p)$ for some $p\in [0,1],$ which corresponds to coalescing walks $(R^1,\ldots,R^k)$.
 \begin{proof}
    By the Cauchy-Schwarz inequality, the expectation on the left-hand side ~of \eqref{e.traps} is bounded above by $\sqrt{E_1\cdot E_2}$ where 
     \begin{align*}
         E_1 &: = \mathbf E_{RW_\nu^{(k)}}\bigg[\exp \bigg(2\theta \sum_{i=1}^k \max_{r\le N} |R^i(r)-(2\mu_\nu - 1)r| \bigg)\bigg]\\& \le \prod_{i=1}^k\mathbf E_{RW_\nu^{(k)}} \bigg[\exp\bigg(2k\theta \max_{r\le N} |R^i(r)-(2\mu_\nu - 1)r| \bigg)\bigg]^{1/k}\\ & = \mathbf E_{RW_\nu^{(1)}} \bigg[\exp\bigg(2k\theta \max_{r\le N} |R(r)-(2\mu_\nu - 1)r| \bigg)\bigg],
         \end{align*}
         \begin{align*}
         E_2 &:= \mathbf E_{RW_\nu^{(k)}}\bigg[\exp \bigg(4\theta \big[\mu_\nu(1-\mu_\nu)-\sigma_\nu^2\big] \sum_{1\le i<j\le k} V^{ij}(N)\bigg)\bigg] \\& \leq \prod_{1\le i<j\le k} \mathbf E_{RW_\nu^{(k)}} \bigg[ \exp\bigg( 2\theta k(k-1) \big[\mu_\nu(1-\mu_\nu)-\sigma_\nu^2\big] V^{ij}(N)\bigg)\bigg]^{2/k(k-1)} \\&= \mathbf E_{RW_\nu^{(2)}} \bigg[ \exp\bigg( 2\theta k(k-1) \big[\mu_\nu(1-\mu_\nu)-\sigma_\nu^2\big] V^{12}(N)\bigg)\bigg].
     \end{align*}
   Above, in obtaining the subsequent bounds, we used H\"older's inequality and the fact that the laws $\mathbf P_{RW_\nu^{(k)}} $ form a projective family of $k$-point motions. Now the law of $(R(r)-(2\mu_\nu - 1)r)_{r\ge 0}$ under $\mathbf P_{RW_\nu^{(1)}} $ is simply that of a mean-zero random walk. Therefore $X(r):=\exp\big(k\theta |R(r)-(2\mu_\nu - 1)r| \big)$ is a positive $\mathbf P_{RW_\nu^{(1)}}$-submartingale, thus by Doob's $L^2$ inequality we find that 
     \begin{align*}\mathbf E_{RW_\nu^{(1)}} \bigg[\exp\bigg(2k\theta \max_{r\le N} |R(r)-(2\mu_\nu - 1)r| \bigg)\bigg]&=\mathbf E_{RW_\nu^{(1)}}\big[\max_{r\le N} X(r)^2\big] \\&\leq 4 \mathbf E_{RW_\nu^{(1)}} [X(N)^2] \\ & = 4\mathbf E_{RW_\nu^{(1)}}\bigg[\exp\bigg(2k\theta |R(N)-(2\mu_\nu - 1)N| \bigg)\bigg].
     \end{align*}
     Then by Azuma's martingale inequality, we obtain that the last expression is bounded above by $Ce^{Ck^2\theta^2 N},$ where $C$ is independent of $k,\theta, N,\nu$. 
     
     Now we need to bound the term with the exponential of the intersection times. To do this, we recall $\mathcal{M}(r)$ from \eqref{m}. Note that $\mathcal{M}(r)$ is a $\mathbf P_{RW_\nu^{(2)}}$-martingale with increments bounded above by $3$, which may be verified by a direct calculation using the Markov chain transition diagram of $(R^1,R^2)$ given in Figure \ref{fig:markov_transitions}. Therefore by the Cauchy-Schwarz inequality followed by Azuma's martingale inequality, we see that 
     \begin{align*}
         & \mathbf E_{RW_\nu^{(2)}} \bigg[ \exp\bigg( {8\theta k(k-1) [\mu_\nu(1-\mu_\nu)-\sigma_\nu^2]} V^{ij}(N)\bigg)\bigg] \\ & = \mathbf E_{RW_\nu^{(2)}} \bigg[ \exp\bigg( 2\theta k(k-1) \big[\mathcal M(N) + |R^1(N)-R^2(N)|\big]\bigg] \\ &\leq \sqrt{\mathbf E_{RW_\nu^{(2)}} \bigg[ e^{4\theta k^2 \mathcal M(N)}\bigg] \mathbf E_{RW_\nu^{(2)}} \bigg[ \exp\bigg( 4\theta k^2 \big|(R^1(N)-N(2\mu_\nu-1))-(R^2(N)-N(2\mu_\nu-1))\big|\bigg)\bigg]}  \\ &\leq Ce^{C \theta^2 k^4 N}\sqrt{\mathbf E_{RW_\nu^{(2)}} \bigg[ \exp\bigg( 4\theta k^2 (|R^1(N)-N(2\mu_\nu-1)|+|R^2(N)-N(2\mu_\nu-1)|)\bigg)\bigg]}.
     \end{align*}
     To bound the last term, another application of Cauchy-Schwarz, the projective property of the random walk measures, and then Azuma's inequality gives 
     \begin{align}
        & \mathbf E_{RW_\nu^{(2)}} \bigg[ \exp\bigg( 4\theta k^2 (|R^1(N)-(2\mu_\nu - 1)N|+|R^2(N)-(2\mu_\nu - 1)N|)\bigg] \\ & \hspace{2.5cm}\leq \mathbf E_{RW_\nu^{(1)}} \bigg[ \exp\bigg( 8\theta k^2 |R(N)-(2\mu_\nu - 1)N|\bigg)\bigg]  \leq Ce^{C\theta^2k^4 N}, 
     \end{align}
      where we again used that $R$ minus its mean drift is a mean-zero random walk (with increments bounded absolutely by 2) under $\mathbf P_{RW_\nu^{(1)}}$.
 \end{proof}

 \begin{lem}\label{lte}
        Fix $k\in\mathbb N$ and $p\ge 1$. There exists a constant $C=C(p,k)>0$ such that uniformly over all $N\ge M\ge 0$ and all probability measures $\nu$ on $[0,1]$, we have
		$$ \mathbf E_{RW_\nu^{(k)}}\bigg[\bigg(\big[\mu_\nu(1-\mu_\nu)-\sigma_\nu^2\big]\sum_{i<j}(V^{ij}(N)-V^{ij}(M))\bigg)^p \bigg]^{1/p} \leq C|M-N|^{1/2}.$$
	\end{lem}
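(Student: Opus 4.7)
My plan is to bypass the exponential moment machinery of Lemma~\ref{traps} entirely and argue directly from the discrete Tanaka-type identity \eqref{m} pairwise, reducing the estimate to standard $L^p$ bounds on bounded-increment martingales and on centered random walk increments over a time interval of length $N-M$.

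First, for each pair $1 \le i < j \le k$, I would recall that the process $\mathcal{M}_{ij}(r) := 4[\mu_\nu(1-\mu_\nu)-\sigma_\nu^2]V^{ij}(r) - |R^i(r)-R^j(r)|$ is a $\mathbf{P}_{RW_\nu^{(k)}}$-martingale whose increments are absolutely bounded by $3$ (a fact already extracted in the proof of Lemma~\ref{traps}, using that $V^{ij}$ has $\{0,1\}$ jumps and $|R^i-R^j|$ has jumps in $\{-2,0,2\}$). Subtracting the defining identity at the times $M$ and $N$ yields the clean decomposition
\[
4[\mu_\nu(1-\mu_\nu)-\sigma_\nu^2]\bigl(V^{ij}(N)-V^{ij}(M)\bigr) = \bigl(|R^i(N)-R^j(N)|-|R^i(M)-R^j(M)|\bigr) + \bigl(\mathcal{M}_{ij}(N)-\mathcal{M}_{ij}(M)\bigr).
\]

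I would then estimate each term on the right in $L^p(\mathbf{P}_{RW_\nu^{(k)}})$ with constants that are absolute (in particular uniform in $\nu$). By the reverse triangle inequality, the first term is dominated in absolute value by $|(R^i(N)-R^i(M))-(R^j(N)-R^j(M))|$, and after subtracting the common drift $(2\mu_\nu-1)(N-M)$ this is the absolute value of the difference of two centered $\pm 1$-random walk increments over time $N-M$; Burkholder--Davis--Gundy (or Azuma's inequality) gives an $L^p$ bound of $C_p\sqrt{N-M}$. The second term is a martingale increment with jumps bounded by $3$, so the same tools give $\|\mathcal{M}_{ij}(N)-\mathcal{M}_{ij}(M)\|_{L^p} \le C_p\sqrt{N-M}$ as well.

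Applying Minkowski's inequality to the displayed identity produces $4[\mu_\nu(1-\mu_\nu)-\sigma_\nu^2]\,\|V^{ij}(N)-V^{ij}(M)\|_{L^p} \le C_p\sqrt{N-M}$ uniformly in $\nu$. Pulling the nonnegative factor $[\mu_\nu(1-\mu_\nu)-\sigma_\nu^2]$ inside, summing over the $\binom{k}{2}$ pairs, and applying Minkowski once more gives the claim with constant $C=C(p,k)$; the coalescing case $\mu_\nu(1-\mu_\nu)-\sigma_\nu^2 = 0$ makes both sides vanish and is trivial. There is no substantive obstacle here, and in contrast to the strategy of deriving the bound from Lemma~\ref{traps} via a Chernoff/Markov argument, this approach avoids having to check that the exponential moment bound extends uniformly over starting configurations of the $k$-point motion (which would be needed after conditioning on $\mathcal{F}_M$).
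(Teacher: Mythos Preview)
Your proposal is correct and follows essentially the same approach as the paper's proof: both use the Tanaka-type martingale decomposition \eqref{m} pairwise, the reverse triangle inequality $||x|-|y||\le |x-y|$, Minkowski's inequality, and then Azuma/BDG on the resulting bounded-increment martingales. The only cosmetic difference is that the paper invokes the projective property of the laws $\mathbf P_{RW_\nu^{(k)}}$ to reduce explicitly to $\mathbf E_{RW_\nu^{(2)}}$ and $\mathbf E_{RW_\nu^{(1)}}$ before applying Azuma, whereas you stay in the $k$-point measure throughout; your closing remark about avoiding Lemma~\ref{traps} is moot since the paper does not use it here either.
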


 \begin{proof}
 By Minkowski's inequality, the left side is bounded above by $$\sum_{1\le i<j\le k} \mathbf E_{RW_\nu^{(k)}}\bigg[\bigg((\mu_\nu(1-\mu_\nu)-\sigma_\nu^2)(V^{ij}(N)-V^{ij}(M))\bigg)^p \bigg]^{1/p}$$ which by the projective property of the Markovian laws is the same as $$ \frac12 k(k-1)\mathbf E_{RW_\nu^{(2)}}\bigg[\bigg((\mu_\nu(1-\mu_\nu)-\sigma_\nu^2)(V^{12}(N)-V^{12}(M))\bigg)^p \bigg]^{1/p}.$$
     Using \eqref{m}, we may write $4(\mu_\nu(1-\mu_\nu)-\sigma^2)V^{12}(N) = \mathcal M(N) + |R^1(N)-R^2(N)|$, where $\mathcal M(N)$ is a martingale with increments bounded absolutely by 3. Thus using $||x|-|y||\leq |x-y|$ and Minkowski's inequality again, we find that the last expression is bounded above by $\frac18 k(k-1)$ times $$ \mathbf E_{RW_\nu^{(2)}}\big[\big|\mathcal M(N)-\mathcal M(M)\big|^p \big]^{1/p}+2\mathbf E_{RW_\nu^{(1)}}\big[\big|R(N)-R(M)-(N-M)(2\mu_\nu-1)\big|^p \big]^{1/p}.$$ From here, an application of Azuma's inequality allows us to bound both terms by $C|M-N|^{1/2}$, where we again use that $R$ minus its mean drift is a mean-zero random walk (with increments bounded absolutely by 2) under $\mathbf P_{RW_\nu^{(1)}}$.
 \end{proof}

 \begin{prop}\label{exp} 
 Fix $k\in\mathbb N$. Suppose $\{\nu_N\}_N$ is a family of probability measures on $[0,1]$ satisfying \begin{align}
     \label{condv}
     \liminf_{N\to\infty}\int_{[0,1]} x(1-x)\nu_N(dx) >0.
 \end{align} 
 Let $\mathcal G_N = \big(\mathcal G_N(r)\big)_{r\ge 0}$ be any family of $\mathbf P_{RW^{(k)}_{\nu_N}}$-martingales, adapted to the canonical filtration $(\mathcal F_r)_{r\ge 0}$ on $(\mathbb Z^k)^\mathbb N,$ satisfying
		\begin{align}\label{e:gcond}
			[ \mathcal G_N](r)- [ \mathcal G_N](s)  \leq C N^{-1/2} \sum_{i<j}(V^{ij}(r)-V^{ij}(s))
		\end{align}
		for some deterministic constant $C>0$ independent of $N$. For all $p,T>0$ we have 
		\begin{align}
			\label{e:exp}
			\sup_{N\ge 1}\sup_{1\le r \le NT}\mathbf E_{RW^{(k)}_{\nu_N}} \big[\exp\big(p\mathcal G_N(r)\big)\big]<\infty.
		\end{align}
	\end{prop}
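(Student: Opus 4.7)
The plan is to use a discrete Dol\'eans--Dade exponential together with Cauchy--Schwarz in order to trade control of $\mathbf E[e^{p\mathcal G_N(r)}]$ for control of the predictable compensator. Set
\[
A_r:=\sum_{s=1}^{r}\log \mathbf E_{RW^{(k)}_{\nu_N}}\!\big[e^{2p(\mathcal G_N(s)-\mathcal G_N(s-1))}\,\big|\,\mathcal F_{s-1}\big],
\qquad \mathcal E_N(r):=\exp\!\big(2p\,\mathcal G_N(r)-A_r\big).
\]
By the general construction recalled in \eqref{aft}, $\mathcal E_N(r)$ is a mean-one $\mathbf P_{RW^{(k)}_{\nu_N}}$-martingale, so Cauchy--Schwarz gives
\[
\mathbf E\!\big[e^{p\mathcal G_N(r)}\big]=\mathbf E\!\big[\mathcal E_N(r)^{1/2}e^{A_r/2}\big]\le \mathbf E[\mathcal E_N(r)]^{1/2}\mathbf E[e^{A_r}]^{1/2}=\mathbf E[e^{A_r}]^{1/2}.
\]
It therefore suffices to bound $\mathbf E[e^{A_r}]$ uniformly in $N\ge 1$ and $r\le NT$.

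To bound $A_r$, I would first observe that the hypothesis \eqref{e:gcond} together with the trivial pointwise estimate $\sum_{i<j}(V^{ij}(s)-V^{ij}(s-1))\le\binom{k}{2}$ yields a uniform increment bound $|\mathcal G_N(s)-\mathcal G_N(s-1)|\le C_k N^{-1/4}$. Then the elementary inequality $e^{y}-1-y\le\tfrac12 y^2 e^{|y|}$, applied with $y=2p(\mathcal G_N(s)-\mathcal G_N(s-1))$, combined with the martingale property $\mathbf E[\mathcal G_N(s)-\mathcal G_N(s-1)\mid\mathcal F_{s-1}]=0$, gives
\[
\log \mathbf E_{RW^{(k)}_{\nu_N}}\!\big[e^{2p(\mathcal G_N(s)-\mathcal G_N(s-1))}\mid\mathcal F_{s-1}\big]\le C_{p,k}\,\mathbf E\!\big[(\mathcal G_N(s)-\mathcal G_N(s-1))^2\mid\mathcal F_{s-1}\big].
\]
Because $V^{ij}$ is predictable, the bound \eqref{e:gcond} controls the right-hand side by $C_{p,k}N^{-1/2}\sum_{i<j}(V^{ij}(s)-V^{ij}(s-1))$, and summing over $s\le r$ yields
\[
A_r\le C_{p,k}\,N^{-1/2}\sum_{i<j} V^{ij}(r).
\]

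Finally, I would invoke hypothesis \eqref{condv} via the identity $\mu_{\nu_N}(1-\mu_{\nu_N})-\sigma_{\nu_N}^{2}=\int x(1-x)\nu_N(dx)\ge c>0$ for all large $N$, which gives $\sum_{i<j}V^{ij}(r)\le c^{-1}[\mu_{\nu_N}(1-\mu_{\nu_N})-\sigma_{\nu_N}^{2}]\sum_{i<j}V^{ij}(r)$. Hence for $r\le NT$,
\[
\mathbf E[e^{A_r}]\le \mathbf E_{RW^{(k)}_{\nu_N}}\!\bigg[\exp\!\Big(C_{p,k,c}\,N^{-1/2}[\mu_{\nu_N}(1-\mu_{\nu_N})-\sigma_{\nu_N}^{2}]\sum_{i<j}V^{ij}(NT)\Big)\bigg],
\]
and Lemma \ref{traps} applied with $\theta=C_{p,k,c}N^{-1/2}$ bounds this by $C\exp(C k^{4} C_{p,k,c}^{2}\,T)$, a constant independent of $N$. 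The small-$N$ regime is handled trivially since then both $r$ and the increments are bounded. The main technical obstacle is the exponential intersection-time moment bound used at the last step, but this has already been established as Lemma \ref{traps} through the discrete Tanaka-type martingale \eqref{m} and Azuma's inequality; the role of \eqref{condv} is precisely to make the pre-factor $\mu_{\nu_N}(1-\mu_{\nu_N})-\sigma_{\nu_N}^{2}$ in that lemma usable uniformly in $N$.
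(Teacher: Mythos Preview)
Your proof is correct and follows essentially the same approach as the paper: define the discrete stochastic exponential of $2p\mathcal G_N$, apply Cauchy--Schwarz to reduce to bounding the exponential of the predictable compensator, use the increment bound from \eqref{e:gcond} to control each term of the compensator by the conditional variance, and then invoke Lemma~\ref{traps} together with the positivity coming from \eqref{condv}. The only cosmetic difference is that the paper first applies Doob's submartingale inequality to reduce to the terminal time $r=NT$, whereas you work with general $r\le NT$ directly; both are fine.
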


\begin{proof} 
For simplicity of notation, we write $\mathbf E_N$ for $\mathbf E_{RW^{(k)}_{\nu_N}}$. We further assume $T=1$ for simplicity of notation. The general case is analogous. Since $\exp\big(p\mathcal G_N(r)\big)$ for $r\ge 0$ is a positive submartingale, by Doob's $L^p$ inequality it suffices to show $\sup_{N\ge 1} \mathbf E_N\big[\exp\big(p\mathcal G_N(N) \big)\big]<\infty,$ i.e., we may ignore the inner supremum after replacing $r$ by the terminal time $N$. Define 
$$W_N(r):= \frac12 \sum_{s\leq r} \log \mathbf E_N [ e^{2p(\mathcal G_N(s)-\mathcal G_N(s-1))}|\mathcal F_{s-1}].$$ By \eqref{aft}, the process $\exp\big( 2p\mathcal G_N(r) - 2W_N(r)\big)$ for $r\ge 0$ is a martingale, therefore 

\begin{align}
\mathbf E_N [ e^{p\mathcal G_N(N)}] & = \mathbf E_N[e^{p\mathcal G_N(N) - W_N(N)} e^{W_N(N)}]\\ &\leq \mathbf E_N[ e^{2p\mathcal G_N(N) - 2W_N(N)}]^{1/2} \mathbf E_N[e^{2W_N(N)}]^{1/2} = \mathbf E_N[e^{2W_N(N)}]^{1/2}. 
\end{align}

By the assumption \eqref{e:gcond}, we have $|\mathcal G_N(r+1)-\mathcal G_N(r)|\leq C^{1/2}N^{-1/4}\leq 1$ deterministically for large enough $N$. For $x$ in the interval $[-2p,2p]$ we have $e^x \leq 1+x+Dx^2$ for a large constant $D=D(p)>0,$ therefore we find that $$e^{2p(\mathcal G_N(s)-\mathcal G_N(s-1))}\leq 1+ 2p (\mathcal G_N(s)-\mathcal G_N(s-1)) + 4p^2D\cdot (\mathcal G_N(s)-\mathcal G_N(s-1))^2,$$ 
so that by martingality of $\mathcal G_N$ and $\log(1+u)\leq u$ we have $$\log \mathbf E_N [e^{2p(\mathcal G_N(s)-\mathcal G_N(s-1))}|\mathcal F_{s-1}] \leq 4p^2D\cdot \mathbf E_N[(\mathcal G_N(s)-\mathcal G_N(s-1))^2|\mathcal F_{s-1}].$$ Hence we find that $$\mathbf E_N[e^{2W_N(N)}] \leq \mathbf E_N[ e^{4p^2D\sum_{s\leq N} \mathbf E_N[(\mathcal G_N(s)-\mathcal G_N(s-1))^2|\mathcal F_{s-1}]}]  \leq \mathbf E_N [ e^{C'N^{-1/2} \sum_{i<j} V^{ij}(N)}],$$ 
where $C'$ is a larger constant that has now absorbed $D$, and we used \eqref{e:gcond} in the last bound. The bound is clearly finite for each fixed $N$. We thus have to justify that the last expression {remains bounded as $N \rightarrow \infty$}. Now, \eqref{condv} guarantees that $\mu_{\nu_N}(1-\mu_{\nu_N}) - \sigma_{\nu_N}^2$ is strictly positive for large enough $N$. A direct application of Lemma \ref{traps} with $\theta=C'N^{-1/2}/(\mu_{\nu_N}(1-\mu_{\nu_N}) - \sigma_{\nu_N}^2)$ shows that the last expression {remains bounded as $N \rightarrow \infty$}.
 \end{proof}

\section{A general theorem for calculating limits via tilting}\label{sec:tilt}
The goal of this section is to establish a general weak convergence theorem that takes the Girsanov tilting into account. As a consequence of this general theorem, we will prove moment convergence in Section \ref{mom}. Recall $V^{ij}$ from \eqref{vijai} and $\Gamma^{(\le v)}$ from \eqref{def:overlap}. Then we have the following.

\begin{thm}\label{converge}
    Fix any $k\in\mathbb N$ and constants $C, T>0$. Assume that $\nu_N$ is a family of probability measures on $[0,1]$ satisfying $\mu_{\nu_N} \to \frac12$ and $\sigma_{\nu_N}^2 \to \sigma^2 \in [0,\frac14)$. Let $\mathbf{R}=(\mathbf{R}(r))_{r\ge 0}$ be the canonical process on $(\mathbb Z^k)^{\mathbb Z_{\ge 0}}$, and define the rescaled processes 
    \begin{align}
        \label{def:resc}
        {\mathbf X}_N(t):= \frac{\big[\mathbf R(Nt)-(2\mu_{\nu_N}-1)Nt\big]}{\sqrt{N}},\;\;\;\;\;\;\;\;\mathscr V^{ij}_N(t):=\frac{V^{ij}(Nt)}{\sqrt{N}}, \;\;\;\;\;\;\;\; \mathscr T_N(t):= \frac{\#\{[Nt]\cap \Gamma^{\le (k-2)}\}}{\sqrt{N}}.
    \end{align}
    for $t\in N^{-1}\mathbb Z_{\ge 0}$, and linearly interpolated for $t \notin N^{-1}\mathbb Z_{\ge 0}$. 
    We will use $\mathbf E_N$ to abbreviate path measures $\mathbf E_{RW^{(k)}_{\nu_N}} $, as defined in Section \ref{sec:girt}. Let $\mathcal G_N = \big(\mathcal G_N(r)\big)_{r\ge 0}$ be any family of $\mathbf P_{N}$-martingales, adapted to the canonical filtration $(\mathcal F_r)_{r\ge 0}$ on $(\mathbb Z^k)^{\mathbb Z_{\ge 0}},$ satisfying\begin{equation}\label{assn1}[\mathcal G_N](r) -[\mathcal G_N](s) \leq CN^{-1/2} \sum_{i<j} (V^{ij}(r)-V^{ij}(s)\big),
		\end{equation}
		for all $r\ge s \ge 0$.  Define:
  \begin{align}
      W_N(r):= \sum_{s=1}^r \log \mathbf E_N [ e^{\mathcal G_N(s)-\mathcal G_N(s-1)}|\mathcal F_{s-1}].
  \end{align}
  Set $\widetilde{\mathcal{G}}_N(t)=\mathcal{G}_N(Nt)$ and $\widetilde{W}_N(t)=W_N(Nt)$ for $t\in N^{-1}\mathbb{Z}$ and linearly interpolated for $t\notin N^{-1}\mathbb{Z}$. Under the $\mathbf P_N$ law, $$\left( \mathbf X_N, ((1-4\sigma^2)\mathscr V^{ij}_N)_{1\le i<j\le k}, \mathscr T_N, \widetilde{\mathcal{G}}_N,\widetilde{\mathcal{W}}_N\right)$$
is a tight family of random variables in the canonical space of 5-tuples $C([0,T],  \mathbb{R}^k \times \mathbb{R}^{k(k-1)/2}\times \mathbb{R}\times \mathbb{R}\times \mathbb R)$. 
Consider any (joint) limit point $\mathbf P_{\infty}$ of that 5-tuple of processes, which is a probability measure on the canonical space of 5-tuples. Let $\left( \mathbf U, (L^{ij})_{1\le i<j\le k}, \mathbf{0}, \mathcal{G},{\mathcal{W}}\right)$ denote the coordinate process on the canonical space of 5-tuples. Then
\begin{enumerate}[label=(\roman*),leftmargin=18pt]
\item \label{convi} $\mathbf{U}$ is a standard $k$-dimensional Brownian motion on $[0,T]$ under $\mathbf P_{\infty}$.
\item \label{convii} $L^{ij}(t):=L^{U^i-U^j}_0(t)$ is the local time at zero of $U^i-U^j$ accrued by time $t$. {Our convention for} the local time at $a\in\mathbb R$ of a continuous semimartingale $X$ is 
\begin{equation}\label{eq:localTimeDef}
    L^X_a(t):= \lim_{\varepsilon \to 0^+}\frac{1}{2 \varepsilon}\int_0^t \mathbbm{1}_{\{a- \varepsilon < X_s < a + \varepsilon\}}d \langle X, X \rangle_s.
\end{equation}
\item \label{conviii} $\mathbf{0}$ is $\mathbf P_{\infty}$-almost surely equal to the function from $[0,T]\to \mathbb R$ which is identically zero. 

\item \label{conviv} $\mathcal{G}$ is a continuous $\mathbf P_{\infty}$-martingale satisfying $\mathbf E_{\infty}[e^{p {\mathcal{G}}(T)-\frac{p}2\langle {\mathcal{G}}\rangle (T)}]<\infty$ for all $p>0$ as well as \begin{align}\label{e:fil2}
	\mathbf E_{\infty}[\exp(\mathcal G(T)-\tfrac12\langle \mathcal G\rangle (T)) \mid \mathcal{F}_T(\mathbf{U})]=1,
		\end{align} where $\mathcal F_T(\mathbf{U})$ denotes the $\sigma$-algebra generated by $\mathbf U$. 
  \item \label{convv}  $\mathcal{W}=\frac12\langle \mathcal{G}\rangle$, where the latter denotes quadratic variation of the continuous martingale $\mathcal G.$
  \end{enumerate}
  	\end{thm}
We remark that the above five conditions do not uniquely characterize the limit points $\mathbf P_{\infty}$, however, they do uniquely characterize expectations with respect to the particular types of observables that we will be interested in, thus the non-uniqueness will not be an issue. We will abuse notation and use $\mathbf P_{\mathrm{lim}}$ to mean different marginal measures of $\mathbf P_\infty$ throughout the proof, clarifying when needed.

\begin{proof}

Write out the vector components as $\mathbf X_N = (\mathbf X^1_N,\ldots,\mathbf X^k_N)$. Note that each of the processes $(\mathbf X^i_N(t))_{t\in N^{-1}\mathbb Z_{\ge 0}}$ is a $\mathbf P_N$-martingale indexed by $N^{-1}\mathbb Z_{\ge 0}$, with increments bounded above by $2N^{-1/2}$. Therefore Azuma's inequality implies the bound $\sup_N \mathbf E_N[|\mathbf X^i_N(t)-\mathbf X^i_N(s)|^p]^{1/p}\leq C|t-s|^{1/2},$ for the linearly interpolated processes $\mathbf X^i_N$. Thus each coordinate of $\mathbf X^i_N$ is tight in $C[0,T]$.
Furthermore, 
the process $\mathscr V_N$ is tight via Lemmas \ref{traps} and \ref{lte}. 

Let us thus consider any \textit{joint} limit point of $(\mathbf{X}_N,\mathscr V_N)$, say $(\mathbf U, (\mathscr L^{ij})_{i<j})$, whose law we view as a measure $\mathbf P_{\text{lim}}$ on the canonical space $C([0,T], \mathbb R^k \times \mathbb R^{k(k-1)/2})$ equipped with its canonical filtration. Without loss of generality, we will assume $(\mathbf U, (\mathscr L^{ij})_{i<j})$ is simply the canonical process on this space. We first show that $\mathbf U$ is a standard Brownian motion in $\mathbb R^k$ and that the processes $(1-4\sigma^2)\mathscr L^{ij}$ necessarily agree with the local time at zero of $U^i-U^j$. We will use the Levy characterization and Tanaka's formula after identifying useful martingales in the prelimit.

For $1\le i \le k$, define the process $S^i(r):= R^i(r)-(2\mu_{\nu_N}-1)r$ which are $\mathbf P_N$-martingales. In the prelimit, the processes for $1\le i \le k$ given by 
    \begin{align*}Y_N^{i}(r)= S^i(r)^2 - \sum_{s=1}^r \mathbf E_N\big[ \big(S^i(s)-S^i(s-1)\big)^2\big|\mathcal F_{s-1}\big] =S^i(r)^2 - 4\mu_{\nu_N}(1-\mu_{\nu_N})r
\end{align*}
are $\mathbf P_N$-martingales. Additionally, the processes for $1\le i<j\le k$ given by 
\begin{align*}K_N^{ij}(r)&= S^i(r)S^j(r) - \sum_{s=1}^r \mathbf E_N\big[ \big(S^i(s)-S^i(s-1)\big)\big(S^j(s)-S^j(s-1)\big)\big|\mathcal F_{s-1}\big] \\&= S^i(r)S^j(r) - 4\sigma_{\nu_N}^2 V_r^{ij} 
\end{align*}
are $\mathbf P_N$-martingales. 
Rescaling, we find that the processes 
\begin{align}N^{-1}Y^i_N(Nt) &= \mathbf X^i_N(t)^2 - 4\mu_{\nu_N}(1-\mu_{\nu_N}) \label{prot} t\\N^{-1}K^{ij}_N(Nt)&=\label{prod}\mathbf X_N^i(t)\mathbf X_N^j(t) - N^{-1/2}4\sigma_{\nu_N}^2 \mathscr V^{ij}_N(t) 
\end{align}
are $\mathbf P_N$ martingales indexed by $t\in N^{-1}\mathbb Z_{\ge 0}.$  Using Lemma \ref{traps} we find that these martingales are uniformly integrable as well.

\medskip

\noindent\textbf{Proof of \ref{convi}.} Now we pass this information to the limit points. Martingality is preserved by limit points as long as one has uniform integrability, thus it is clear that $\mathbf U$ is necessarily a $\mathbf P_{\text{lim}}$-martingale (with respect to the canonical filtration on $C([0,T],\mathbb R^k\times \mathbb R^{k(k-1)/2})$). Next, note that $\lim_{N\to \infty} 4\mu_{\nu_N}(1-\mu_{\nu_N}) = 1$ by our assumption on $\mu_{\nu_N}$. Again using that martingality is preserved by limit points, we can thus apply \eqref{prot} to conclude that $\mathbf U^i(t)^2-t$ is a $\mathbf P_{\text{lim}}$-martingale, i.e., $\mathbf U^i$ necessarily has quadratic variation $t$. 
Yet again using the fact that martingality is preserved by limit points, we can use \eqref{prod} and tightness in $C[0,T]$ of the processes $\mathscr V^{ij}_N$ to conclude that for all $i<j$, the process $\mathbf U^i \mathbf U^j$ is a $\mathbf P_{\text{lim}}$-martingale, i.e., $\langle \mathbf U^i ,\mathbf U^j\rangle = 0$ for $i\ne j$. By Levy's characterization, we may conclude that $\mathbf U$ is therefore a standard Brownian motion in $\mathbb R^k$. This proves \ref{convi}.

\medskip

\noindent\textbf{Proof of \ref{convii}.} We will again use a martingale problem. 
Recall the martingales $\mathcal{M}(r)$ from \eqref{m}. Rescaling, we find that 
$$|\mathbf X_N^i(t)-\mathbf X_N^j(t)| - 4\big[( \mu_{\nu_N}(1-\mu_{\nu_N})-\sigma_{\nu_N}^2)\big]\mathscr V^{ij}_N(t) 
$$
are $\mathbf P_N$-martingales indexed by $t\in N^{-1}\mathbb Z_{\ge 0}.$ All quantities are uniformly integrable and tight in $C[0,T]$ as $N\to \infty$ (as explained earlier). By our hypothesis on $\nu_N$, we have $\mu_{\nu_N}(1-\mu_{\nu_N})\to 1/4$. Thus for the limit points, we conclude that the processes  $|\mathbf U^i-\mathbf U^j| - (1-4\sigma^2)\mathscr L^{ij}$ are $\mathbf P_{\text{lim}}$-martingales. As $\mathbf U$ is a standard $k$-dimensional Brownian motion, and the processes $\mathscr L^{ij}$ are $\mathbf P_{\text{lim}}$-a.s. nondecreasing, we conclude by Tanaka's formula that $(1-4\sigma^2)\mathscr L^{ij}$ is the local time at zero of $U^i-U^j$. This uniquely identifies the law of the joint limit point $(\mathbf U, (\mathscr L^{ij})_{i<j})$.

\medskip

\noindent\textbf{Proof of \ref{conviii}} We now show that as $N\to \infty$ the processes $\mathscr T_N$ converge in probability to the zero process with respect to the topology of $C[0,T]$. Indeed these processes are tight because for $\delta>0$, $$\sup_{|t-s|<\delta}|\mathscr T_N(t)-\mathscr T_N(s)| \leq  \sup_{|t-s|<\delta}N^{-1/2} \sum_{1\leq i<j\leq k} V^{ij}(Nt) - V^{ij}(Ns) $$
and we already know that the intersection time process on the right side is tight. Now if we take any \textit{joint} limit point $(F_t, \mathbf U_t)_{t\in [0,T]}$ of the pair of processes $(\mathscr T_N,\mathbf X_N)$, then $F_t$ is a nondecreasing process with $F_0=0$. Furthermore by definition of the set $\Gamma^{(\le k-2)}$ (see \eqref{def:overlap}), this limit point $F$ is constant on every connected component of the complement of the random closed set $K\cup L$ where \begin{align*}&\text{$K:=\{t\in [0,T]:\mathbf U^i_t=\mathbf U^j_t=\mathbf U^p_t$ for some $1\le i<j<p\leq k\}$},\\ &\text{$L:=\{t\in [0,T]:\mathbf U^i_t=\mathbf U^j_t$ and $\mathbf U^p_t=\mathbf U^q_t$  for some $i<j$ and $p<q$ with $\{i,j\}\cap\{p,q\}=\emptyset\}.$}
\end{align*}
We already know that $\mathbf U$ is a standard Brownian motion in $\mathbb R^k$. But 3d Brownian motion never hits the diagonal $\{x=y=z\}$ of $\mathbb R^3$ after time $0$, and likewise, a 4d Brownian motion never hits the proper 2d linear subspace of $\mathbb R^4$ given by $\{x=y\}\cap\{z=w\}$ after time $0$. Consequently $K\cup L = \{0\},$ and so $F_t=0$ identically. 

\medskip

\noindent\textbf{Proof of \ref{conviv}} Let us now take any $\mathcal G_N$ satisfying the assumption of the theorem. We first prove that the linearly interpolated processes $\big(\mathcal G_N(Nt)\big)_{t\in [0,T]\cap N^{-1}\mathbb Z_{\ge 0}}$ are tight in $C[0,T]$. Indeed, by Burkholder-Davis-Gundy inequality, we have for all $s,t \in N^{-1}\mathbb Z_{\ge 0}$ that $$\mathbf E \big[ |\mathcal G_N(Nt)-\mathcal G_N(Ns)|^p \big] \leq \mathbf E \bigg[ \bigg([\mathcal G_N](Nt)-[\mathcal G_N](Ns)\bigg)^{\frac{p}2}\bigg] \leq C\cdot\mathbf E \bigg[ \bigg(N^{-\frac12}\sum_{i<j}  \big(V^{ij}(Nt)-V^{ij}(Ns)\big)\bigg)^{\frac{p}2}\bigg]\!.\!$$ 
		The second inequality above is due to the assumption \eqref{assn1}. Lemma \ref{lte} implies that the term on the right-hand side of the above equation is bounded by $C|t-s|^{p/4}$ for some constant $C>0$ free of $N$. This verifies the tightness of $t\mapsto \mathcal G_N(Nt)$. 

  Consider any limit point of the triple $\big(\widetilde{\mathcal{G}}_N,\; \mathbf X_N,\; (1-4\sigma^2)\big(\mathscr V^{ij}_N\big)_{1\leq i<j\leq k}\big)$, say $(\mathcal{G}, \mathbf{U}, \big(L^{ij}\big)_{1\leq i<j\leq k})$ whose law $\mathbf P_{\text{lim}}$ (now a different object from the previous parts of the proof) we view as a measure on the canonical space $C([0,T],\mathbb R\times \mathbb R^k \times \mathbb R^{k(k-1)/2})$. By the tightness and $L^p$ bounds above, the process $\mathcal G$ is still a $\mathbf P_{\text{lim}}$-martingale with respect to the canonical filtration on this space. We claim that the quadratic covariations satisfy $\langle G, \mathbf U^i\rangle_t=0$ for all $t\in [0,T]$. To prove this, note that in the prelimit, $\mathbf X^i_N(t)\mathcal G_N(Nt) - \mathscr P_N(t)$ for $t\in N^{-1}\mathbb Z_{\ge 0}$ is a $\mathbf P_N$-martingale where $$\mathscr P_N(t):= N^{-1/2} \sum_{s=1}^{Nt} \mathbf E_N[ (\mathcal G_N(s)-\mathcal G_N(s-1))(R^i(s)-R^i(s-1)-(2\mu_{\nu_N}-1))|\mathcal F_{s-1}].$$
Note that

  \begin{align*} 
      |\mathscr P_N(t)| & =  N^{-\frac12}\sum_{s\in [Nt]\cap\Gamma^{(\le k-1)}} \big|\mathbf E_N[ (\mathcal G_N(s)-\mathcal G_N(s-1))(R^i(s)-R^i(s-1)-(2\mu_{\nu_N}-1)|\mathcal F_{s-1}]\big| \\ &\leq N^{-\frac12}\sum_{s\in [Nt]\cap\Gamma^{(\le k-1)}} \bigg[\sqrt{\mathbf E_N[ (\mathcal G_N(s)-\mathcal G_N(s-1))^2|\mathcal F_{s-1}]}\\ & \hspace{5cm} \cdot\sqrt{\mathbf E_N[(R^i(s)-R^i(s-1)-(2\mu_{\nu_N}-1))^2|\mathcal F_{s-1}]}\bigg] \\ & \leq \sum_{s\in [Nt]\cap\Gamma^{(\le k-1)}} C^{1/2}N^{-3/4} \leq C^{1/2} N^{-1/4} \sum_{i<j} \mathscr{V}^{ij}_N.
  \end{align*}
 Let us briefly explain the above deductions. The first equality is due to the fact that $\mathcal G_N(s)-\mathcal G_N(s-1)=0$ for $s\in \Gamma^{(k)}$ via assumption \eqref{assn1}. The inequality in the second line follows from Cauchy-Schwarz inequality. Note that $R^i(s)-R^i(s-1)-(2\mu_{\nu_N}-1)$ is uniformly bounded by $2$. Thus, employing the bound in \eqref{assn1}, we get the inequality in the third line. The final inequality follows from the definition of $\Gamma^{(k-1)}$ and $\mathscr{V}^{ij}$. 
 
 Now, since we already know $\mathscr V_N$ are tight, we conclude that $|\mathscr P_N(t)|\to  0$. Invoking Lemmas \ref{traps} and \ref{exp}, we see that $\mathbf X_N^i\mathcal{G}_N-\mathscr P_N$ is uniformly integrable. Hence $\mathbf U^i \mathcal G$, being a limit point of $\mathbf X_N^i\mathcal{G}_N-\mathscr P_N$, is a $\mathbf P_{\mathrm{lim}}$-martingale for all $1\le i \le k$, so that $\langle \mathcal G, \mathbf U^i\rangle_t=0$ for all $t\in [0,T]$, as claimed. In fact, Proposition \ref{exp} guarantees that the process $\exp(\mathcal G - \frac12 \langle \mathcal G\rangle)$ is a $\mathbf P_{\text{lim}}$-martingale. Now a standard argument that we used in our previous paper \cite[Proof of Theorem 3.2]{DDP23} ensures that $\mathbf E_{\text{lim}}[\exp(\mathcal G(T)-\tfrac12\langle \mathcal G\rangle (T)) \mid \mathcal{F}_T(\mathbf{U})]=1.$ For completeness, we reproduce the argument in the next paragraph.

  Take any bounded $\mathcal F_T(\mathbf{U})$-measurable functional $H:C([0,T],\mathbb R^k)\to \mathbb R$.  Note that the process $t\mapsto \mathbf E_{\text{lim}}[H(\mathbf{U})|\mathcal F_t(\mathbf U)]$ is a martingale in the filtration of $\mathbf{U}$. Under $\mathbf P_{\text{lim}},$ the marginal law of $\mathbf{U}$ is a standard $k$-dimensional Brownian motion by the result of part \ref{convi}, thus by the martingale representation theorem we have
		$$H(\mathbf{U})=\mathbf E_{\text{lim}} [H(\mathbf{U})]+\sum_{i=1}^k\int_0^T h_s^i dU_s^i$$
		for some adapted $\mathbb R$-valued processes $h^1,\ldots,h^k$. For stochastic exponential we have $$\exp\left(\mathcal{G}(T) - \frac12 \langle \mathcal{G}\rangle (T)\right) = 1+\int_0^T \exp\left(\mathcal{G}(s) - \frac12 \langle \mathcal{G}\rangle (s)\right) d\mathcal{G}(s).$$ Using this we find that 
		\begin{align*} &\mathbf E_{\text{lim}}\left[\big(e^{\mathcal{G}(T) - \frac12 \langle \mathcal{G}\rangle (T)} -1\big)\big(H(\mathbf{U})-\mathbf E_{\text{lim}}[H(\mathbf{U})]\big)\right] \\ &= \sum_{i=1}^k\mathbf E_{\text{lim}}\bigg[\bigg(\int_0^T e^{\mathcal{G}(s) - \frac12 \langle \mathcal{G}\rangle(s)}d\mathcal{G}(s)\bigg)\bigg(\int_0^T h_s^i  dU^i_s\bigg)\bigg] \\&= \sum_{i=1}^k \mathbf E_{\text{lim}}\bigg[\int_0^T h_s^i e^{\mathcal{G}(s) - \frac12 \langle \mathcal{G}\rangle(s)} d\langle \mathcal{G},U^i\rangle(s)\bigg] = 0,
		\end{align*}
		where the last equality is due to the fact $\langle U^i,\mathcal{G}\rangle \equiv 0$ almost surely. This proves that for all bounded measurable $H$ we have $\mathbf E_{\text{lim}}[e^{\mathcal{G}(T) - \frac12 \langle \mathcal{G}\rangle(T)}H(\mathbf{U})]=\mathbf E_{\text{lim}}[H(\mathbf{U})]$, establishing \eqref{e:fil2}.

  \medskip

\noindent\textbf{Proof of \ref{convv}} 
  By assumption \eqref{assn1}, we have $|\mathcal G_N(r+1)-\mathcal G_N(r)|\leq C^{1/2}N^{-1/4}\leq 1$ deterministically for large enough $N$. For $x\in [-1,1]$ we have $e^x \leq 1+x+x^2$, therefore we find that $$e^{\mathcal G_N(s)-\mathcal G_N(s-1)}\leq 1+ (\mathcal G_N(s)-\mathcal G_N(s-1)) + (\mathcal G_N(s)-\mathcal G_N(s-1))^2,$$ consequently using martingality of $\mathcal G_N$ and $\log(1+u)\leq u$ we find that 
  \begin{align*}W_N(r)-W_N(r-1)& \leq \log \mathbf E_N [e^{\mathcal G_N(r)-\mathcal G_N(r-1)}|\mathcal F_{r-1}] \\&\leq  \mathbf E_N[(\mathcal G_N(r)-\mathcal G_N(r-1))^2|\mathcal F_{r-1}] \leq CN^{-1/2} \sum_{i<j} (V^{ij}(r) - V^{ij}(r-1)),
  \end{align*}
  where we used \eqref{assn1}. Using this bound together with the nondecreasing property of $W_N$, and Lemmas \ref{traps} and \ref{lte}, it is clear that for all $p\ge 0$  $$\sup_{N\ge 1} \mathbf E_N[|W_N(Nt)-W_N(Ns)|^p]^{1/p} \leq C|t-s|^{p/2}, \;\;\;\;\;\;\;\;\;\;\sup_{N\ge 1} \mathbf E_N[e^{pW_N(Nt)}] < \infty. $$
  Thus the linearly interpolated processes $t\mapsto W_N(Nt)$ for $t\in N^{-1}\mathbb Z_{\ge 0}$ are tight in $C[0,T]$, so we may consider any limit point $\mathcal W$ which is taken \textit{jointly} with the earlier triple of processes $(\mathcal{G}, \mathbf{U}, \big(L^{ij}\big)_{1\leq i<j\leq k})$. Note that since $\exp(\mathcal G_N - W_N)$ are martingales in the prelimit, and the above exponential bound on $W_N$ guarantees uniform integrability of its exponential, it follows that $\exp(\mathcal G-\mathcal W)$ is a martingale for any limit point of the joint 4-tuple. Since $\mathcal G$ is a continuous martingale, this then forces that $\mathcal W = \frac12 \langle \mathcal G\rangle$. 
\end{proof}

We record an extension of the above theorem which also takes certain types of measures into account. The following corollary will be useful in our later analysis.

\begin{cor}\label{convc} Assume the same notations and assumptions of Theorem \ref{converge}.
    Suppose $k=2m$ is even. For $t>s>0$ define
		\begin{align}\label{deltadef}
			\Delta_m(s,t):=\{(s_1,s_2,\ldots,s_m)\in [s,t]^m \mid s\le s_1\le s_2\le\cdots\le s_m\le t\}
		\end{align}
		to be the set of all $m$ ordered points in $[s,t]$.    We define $\mathcal M(\Delta_m(0,T))$ to be the space of finite and non-negative Borel measures on that simplex, equipped with the topology of weak convergence. Consider the following sequence of $\mathcal M(\Delta_m(0,T))$-valued random variables
\begin{align}
\label{gamman}
	\gamma_N:= N^{-m/2}(1-4\sigma^2)^m\sum_{u_1\le ...\le u_m \in (N^{-1}\mathbb Z_{\ge 0})\cap [0,T]}\;\;\;\prod_{j=1}^m \ind_{ \{\mathbf X^{2j-1}_N(u_j)=\mathbf X^{2j}_N(u_j)\} }\delta_{(u_1,\ldots,u_m)}
\end{align}
where $\mathbf X_N = (\mathbf X^1_N,\ldots,\mathbf X^k_N)$. The random variables $\{\gamma_N\}_{N\ge 1}$ are tight in $\mathcal M(\Delta_m(0,T))$. Moreover, any limit point of the triple $(\mathbf X_N,\; (1-4\sigma^2)\big(\mathscr V^{ij}_N\big)_{1\leq i<j\leq k},\; \gamma_N)$ is of the form
\begin{align*}
    \left(\mathbf{U},\; \big(L^{ij}\big)_{1\leq i<j\leq k}, \;\prod_{j=1}^m dL^{{2j-1,2j}}(u_j)\right) 
\end{align*}
where $\mathbf U, L^{ij}$ are as in Theorem \ref{converge}, and $dL(t)$ denotes the Lebesgue-Stieltjes measure induced by the increasing function $t\mapsto L(t)$.
\end{cor}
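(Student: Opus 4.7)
My strategy is to realize $\gamma_N$ as the restriction to the simplex $\Delta_m(0,T)$ of an explicit product measure on the full cube $[0,T]^m$, and then read off the claim from Theorem \ref{converge}. To set this up, I would first introduce the unordered version
$$\tilde\gamma_N := N^{-m/2}(1-4\sigma^2)^m \sum_{u_1,\ldots,u_m \in N^{-1}\mathbb Z_{\ge 0}\cap [0,T]} \prod_{j=1}^m \ind_{\{\mathbf X_N^{2j-1}(u_j)=\mathbf X_N^{2j}(u_j)\}}\, \delta_{(u_1,\ldots,u_m)},$$
viewed as a measure on $[0,T]^m$. Inspection of \eqref{gamman} shows that $\tilde\gamma_N$ factors as the product measure $\prod_{j=1}^m (1-4\sigma^2)\,d\mathscr V_N^{2j-1,2j}$, where each factor is the Lebesgue-Stieltjes measure on $[0,T]$ associated with the non-decreasing function $\mathscr V_N^{2j-1,2j}$, and $\gamma_N = \tilde\gamma_N|_{\Delta_m(0,T)}$. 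Since $\Delta_m(0,T)$ is compact, tightness of $\gamma_N$ in $\mathcal M(\Delta_m(0,T))$ reduces to tightness of the total masses, and the pointwise bound $\gamma_N(\Delta_m(0,T)) \le \prod_{j=1}^m (1-4\sigma^2)\,\mathscr V_N^{2j-1,2j}(T)$ combined with Theorem \ref{converge} gives this immediately.

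Next I would identify any joint limit point. By Theorem \ref{converge}, $(\mathbf X_N, (1-4\sigma^2)\mathscr V_N^{ij})$ converges jointly in distribution to $(\mathbf U, L^{ij})$ in $C([0,T],\mathbb R^k\times \mathbb R^{k(k-1)/2})$; pass to an almost-sure coupling via Skorohod. Uniform convergence of non-decreasing functions with common value at the left endpoint implies weak convergence of the associated Stieltjes measures on $[0,T]$, so $(1-4\sigma^2)\,d\mathscr V_N^{2j-1,2j} \to dL^{2j-1,2j}$ weakly on $[0,T]$ for each $j$. Because tensor products are dense in $C([0,T]^m)$ by Stone-Weierstrass and the total masses are tight, this upgrades to weak convergence of the product measures: $\tilde\gamma_N \to \tilde\gamma_\infty := \prod_{j=1}^m dL^{2j-1,2j}$ on $[0,T]^m$, jointly with $\mathbf X_N$ and the $\mathscr V_N^{ij}$.

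The remaining step, which I expect to be the main subtlety, is to pass from $\tilde\gamma_N$ to the restriction $\gamma_N$. The boundary of $\Delta_m(0,T)$ in $[0,T]^m$ is contained in $\bigcup_{i=1}^{m-1}\{u_i=u_{i+1}\}\cup\{u_1=0\}\cup\{u_m=T\}$. Since $L^{ij}$ is almost surely continuous (being the local time of a Brownian motion), each $dL^{ij}$ is non-atomic; by Fubini each pairwise diagonal $\{u_i=u_{i+1}\}$ has zero $(dL^{2i-1,2i}\otimes dL^{2i+1,2i+2})$-mass, and the endpoint faces have zero mass as well. Hence $\tilde\gamma_\infty(\partial \Delta_m(0,T))=0$ almost surely, so by a Portmanteau argument the restriction map $\mu\mapsto \mu|_{\Delta_m(0,T)}$ is continuous at $\tilde\gamma_\infty$ in the weak topology. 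This yields $\gamma_N \to \tilde\gamma_\infty|_{\Delta_m(0,T)} = \prod_{j=1}^m dL^{2j-1,2j}(u_j)$ jointly with the convergence of $\mathbf X_N$ and $(1-4\sigma^2)\mathscr V_N^{ij}$, which is exactly the claim.
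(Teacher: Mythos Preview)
Your proposal is correct and follows essentially the same idea as the paper: both exploit the product structure underlying $\gamma_N$ and identify the limit through the increments of the processes $(1-4\sigma^2)\mathscr V_N^{2j-1,2j}$. The paper's proof is considerably terser --- after bounding the total mass via Lemma~\ref{traps} to obtain tightness, it simply notes that ``the joint cumulative distribution function of the measure $\gamma$ is necessarily given by a product of the $K^{ij}$,'' and then invokes Theorem~\ref{converge}\ref{convii} to conclude. Your version unpacks this step more carefully by lifting to the full cube $[0,T]^m$, using Skorokhod and Stone--Weierstrass to get weak convergence of the product measures $\tilde\gamma_N$, and then handling the restriction to $\Delta_m(0,T)$ via a Portmanteau argument with the non-atomicity of local time. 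One small point to watch: $\gamma_N$ as defined in \eqref{gamman} is built from Dirac masses at lattice points, whereas the Stieltjes measure $d\mathscr V_N^{2j-1,2j}$ of the \emph{linearly interpolated} path is absolutely continuous; the two differ only at scale $N^{-1}$ in the time variable and have identical CDFs at lattice points, so this does not affect the weak limit, but you may want to note this explicitly.
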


\begin{proof}
Note that
\begin{align*}
	\gamma_N(\Delta_m(0,T)) \le \prod_{j=1}^m\left[N^{-\frac12}\sum_{u\in [0,T]\cap (N^{-1}\mathbb Z_{\ge 0})} \ind_{\{\mathbf X_N^{2j-1}(u)=\mathbf X_N^{2j}(u)\}} \right].
\end{align*}
From Lemma \ref{traps} we know the exponential moments for each of the terms in the product are uniformly bounded under the $\mathbf E_N$ measure. Thus for all $p\ge 1$,
\begin{align}\label{gmass1}
	\sup_{N\ge 1} \mathbf E_N[\gamma_N(\Delta_m(0,T))^p]<\infty.
\end{align}
Hence the laws of $\{\gamma_N\}_{N\ge 1}$ are tight, because the total mass of $\gamma_N$ is a tight family of random variables and because $\Delta_m(0,T)$ as defined in \eqref{deltadef} is a compact space. Let $\left(\mathbf{U}, \big(K^{ij}\big)_{1\leq i<j\leq k}, \gamma\right)$ be any limit point of the sequence $(\mathbf X_N,(1-4\sigma^2)\big(\mathscr V^{ij}_N\big)_{1\leq i<j\leq k}, \gamma_N)$.  Note that the joint cumulative distribution function of the measure $\gamma$ is necessarily given by a product of the $K^{ij}$. Since we know that $K^{ij}=L_0^{U^{i}-U^{j}}$ by Theorem \ref{converge}\ref{convii}, this implies that $\gamma=\prod_{j=1}^m dL_0^{U^{2j-1}-U^{2j}}(u_j)$. 
\end{proof}

\subsection{Convergence of moments}\label{mom}

We will now prove that moments of the field \eqref{field} converge to the moments of \eqref{she}. It will be a consequence of Theorem \ref{converge}, illustrating the applicability of the convergence theorem. While the moment convergence is not enough to prove the weak convergence of Theorem \ref{main}, it gives a strong indication of it, and more importantly, it implies some useful estimates that will be used later.

\begin{prop}[Moment convergence] Fix $\phi \in C_c^\infty(\mathbb R)$, $t>0$, and $k\in \mathbb N$. With $\mathscr U_N$ as defined in \eqref{field}, we have that $$\lim_{N\to \infty} \mathbb E[\mathscr U_N(t,\phi)^k] = \mathbb E \bigg[ \bigg( \int_\mathbb R \mathcal U_t(x)\phi(x)dx\bigg)^k\bigg],$$
    where $(t,x)\mapsto \mathcal U_t(x)$ solves \eqref{she} with initial condition $\delta_0(x).$
\end{prop}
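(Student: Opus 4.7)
The plan is to mimic the first-moment calculation in the introduction, but applied to the $k$-point motion. First I would express
\[
\mathbb E[\mathscr U_N(t,\phi)^k] = \mathbf E_{RW^{(k)}_\nu}\bigg[\prod_{i=1}^k C_{N,t,N^{-1/2}(R^i(Nt)-N^{3/4}t)}\cdot\phi\big(N^{-1/2}(R^i(Nt)-N^{3/4}t)\big)\bigg].
\]
Writing $\lambda=N^{-1/4}$, a direct calculation using \eqref{cntx} gives $\prod_{i=1}^k C_{N,t,\cdot}(R^i) = \exp(\lambda\sum_j R^j(Nt)-kNt\log\cosh\lambda)$, and factoring out the martingale $\mathpzc M^\lambda(Nt)$ from \eqref{m_n} peels off the deterministic remainder $\exp(\sum_{\ell<Nt}(f_\ell^{\lambda,k,\nu}-k\log\cosh\lambda))$. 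Interpreting $\mathpzc M^\lambda$ as a Radon-Nikodym derivative via Definition~\ref{q} and invoking Proposition~\ref{g.exists} to pass from $\mathbf Q^{N^{-1/4}}_{RW^{(k)}_\nu}$ to the skewed measure $\mathbf P_{RW^{(k)}_{\nu_N^*}}$ then gives
\[
\mathbb E[\mathscr U_N(t,\phi)^k] = \mathbf E_{RW^{(k)}_{\nu_N^*}}\!\Big[e^{\widetilde{\mathcal G_N}(Nt)}e^{\sum_{\ell<Nt}(f_\ell^{\lambda,k,\nu}-k\log\cosh\lambda)}\prod_{i=1}^k\phi\big(N^{-1/2}(R^i(Nt)-N^{3/4}t)\big)\Big],
\]
with $\widetilde{\mathcal G_N}=\mathcal G_N-W_N$. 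Since $\mu_{\nu_N^*}\to\tfrac12$ and $\sigma_{\nu_N^*}^2\to\sigma^2$ by Definition~\ref{def:stt}, the hypotheses of Theorem~\ref{converge} are in force.

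Next I would take $N\to\infty$. A small-$\lambda$ Taylor expansion, using $\mu_\nu=\tfrac12$ and the identities $\int(2\omega-1)\nu(d\omega)=0$, $\int(2\omega-1)^2\nu(d\omega)=4\sigma^2$, yields $f_\ell^{\lambda,k,\nu}-k\log\cosh\lambda = 4\sigma^2\lambda^2\sum_j\binom{n_j}{2}+O(\lambda^3)$, where $n_1,\ldots,n_v$ are the cluster sizes of $\mathbf R(\ell)$. Since $\sum_j\binom{n_j}{2}=\sum_{i<j}\mathbf 1_{R^i(\ell)=R^j(\ell)}$, summing over $\ell<Nt$ with $\lambda=N^{-1/4}$ gives $\sum_{\ell<Nt}(f_\ell^{\lambda,k,\nu}-k\log\cosh\lambda) = 4\sigma^2\sum_{i<j}\mathscr V^{ij}_N(t)+O(N^{-1/4}\sum_{i<j}\mathscr V^{ij}_N(t))$, which by Theorem~\ref{converge}(ii) converges in law to $\tfrac{4\sigma^2}{1-4\sigma^2}\sum_{i<j}L^{ij}(t)$. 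Combining with parts (i), (iv), (v) of Theorem~\ref{converge} and the observation that $(2\mu_{\nu_N^*}-1)Nt - N^{3/4}t = O(N^{1/4})$, so that $N^{-1/2}(R^i(Nt)-N^{3/4}t) = \mathbf X_N^i(t)+o(1)$ and the bounded continuous $\phi$ passes to $\phi(U^i(t))$, the integrand converges jointly in law to
\[
e^{\mathcal G(t)-\tfrac12\langle\mathcal G\rangle(t)}\cdot\exp\!\Bigl(\tfrac{4\sigma^2}{1-4\sigma^2}\textstyle\sum_{i<j}L^{U^i-U^j}_0(t)\Bigr)\prod_{i=1}^k\phi(U^i(t)).
\]
Conditioning on $\mathbf U$ and applying the martingale identity \eqref{e:fil2} integrates out the $\mathcal G$ factor, leaving $\mathbf E_{\mathrm{BM}}[\exp(\tfrac{4\sigma^2}{1-4\sigma^2}\sum_{i<j}L^{U^i-U^j}_0(t))\prod_i\phi(U^i(t))]$, where $\mathbf U$ is a standard $k$-dimensional Brownian motion. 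The answer is independent of the subsequence, so the full limit exists.

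To finish, I would match this with the $k$-th moment of the SHE via the delta Bose gas. The moment kernel $\phi_k(\mathbf x,t):=\mathbb E[\prod_i\mathcal U_t(x_i)]$ of \eqref{she} (with $\gamma^2 = 8\sigma^2/(1-4\sigma^2)$) satisfies $\partial_t\phi_k = \tfrac12\sum_i\Delta_i\phi_k+\gamma^2\sum_{i<j}\delta(x_i-x_j)\phi_k$ with initial data $\delta_0^{\otimes k}$. The Feynman-Kac representation, together with the occupation-time identity $\int_0^t\delta(U^i_s-U^j_s)ds=\tfrac12 L^{U^i-U^j}_0(t)$ (valid because $U^i-U^j$ is a Brownian motion of quadratic variation $2t$), gives $\mathbb E[(\int\phi\mathcal U_t)^k] = \mathbf E_{\mathrm{BM}}[\exp(\tfrac{\gamma^2}{2}\sum_{i<j}L^{U^i-U^j}_0(t))\prod_i\phi(U^i(t))]$; with $\gamma^2/2=4\sigma^2/(1-4\sigma^2)$ this matches exactly.

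The main technical obstacle is upgrading weak convergence of the integrand to convergence of its expectation, i.e., establishing uniform integrability of $e^{\widetilde{\mathcal G_N}(Nt)}\cdot e^{\sum(f-k\log\cosh\lambda)}\cdot\prod\phi(\cdot)$ under $\mathbf P_{RW^{(k)}_{\nu_N^*}}$. Since $\phi$ is bounded, this reduces by H\"older's inequality to uniform $L^p$ bounds on the two exponential factors: Proposition~\ref{exp} controls $\mathbf E[e^{p\widetilde{\mathcal G_N}(Nt)}]$ (using the deterministic bound $W_N(Nt)\le CN^{-1/2}\sum_{i<j}V^{ij}(Nt)$ that follows from the proof of Proposition~\ref{exp}), while Lemma~\ref{traps} applied with $\theta\propto N^{-1/2}$ controls the intersection exponential. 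A secondary nuisance is the $O(\lambda^3)$ remainder in the cluster-by-cluster Taylor expansion, which is controlled by $\lambda^3\sum_{i<j}V^{ij}(Nt)=O(N^{-1/4})$ together with the vanishing of $\mathscr T_N$ from Theorem~\ref{converge}(iii) to dispose of the multi-cluster contributions to $f_\ell^{\lambda,k,\nu}-k\log\cosh\lambda$.
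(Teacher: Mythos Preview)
Your proposal is correct and follows essentially the same route as the paper: tilt the $k$-point annealed expectation via $\mathpzc M^{N^{-1/4}}$ and Proposition~\ref{g.exists} to reach $\mathbf P_{RW^{(k)}_{\nu_N^*}}$, apply Theorem~\ref{converge} to identify the weak limit, use \eqref{e:fil2} to integrate out the $\mathcal G$ factor, and match against the local-time Feynman--Kac formula for the SHE moments (with uniform integrability supplied by Proposition~\ref{exp} and Lemma~\ref{traps}). The only cosmetic difference is in the bookkeeping for $\sum_\ell(f_\ell^{\lambda,k,\nu}-k\log\cosh\lambda)$: the paper splits explicitly over $\Gamma^{(k)},\Gamma^{(k-1)},\Gamma^{(\le k-2)}$ and invokes $\mathscr T_N\to 0$ for the last piece, whereas your uniform cluster-by-cluster Taylor expansion $4\sigma^2\lambda^2\sum_{i<j}\mathbf 1_{R^i(\ell)=R^j(\ell)}+O(\lambda^3)\mathbf 1_{\Gamma^{(\le k-1)}}$ captures all configurations at once and makes the appeal to $\mathscr T_N$ unnecessary.
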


 \begin{proof} Fix any  $\phi\in \mathcal{S}(\R)$. With $\mathscr{U}_N$ defined in \eqref{field}, we wish to compute the annealed expectation $\mathbb E[\mathscr U_N(t,\phi)^k]$. Just like \eqref{1stmoment} we can write 
\begin{align*}\mathscr U_N(t,\phi)^k &= \mathsf E^\omega_{(k)} \bigg[\prod_{j=1}^kC_{N,t,N^{-1/2}(R^j(Nt)- N^{3/4} t)}\phi\big(N^{-1/2}(R^j(Nt)- N^{3/4}t)\big)\bigg],
\end{align*}
where $\mathsf E^\omega_{(k)}$ is a \textit{quenched} expectation of $k$ independent particles sampled from a fixed realization of the environment $ (\omega_{t,x})_{t\ge 0, x\in \mathbb Z}$. Taking the annealed expectation over the quenched one we get
\begin{align}\label{e.utphi}\Ex[\mathscr U_N(t,\phi)^k] &= \mathbf E_{RW_{\nu}^{(k)}} \bigg[\prod_{j=1}^kC_{N,t,N^{-1/2}(R^j({Nt})- N^{3/4} t)}\phi\big(N^{-1/2}(R^j({Nt})- N^{3/4}t)\big)\bigg]. 
\end{align}
Let $\{\nu_N^*\}_{N\ge 1}$ be the \stt\ sequence of measures corresponding to $\nu$. Recall the measure $\mathbf Q_{RW_{\nu}^{(k)}}^{\lambda}$, $\mathpzc M^{\lambda}$ and $\widetilde{\mathcal G}_N$ from Definition \ref{q}, \eqref{m_n} and \eqref{g} respectively. The key idea from here is to get rid of divergent terms appearing in the above expectation through the following change of variable:
\begin{align}\label{tilt}
     \mathbf{E}_{RW_{\nu}^{(k)}}[\mathbf{F}] =\mathbf{Q}_{RW_{\nu}^{(k)}}^{N^{-1/4}}\left[\frac1{\mathpzc M^{N^{-\frac14}}(Nt)}\mathbf{F}\right] 
=\mathbf{E}_{RW_{\nu_N^*}^{(k)}}\left[\frac1{\mathpzc M^{N^{-\frac14}}(Nt)}e^{\widetilde{\mathcal{G}_N}(Nt)}\mathbf{F}\right].  
 \end{align}
where  $\mathbf{F}=\mathbf{F}(\mathbf{R})$ is any measurable functional on the path space. 
Then we appeal to Theorem \ref{converge} to deduce convergence under the measure $\mathbf{P}_{RW_{\nu_N^*}^{(k)}}$. 
 
Indeed, applying the tilts in \eqref{tilt} we get
\begin{equation}
    \label{e.utphi2}
    \begin{aligned}
    \mbox{r.h.s.~of \eqref{e.utphi}} & =\mathbf E_{RW_{\nu_N^*}^{(k)}}\bigg[\exp\bigg({\mathcal{G}}_N(Nt)-\sum_{s\le Nt}\mE_{RW_{\nu_N}^{(k)}}[e^{\mathcal{G}_N(s)-\mathcal{G}_N(s-1)}\mid \mathcal{F}_{s-1}]\bigg) \\ & \hspace{1cm}\cdot \frac1{\mathpzc M^{N^{-\frac14}}(NT)}\prod_{j=1}^kC_{N,t,N^{-\frac12}(R^j({Nt})- N^{\frac34} t)}   \phi\big(N^{-\frac12}(R^j({Nt})- N^{\frac34}t)\big)\bigg]\!.\!
\end{aligned}
\end{equation}
Here $\mathcal{G}_N$ is a family of $\mathbf P_{RW_{\nu}^{(k)}}$-martingales satisfying \eqref{g1}. We now use the notations of rescaled processes: $\mathbf{X}_N, \mathscr{V}_N^{ij},$ and $\mathscr{T}_N$, considered in \eqref{def:resc}. We claim that 
\begin{align}
\label{twort}
    \frac1{\mathpzc M^{N^{-1/4}}(NT)}\prod_{j=1}^kC_{N,t,N^{-1/2}(R^j(Nt)- N^{3/4} t)} =\exp\left(4\sigma^2\sum_{1\le i<j\le k}\mathscr V^{ij}_N(t)-\mathsf{Err}_N(t)\right)
\end{align}
where
\begin{align}\label{errbd}
|\mathsf{Err}_N(t)|\le \Con\bigg(\mathscr{T}_N(t)+N^{-1/2}\sum_{1\le i<j\le k}\mathscr V^{ij}_N(t)\bigg).
\end{align}
The proofs of \eqref{twort} and \eqref{errbd} follow by some algebraic calculations and definition chasing. We shall prove them in a moment. 
Note that $\nu_N$ and $\mathcal{G}_N$ satisfy the hypothesis of Theorem \ref{converge}. 
Thus applying Theorem \ref{converge}, in view of the bound in \eqref{errbd}, we see that any limit point of the random variable inside the expectation on the r.h.s.~of \eqref{e.utphi2} is of the form
\begin{align*}
    \exp\left(\mathcal{G}(t)-\tfrac12\langle \mathcal{G}\rangle (t)\right)\exp\left(\frac{4\sigma^2}{1-4\sigma^2}\sum_{1\le i<j\le k} L^{ij}(t)\right)\prod_{j=1}^k \phi({U}^i(t)), 
\end{align*}
where $\mathbf{U}$ is a standard $k$-dimensional Brownian motion on $[0,T]$, defined on a standard probability space $(\Omega,\mathcal F,\mathbf P)$,  
 $L^{ij}(t):=L^{U^i-U^j}_0(t)$ is the local time at zero of $U_i-U_j$ accrued by time $t$ and $\mathcal{G}$ is a $C[0,T]$-valued random variable satisfying $\mathbf E[e^{p {\mathcal{G}}(T)-\frac{p}2\langle {\mathcal{G}}\rangle (T)}]<\infty$ for all $p>0$ as well as \eqref{e:fil2}.

Thanks to the estimates in Lemma \ref{traps} and Proposition \ref{exp}, we see that the random variables in the prelimit are uniformly integrable. Thus every subsequence of the expectation on r.h.s.~of \eqref{e.utphi2} has a further subsequence which as $N\to\infty$ converges to
 \begin{align*}
    & \mE\left[\exp\left(\mathcal{G}(t)-\tfrac12\langle \mathcal{G}\rangle (t)\right)\exp\left(\frac{4\sigma^2}{1-4\sigma^2}\sum_{1\le i<j\le k} L^{ij}(t)\right)\prod_{j=1}^k \phi(\mathbf{U}^i(t)) \right] \\ & = \mE\left[\exp\left(\frac{4\sigma^2}{1-4\sigma^2}\sum_{1\le i<j\le k} L^{ij}(t)\right)\prod_{j=1}^k \phi(\mathbf{U}^i(t)) \right].
\end{align*}
The last equality is due to \eqref{e:fil2}. Since this expression is free $\mathcal{G}$ (which may depend on the subsequence), we thus get that
\begin{equation} \label{eq:moments}
    \lim_{N\to\infty} \mathbb \E[\mathscr U_N(t,\phi)^k] = \mathbf E\left[\exp\left(\frac{4\sigma^2}{1-4\sigma^2}\sum_{1\le i<j\le k} L^{ij}(t)\right)\prod_{j=1}^k \phi(\mathbf{U}^i(t)) \right].
\end{equation}
			
		
		By the Feynman-Kac formula, the solution $\mathcal U$ of the stochastic heat equation \eqref{she} admits well-known moment formulas in terms of local times of Brownian bridges. In particular, from \cite[Lemma 2.3]{BC95} we have 
		\begin{align*}
			\mathbf E\left[\prod_{i=1}^k \mathcal{U}_t(x_i)\right] = p(t,x_1)\cdots p(t,x_k)\mathbf E_{\vec{x}}\bigg[  e^{\frac{4\sigma^2}{1-4\sigma^2} \sum_{i<j} L^{B^i-B^j}_0(t)} \bigg],
		\end{align*}
		where $p$ is the standard heat kernel, and the $B^i$ are independent Brownian bridges on $[0,t]$ from $0$ to $x_i$ respectively. Note that in \cite{BC95}, local time is defined as an integral against Lebesgue measure: $L^X_a(t):= \lim_{\varepsilon \to 0^+}\frac{1}{2 \varepsilon}\int_0^t \mathbbm{1}_{\{a- \varepsilon < X_s < a + \varepsilon\}}ds$, in contrast with our definition of local time in \eqref{eq:localTimeDef} which is an integral against $d\langle X, X \rangle_s$. Hence, when taking the local time of $B^i - B^j$, our definition of local time differs from theirs by a factor of $2$.   
  
  We can integrate this formula to arrive at the moment formula for  $\mathcal{U}_t(\phi):=\int_{\R} \mathcal{U}_t(x)\phi(x)dx$, with $(t,x) \mapsto \mathcal U_t(x)$ solving \eqref{she}, and we see that it matches the right-hand side of \eqref{eq:moments},  thus completing the proof of moment convergence for $\mathscr U_N$ modulo \eqref{twort} and \eqref{errbd}.

Let us now prove \eqref{twort}-\eqref{errbd}. Using the definition of martingale $\mathpzc M^{\lambda}$ from \eqref{m_n} we see that
\begin{align}
    \label{erty}
    \frac1{\mathpzc M^{N^{-\frac14}}(Nt)}\prod_{j=1}^kC_{N,t,N^{-\frac12}(R^j(Nt)- N^{3/4} t)} = \exp\bigg(\sum_{\ell=0}^{Nt-1} f_{\ell}^{N^{-\frac14},k}  -kNt\log\cosh(N^{-\frac14})\bigg),
\end{align}
where $f_{\ell}^{N^{-1/4},k}$ is defined in \eqref{def:flk}.   Recall $\Gamma^{(v)}$ from \eqref{def:overlap}. Using the expressions for $f_{\ell}^{\lambda ,k}$ for $\ell\in \Gamma^{(k)}, \Gamma^{(k-1)}$ (obtained after \eqref{def:flk}) we have
\begin{equation}
    \label{erty2}
    \begin{aligned} \sum_{\ell=0}^{Nt-1}f_\ell^{N^{-\frac14},k} -kNt\log \cosh (N^{-\frac14})  
& =  g(N^{-1/4},\sigma^2)\cdot \#\{[Nt]\cap \Gamma^{(k-1)}\}+\sum_{\ell \in[Nt]\cap \Gamma^{(\le k-2)}} f_{\ell}^{N^{-1/4},k}\\ & \hspace{2cm}-k\log \cosh (N^{-1/4}) \cdot \#\{[Nt]\cap \Gamma^{(\le k-2)}\}.
\end{aligned}
\end{equation}
We note that 
\begin{align}
\label{relation}
   \#\{[Nt]\cap \Gamma^{(k-1)}\} = N^{1/2}\sum_{1\le i<j\le k} \mathscr{V}_N^{ij}(t)- \sum_{\ell\in [Nt]\cap \Gamma^{(\le k-2)}} \sum_{1\le i<j\le k}\ind_{\{R^i(\ell)=R^j(\ell)\}},
\end{align}
where $\mathscr{V}_N^{ij}$ is defined in \eqref{def:resc}. Let us now define
\begin{align*}
    \mathsf{Err}_N^{(1)}(t)& :=g(N^{-1/4},\sigma^2)\sum_{\ell\in [Nt]\cap \Gamma^{(\le k-2)}} \sum_{1\le i<j\le k}\ind_{\{R^i(\ell)=R^j(\ell)\}} \\ & \hspace{2cm}-\sum_{\ell \in[Nt]\cap \Gamma^{(\le k-2)}} f_{\ell}^{N^{-1/4},k}+k\log \cosh (N^{-1/4}) \cdot \#\{[Nt]\cap \Gamma^{(\le k-2)}\}, \\
    \mathsf{Err}_N^{(2)}(t) & :=\big(g(N^{-1/4},\sigma^2)-4\sigma^2 N^{-1/2}\big)N^{1/2}\sum_{1\le i<j\le k} \mathscr{V}_N^{ij}(t).
\end{align*}
In view of the relation in \eqref{relation} and above definitions, we obtain from \eqref{erty2} that
\begin{align*}
    \sum_{\ell=0}^{Nt-1}f_\ell^{N^{-\frac14},k} -kNt\log \cosh (N^{-\frac14}) = 4\sigma^2 \sum_{1\le i<j\le k} \mathscr{V}_N^{ij}(t)+\mathsf{Err}_N^{(2)}(t)-\mathsf{Err}_N^{(1)}(t).
\end{align*}
Setting $\mathsf{Err}_N(t):=\mathsf{Err}_N^{(2)}(t)-\mathsf{Err}_N^{(1)}(t)$ leads to \eqref{twort} from \eqref{erty} and the above relation. To prove the bound in \eqref{errbd}, note that $g(N^{-1/4},\sigma^2)=4\sigma^2N^{-1/2}+O(N^{-1})$. Thus we have
\begin{align*}
    |\mathsf{Err}_N^{(2)}(t)|\le \Con N^{-1/2}\sum_{1\le i<j\le k} \mathscr{V}_N^{ij}(t).
\end{align*}
Furthermore $\log\cosh(N^{-1/4})=O(N^{-1/2})$ and $f_{\ell}^{N^{-1/4},k}$ is uniformly bounded by $\Con N^{-1/2}$ by Lemma \ref{prop:bound}. Using these bounds and recalling the definition of $\mathscr{T}_N$ from \eqref{def:resc} we see that
\begin{align*}
    |\mathsf{Err}_N^{(1)}(t)|\le \Con \cdot \mathscr{T}_N(t).
\end{align*}
Combining the bounds in the last two displays verifies \eqref{errbd}.
	\end{proof}

\section{A discrete Hopf-Cole transform: Proof overview of Theorem \ref{main}}\label{hopf}
Having established the moment convergence, in this section, we present a technical roadmap to the proof of the main result Theorem \ref{main}. We will rely on the martingale characterization of the solution of the multiplicative noise stochastic heat equation. Specifically, consider a measure $\mu$  on $C([0,T],C(\mathbb R))$, and let $(\mathcal U(t))_{t\in [0,T]}$ denote the canonical process on that space. The canonical filtration $\mathcal F_t$ on $C([0,T],C(\mathbb R))$ is the one generated by $\{\mathcal U(s):s\leq t\}$. Define the heat operator $\mathcal{L}:= \partial_t-\frac12 \partial_x^2$, which acts on tempered distributions $f\in \mathcal S'(\mathbb R^2)$ by the formula $((\partial_t-\tfrac12\partial_x^2)f,\varphi) = (f, (-\partial_t-\tfrac12\partial_x^2)\varphi)$.

{\renewcommand{\arraystretch}{1.2}
	\begin{longtable}[t]{c|c|c|c}
 \captionsetup{width=.9\linewidth}

		\toprule
		Objects & \multicolumn{2}{c|}{Discrete Version}  & Continuous Version \\
		\midrule
  Heat Operator & \multicolumn{2}{c|}{$\mathcal{L}_N$ \eqref{heat1}} & $\mathcal{L}=\partial_t-\frac12 \partial_x^2$  \\
  \midrule
 & Microscopic version & Rescaled field & Continuum limit \\
 \midrule
   Process of interest& ${Z}_N^{\omega}$ \eqref{z_n} & $\mathscr{U}_N$ \eqref{field} & $\mathcal U$ \eqref{she} \\
Relevant martingale & ${v}_N$ \eqref{mart'} & $M_N$ \eqref{m_field} & $\mathcal{Y}$ \eqref{me1} \\
Quadratic Martingale field & & $Q_N$ \eqref{qfield} & Integrated $\mathcal{U}^2$ \\
\bottomrule

\caption{Objects that will appear in the proof of weak convergence} \\

	\end{longtable}
}

We recall a result of \cite[Proposition 4.11]{BG97} inspired by the work of \cite{konno}. If for all $\phi \in C_c^\infty(\mathbb R)$ the processes 
\begin{align}
\label{me1}
    \mathcal{Y}(t,\phi):= \mathcal U(t,\phi) - \frac12 \int_0^t ( \mathcal U(s) , \phi'') ds
\end{align}
 (which is formally the same as $ \int_0^t\int_\mathbb R(\mathcal{L} \mathcal U)(s,y)\phi(y)dyds$) 
  are $(\mathcal F_t,\mu)$-martingales with quadratic variation 
 \begin{align}\label{me2}
     \langle \mathcal{Y}(\cdot,\phi)\rangle(t) = {\frac{8\sigma^2}{1-4\sigma^2}}\int_0^t (\mathcal U(s)^2, \phi^2)_{L^2(\mathbb R)}ds,
 \end{align}
 then (under reasonable assumptions on the spatial growth of $\mathcal U(t)$ at infinity) the measure $\mu$ necessarily coincides with the law of \eqref{she} started from an initial condition that is distributed as $\mathcal U(0)$ under $\mu$.

We shall eventually show our prelimiting field from \eqref{field} is tight and any limit point satisfies the above martingale characterization.  The key observation that drives our proof is that the prelimiting field itself satisfies a lattice stochastic heat equation. This will then allow us to define some observables that will be crucial in proving tightness and identifying the limit points in later sections. We now end the heuristic discussions and rigorously derive the lattice SPDE, then write out the quadratic variations of the resultant martingales. 

\begin{defn}Fix $N\in\mathbb N$. Define a discrete lattice 
\begin{equation}\label{lattice}\Lambda_N:= \{(t,x) \in \mathbb Z_{\ge 0} \times \mathbb R: x+tN^{-1/4} \in \mathbb Z\},
\end{equation} and a \textbf{discrete heat operator} $\mathcal L_N$, which acts on real-valued functions $f:\Lambda_N\to\mathbb R$ by 
\begin{equation}\label{heat1}(\mathcal L_Nf)(t,x) := f(t+1,x-N^{-1/4}) - \big(\rho_Nf(t,x-1) + (1-\rho_N) f(t,x+1) \big),\end{equation} with $\rho_N$ given by \eqref{def:rhon}.\end{defn}
Recall $\mathsf P^\omega(t,x)$ from \eqref{pwtx}. Assume $\mathbb{E}[\omega_{ij}]=\tfrac12$. For each realization of the environment $\omega$, define the random function $Z_N^\omega:\Lambda_N\to\mathbb R$: 
\begin{equation}Z_N^\omega(t,x):= C_{N,N^{-1}t, N^{-1/2}x}\mathsf P^\omega(t, x+N^{-1/4}t), \label{z_n}
\end{equation}where $C_{N,T,x}$ is defined in \eqref{cntx}. Note that $Z_N^{\omega}$ is the pointwise version of the rescaled field $\mathscr{U}_N$ defined in \eqref{field}. Roughly speaking, Theorem \ref{main} suggests that we wish to prove convergence to SHE for the fields $N^{1/2}Z^\omega_N(Nt,N^{1/2}x)$. The following lemma suggests $Z_N^\omega(t,x)$ 
satisfies a discrete version of \eqref{me1} on $\Lambda_N$.
 
\begin{lem}  For $(t,x)\in\Lambda_N$, define\begin{equation}\label{mart'}
    v_N(t,x):= \big(\mathcal L_NZ_N^\omega\big)(t,x).
    \end{equation} Then $v_N$ is a martingale-difference field in the filtration $\mathcal F^\omega_t = \sigma(\{\omega_{s,x}:x\in \mathbb Z, 0\leq s\leq t\}),$ i.e. \begin{equation}\mathbb E \big[ v_N(t,x)\big| \mathcal F^\omega_{t-1} ] =0.\label{mfield}\end{equation}
\end{lem}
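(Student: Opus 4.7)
The plan is a direct computation showing that, up to a deterministic prefactor, $v_N(t,x)$ reduces to the one-step martingale increment of the quenched kernel $\mathsf P^\omega$, and that the tilt parameter $\rho_N$ defined in \eqref{def:rhon} is calibrated precisely to make the two spatial coefficients in $\mathcal L_N$ equal. First I would abbreviate $C_N(t,x):=C_{N,N^{-1}t,N^{-1/2}x}=\exp\bigl(N^{-1/4}x+(N^{-1/2}-\log\cosh(N^{-1/4}))t\bigr)$ and $y:=x+N^{-1/4}t$, and record the three exponential identities
\begin{equation*}
C_N(t+1,x-N^{-1/4})=\frac{C_N(t,x)}{\cosh(N^{-1/4})},\qquad C_N(t,x\pm 1)=e^{\pm N^{-1/4}}C_N(t,x),
\end{equation*}
which follow immediately from the definition of $C_N$. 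Substituting these into \eqref{z_n} and \eqref{heat1} and pulling out the common factor $C_N(t,x)/\cosh(N^{-1/4})$ gives
\begin{equation*}
v_N(t,x)=\frac{C_N(t,x)}{\cosh(N^{-1/4})}\Bigl[\mathsf P^\omega(t+1,y)-\rho_N\cosh(N^{-1/4})e^{-N^{-1/4}}\mathsf P^\omega(t,y-1)-(1-\rho_N)\cosh(N^{-1/4})e^{N^{-1/4}}\mathsf P^\omega(t,y+1)\Bigr].
\end{equation*}

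The second step is the algebraic cancellation. From $\rho_N=e^{N^{-1/4}}/(2\cosh(N^{-1/4}))$ and $1-\rho_N=e^{-N^{-1/4}}/(2\cosh(N^{-1/4}))$ one checks that both bracketed coefficients collapse to $\tfrac12$, so
\begin{equation*}
v_N(t,x)=\frac{C_N(t,x)}{\cosh(N^{-1/4})}\Bigl[\mathsf P^\omega(t+1,y)-\tfrac12\mathsf P^\omega(t,y-1)-\tfrac12\mathsf P^\omega(t,y+1)\Bigr].
\end{equation*}
This is the cleanest way to see why $\rho_N$ is the ``right'' tilt: it exactly compensates for the exponential weight in the definition of $Z_N^\omega$, reducing $\mathcal L_N$ acting on $Z_N^\omega$ to the symmetric discrete Laplacian acting on $\mathsf P^\omega$.

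For the third step, I would invoke the one-step Markov recursion for the quenched kernel, namely
\begin{equation*}
\mathsf P^\omega(t+1,y)=\omega_{t,y-1}\mathsf P^\omega(t,y-1)+(1-\omega_{t,y+1})\mathsf P^\omega(t,y+1),
\end{equation*}
which comes from conditioning on $R(t)\in\{y-1,y+1\}$ and using the rule defining the walk in the environment $\omega$. Since $\mathsf P^\omega(t,y\pm 1)$ is $\mathcal F^\omega_{t-1}$-measurable while the variables $\omega_{t,\cdot}$ are independent of $\mathcal F^\omega_{t-1}$ with mean $1/2$, taking $\mathbb E[\,\cdot\,|\mathcal F^\omega_{t-1}]$ yields $\tfrac12\mathsf P^\omega(t,y-1)+\tfrac12\mathsf P^\omega(t,y+1)$, which is exactly what we are subtracting. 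Hence $\mathbb E[v_N(t,x)|\mathcal F^\omega_{t-1}]=0$, as claimed. There is no real obstacle here; the only thing to be careful about is the bookkeeping between the $(t,x)$ coordinates on $\Lambda_N$ and the physical coordinates $(t,y)$ on the integer lattice where $\mathsf P^\omega$ lives.
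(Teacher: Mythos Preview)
Your proof is correct and follows essentially the same direct computation as the paper: both use the one-step recursion for $\mathsf P^\omega$ together with the independence and mean-$\tfrac12$ property of the time-$t$ weights. The only difference is organizational: the paper substitutes the recursion first and keeps everything in terms of $Z_N^\omega$, arriving at the explicit formula $v_N(t,x)=(1-2\omega_{t,y+1})(1-\rho_N)Z_N^\omega(t,x+1)+(2\omega_{t,y-1}-1)\rho_N Z_N^\omega(t,x-1)$, which it reuses later when computing quadratic variations; your route through $\mathsf P^\omega$ is slightly cleaner for the lemma itself but does not record that formula.
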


\begin{proof} First note that $v_N(t,x)$ is indeed adapted to $\mathcal F^\omega_t$ because $Z_N^\omega(t+1,\cdot)$ is measurable with respect to $\mathcal F^\omega_t$. Note that the quenched probabilities satisfy the relation $$\mathsf P^\omega(t+1,x) = (1-\omega_{t,x+1})\mathsf P^\omega(t,x+1) +  \omega_{t,x-1}\mathsf P^\omega(t,x-1),$$ which can be derived from conditioning on the previous step. Using the definition of $Z_N^{\omega}$ and $C_{N,t,x}$ from \eqref{z_n} and \eqref{cntx} respectively, we thus have
\begin{align*}
   Z_N^\omega(t+1,x-N^{-1/4}) & = e^{N^{-\frac12} -\log\cosh(N^{-\frac14})} \big[(1- \omega_{t,x+N^{-1/4}t+1})e^{N^{-1/4}(-1-N^{-1/4})}Z_N^\omega(t,x+1)\\ & \hspace{3cm}+ \omega_{t,x+N^{-1/4}t-1}e^{N^{-1/4}(1-N^{-1/4})}Z_N^\omega(t,x-1)\big] \\ & =  2(1- \omega_{t,x+N^{-1/4}t+1})(1-\rho_N)Z_N^\omega(t,x+1)+ 2\omega_{t,x+N^{-1/4}t-1}\rho_NZ_N^\omega(t,x-1) , 
\end{align*}
 for $(t,x)\in\Lambda_N$. Consequently we have that \begin{equation}\label{mzero'}v_N(t,x) = (1 - 2 \omega_{t,x+N^{-1/4}t+1}) (1-\rho_N) Z^\omega_N(t,x+1) + (2\omega_{t,x+N^{-1/4}t-1}-1)\rho_N Z_N^\omega(t,x-1).
\end{equation}
Since the $\omega_{t,x}$ have mean 1/2 and are independent of $\mathcal F^\omega_{t-1}$, and since $Z_N^\omega(t,\cdot)$ is $\mathcal F^\omega_{t-1}$ measurable, \eqref{mfield} follows.
\end{proof}



\begin{defn}Define the rescaled martingale field as follows. For $\phi\in C_c^\infty(\mathbb R)$ and $t\in N^{-1}\mathbb Z_{\ge 0}$ let \begin{equation}\label{m_field}M_N(t,\phi):= \sum_{r=0}^{Nt}\sum_{x \in \mathbb Z-rN^{-1/4}} \phi(N^{-1/2}x) v_N(r,x).  
\end{equation}
\end{defn}
$M_N(t,\phi)$ is the macroscopic field corresponding to the microscopic variables $v_N(t,x)$. $M_N(t,\phi)$ is the discrete analog of $\mathcal Y(t,\phi)$ defined in \eqref{me1}. 
Note by \eqref{mfield} that $M_N(t,\phi)$ is a martingale indexed by $t\in N^{-1}\mathbb Z_{\ge 0}$. A substantial amount of effort in this paper will be spent studying the quadratic variations of $M_N(t,\phi)$, as we need to verify that \eqref{me2} holds for any limit point. By defining $\eta_N(r,x):=1-2 \omega_{r,x+N^{-1/4}r}$, we can use \eqref{mzero'} to collect neighboring terms and rewrite \eqref{m_field} as follows for $t\in N^{-1}\mathbb Z_{\ge 0}:$ \begin{equation}\label{grad_form}M_N(t,\phi) = \sum_{r=0}^{Nt} \sum_{x\in\mathbb Z - rN^{-1/4}}(\nabla_N \phi)\big(N^{-1/2}x\big) Z_N^\omega(r,x) \eta_N(r,x), \end{equation}
where $$(\nabla_N \phi)(x):= (1-\rho_N) \phi(x-N^{-1/2}) - \rho_N \phi(x+N^{-1/2}).$$ Consequently the optional quadratic variation is given for $t\in N^{-1}\mathbb Z_{\ge 0}$ by
    \begin{align}
         [M_N(\phi)]_t &= 
         \sum_{r=0}^{Nt}\bigg( \sum_{x \in \mathbb Z - rN^{-1/4}}(\nabla_N \phi)\big(N^{-1/2}x\big) Z_N^\omega(r,x) \eta_N(r,x)\bigg)^2.\label{optvar}
    \end{align}
    Since $\mathbb E[\eta_N(r,x)\eta_N(r,y)|\mathcal F^\omega_{r-1}]=4\sigma^2\ind_{\{x=y\}}$, the predictable quadratic variation of $M_N(\phi)$ is given for $t\in N^{-1}\mathbb Z_{\ge 0}$ by\begin{align}
        \notag \langle M_N(\phi)\rangle_t& = \sum_{r=0}^{Nt}\mathbb E\bigg[ \bigg( \sum_{x \in \mathbb Z - rN^{-1/4}}(\nabla_N \phi)\big(N^{-1/2}x\big) Z_N^\omega(r,x) \eta_N(r,x)\bigg)^2\bigg| \mathcal F^\omega_{r-1}\bigg] 
        \\ &= 4\sigma^2\sum_{r=0}^{Nt} \sum_{x \in \mathbb Z - rN^{-1/4}}\big[(\nabla_N \phi)\big(N^{-1/2}x\big) Z_N^\omega(r,x) \big]^2.\label{predvar} 
    \end{align}
    Note that $M_N(t,\phi)^2-[M_N(\phi)](t)$ and $M_N(t,\phi)^2-\langle M_N(\phi)\rangle(t)$ are both martingales indexed by $N^{-1}\mathbb Z_{\ge 0}$, which will be relevant in later sections. We decompose \eqref{predvar} as
   \begin{align} \langle M_N(\phi)\rangle_t &= \mathcal E_{N}(t,\phi)
   +\big((2\rho_N-1)^2\sqrt{N}\big)Q_N(t,\phi^2) \label{m=q}
    \end{align}where for $t\in N^{-1}\mathbb Z_{\ge 0}$ \begin{align}\notag Q_N(t,\phi) &:= \frac{4\sigma^2}{\sqrt N} \sum_{r=0}^{Nt} \sum_{x \in \mathbb Z-rN^{-1/4}} \phi(N^{-1/2}x)  Z_N^\omega(r,x)^2 \\ &= \frac{4\sigma^2}{\sqrt N} \sum_{r=0}^{Nt}  \mathsf E_{(2)}^\omega \bigg[ \phi( N^{-1/2} (S(r) -N^{-1/4} r)) \ind_{\{S(r)=R(r)\}} C_{N,N^{-1}r,N^{-1/2}(S(r)-N^{-1/4}r)}^2\bigg] \label{qfield}
    \end{align} 
    where $\mathsf E_{(2)}^\omega$ denotes quenched expectation for two independent motions $(R(r),S(r))_{r\ge 0}$ in the fixed realization of the environment $\omega$, and where $\mathcal E_{N}(T,\phi)$ 
    is an ``error term" given by 
    \begin{align}\label{e_n}
        \mathcal E_N(t,\phi) := 4\sigma^2 \sum_{r=0}^{Nt} \sum_{x\in \mathbb Z-rN^{-1/4}}\big[ (\nabla_N\phi)\big(N^{-1/2}x\big)^2 - (2\rho_N-1)^2\phi(N^{-1/2}x)^2 ] Z_N^\omega(r,x)^2.
    \end{align}

\begin{defn}We shall call $Q_N(t,\phi)$ the \textbf{Quadratic martingale field} (QMF), and $\mathcal E_N(t,\phi)$ the \textbf{error term}. \end{defn}It will be shown that $Q_N$ contributes meaningfully in the limit, while the error term $\mathcal{E}_N$ vanishes in the limit. Let us see why $\mathcal E_N$ should vanish. Notice that a first order Taylor expansion gives $\nabla_N\phi(x) = (1-2\rho_N)\phi(x) - O(N^{-1/2}),$ and $1-2\rho_N$ is of order $N^{-1/4}.$ Denote by $\|\phi\|_{L^\infty}$ the supremum of $\phi$ on $\mathbb R$, and denote by $\|\phi\|_{C^k}:=\sum_{j=0}^k \|\phi^{(j)}\|_{L^\infty}.$ Using the Taylor expansion, one verifies that the term in the square brackets of \eqref{e_n} is bounded above absolutely by $N^{-3/4}$ multiplied by $2\|\phi\|_{L^\infty}\|\phi'\|_{L^\infty} \leq \|\phi\|_{L^\infty}^2 + \|\phi'\|_{L^\infty}^2 \leq \|\phi\|_{C^1}^2.$ For $\phi \in C_c^\infty(\mathbb R)$ let $A_\phi:= \sup\{|x|:x\in supp(\phi)\}$, and note that the summands in \eqref{e_n} vanish whenever $|N^{-1/2}x|>A_\phi+1$. We have thus shown that the error term satisfies a pathwise bound written in \eqref{ebound} just below.

\begin{lem} For all $\phi\in C_c^\infty(\mathbb R)$ and $t\in N^{-1}\mathbb Z_{\ge 0}$ the predictable quadratic variation of $M_N(\phi)$ may be written as in \eqref{m=q}, where $Q_N$ and $\mathcal E_N$ are defined by \eqref{qfield} and \eqref{e_n} respectively. Furthermore, we have uniformly over all $s,t \in N^{-1}\mathbb Z_{\ge 0}$, $\phi \in C_c^{\infty}(\mathbb R)$ and $N\ge 1$ the bound
    \begin{align}\notag |\mathcal E_{N}(t,\phi)- \mathcal E_{N}(s,\phi)| &\leq 8N^{-3/4}\sigma^2 \|\phi\|_{C^1}^2\sum_{r=Ns}^{Nt} \sum_{x \in \mathbb Z-rN^{-1/4}}Z_N^\omega(r,x)^2\ind_{[-A_\phi-1,A_\phi+1]}(N^{-1/2}x) \notag \\&\leq 8N^{-1/4}\sigma^2 \|\phi\|_{C^1}^2\big(Q_N(t,\ind_{[-A_\phi-1,A_\phi+1]})-Q_N(s,\ind_{[-A_\phi-1,A_\phi+1]})\big), \label{ebound}
    \end{align}
    where $A_\phi:= \sup\{|x|:x\in supp(\phi)\}$.
    \end{lem}
    What this estimate shows is that the error term $\mathcal E_N$ is of the same form as $Q_N$, but with an extra factor of $N^{-1/4}$ in front. The field $Q_N$ will be shown to be tight in later sections, and therefore the irrelevance of $\mathcal E_N$ will then be immediate from \eqref{ebound} without any further work needed. This error calculation illustrates a remarkable property of the model under consideration, which is that the error terms behave very nicely in relation to the original object itself, which is rare among KPZ-related models where a martingale characterization has been used, see e.g. \cite{BG97, dembo, yang22} where extremely careful analysis was needed to show vanishing of error terms.

Now that the important objects $M_N$ and $Q_N$ have been introduced, we give a brief discussion on how the proof of Theorem \ref{main} will be completed. The next section is heavily devoted to studying the quadratic martingale field $Q_N(t,\phi)$. In Proposition \ref{tight1}, we will establish tightness-related bounds of the form 
$$\mE[|Q_N(t,\phi)-Q_N(s,\phi)|^p]^{1/p} \leq C\|\phi\|_{L^\infty}^2 |t-s|^{1/2}.$$ This will then lead to a similar regularity bound for $[ M_N(\cdot,\phi)]$ and $\mathscr U_N(\cdot,\phi)$ as well.

Together with these bounds, a Kolmogorov-type lemma will tell us that the triple $(\mathscr{U}_N,M_N,Q_N)$ is tight. This will be the subject of Section \ref{iden}. Since the prelimiting object satisfies a lattice stochastic heat equation, one can obtain that any limit point of the triple satisfies \eqref{me1} by showing that the discrete and continuous heat operators are close. 

To justify why the limit point satisfies the other part \eqref{me2} of the martingale problem, we study the QMF extensively in Section 5. In particular, 
informally speaking, we shall show the ``key estimate" that
\begin{align*}
    Q_N(t,\phi)-\frac{8\sigma^2}{1-4\sigma^2} \frac1N\sum_{s\in (N^{-1}\mathbb Z_{\ge 0})\cap [0,t]}\mathscr U_N(s,\phi)^2 \;\;\;\;\stackrel{L^2(\mathbb P)}{\longrightarrow}\;\;\;\; 0
\end{align*}
if we take $N\to \infty$ and then $\phi \to \delta_a$ in that order, for any $a\in \mathbb{R}\backslash\{0\}$. The precise statement is interpreted in terms of well-chosen Gaussian bump functions (see Proposition \ref{4.1}). This type of estimate will allow us to conclude that the limit point satisfies \eqref{me2}.

    \section{Formulas and estimates for the martingales}\label{sec5}
This section will heavily focus on obtaining crucial formulas and estimates for the quadratic martingale field (QMF) $Q_N$ defined in \eqref{qfield}. Later, these formulas and estimates will allow us to show the tightness of \eqref{field} and also identify the limit points. We first have two lemmas before stating the key estimate of this section. 

 \begin{lem}[Moment formulas]\label{l:Qmom} Fix any bounded functions $\psi,\phi$ on $\R$ and $N\ge 1$. Suppose that $(R^1,\ldots, R^{2k})$ denotes the canonical process on $(\mathbb Z^{2k})^{\mathbb Z_{\ge 0}}$. Recall $\Delta_k(s,t)$ from \eqref{deltadef}, and define $\Delta_k^N(s,t):= (N^{-1}\mathbb Z_{\ge 0})^k\cap \Delta_k(s,t)$. {For any $\vec{s}=(s_1,s_2,\ldots,s_k)\in \Delta_k(0,t)$, we define
 \begin{align}\label{barg}
     \bar{g}_N(\vec{s}) = \prod_{r\in N^{-1}\mathbb{Z}_{\ge 0}}\frac1{(\#\{i : s_i=r\})!}.
 \end{align}
 Note that $\#\{i : s_i=r\}=0$ for all but finitely many $r \in N^{-1}\mathbb{Z}_{\ge 0}$.}
 We have the following moment formulas.
		\begin{enumerate}[label=(\alph*),leftmargin=15pt]
			\item \label{l:QXmom} For all $t\in N^{-1}\mathbb Z_{\ge 0}$ and $\gamma>0$, we have 
			\begin{equation}
				\label{e:QXmom}
				\begin{aligned}
					& 
     \Ex\left[\bigg(Q_N(t,\psi)- \frac{\gamma}N\sum_{s\in (N^{-1}\mathbb Z_{\ge 0})\cap [0,t]} \mathscr U_N(s,\phi)^2 \bigg)^{k}\right] \\ & = k! N^{-k}\sum_{(s_1,\ldots,s_k)\in\Delta_k^N(0,t)}  {\bar{g}_N(\vec{s})} \mathbf E_{RW_{\nu}^{(2k)}}\bigg[ \prod_{i=1}^k C_{N,s_i,N^{-1/2}(R^{2i-1}({Ns_i})- N^{3/4} s_i)} \\ & \hspace{6cm}\cdot C_{N,s_i,N^{-1/2}(R^{2i}({Ns_i})- N^{3/4} s_i)}
     \Upsilon_N({Ns_i};R^{2i-1},R^{2i})\bigg],
				\end{aligned}
			\end{equation}
			where
			\begin{align*}
				\Upsilon_N(r;X,Y) & := 4N^{1/2}\sigma^2\psi\big(N^{-1/2}(X(r)-N^{-1/4}r)\big)\ind_{\{X(r)=Y(r)\}} \\ & \hspace{1cm}-\gamma \phi\big(N^{-1/2}(X(r)-N^{-1/4}r)\big)\phi\big(N^{-1/2}(Y(r)-N^{-1/4}r)\big).
			\end{align*}
			\item \label{l:Qincmom} For all $0\le s<t$ we have the following moment formula for the increment of the QMF
			\begin{equation}
				\label{e:Qincmom}
				\begin{aligned}
					& \Ex\left[\bigg(Q_N(t,\psi)-Q_N(s,\psi)\bigg)^{k}\right] \\ 
					&  = (4\sigma^2)^k N^{-k/2}k! \sum_{(s_1,\ldots,s_k)\in\Delta_k^N(s,t)} {\bar{g}_N(\vec{s})}\mathbf E_{RW_{\nu}^{(2k)}}\bigg[ \prod_{i=1}^k C_{N,s_i,N^{-1/2}(R^{2i}(Ns_i)- N^{3/4} s_i)}^2 \\ & \hspace{6cm}\psi(N^{-1/2}(R^{2i}(Ns_i)-N^{3/4}s_i))\ind_{\{R^{2i-1}(Ns_i)=R^{2i}(Ns_i)\}}\bigg].
				\end{aligned}
			\end{equation}
		\end{enumerate}
	\end{lem}

 \begin{proof} Note by \eqref{field} that
     \begin{align}\notag \mathscr U_N(t,\phi)^2& = \sum_{(x,y)\in \mathbb Z^2} C_{N,t,N^{-1/2}(x- N^{3/4} t)}C_{N,t,N^{-1/2}(y- N^{3/4} t)} \\ \notag &\;\;\;\;\;\;\;\;\;\;\;\;\;\;\;\;\;\cdot \mathsf P^\omega(Nt,x)\mathsf P^\omega(Nt,y) \phi\big( N^{-1/2}(x- N^{3/4} t)\big)\phi\big( N^{-1/2}(y- N^{3/4} t)\big) \\ &=\mathsf E_{(2)}^\omega\notag \big[C_{N,t,N^{-1/2}(S({Nt})- N^{3/4} t)}C_{N,t,N^{-1/2}(R({Nt})- N^{3/4}t)}\\ &\;\;\;\;\;\;\;\;\;\;\;\;\;\;\;\;\;\cdot\phi\big( N^{-1/2}(S({Nt})- N^{3/4}t)\big)\phi\big( N^{-1/2}(R({Nt})-N^{3/4} t)\big)\big]. \nonumber
\end{align}
Here $\mathsf E_{(2)}^\omega$ denotes quenched expectation for two independent motions $(R(r),S(r))_{r\ge 0}$ in the fixed realization of the environment $\omega$. Note that if we apply $\frac1N\sum_{s\in (N^{-1}\mathbb Z_{\ge 0})\cap [0,t]}$ to the last expression, then it is of a similar form to the definition \eqref{qfield} of $Q_N$, hence we have that 
\begin{align*}Q_N&(t,\psi)- \frac{\gamma}N\sum_{s\in (N^{-1}\mathbb Z_{\ge 0})\cap [0,t]} \mathscr U_N(s,\phi)^2  \\&= \frac1N\sum_{s\in (N^{-1}\mathbb Z_{\ge 0})\cap [0,t]}\mathsf E_{(2)}^\omega[C_{N,s,N^{-1/2}(R({Ns})- N^{3/4} s)}C_{N,s,N^{-1/2}(S({Ns})- N^{3/4} s)} \Upsilon_N (Ns;R,S)].
\end{align*}
From here one expands out the $k^{th}$ power of both sides of this equation, and then one applies the annealed expectation over the quenched expectation to deduce \eqref{e:QXmom}, {noting in general that by the multinomial theorem one has the following expansion for a function $f$:  $$\bigg(\sum_{s=0}^{r-1} f(s) \bigg)^k = k! \sum_{0\le s_1\le ...\le s_k<r} \bar{g}_N(\vec{s}) f(s_1)\cdots f(s_k).$$} The proof of \eqref{e:Qincmom} is similar. 
 \end{proof}

The following proposition will allow us to further analyze the moment formulas obtained in Lemma \ref{l:Qmom}.

 \begin{prop}\label{add} Assume that $\nu$ is a probability measure on $[0,1]$ of mean $1/2$. Fix any $k\in \mathbb N$. Let $\{\psi_i,\phi_i\}_{i=1}^{k}$  be bounded continuous functions on $\mathbb R$.

  Let $A$ be a subset of $\{1,2,\ldots, k\}$. Let $B=\{1,2\ldots,k\}\cap A^c$. Define
\begin{align}
       &  E_1(\vec{t}):=  \prod_{i\in A} C_{N,t_i,N^{-1/2}(R^{2i-1}(Nt_i)- N^{3/4} t_i)}\phi_i\big(N^{-1/2}(R^{2i-1}(Nt_i)- N^{3/4} t_i)\big) \\ & \hspace{3cm} \cdot \prod_{i\in A} C_{N,t_i,N^{-1/2}(R^{2i}(Nt_i)- N^{3/4} t_i)}\phi_i\big(N^{-1/2}(R^{2i}(Nt_i)- N^{3/4} t_i)\big) \\
   &  E_2(\vec{t}):=\prod_{i\in B} C_{N,t_i,N^{-1/2}(R^{2i}({Nt_i})-N^{3/4}t_i)}^2 \psi_i(N^{-1/2}(R_{}^{2i}(Nt_{i}) - N^{3/4}t_i))\ind_{\{R^{2i-1}({Nt_{i}}) =R^{2i}({Nt_{i}})\}} 
\end{align}
{Consider any uniformly bounded sequence of deterministic functions $g_N: \Delta_{k}^N(s,t)\to [0,\infty)$ that converge uniformly to 1 on the interior of the simplex (consisting of those points $(t_1,...,t_k)$ with all $t_i$ distinct) as $N\to \infty$. }For each $0\le s< t\le T<\infty$ we have
		\begin{equation}
			\label{e:add2}
			\begin{aligned}
				& \lim_{N\to\infty} N^{-|A|-\tfrac12|B|}(1-4\sigma^2)^{|B|}\cdot\mathbf E_{RW^{(2k)}_{\nu}}\bigg[\sum_{(t_1,\ldots,t_{k})\in \Delta_{k}^N(s,t)} {g_N(\vec{t})} E_1(\vec{t})E_2(\vec{t})\bigg]
      \\ & = \mathbf E_{B^{\otimes (2k)}} \bigg[\int_{\Delta_{k}(s,t)} e^{\frac{4\sigma^2}{1-4\sigma^2} \V_{k}(\vec{t})} \prod_{i\in A} \phi_i(U_{t_{i}}^{2i-1}) \phi_i(U_{t_{i}}^{2i})dt_i \cdot \prod_{i\in B} \psi_i(U_{t_{i}}^{2i}) dL_0^{U^{2i-1}-U^{2i}}(t_i) \bigg], 
			\end{aligned}
		\end{equation}
	where the expectation on the right is with respect to a $2k$-dimensional standard Brownian motion $(U^1,\ldots,U^{2k})$, and 
		\begin{align}
			\label{def:v}
			\V_k(\vec{t}):=\sum_{i=1}^k\sum_{2i-1\le p<q\le 2k} \left[L_0^{U^p-U^q}(t_{i})-L_0^{U^p-U^q}(t_{i-1})\right].
		\end{align}	Here  $\int_0^t f(s) dL_0^{U^i-U^j}(s)$ denotes the integration of the continuous function $f:[0,t]\to\mathbb R$ against the random Lebesgue-Stieltjes measure $dL_0^{U^i-U^j}$ induced from the increasing function $t\mapsto L_0^{U^i-U^j}(t).$

	\end{prop}

{We will be most interested in the case where $g_N(\vec{t}) =\bar{g}_N(\vec{t})$ defined in \eqref{barg}.}

 \begin{proof}   {We are going to assume that $g_N\equiv 1$; the general case follows by slight modifications.} 
Let $\nu_N^*$ be the \stt\ sequence of measures corresponding to $\nu$, as in Definition \ref{def:stt}.  For simplicity we write $\mE$ and $\mE_N$ for $\mE_{RW_\nu^{(2k)}}$ and $\mE_{RW_{\nu_N^*}^{(2k)}}$ respectively. Let us fix any $\vec{t}\in \Delta_k^N(s,t)$ and a subset $A$ of $\{1,2,\ldots,k\}$. Just as in the proof of moment convergence, the main idea of the proof is to first apply a change of measure to get rid of the divergent terms inside the expectation of $\mE[E_1(\vec{t})E_2(\vec{t})]$ and then use the weak convergence result from Theorem \ref{converge}. The change of measure here will be based on a glorified version of \eqref{tilt}.

\medskip

\noindent\textbf{Step 1. Post-processing the expectation via tilting.}
To carry out the tilting procedure, we first note that 
\begin{align}\label{tower}
\mE[E_1(\vec{t})E_2(\vec{t})] = \mE\left[\prod_{i=1}^k \widetilde{C}_N^i \Lambda_N^i\right]=\mE\left[\widetilde{C}_N^1\Lambda_N^1\mE\left[\widetilde{C}_N^2\Lambda_N^2 \cdots \mE\left[\widetilde{C}_N^k\Lambda_N^k\mid \mathcal{F}_{Nt_{k-1}}\right] \cdots \mid \mathcal{F}_{Nt_1}\right]\right]\!.\!
\end{align}
where
\begin{align*}
    \Lambda_N^i:=\begin{cases}
        \displaystyle \phi_i\big(N^{-1/2}(R^{2i-1}(Nt_i)- N^{3/4} t_i)\big)\phi_i\big(N^{-1/2}(R^{2i}(Nt_i)- N^{3/4} t_i)\big) & i\in A \\ \displaystyle  \psi_i(N^{-1/2}(R_{}^{2i}(Nt_{i}) - N^{3/4}t_i))\ind_{\{R^{2i-1}({Nt_{i}}) =R^{2i}({Nt_{i}})\}} & i\in B,
    \end{cases}
\end{align*}
and
\begin{align}
    \widetilde{C}_N^i:=\prod_{j=2i-1}^{2k} C_{N,t_i-t_{i-1},N^{-1/2}(R^j(Nt_i)-R^j(Nt_{i-1})-N^{3/4}(t_i-t_{i-1}))}.
\end{align}
We shall now apply tilting to each of the expectations on the r.h.s.~of \eqref{tower} and go from $\mathbf P$ to $\mathbf P_N$. To perform the tilting for conditional expectations we need generalized versions of $\mathpzc{M}^{\lambda}(\cdot)$ (defined in \eqref{m_n}) and $\widetilde{\mathcal{G}}_N$ (defined in \eqref{g}): 

  Let us define
\begin{align}
    \mathpzc M^{[k_1:k_2],\lambda}(r) = \exp\bigg( \lambda\sum_{j=k_1}^{k_2} R^j(r) - \sum_{\ell =0}^{r-1} f_{\ell}^{\lambda,k_2-k_1+1,\nu}(R^{k_1}(\ell),\ldots,R^{k_2}(\ell))\bigg)
\end{align}
The above martingale is obtained in the same spirit as  $\mathcal{M}^{\lambda}$ by considering only  the motion of 
 $(k_2-k_1+1)$ particles: $(R^j(\cdot))_{j=k_1}^{k_2}$.

Let us also define $\mathbf P_N$-martingales $\mathcal G^1_N,\ldots,\mathcal G^k_N$ by $\mathcal G^i_N =  \widetilde{\mathcal{H}}^i_N + \widetilde{\mathcal{D}}^i_N,$ where the $\mathbf P_N$-martingales $\widetilde{\mathcal{D}}^i_N$ and $\widetilde{\mathcal{H}}^i_N$ are defined by $\widetilde{\mathcal{H}}^i_N(0)=\widetilde{\mathcal{D}}^i_N(0)=0$, and 
     \begin{align*}\widetilde{\mathcal{H}}^i_N(r+1)-\widetilde{\mathcal{H}}^i_N(r) &= \begin{cases} \displaystyle N^{-1/4}\sum_{j=2i-1}^{2k}(R^j(r+1) -R^j(r) - (2\mu_{\nu_N^*}-1)), & Nt_i \le r< Nt_{i+1}\\ 0 & \mbox{otherwise} 
     \end{cases}.\\ \widetilde{\mathcal{D}}^i_N(r+1)-\widetilde{\mathcal{D}}^i_N(r) &= \begin{cases} \displaystyle \sum_{q=1}^{v} \big(\log (m_{b_q,n_q-b_q}) - \mathbf E_N [\log (m_{b_q,n_q-b_q}) |\mathcal F_{r}]\big), & Nt_i \le r< Nt_{i+1}\\ 0,  & \mbox{otherwise} 
     \end{cases}
     \end{align*} where $m_{b,n-b}$ is defined in \eqref{def:diff} and the parameters $v=v(i,r), n_q=n_q(i,r),$ and $b_q=b_q(i,r)$ are obtained deterministically from the path $(R^{2i-1},\ldots,R^{2k})$ as follows. Assume that at time $r$, the $2k-2i+2$ ``particles" of $(R^{2i-1}(r),\ldots, R^{2k}(r))\in \mathbb Z^{2k-2i+2}$ may be grouped into $v=v(i,r)$ disjoint groups with all particles in each distinct group at the same site in $\mathbb Z$. Assume that the $v$ respective groups contain $n_1,\ldots,n_v$ respective particles, where $n_q = n_q(i,r)$ are positive integers such that $n_1+\cdots+n_v=2k-2i$. From time $r$ to $r+1$ in the path $(R^{2i-1},\ldots,R^{2k})$, assume (for each $1\le q \le v$) in the $q^{th}$ group, that $b_q=b_q(i,r)$ of the $n_j$ particles go up one step, so that $n_j-b_j$ particles go down one step. 

     The processes $\mathcal{G}_N^i$ are martingales of the same form as those in the proof of Proposition \ref{g.exists}, but where only the last $2k-2i+2$ of the $2k$ particles are taken into account on the time interval $[Nt_i,Nt_{i+1})\cap \mathbb Z$. In particular, by the exact same argument as in that proof, each of the martingales $\mathcal G^i_N$ satisfies the bound \eqref{assn1} for some absolute constant $C>0$ independent of $N$. Define $W_N^i(r)=\sum_{s\leq r} \log \mathbf E_N[e^{\mathcal G_N^i(s)-\mathcal G_N^i(s-1)}\mid \mathcal{F}_{s-1}]$. Set $\widetilde{\mathcal{G}_N^i}:= \mathcal{G}_N^i-W_N^i$. We will now see that the exponentials of $\widetilde{\mathcal{G}_N^i}$ are the ``correct" martingales to tilt the expressions in the lemma statement in such a way that Theorem \ref{converge} is applicable.

Just like \eqref{tilt}, one can check that for each $i$ the conditional expectation $\mE[\mathbf{F} \mid \mathcal{F}_{Nt_i}]$ can be tilted as follows:
 \begin{align}\label{tilt2}
     \mathbf{E}[\mathbf{F} \mid \mathcal{F}_{Nt_i}]  =\mathbf{E}_N\left[\frac{\mathpzc M^{[2i-1:2k]}(Nt_{i-1})}{\mathpzc M^{[2i-1:2k]}(Nt_i)}\exp\left(\widetilde{\mathcal{G}_N^i}(Nt_i)\right)\mathbf{F}\mid \mathcal{F}_{Nt_i}\right],  
 \end{align}
where $\mathbf{F}$ is measurable with respect to~$\sigma(\{(R^{2i-1}(r),\ldots,R^{2k}(r))_{r\in [0,Nt]}\})$.

Using this, we tilt each of the conditional expectations in \eqref{tower} to get
\begin{align}
\label{newexp}\mE\left[E_1(\vec{t})E_2(\vec{t})\right]=\mE_N\left[\prod_{i=1}^k \frac{\mathpzc M^{[2i-1:2k]}(Nt_{i-1})}{\mathpzc M^{[2i-1:2k]}(Nt_i)}\exp\left(\widetilde{\mathcal{G}_N^i}(Nt_i)\right) \widetilde{C}_N^i {\Lambda}_N^i \right]
\end{align}

\medskip

\noindent\textbf{Step 2. Convergence.} We shall now study the weak convergence of the sum of random variables inside the expectation on the r.h.s.~of \eqref{newexp}.
We use the notations of rescaled processes from \eqref{def:resc}. Recalling the identity \eqref{twort} and the bound in \eqref{errbd} we see that
\begin{align*}
   & \prod_{i=1}^k\frac{\mathpzc M^{[2i-1:2k],N^{-1/4}}(Nt_{i-1})}{\mathpzc M^{[2i-1:2k],N^{-1/4}}(Nt_{i})}\widetilde{C}_N^i =\exp\left(4\sigma^2 \sum_{i=1}^k\sum_{2i-1<p<q\le 2k} [\mathscr{V}_N^{pq}(t_i)-\mathscr{V}_N^{pq}(t_{i-1})]-\widetilde{\mathsf{Err}}_N(\vec{t})\right).
\end{align*}
where
\begin{align}
    |\widetilde{\mathsf{Err}}_N(\vec{t})| \le \Con \left(\mathscr{T}_N(t)+N^{-1/2}\sum_{1\le p<q\le 2k} \mathscr{V}_N^{pq}(t)\right).
\end{align}
Let us define 
    $W_N^{\vec{t}}(s):=  W_N^1(Ns)+\cdots+W_N^k(Ns)$ and $\mathcal{G}_N^{\vec{t}}(s):=  \mathcal{G}_N^1(Ns)+\cdots+\mathcal{G}_N^k(Ns)$
for $s\in N^{-1}\mathbb{Z}$ and linearly interpolated for $s\notin N^{-1}\mathbb{Z}$. Let us view $W_N$ and $\mathcal{G}_N$ as random continuous functions from $\Delta_k(0,t)\times [0,t]\to \mathbb R$. Using the same arguments from the proof of Theorem \ref{converge}, it follows that  $W_N$ and $\{\mathcal{G}_N\}_{N\ge 1}$ are tight in the space of $C(\Delta_k(0,t)\times [0,t])$ equipped with uniform topology, and furthermore $\mathcal{G}_N^{\vec{t}}$ satisfies \eqref{assn1} with constant $C$ independent of $\vec t$. By Theorem \ref{converge} and Corollary \ref{convc} we know that any limit point as $N\to \infty$ of the sequence
\begin{align}\bigg(\mathcal{G}_N^{\vec{t}}, W_N^{\vec{t}}, \mathbf{X}_N, \mathscr{V}_N, \mathscr{T}_N, N^{-|B|/2}\hspace{-0.6cm}\sum_{u_1\le \cdots \le u_{|B|}\in N^{-1}\mathbb{Z}}\prod_{i\in B} \ind_{\{\mathbf{X}_N^{2i-1}(u_i)=\mathbf{X}_N^{2i}(u_i)\}}\delta_{(u_1,\ldots,u_{|B|})}\bigg)\label{tuple}\end{align}
(considered under the measure $\mathbf E_N$) is of the form
\begin{equation}\label{tuple2}\bigg(\mathcal{G}^{\vec{t}}, \tfrac12\langle \mathcal{G}^{\vec{t}}\rangle, \mathbf{U}, (L^{ij})_{1\le i<j\le 2k}, \mathbf{0}, (1-4\sigma^2)^{-|B|}\prod_{i\in B} dL^{2i-1,2i}(t_i)\bigg),\end{equation}
   where $\mathbf U$ is a standard Brownian motion in $\mathbb R^k$, where $L^{i,j}$ are its pairwise local times, and $\mathcal G^{\vec t}$ are martingales satisfying \eqref{e:fil2}. 
    Here the last coordinate has a topology of $\mathcal M(\Delta_k(0,T))$ which was defined in the statement of Corollary \ref{convc}, while all other coordinates have a uniform topology with respect to all relevant variables. Notice that the map from $\mathcal M(\Delta_k(0,T)) \times C(\Delta_k(0,T))\to \mathbb R$ given by $(f,\mu)\mapsto \int_{\Delta_k(0,T)} f\;d\mu$ is a continuous map. Using the continuous mapping theorem, we therefore see that for the limit point \eqref{tuple2} of \eqref{tuple} one has

\begin{align*}
   & N^{-|A|-\tfrac12|B|}(1-4\sigma^2)^{|B|}\sum_{\vec{t}\in \Delta_k^N(s,t)} \prod_{i=1}^k \frac{\mathpzc M^{[2i-1:2k]}(Nt_{i-1})}{\mathpzc M^{[2i-1:2k]}(Nt_i)}\exp\left(\widetilde{\mathcal{G}_N^i}(Nt_i)\right) \widetilde{C}_N^i {\Lambda}_N^i \\ & \hspace{2cm} \stackrel{d}{\longrightarrow} \int_{\Delta_k(s,t)} e^{\mathcal{G}^{\vec{t}}(t)-\frac12\langle \mathcal{G}^{\vec{t}}\rangle (t)+\frac{4\sigma^2}{1-4\sigma^2}\V_k(\vec{t})}\prod_{i\in A} \phi_i(U_{t_{i}}^{2i-1}) \phi_i(U_{t_{i}}^{2i})dt_i \cdot \prod_{i\in B} \psi_i(U_{t_{i}}^{2i})dL^{2i-1,2i}(t_i),
\end{align*}
where we again emphasize that the objects in the prelimit are viewed as observables under $\mathbf E_N$. Thanks to the estimates in Lemma \ref{lte}, Proposition \ref{exp}, and \eqref{gmass1}, we see that the prelimiting sum above is uniformly integrable. Now we finally prove the proposition. Taking into account the identity in \eqref{newexp}, the preceding observations imply that for every subsequence of indices $N\to \infty$, there is a further subsequence along which we have
    \begin{align*}
   & N^{-|A|-\tfrac12|B|}(1-4\sigma^2)^{|B|}\sum_{\vec{t}\in \Delta_k^N(s,t)} \mE[E_1(\vec{t})E_2(\vec{t})]
   \\ & \hspace{0.2cm} {\longrightarrow} \mE_{\text{lim}}\left[\int_{\Delta_k(s,t)} e^{\mathcal{G}^{\vec{t}}(t)-\frac12\langle \mathcal{G}^{\vec{t}}\rangle (t)+\frac{4\sigma^2}{1-4\sigma^2}\V_k(\vec{t})}\prod_{i\in A} \phi_i(U_{t_{i}}^{2i-1}) \phi_i(U_{t_{i}}^{2i})dt_i \cdot \prod_{i\in B} \psi_i(U_{t_{i}}^{2i})dL^{2i-1,2i}(t_i)\right] \\ & \hspace{0.2cm} =\mE_{\text{lim}}\bigg[\int_{\Delta_k(s,t)} \mE_{\text{lim}}[e^{\mathcal{G}^{\vec{t}}(t)-\frac12\langle \mathcal{G}^{\vec{t}}\rangle (t)}\mid \mathcal{F}_t(\mathbf{U})] \\ & \hspace{3cm}\cdot e^{\frac{4\sigma^2}{1-4\sigma^2}\V_k(\vec{t})}\prod_{i\in A} \phi_i(U_{t_{i}}^{2i-1}) \phi_i(U_{t_{i}}^{2i})dt_i \cdot \prod_{i\in B} \psi_i(U_{t_{i}}^{2i})dL^{2i-1,2i}(t_i)\bigg].
\end{align*}
Here $\mE_{\text{lim}}$ denotes a possible limit point on the canonical space, of the entire tuple of processes given by \eqref{tuple}, and we are viewing \eqref{tuple2} as the canonical process on that space. The inner conditional expectation in the last expression equals $1$ due to \eqref{e:fil2}. Thus the last expression must be equal to the right side of \eqref{e:add2}, completing the proof.
 \end{proof}

 \begin{prop}[Key estimate for the QMF] \label{4.1}Let $a \in \mathbb R$ let $\xi(x):= \frac1{\sqrt{\pi}}e^{-x^2}$, and let $\xi_\e^a (x):= \e^{-1}\xi(\e^{-1} (x-a))$.  
  Then for all $t>0$ and $a\in\mathbb R\setminus\{0\}$,
		\begin{align}
			\label{Qllim}
			\limsup_{\e \to 0} \limsup_{N \to \infty}  \Ex\bigg[ \bigg( Q_N(t,\xi_\e^a)-\frac{8\sigma^2}{1-4\sigma^2} \frac1N\sum_{s\in (N^{-1}\mathbb Z_{\ge 0})\cap [0,t]}\mathscr U_N(s,\xi_{\e\sqrt{2}}^a)^2\bigg)^2 \bigg] = 0.
		\end{align}
		Furthermore, we have the bound \begin{equation}\label{e:QXpolylog}\sup_{\substack{\e>0\\a\in\mathbb R\setminus\{0\}}} \limsup_{N \to \infty}  \left[1\wedge\frac1{|\log a|^{2}}\right]\!\cdot\!\Ex\bigg[ \bigg( Q_N(t,\xi_\e^a)-\frac{8\sigma^2}{1-4\sigma^2} \frac1N\sum_{s\in (N^{-1}\mathbb Z_{\ge 0})\cap [0,t]}\mathscr U_N(s,\xi_{\e\sqrt{2}}^a)^2\bigg)^2 \bigg] <\infty.
		\end{equation}
	\end{prop}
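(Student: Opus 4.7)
First I would apply Lemma \ref{l:Qmom}(a) with $k=2$, $\psi=\xi^a_\e$, $\phi=\xi^a_{\e\sqrt 2}$, and $\gamma=\tfrac{8\sigma^2}{1-4\sigma^2}$. Writing $\Upsilon_N = 4N^{1/2}\sigma^2\,\psi\,\ind - \gamma\,\phi\phi$ and expanding the product $\prod_{i=1}^{2}\Upsilon_N(Ns_i;R^{2i-1},R^{2i})$ gives $2^2=4$ terms indexed by subsets $A\subseteq\{1,2\}$, where $i\in A$ corresponds to choosing the $-\gamma\phi\phi$ piece and $i\in B:=\{1,2\}\setminus A$ to the $4N^{1/2}\sigma^2\,\psi\,\ind$ piece. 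Then Lemma \ref{add} may be applied term by term; the $N$-powers balance exactly ($N^{-2}\cdot N^{|B|/2}\cdot N^{|A|+|B|/2}=1$) and the $N\to\infty$ limit of the left-hand side of \eqref{Qllim} becomes
\[
2c^2\sum_{A\subseteq\{1,2\}}(-2)^{|A|}\,\tilde E_A^{(\e)},\qquad c:=\tfrac{4\sigma^2}{1-4\sigma^2},
\]
where $\tilde E_A^{(\e)}$ is the Brownian-motion expectation on the right-hand side of \eqref{e:add2} applied with $\phi_i=\phi$, $\psi_i=\psi$.

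The cancellation in the $\e\to 0$ limit hinges on the Gaussian identity
\[
\xi^a_{\e\sqrt 2}(x)\,\xi^a_{\e\sqrt 2}(y)=\xi^a_\e\!\left(\tfrac{x+y}{2}\right)\xi^0_{2\e}(x-y),
\]
whose verification (completion of the square) motivates the scale $\e\sqrt 2$ chosen for $\phi$. Combined with the occupation-density formula for the semimartingale $X^{i}:=U^{2i-1}-U^{2i}$ (which has quadratic variation $2t$), yielding $\int_0^{t_i}g(s,X^{i}_s)\,\xi^0_{2\e}(X^{i}_s)\,ds\to\tfrac12\int_0^{t_i}g(s,0)\,dL^{X^{i}}_0(s)$ as $\e\to 0$, the random measure $\phi(U^{2i-1}_{t_i})\phi(U^{2i}_{t_i})\,dt_i$ converges weakly to $\tfrac12\,\delta_a(U^{2i-1}_{t_i})\,dL^{X^i}_0(t_i)$, which equals $\tfrac12\,\psi(U^{2i}_{t_i})\,dL^{X^i}_0(t_i)$ in the limit (using $U^{2i-1}=U^{2i}$ on the support of $dL^{X^i}_0$). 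Hence $\tilde E_A^{(\e)}\to (1/2)^{|A|}\tilde E^\star$ for a single common limit value $\tilde E^\star$, and
\[
\sum_{A\subseteq\{1,2\}}(-2)^{|A|}(1/2)^{|A|}\tilde E^\star=\tilde E^\star\sum_{A}(-1)^{|A|}=0.
\]
This exact cancellation is the reason that $\gamma=2c=\tfrac{8\sigma^2}{1-4\sigma^2}$ is the forced coefficient.

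For the $|\log a|^{-2}$-weighted bound \eqref{e:QXpolylog}, I would estimate each $|\tilde E_A^{(\e)}|$ via Cauchy--Schwarz to extract $\mathbf E[e^{2c\V_2(\vec t)}]^{1/2}$, whose uniform finiteness follows from the intersection-time exponential moment bound of Lemma \ref{traps}. The remaining heat-kernel factor is dominated by $\bigl(\int_0^t p_s(a)^2\,ds\bigr)^2$; the substitution $u=a^2/s$ gives $\int_0^t p_s(a)^2\,ds=\tfrac{1}{2\pi}\int_0^t s^{-1}e^{-a^2/s}\,ds=O(|\log a|)$ as $a\to 0$, yielding the required $O(|\log a|^2)$ upper bound. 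The main obstacle is justifying the weak convergence $\phi\phi\,dt-\tfrac12\,\psi\,dL\to 0$ uniformly in the exponential weight $e^{c\V_2(\vec t)}$, which couples all four Brownian motions through the pairwise local times $L^{U^p-U^q}_0$; this can be handled by truncating $e^{c\V_2}$ at a large threshold, controlling the truncation error via the same moment bounds, and thereby reducing the question to convergence against bounded continuous path functionals, where the occupation-density argument above applies directly.
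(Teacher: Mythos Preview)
Your approach coincides with the paper's through the four-term decomposition: both apply Lemma~\ref{l:Qmom}\ref{l:QXmom} with $k=2$ and $\gamma=\tfrac{8\sigma^2}{1-4\sigma^2}$, pass to the $N\to\infty$ limit via Lemma~\ref{add}, invoke the Gaussian identity $\xi^a_{\e\sqrt2}(x)\xi^a_{\e\sqrt2}(y)=\xi^a_\e(\tfrac{x+y}{2})\xi^0_{2\e}(x-y)$, rotate to the independent coordinates $U^{i,\pm}=U^{2i-1}\pm U^{2i}$, and expand. Your $\tilde E_A^{(\e)}$ are exactly the paper's $A_1,\dots,A_4$, and your cancellation $\sum_A(-1)^{|A|}=0$ is precisely the paper's $A_1-A_2-A_3+A_4\to 0$. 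The truncation device you propose for handling the exponential weight in the $\e\to 0$ limit is a reasonable route to \eqref{Qllim}.

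The genuine gap is in your argument for the polylog bound \eqref{e:QXpolylog}. Cauchy--Schwarz on the expectation leaves you with the \emph{second} moment of $\int_0^t\xi^a_\e(\tfrac12 U^{i,+}_s)\,dL_0^{U^{i,-}}(s)$, not its first moment, and this second moment blows up as $\e\to 0$: the two-time integrand picks up a factor $p^W_{s_2-s_1}(0)\,p^{U^-}_{s_2-s_1}(0)\sim (s_2-s_1)^{-1}$, which is non-integrable, so no bound uniform in $\e$ can come out of it. The quantity $\bigl(\int_0^t p_s(a)^2\,ds\bigr)^2$ you wrote down is actually the square of the \emph{first} moment, and the way to reach it is not Cauchy--Schwarz but endpoint disintegration: one conditions on $(U^{i,+}_{t_i},U^{i,-}_{t_i})$, shows that the Brownian-bridge functional $\mathbf E\bigl[e^{c\V_2(\vec t)}\,\big|\,\text{endpoints}\bigr]$ is bounded uniformly in $\vec t$ and in the pinned values, and then the marginal endpoint density supplies exactly your heat-kernel factor. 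This is what the paper means by the ``elementary (albeit lengthy) disintegration formulas for Brownian motion at its endpoint'' that it imports verbatim from \cite{DDP23}. A minor related point: Lemma~\ref{traps} is stated for the discrete walk, so the required uniform Brownian-bridge bound on $e^{c\V_2}$ is a separate (though standard) input.
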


 The above proposition is the key estimate that will allow us to identify limit points. This proposition also illustrates why we cannot hope to obtain convergence in a space of continuous functions. Indeed if this were possible, then by first equality in the definition \eqref{qfield} of the field $Q_N$ it is easy to see that in \eqref{Qllim}, the correct coefficient would have to be $8\sigma^2 = 2\cdot 4\sigma^2$ rather than $\frac{8\sigma^2}{1-4\sigma^2}$ (where the extra factor of 2 comes from the fact that $Z_N^\omega(t,x)$ as defined in \eqref{z_n} are nonzero only at those points such that $t-x$ is even). Clearly, both coefficients cannot be correct unless $\mathscr U_N$ vanishes in the limit which is certainly not the case, as we have already shown that its moments of all orders converge to a nontrivial limit. 

 \begin{proof}
     Applying Lemma \ref{l:Qmom} \ref{l:QXmom} with $\gamma = \frac{8\sigma^2}{1-4\sigma^2}$, and Proposition \ref{add} with $k=2$, we get 
		\begin{align}
			\nonumber    & \lim_{N\to \infty}  \Ex\left[\bigg(Q_N(t,\psi)-\frac{8\sigma^2}{1-4\sigma^2} \frac1N\sum_{s\in (N^{-1}\mathbb Z_{\ge 0})\cap [0,t]} \mathscr U_N(s,\phi)^2\bigg)^{2}\right] \\ \nonumber & =2\bigg(\frac{8\sigma^2}{1-4\sigma^2}\bigg)^2 \cdot \mE_{B^{\otimes 4}}\left[\int_{\Delta_2(0,t)} e^{\frac{4\sigma^2}{1-4\sigma^2} \V_2(s_1,s_2)}\prod_{i=1}^2 \left(\psi(X_{s_i}^i) \tfrac12 dL_0^{X^i-Y^i}(s_i)-\phi(X^i)\phi(Y^i)ds_i\right)\right]   \\ & = 2\bigg(\frac{8\sigma^2}{1-4\sigma^2}\bigg)^2 \hspace{-0.1cm}\!\cdot\! \mE_{B^{\otimes 4}}\left[\int\limits_{\Delta_2(0,t)} \hspace{-0.3cm}e^{\frac{4\sigma^2}{1-4\sigma^2}\V_2(s_1,s_2)}\prod_{i=1}^2 \hspace{-0.1cm}\left(\psi\big(\tfrac12(X_{s_i}^i+Y_{s_i}^i)\big) \tfrac12 dL_0^{X^i-Y^i}(s_i)-\phi(X^i)\phi(Y^i)ds_i\right)\hspace{-0.1cm}\right] \label{e:smom1}
		\end{align}
		for all $\psi,\phi \in \mathcal{S}(\R)$, where $\V_2$ is defined in \eqref{def:v}, and $(X^1,X^2,Y^1,Y^2)$ is a 4d standard BM under $\mathbf P_{B^{\otimes 4}}$.
  The second equality in the above equation follows by observing that $X_u^i=Y_u^i$ for $u$ in the support of $L_0^{X^i-Y^i}(du)$. 
		
		\medskip
		
		We shall now write $\mE$ instead of $ \mE_{B^{\otimes 4}}$ for convenience. Let us now take  $$\psi(x) := \xi^a_\e(x)= \frac1{\sqrt{\pi \e^2 }}e^{-(x-a)^2/\e^2}, \quad \phi(x) := \xi^a_{\e\sqrt{2}}(x)= \frac1{\sqrt{2\pi \e^2 }}e^{-(x-a)^2/2\e^2},$$
		in \eqref{e:smom1}. Using the identity $\xi^a_{\e\sqrt{2}}(x)\xi^a_{\e\sqrt{2}}(y)= \xi^a_\e((x+y)/2)\xi^0_{2\e}(x-y)$, we may now write \eqref{e:smom1} as 
		\begin{align}
			\label{e:smom2}
			2\bigg(\frac{8\sigma^2}{1-4\sigma^2}\bigg)^2\hspace{-0.1cm} \!\cdot\! \mE\left[\int\limits_{\Delta_2(0,t)} \hspace{-0.2cm}e^{\frac{4\sigma^2}{1-4\sigma^2}\V_2(s_1,s_2)}\prod_{i=1}^2 \left(\xi_\e^a\big(\tfrac12(X_{s_i}^i+Y_{s_i}^i)\big)\!\cdot\!(\tfrac12 dL_0^{X^i-Y^i}(s_i)-\xi_{2\e}^{0}(X_{s_i}^i-Y_{s_i}^i)ds_i)\right)\hspace{-0.1cm}\right]\!.\!
		\end{align}
		Let us write $U^{i,-}:=X^i-Y^i$ and $U^{i,+}:=X^i+Y^i$. Note that under $\mathbf P_{B^{\otimes 4}}$ the four processes $U^{1,-},U^{1,+},U^{2,-},U^{2,+}$ are independent Brownian motions with diffusion coefficient $2$. This enables us to view \eqref{e:smom2} as
		\begin{align}
			\nonumber & 2\bigg(\frac{8\sigma^2}{1-4\sigma^2}\bigg)^2 \cdot \mE\left[\int_{\Delta_2(0,t)} e^{\frac{4\sigma^2}{1-4\sigma^2}\V_2(s_1,s_2)}\prod_{i=1}^2 \left(\xi_\e^a\big(\tfrac12U_{s_i}^{i,+})\big)\cdot(\tfrac12 dL_0^{U^{i,-}}(s_i)-\xi_{2\e}^{0}(U_{s_i}^{i,-})ds_i)\right)\right]
			\\ & =: 2\bigg(\frac{8\sigma^2}{1-4\sigma^2}\bigg)^2 [A_1(\e)-A_2(\e)-A_3(\e)+A_4(\e)], \label{e:smom3}
		\end{align}
		where
		\begin{align*} 
			A_1(\e) & := \mE\left[\int_{\Delta_2(0,t)}  e^{\frac{4\sigma^2}{1-4\sigma^2}\V_2(s_1,s_2)}\xi_\e^a\big(\tfrac12U_{s_1}^{1,+}\big)\xi_\e^a\big(\tfrac12U_{s_2}^{2,+}\big)\,\tfrac12 dL_0^{U^{1,-}}(s_1)\,\tfrac12dL_0^{U^{2,-}}(s_2)\right] \end{align*}
   \begin{align*}
			A_2(\e) & := \mE\left[\int_{\Delta_2(0,t)} e^{\frac{4\sigma^2}{1-4\sigma^2}\V_2(s_1,s_2)}  \xi_\e^a\big(\tfrac12U_{s_1}^{1,+}\big)\xi_\e^a\big(\tfrac12U_{s_2}^{2,+}\big)\xi_{2\e}^{0}(U_{s_2}^{2,-})\, \tfrac12 dL_0^{U^{1,-}}(s_1)\, ds_2\right]
   \end{align*}
   \begin{align*}
			A_3(\e) & := \mE\left[\int_{\Delta_2(0,t)} e^{\frac{4\sigma^2}{1-4\sigma^2}\V_2(s_1,s_2)} \xi_\e^a\big(\tfrac12U_{s_1}^{1,+}\big)\xi_\e^a\big(\tfrac12U_{s_2}^{2,+}\big)\xi_{2\e}^{0}(U_{s_1}^{1,-})\, \tfrac12 dL_0^{U^{2,-}}(s_2)\, ds_1\right]
   \end{align*}
   \begin{align*}
			A_4(\e) & := \mE\left[\int_{\Delta_2(0,t)} e^{\frac{4\sigma^2}{1-4\sigma^2}\V_2(s_1,s_2)}\xi_\e^a\big(\tfrac12U_{s_1}^{1,+}\big)\xi_\e^a\big(\tfrac12U_{s_2}^{2,+}\big)\xi_{2\e}^{0}(U_{s_1}^{1,-})\xi_{2\e}^{0}(U_{s_2}^{2,-})\,ds_1\,ds_2\right].
		\end{align*}
  From here, the goal is to show that \eqref{e:smom3} vanishes as $\varepsilon\to 0$ as long as $a\neq 0$, as well as establish a bound given by the right side of \eqref{e:QXpolylog} for each of the four terms $A_i(\varepsilon)$. 
  
  Note that informally $\tfrac12 dL_0^{U^{i,\pm}}(s_i)$ may be written as $\delta_0(U^{i,\pm}) ds_i$ which suggests that each of the $A_i$ may be written in terms of Brownian bridge expectations. Indeed this is the case, and consequently the proofs of the desired convergence statements and bounds for the terms $A_i(\varepsilon)$ rely purely on elementary (albeit lengthy) disintegration formulas for Brownian motion at its endpoint, and these proofs can be copied verbatim from \cite[Proof of Proposition 5.3: Steps 2 and 3]{DDP23}, replacing the coefficient $\sigma$ appearing there with our coefficient $\frac{8\sigma^2}{1-4\sigma^2}$ throughout the proof. For brevity, we do not reproduce the details here.
 \end{proof}

 With the ``key estimate" proved, next we focus on obtaining bounds that will be useful for proving the tightness of the rescaled field \eqref{field}.

 \begin{prop}[Estimates for moments of the increments of QMF]\label{tight1} Fix $k\in\mathbb N$ and $T>0$. Then there exists a constant $\Con=\Con(k,T)>0$ such that for all bounded measurable functions $\phi$ on $\mathbb R$ and all $0\leq s<t\leq T$ with $s,t\in  N^{-1}\mathbb Z_{\ge 0}$ one has that 
		\begin{align}
			\label{e.tight1}
			\sup_{N\ge 1}\Ex\big[ (Q_N(t,\phi)-Q_N(s,\phi))^{k}\big] \leq \Con\|\phi\|^{k}_{L^\infty(\mathbb R)}(t-s)^{k/2}.
		\end{align}
		Furthermore fix $p>1$ and $\e>0$. Then there exists $\Con=\Con(p,\e,k,T)>0$ such that for all functions $\phi \in L^p(\R)$ and all $\e \leq s<t\leq T$ one has 
		\begin{align}
			\label{e.tight2}
			\lim_{N\to\infty}\Ex\big[ (Q_N(t,\phi)-Q_N(s,\phi))^{k}\big] \leq \Con\|\phi\|_{L^p(\mathbb R)}^k(t-s)^k.
		\end{align}
	\end{prop}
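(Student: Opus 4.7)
The plan is to start from the explicit moment formula in Lemma \ref{l:Qmom}\ref{l:Qincmom}, which expresses $\Ex[(Q_N(t,\phi)-Q_N(s,\phi))^k]$ as $(4\sigma^2)^k k! N^{-k/2}$ times a sum over $\vec s\in\Delta_k^N(s,t)$ of expectations under $\mathbf{P}_{RW^{(2k)}_\nu}$ involving $C_N^2$-factors, $\phi$-evaluations at $N^{-1/2}(R^{2i}(Ns_i)-N^{3/4}s_i)$, and the $k$ pairwise intersection indicators $\ind_{\{R^{2i-1}(Ns_i)=R^{2i}(Ns_i)\}}$. The two bounds are then obtained by working at the discrete and continuum level respectively.

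For the uniform bound \eqref{e.tight1}, I would first pull out $\|\phi\|_{L^\infty}^k$. Then apply the Girsanov tilt of Proposition \ref{g.exists} together with identity \eqref{twort}-\eqref{errbd} to convert the measure from $\mathbf{P}_{RW^{(2k)}_\nu}$ to $\mathbf{P}_{RW^{(2k)}_{\nu_N^*}}$: this replaces the divergent product $\prod_i C^2_{N,s_i,\cdots}$ by $\exp\bigl(\widetilde{\mathcal G}_N(t)+4\sigma^2\sum_{p<q}\mathscr V_N^{pq}(t)\bigr)$ modulo the subleading error $\widetilde{\mathsf{Err}}_N$. H\"older's inequality peels off this exponential factor, which is uniformly bounded in $N$ by Proposition \ref{exp} and Lemma \ref{traps}. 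After relaxing the ordering constraint the residual contribution is controlled by
\begin{align*}
N^{-k/2}\sum_{\vec s \in \Delta_k^N(s,t)} \prod_{i=1}^k \ind_{\{R^{2i-1}(Ns_i)=R^{2i}(Ns_i)\}} \leq \prod_{i=1}^k \bigl(\mathscr V_N^{2i-1,2i}(t)-\mathscr V_N^{2i-1,2i}(s)\bigr),
\end{align*}
and one more application of H\"older combined with Lemma \ref{lte} (invoked via the projective property of $\mathbf{P}_{RW^{(2k)}_{\nu_N^*}}$) bounds each factor in $L^r$ by $C(t-s)^{1/2}$, giving the overall $(t-s)^{k/2}$ scaling.

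For the $L^p$ bound \eqref{e.tight2}, I would instead apply Lemma \ref{add} with $A=\emptyset$, $B=\{1,\ldots,k\}$ and $\psi_i=\phi$ (first for bounded continuous $\phi$, then extending by density) to identify the $N\to\infty$ limit as
\begin{align*}
\Con\cdot\mathbf{E}_{B^{\otimes 2k}}\bigg[\int_{\Delta_k(s,t)} e^{\frac{4\sigma^2}{1-4\sigma^2}\V_k(\vec t)}\prod_{i=1}^k \phi\bigl(U^{2i}_{t_i}\bigr)\,dL_0^{U^{2i-1}-U^{2i}}(t_i)\bigg].
\end{align*}
Then introduce $V^i:=U^{2i-1}-U^{2i}$, $W^i:=U^{2i-1}+U^{2i}$, yielding $2k$ mutually independent Brownian motions of variance $2$. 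On the support of $dL_0^{V^i}$ one has $\phi(U^{2i}_{t_i})=\phi(W^i_{t_i}/2)$, and $\V_k$ is a functional of $V=(V^1,\ldots,V^k)$ alone. Conditioning on $V$ and using mutual independence of the $W^i$'s gives $\mathbf{E}_W[|\phi(W^i_{t_i}/2)|]\leq C\|\phi\|_{L^p}\,t_i^{-1/(2p)}$ by H\"older against the Gaussian density of $W^i_{t_i}/2$. Combining this with a Cauchy-Schwarz peel-off of the $e^{c\V_k}$ factor (bounded via a continuum analog of Lemma \ref{traps} for sums of Brownian pair-local-times) and using the expected local time density $\mathbf{E}_V[dL_0^{V^i}(t_i)]=C\,t_i^{-1/2}\,dt_i$ yields the estimate
\begin{align*}
\lim_{N\to\infty}\Ex[(Q_N(t,\phi)-Q_N(s,\phi))^k] \leq C_p\|\phi\|_{L^p}^k\int_{\Delta_k(s,t)}\prod_{i=1}^k t_i^{-1/(2p)-1/2}\,dt_i \leq C_{\e,p,k,T}\|\phi\|_{L^p}^k(t-s)^k,
\end{align*}
where the hypothesis $s\geq\e>0$ is used exactly to render the integrand bounded (so that the $k$-fold integration over $\Delta_k(s,t)$ yields the full factor $(t-s)^k$ rather than $(t-s)^{k/2}$). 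The main technical hurdle is the coupling introduced by $e^{c\V_k}$, which forbids direct factorization of the expectation across the index $i$; handling it cleanly requires a careful Cauchy-Schwarz decomposition combined with continuum exponential moment bounds for sums of Brownian local times on $[0,T]$, which is the continuum analog of Lemma \ref{traps}.
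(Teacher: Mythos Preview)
Your overall strategy matches the paper's proof, but two of the specific mechanisms you invoke do not work as stated.

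For \eqref{e.tight1}: the single-stage Girsanov tilt of Proposition \ref{g.exists} together with identity \eqref{twort} is designed to cancel a product $\prod_{j=1}^{2k}C_{N,T,\cdots}$ with all particles evaluated at one common time. In \eqref{e:Qincmom} the factors $C_{N,s_i,\cdots}^2$ sit at the distinct ordered times $s_1\le\cdots\le s_k$; if one tilts all $2k$ particles to a single terminal time, the ratio $\prod_iC^2_{N,s_i,\cdots}/\mathpzc M^{N^{-1/4}}(NT)$ still contains $N^{-1/4}\sum_i\bigl(R^{2i-1}(NT)+R^{2i}(NT)-R^{2i-1}(Ns_i)-R^{2i}(Ns_i)\bigr)$ in the exponent, which under $\mathbf P_{\nu_N^*}$ is of order $N^{1/2}$ and is not controllable. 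The paper instead imports the \emph{multi-stage} tilt constructed in the proof of Lemma \ref{add} (the martingales $\mathpzc M^{[2i-1:2k],N^{-1/4}}$ and $\mathcal G_N^i$, applied successively on $[Ns_{i-1},Ns_i)$), which is precisely calibrated to cancel $\prod_iC^2_{N,s_i,\cdots}$ and produces the bounded factors $\mathbf A_N(\vec t),\mathbf B_N(\vec t)$. With that substitution the remainder of your plan (H\"older into three pieces, Proposition \ref{exp} plus Doob for the $\mathcal G$-part, Lemma \ref{traps} for the intersection exponential, Lemma \ref{lte} for $\gamma_N(\Delta_k^N(s,t))$) coincides with the paper.

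For \eqref{e.tight2}: your first step, applying Lemma \ref{add} with $A=\emptyset$ to reach the Brownian local-time expression, is exactly what the paper does; the paper then defers to \cite[(5.43)]{DDP23} for the continuum estimate. Your more explicit argument contains a real error: the claim that $\V_k$ is a functional of $V=(V^1,\ldots,V^k)$ alone is false. By \eqref{def:v}, $\V_k(\vec t)$ involves the local times $L_0^{U^p-U^q}$ for \emph{all} pairs $2i-1\le p<q\le 2k$, not just the consecutive pairs $(2i-1,2i)$; for instance $U^1-U^3=\tfrac12(V^1+W^1-V^2-W^2)$ depends on both the $V$'s and the $W$'s. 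Consequently conditioning on $V$ does not separate $e^{c\V_k}$ from the factors $\phi(W^i_{t_i}/2)$, and the factorised bound you write does not follow. A workable route is to interchange expectation with the local-time integral (rewriting $dL_0^{V^i}(t_i)$ as a density times $dt_i$ via occupation-time or bridge disintegration) and then apply H\"older pointwise in $\vec t$, bounding the exponential via the continuum analogue of Lemma \ref{traps}; this preserves the full $(t-s)^k$ coming from the $k$-fold $dt$-integral over $\Delta_k(s,t)\subset[\e,T]^k$.
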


 \begin{proof}
We are going to use the same notation and the same family of martingales from the proof of Proposition \ref{add}. Using \eqref{e:Qincmom} together with the trivial bound $|\phi(N^{-1/2}(R^j(Nt_j)-N^{3/4}t_j))|\leq \|\phi\|_{L^\infty},$ we obtain that 
		\begin{align}
			 \notag &\Ex[\big(Q_N(t,\phi)-Q_N(s,\phi)\big)^k]  \\ & \notag \le  N^{-k/2} k!\|\phi\|_{L^\infty}^k\sum_{(t_1,\ldots,t_k)\in \Delta_k^N(s,t)} \mathbf E_{RW^{(2k)}_{\nu}}\bigg[\prod_{j=1}^kC_{N,t_j,N^{-1/2}(R^{2j}({Nt_j})- N^{3/4} t_j)}^2  \ind_{\{R^{2j-1}(Nt_j) =R^{2j}(Nt_j)\}} \bigg]\\ & =  k!\|\phi\|_{L^\infty}^k \cdot N^{-k/2}\sum_{(t_1,\ldots,t_k)\in\Delta_k^N(s,t)}\mathbf E_{RW^{(2k)}_{\nu_N^*}}\bigg[\mathbf{A}_N(\vec{t})\cdot \mathbf{B}_N(\vec{t}) \cdot \prod_{j=1}^k  \ind_{\{R^{2j-1}(Nt_j) =R^{2j}(Nt_j)\}} \bigg], \label{eqg}
		\end{align}
  where
  \begin{align*}
     \mathbf{A}_N(\vec{t})&:=\exp\bigg(\mathcal G_N^{\vec{t}}(t) -W_N^{\vec{t}}(t)\bigg), \\\quad \mathbf{B}_N(\vec{t})&:=\exp\left(4\sigma^2 \sum_{i=1}^k\sum_{2i-1<p<q\le 2k} [\mathscr{V}_N^{pq}(t_i)-\mathscr{V}_N^{pq}(t_{i-1})]-\widetilde{\mathsf{Err}}_N(\vec{t})\right),
 \end{align*}
and where all of these objects are exactly the same as those introduced in the proof of Proposition \ref{add}. The equality in \eqref{eqg} is due to the argument given in the proof of Proposition \ref{add}. As before, let us abbreviate $\mathbf E_N:=\mathbf E_{RW^{(2k)}_{\nu_N^*}}.$ Recalling the measure $\gamma_N$ introduced in \eqref{gamman}, let us rewrite \eqref{eqg} as $$k!\|\phi\|_{L^\infty}^k \mathbf E_N \bigg[ \int_{\Delta_k^N(s,t)} \mathbf{A}_N(\vec{t})\cdot \mathbf{B}_N(\vec{t}) \gamma_N(d\vec t)\bigg].$$ 
Now using a trivial bound $$\int_{\Delta_k^N(s,t)} \mathbf{A}_N(\vec{t})\cdot \mathbf{B}_N(\vec{t}) \gamma_N(d\vec t)\le  \|\mathbf A_N\|_{L^\infty(\Delta_k^N(s,t))} \|\mathbf B_N\|_{L^\infty(\Delta_k^N(s,t))} \gamma_N(\Delta_k^N(s,t)),$$
we have by Holder's inequality that 
\begin{align}
   & \mathbf E_N \bigg[ \int_{\Delta_k^N(s,t)} \mathbf{A}_N(\vec{t})\cdot \mathbf{B}_N(\vec{t}) \gamma_N(d\vec t)\bigg] \\ & \hspace{2cm} \leq \mathbf E_N\big[ \|\mathbf A_N\|_{L^\infty(\Delta_k^N(s,t))}^3\big]^{1/3}\mathbf E_N \big[ \|\mathbf B_N\|_{L^\infty(\Delta_k^N(s,t))}^3\big]^{1/3}\mathbf E_N\big[ \gamma_N(\Delta_k^N(s,t))^3\big]^{1/3}. 
\end{align}
 Recalling $\mathscr V^{ij}$ from Theorem \eqref{def:resc}, note that $\gamma_N(\Delta_k^N(s,t)) \leq \big( \sum_{1\leq i<j\leq k} \mathscr V^{ij}_N(t) - \mathscr V^{ij}_N(s)\big)^k. $ By Lemma \ref{lte}, we can bound $\mathbf E_N\big[ \gamma_N(\Delta_k^N(s,t))^3\big]^{1/3} \leq C(t-s)^{k/2}$ where $C$ does not depend on $N$. This is because of the fact that $\mu_{\nu_N^*}(1-\mu_{\nu_N^*}) - \sigma_{\nu_N^*}^2$ is positive for large enough $N$. Thus it suffices to bound $I_1,I_2$ independently of $N$. 

Note that the martingales $\mathcal G_N^{\vec{t}}(t)$ satisfy \eqref{assn1}, by their construction in the proof of Proposition \ref{add}. Consequently an application of Doob's $L^p$ inequality combined with Proposition \ref{exp} 
allows us to bound the expected maximum $\mathbf E_N\big[ \|\mathbf A_N\|_{L^\infty(\Delta_k^N(s,t))}^3\big]^{1/3}$ independently of $N$. Finally using Lemma \ref{traps} we can bound $\mathbf E_N \big[ \|\mathbf B_N\|_{L^\infty(\Delta_k^N(s,t))}^3\big]^{1/3}$ for large enough $N$ as well (as again $\mu_{\nu_N^*}(1-\mu_{\nu_N^*}) -\sigma_{\nu_N^*}^2$ is positive for large enough $N$). This verifies the bound in \eqref{e.tight1}.

\medskip

For \eqref{e.tight2}.  appealing to the moment formula for the increment of QMF from \eqref{e:Qincmom} and the convergence from \eqref{e:add2} we have
		\begin{align}
  \label{e.idef}
  & \lim_{N\to \infty}\Ex\bigg[\big(Q_N(t,\phi)-Q_N(s,\phi)\big)^k\bigg] \\ & \hspace{2cm}= k!\bigg(\frac{4\sigma^2}{1-4\sigma^2}\bigg)^k \cdot \mathbf E_{B^{\otimes 2k}} \bigg[\int_{\Delta_k(s,t)}e^{\frac{4\sigma^2}{1-4\sigma^2}\V_k(\vec{t})}\prod_{j=1}^k \phi(U_{t_j}^{2j})dL_0^{U^{2j-1}-U^{2j}}(t_j) \bigg],
		\end{align}
  where $\V_k(\vec{t})$ is defined in \eqref{def:v}. The rest of the proof is analogous to the proof of  \cite[Eq.~(5.43)]{DDP23}.
 \end{proof}

Recall the martingale field $M_N$ from \eqref{m_field}. The next estimate will obtain a $L^p$ bound on its optional quadratic variation from \eqref{optvar}, which will be the main tool in obtaining tightness for the field \eqref{field}. 

    \begin{prop}[Optional quadratic variation bound] \label{optbound}
        Fix $p\ge 1$. For all $s,t\in N^{-1}\mathbb Z_{\ge 0}$ and all $\phi\in C_c^\infty(\mathbb R)$ we have that 
         \begin{equation}\mathbb E\big[ \big|[M_N(\phi)]_{t}-[M_N(\phi)]_s\big|^p\big]^{1/p}\leq C \|\phi\|_{C^1}^2 |t-s|^{1/2}. \label{o2}
        \end{equation}
        Here $C$ is independent of $N,\phi,s,t$.
    \end{prop}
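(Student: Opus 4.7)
The strategy is to reduce control of the optional quadratic variation $[M_N(\phi)]$ to that of the predictable quadratic variation $\langle M_N(\phi)\rangle$, which is already accessible through the QMF bounds of Proposition \ref{tight1}. First, I would exploit the decomposition \eqref{m=q}, which writes $\langle M_N(\phi)\rangle_t - \langle M_N(\phi)\rangle_s$ as $(2\rho_N-1)^2\sqrt{N}$ times the increment of $Q_N(\cdot,\phi^2)$ plus the error term $\mathcal E_N(t,\phi)-\mathcal E_N(s,\phi)$. Since $(2\rho_N-1)^2\sqrt{N}=1+O(N^{-1/2})$ is bounded, Proposition \ref{tight1} applied with test function $\phi^2$ gives $\|Q_N(t,\phi^2)-Q_N(s,\phi^2)\|_{L^p}\leq C\|\phi\|_{L^\infty}^2|t-s|^{1/2}$. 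The pathwise bound \eqref{ebound} reduces the error term to $N^{-1/4}\|\phi\|_{C^1}^2$ times an increment of $Q_N(\cdot,\ind_{[-A_\phi-1,A_\phi+1]})$, which is again controlled by Proposition \ref{tight1} (after dominating the indicator by a $C_c^\infty$ bump). Together these give
\[
\|\langle M_N(\phi)\rangle_t-\langle M_N(\phi)\rangle_s\|_{L^p}\leq C\|\phi\|_{C^1}^2|t-s|^{1/2}.
\]

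The next step is to pass from $\langle M_N(\phi)\rangle$ to $[M_N(\phi)]$ via martingale inequalities. Applying the Burkholder--Davis--Gundy inequality to the restarted martingale $r\mapsto M_N(\phi)_r-M_N(\phi)_{Ns}$ together with Doob's $L^{2p}$ maximal inequality yields $\mathbb E[([M_N(\phi)]_t-[M_N(\phi)]_s)^p]^{1/p}\leq C_p\,\mathbb E[|M_N(\phi)_t-M_N(\phi)_s|^{2p}]^{1/p}$. For $p\geq 2$, the discrete Burkholder--Rosenthal inequality then bounds the right-hand side by
\[
C_p\Big(\|\langle M_N(\phi)\rangle_t-\langle M_N(\phi)\rangle_s\|_{L^p}+\big\|\sup_{Ns<r\leq Nt}|X_r|\big\|_{L^{2p}}^{2}\Big),
\]
where $X_r$ is the $r$-th martingale increment of $M_N(\phi)$ as in \eqref{grad_form}. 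The first summand is handled by the previous paragraph; only the supremum of the increments needs additional work. The case $1\leq p<2$ will be reduced to $p=2$ by Jensen's inequality.

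For the maximal-increment term, I would combine $|\eta_N|\leq 1$ with the Taylor expansion $|\nabla_N\phi|_{L^\infty}\leq CN^{-1/4}\|\phi\|_{C^1}$ (which uses $1-2\rho_N=O(N^{-1/4})$) and the identity $\sum_x Z_N^\omega(r,x)\psi(N^{-1/2}x)=\mathscr U_N(r/N,\psi)$ to obtain the pathwise bound
\[
|X_r|\leq CN^{-1/4}\|\phi\|_{C^1}\,\mathscr U_N\bigl(r/N,\ind_{[-A_\phi-1,A_\phi+1]}\bigr).
\]
The crude estimate $\mathbb E[\sup_r|X_r|^{2p}]\leq \sum_r\mathbb E[|X_r|^{2p}]$ together with uniform $L^{2p}$ bounds on $\mathscr U_N(\cdot,\ind_I)$ (inherited from the moment machinery of Section \ref{mom}, after dominating the indicator by a smooth bump) then yields $\|\sup_r|X_r|\|_{L^{2p}}^2\leq C\|\phi\|_{C^1}^2 N^{1/p-1/2}|t-s|^{1/p}$. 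The crucial final step is the inequality $N^{1/p-1/2}|t-s|^{1/p}\leq |t-s|^{1/2}$, which follows because $|t-s|\geq N^{-1}$ (as $s,t\in N^{-1}\mathbb Z$): indeed this reduces to $(N|t-s|)^{1/p-1/2}\leq 1$, which holds since $1/p-1/2\leq 0$ and $N|t-s|\geq 1$. Assembling the three steps gives the claimed bound.

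The main obstacles I anticipate are bookkeeping-type rather than conceptual: (i) verifying uniform $L^{2p}$ bounds on $\mathscr U_N$ evaluated against a mere indicator (handled by a smooth cutoff approximation plus the moment formulas of Section \ref{mom}), and (ii) the delicate exponent arithmetic in the final step of Step 3, which crucially exploits the microscopic resolution $|t-s|\geq N^{-1}$ intrinsic to the discrete setting. The overall approach contrasts with a direct analysis of $[M_N(\phi)]-\langle M_N(\phi)\rangle$ as a martingale in its own right, which would require bounding fourth-moment observables that we do not have direct control over.
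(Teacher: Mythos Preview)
Your approach is correct but takes a genuinely different route from the paper. The paper never passes through $\langle M_N(\phi)\rangle$; instead it expands the squared martingale increment via \eqref{grad_form} into a diagonal piece $A_N(r,\phi)=\sum_x(\nabla_N\phi)^2(Z_N^\omega)^2\eta_N^2$ and an off-diagonal piece $B_N(r,\phi)=\sum_{x\neq y}(\cdots)\eta_N(r,x)\eta_N(r,y)$. The sum $\sum_r A_N$ is bounded pathwise by a $Q_N$-increment and handled directly by Proposition~\ref{tight1}. The key observation for $B_N$ is that it is itself a martingale difference in $r$ (since $\mathbb E[\eta_N(r,x)\eta_N(r,y)\mid\mathcal F^\omega_{r-1}]=0$ for $x\neq y$); BDG applied to $\sum_r B_N$, together with a combinatorial matching of the $\eta_N$ factors in $\mathbb E[\prod_j B_N(r_j,\phi)^2]$, again collapses everything to powers of a $Q_N$-increment. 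Your route replaces this $A_N/B_N$ decomposition and the matching combinatorics with an off-the-shelf Burkholder--Rosenthal inequality, at the price of one extra input: uniform-in-$(N,t)$ moment bounds on $\mathscr U_N(t,\ind_I)$, which do follow from Lemma~\ref{traps} and Proposition~\ref{exp} but are never isolated as a standalone estimate in the paper. Thus the paper's argument is self-contained once Proposition~\ref{tight1} is available, while yours is shorter and more conceptual but draws on slightly more of the earlier machinery; your lattice-resolution trick $N^{1/p-1/2}|t-s|^{1/p}\leq|t-s|^{1/2}$ has no counterpart in the paper's proof.
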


    \begin{proof}

        The main idea of the proof will be to split the quadratic variation $[M_N(\phi)]$ into a ``predictable part" (denoted $A_N$ below) which is easy to control, and a ``discontinuity part" (denoted $B_N$ below) which we expect to vanish as $N\to\infty$. Then we bound each of these separately.
        
        It suffices to prove the claim when $p=2k$ for some positive integer $k$. Letting $\eta_{N}(r,x):= 1-2\omega_{r,x+N^{-1/4}r}$, recall from \eqref{grad_form} that for $t\in N^{-1}\mathbb Z_{\ge 0}:$ $$M_N(t,\phi) = \sum_{r=0}^{Nt} \sum_{x\in\mathbb Z - rN^{-1/4}}(\nabla_N \phi)\big(N^{-1/2}x\big) Z_N^\omega(r,x) \eta_{N}(r,x). $$
        where $(\nabla_N \phi)(x):= (1-\rho_N) \phi(x-N^{-1/2}) - \rho_N \phi(x+N^{-1/2}).$ A first-order Taylor expansion of $\phi$ yields that $\|\nabla_N\phi\|_{L^\infty(\mathbb R)}\leq \|\phi\|_{C^1}N^{-1/4} $. Then for $t\in N^{-1}\mathbb Z_{\ge 0}$  we have that 
\begin{align*} \notag (M_{N}&(t+N^{-1},\phi)- M_N(t,\phi))^2 = A_N(Nt,\phi)+B_N(Nt,\phi). 
        \end{align*}
        where
        \begin{align*}
            A_N(r,\phi) & :=\sum_{x\in \mathbb Z - rN^{-1/4}} (\nabla_N \phi)\big(N^{-1/2}x\big)^2 Z_N^\omega(r,x)^2 \eta_{N}(r,x)^2, \\
            B_N(r,\phi) & :=\sum_{\substack{x,y\in \mathbb Z - rN^{-1/4}\\x\neq y}} (\nabla_N \phi)\big(N^{-1/2}x\big) (\nabla_N \phi)\big(N^{-1/2}y\big) Z_N^\omega(r,x)Z_N^\omega(r,y) \eta_{N}(r,x)\eta_N(r,y).
        \end{align*}

     We will separately obtain bounds of the desired form for both $A_N$ and $B_N$. 
     Using the fact that $\eta_N(r,x)^2 \leq 1$ deterministically, 
        and using the definition \eqref{qfield} of $Q_N$, we obtain the pathwise bound \begin{equation*} \sum_{r=Ns}^{Nt}A_N(r,\phi) \leq \sigma^{-2}\|\phi\|_{C^1}^2 \big( Q_N(t,\ind) - Q_N(s,\ind)\big).\end{equation*}
        In particular, by \eqref{e.tight1} we see that \begin{equation}\label{an} \mathbb E \bigg[ \bigg(\sum_{r=Ns}^{Nt}A_N(r,\phi) \bigg)^{2k}\bigg] \leq\|\phi\|_{C^1}^4\mathbb E\big[ \big( Q_N(t,\ind) - Q_N(s,\ind)\big)^{2k} \big] \leq C \|\phi\|_{C^1}^{2k}|t-s|^k.\end{equation}
        Now let us bound the moments of  $B_N$. 
        Note that
\begin{align*}
    (B_N(r,\phi))^2 & \le N^{-1}\|\phi\|_{C^1}^4\bigg(\sum_{x} Z_N^{\omega} (r,x)\bigg)^4=N^{-1}\|\phi\|_{C^1}^{4}\cdot\left(\mathsf{E}^{\omega}\big[C_{N,N^{-1}r, N^{-1/2}(R(r)-N^{-1/4}r)}\big]\right)^4.
\end{align*}
Taking the annealed expectation of a product of $B_N$'s we get
\begin{align}
    \Ex\left[\prod_{j=1}^k B_N(r_j,\phi)^2\right] & \le N^{-k}\|\phi\|_{C^1}^{4k}\cdot \Ex\left[\prod_{j=1}^k\left(\mathsf{E}^{\omega}\big[C_{N,N^{-1}r_j, N^{-1/2}(R(r_j)-N^{-1/4}r_j)}\big]\right)^4\right].
\end{align}
By H\"older's inequality the right-hand side of the above equation is less than
\begin{align}
    N^{-k}\|\phi\|_{C^1}^{4k}\prod_{j=1}^k\left(\mathbf{E}_{RW_{\nu}^{(4k)}}\bigg[\prod_{i=1}^{4k}C_{N,N^{-1}r_j, N^{-1/2}(R_i(r_j)-N^{-1/4}r_j)}\bigg]\right)^{1/k}.
\end{align}
Using the same arguments as in the proof of Proposition \ref{tight1}, the above expectation can be bounded uniformly over $r_j$'s in $[0,Nt]$.
        Using the fact that $\mathbb E[B_N(r,\phi)|\mathcal F^\omega_{r-1}]=0,$ we then obtain from Burkholder-Davis-Gundy that
        \begin{align*}
            \mathbb E \bigg[ \bigg(\sum_{r=Ns}^{Nt} &B_N(r,\phi) \bigg)^{2k} \bigg]  \leq C_k \mathbb E\bigg[\bigg(\sum_{r=Ns}^{Nt} B_N(r,\phi)^2\bigg)^k\bigg] \\ &= C_k\mathbb E \bigg[ \sum_{r_1,...,r_k=Ns}^{Nt}  B_N(r_1,\phi)^2\cdots B_N(r_k,\phi)^2 \bigg] \leq C_k\|\phi\|_{C^1}^{4k}|t-s|^k.
        \end{align*}
        This gives the desired claim after combining this bound with the other bound \eqref{an} for $A_N(r,\phi)$.
    \end{proof}

    \section{Tightness and identification of limit points}\label{iden}

    Throughout this section, we are going to fix a terminal time $T>0$. To prove Theorem \ref{main} we need to introduce a number of preliminaries. 
    
    \begin{defn}Recall the lattice $\Lambda_N$ from \eqref{lattice} and define its rescaled version $$\Psi_{N,T}:= \{ (t,x)\in \mathbb R^2 : (Nt,N^{1/2}x) \in \Lambda_N, \;\;\;0\le t\leq T\}.$$ 
    \end{defn}
    
    \begin{defn}\label{dnlnkn} For $(s,y) \in \Lambda_N$ define $p_N(s,y)$ to be the transition density at time $s$ and position $y$ of a random walker on $\Lambda_N$ with increment distribution given by going from $0$ to $-1+N^{-1/4}$ with probability $\rho_N$ (as defined in \eqref{def:rhon}), and from $0$ to $1+N^{-1/4}$ with probability $1-\rho_N$. 
    
    Define linear operators $D_N,L_N,K_N$ on $\mathcal S'(\mathbb R^2)$ by
    \begin{align*} D_Nf(t,x) &:= N \big[ f(t+N^{-1},x)-f(t,x)\big],\\
        L_N f(t,x) &: = N\big[f(t+N^{-1},x-N^{-3/4}) - \big(\rho_Nf(t,x-N^{-1/2}) + (1-\rho_N) f(t,x+N^{-1/2}) \big)\big],\\
        K_N f(t,x) &:= N^{-1}\sum_{(s,y) \in \Psi_{N,T+1}} p_N(Ns,N^{1/2}y) f(t-s,x-y).
    \end{align*}
    These equalities should be understood by integration against smooth functions $\varphi \in \mathcal S(\mathbb R^2)$.
    \end{defn}

    The sum defining $K_Nf$ is well defined because it is actually a finite sum, as $p_N$ is finitely supported for bounded time intervals. Intuitively it is clear that $L_N$ is a diffusively rescaled version of the discrete heat operator $\mathcal L_N$ from Section \ref{hopf}, but which acts on tempered distributions rather than functions on $\Lambda_N$. Indeed if $\varphi \in \mathcal S(\mathbb R^2)$ then a second-order Taylor expansion shows that $L_N\varphi$ converges in $\mathcal S(\mathbb R^2)$ as $N\to\infty$ to $(\partial_t-\frac12\partial_x^2)f$, i.e., $L_N$ approaches the continuum heat operator. Then $K_N$ is the inverse operator to $L_N$, in the following sense.
    
    \begin{lem}\label{inv} $L_NK_Nf=K_NL_N f=f$ whenever $f$ is a tempered distribution supported on $[a,b]\times \mathbb R$ with $b-a<T+1$.
    \end{lem}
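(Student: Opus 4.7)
My plan is to exploit the fact that $p_N$ is the discrete heat kernel (Green's function) associated to $L_N$, so that $K_N$ is essentially the convolution inverse of $L_N$ on distributions supported in a time-strip of width less than $T+1$.

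First, I would verify the Green's function property: viewing $\tilde p_N(s,y):=p_N(Ns, N^{1/2}y)$ as a function on the rescaled lattice $\Psi_{N,T+1}$, the Chapman--Kolmogorov equation for the random walk defining $p_N$ translates directly into the discrete heat equation $L_N \tilde p_N(s,y)=0$ for $s\ge N^{-1}$, together with the initial-time delta condition $\tilde p_N(0,y)=\mathbf 1_{y=0}$. This step is pure bookkeeping: one rewrites the one-step update of the walker, matches it against the three-term linear combination defining $L_N$ in Definition \ref{dnlnkn}, and checks compatibility with the definition of $\Lambda_N$.

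Second, to prove $L_N K_N f=f$, I expand $L_N K_N f$ by linearity. Each of the three shifted copies of $K_N f$ produces a sum of the form $N^{-1}\sum p_N(Ns,N^{1/2}y) f(t+\alpha-s,x+\beta-y)$ for appropriate small shifts $(\alpha,\beta)$. By relabelling the summation variable $(s,y)\mapsto (s',y')$ in each term so that the argument of $f$ becomes $(t-s',x-y')$, I can collect everything into a single sum over $(s',y')$ in $\Psi_{N,T+1}$ (after reconciling slightly shifted index sets). The integrand becomes $f(t-s',x-y')$ multiplied by $N\cdot L_N\tilde p_N(s',y')$ on the common bulk, plus two boundary contributions: an initial-time boundary from $s=0$, and a terminal-time boundary from $s$ near $T+1$. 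By the first step, the bulk vanishes; the initial boundary evaluates to $f(t,x)$ using the delta property $\tilde p_N(0,\cdot)=\mathbf 1_{0}$; and the terminal boundary supports $f(t-s,\cdot)$ with $s\approx T+1$, so it only tests $f$ at time coordinates in a band of width $O(N^{-1})$ near $t-(T+1)$. Since $f$ is supported in $[a,b]\times\mathbb R$ with $b-a<T+1$, pairing with a Schwartz function $\varphi$ shows the terminal contribution can only involve $t$ with $t-(T+1)\in[a,b]$, i.e.\ $t\in[a+T+1,b+T+1]$, which lies strictly above $b$; hence this contribution vanishes when paired with anything that detects the support of $f$.

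Third, for $K_N L_N f=f$, I would run the analogous argument in reverse: write $K_N L_N f$ as a double sum, interchange the order of summation (which is permitted because only finitely many indices matter once one tests against a Schwartz function and uses the support of $f$), and move $L_N$ onto $p_N$ via the same index shifts. The heat-equation property again annihilates the bulk, the $s=0$ boundary produces $f$, and the terminal boundary is again killed by the support condition $b-a<T+1$.

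The main obstacle is entirely bookkeeping: keeping track of the three slightly different shifted index sets produced when applying $L_N$ through $K_N$ and ensuring they combine to give a single clean sum plus two well-identified boundary strips. Once this is done cleanly, the argument is reduced to the Chapman--Kolmogorov identity for $p_N$ plus the elementary observation that the support hypothesis on $f$ keeps the terminal boundary outside the support of $f$. No genuine analytic difficulty arises; the result is essentially a discrete analogue of the classical statement ``convolution with the fundamental solution inverts the operator on a strip shorter than the time horizon.''
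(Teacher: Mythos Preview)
Your approach is correct and is essentially the same as the paper's: both reduce the identity to the Green's function property of $p_N$ for the discrete heat operator, and your bulk/boundary reindexing is precisely the ``direct calculation'' the paper alludes to. The only stylistic difference is that the paper first notes $K_N,L_N$ are continuous on $\mathcal S'(\mathbb R^2)$ (as finite linear combinations of translations) and passes by density to smooth compactly supported $f$, which lets it skip the distributional bookkeeping you spell out.
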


    \begin{proof}
        Note that for each $N$ the operators $K_N,L_N$ are continuous on $\mathcal S'(\mathbb R^2)$, since they are finite linear combinations of translation operators.  Therefore it suffices to prove the claim for all smooth functions $f$ that have compact support contained in $(a,b)\times \mathbb R$, since these are dense in the subset of distributions supported on $[a,b]\times \mathbb R$, with respect to the topology of $\mathcal S'(\mathbb R^2)$. The smooth claim is true by direct calculations since $p_N(s,y)$ is the kernel for the inverse operator to the discrete heat operator $\mathcal L_N$ introduced in Section \ref{hopf}.
    \end{proof}

\begin{defn}\label{mqv} 
    With the fields $M_N$ and $Q_N$ as defined in \eqref{m_field} and \eqref{qfield} respectively, 
    we will associate random elements of $C([0,T+1],\mathcal S'(\mathbb R))$ by the formulas $\hat M_N(t,\phi)=0$ for $t\in [0,N^{-1}]$ and
    \begin{align*}\hat{M}_N(t,\phi) &:= M_N(t-N^{-1},\phi)-M_N(0,\phi),\;\;\;\;\; t\in N^{-1}\mathbb Z_{\ge 1}.
    \\ \hat{Q}_N(t,\phi)&:= Q_N(t,\phi),\;\;\;\;\; t\in N^{-1}\mathbb Z_{\ge 0}.
    \end{align*}
    for $\phi \in \mathcal S(\mathbb R)$. We define these fields by linear interpolation for $t\notin N^{-1}\mathbb Z_{\ge 0}.$
    \end{defn}

    For any $T$, note that $C([0,T],\mathcal S'(\mathbb R))$ embeds naturally into the linear subspace of $\mathcal S'(\mathbb R^2)$ consisting of distributions supported on $[0,T]\times \mathbb R$, thus we can make sense of $D_Nf,L_Nf,K_Nf$ for all $f\in C([0,T],\mathcal S'(\mathbb R)),$ and these will be elements of $\mathcal S'(\mathbb R^2)$ in general.

    \begin{defn}
        We will say that two tempered distributions $f,g \in \mathcal S'(\mathbb R^2)$ agree on $[0,T]$ if there exists $\e>0$ such that $(f,\varphi) = (g,\varphi)$ for all $\varphi$ supported on $[-\e,T+\e] \times \mathbb R.$
    \end{defn}

    \begin{defn}
        Sample the environment $\omega$ and then define a collection of coefficients $a_N(s,y)$ with $(s,y)\in \Lambda_N$ as $a_N(0,y) = \ind_{\{y=0\}}$ and for $s\geq 1$, $$a_N(s,y) = \omega_{0,0} p_N(s-1,y-1-N^{-1/4}) + (1-\omega_{0,0}) p_N(s-1, y+1-N^{-1/4}).$$ 
        Then define the distribution $\mathfrak p_N\in C([0,T+1],\mathcal S'(\mathbb R))$ for $t\in N^{-1}\mathbb Z_{\ge 0}$ by $$\mathfrak p_N(t,\cdot) := N^{-1} \sum_{y:(t,y) \in \Psi_{N,T+1}} a_N(Nt,N^{1/2}y)  \delta_y$$and linearly interpolated for $t\notin N^{-1}\mathbb Z_{\ge 0}$.
    \end{defn}
    Although its definition is tedious, $\mathfrak p_N$ admits the following simple description: it agrees with the density field $\mathscr U_N$ from \eqref{field} on the time interval $[0,N^{-1}]$, and it is a linearly interpolated solution of the discrete heat equation $\mathcal L_Na_N=0$ at all positive times in $N^{-1}\mathbb Z_{\ge 0}$. The purpose of this will be to make certain technical details work later. Since $\mathfrak p_N$ behaves deterministically after time $N^{-1}$, its scaling limit will simply be the continuum heat kernel.

    \begin{lem}\label{u=kdm}
        Let $\hat M_N$ be as defined above. Furthermore, let $\mathscr U_N$ be as defined in \eqref{field}. Restrict $\mathscr U_N$ to $[0,T]$ thus viewed as an element of $C([0,T],\mathcal S'(\mathbb R))$. Then $L_N(\mathscr U_N-\mathfrak p_N)$ agrees with $D_N\hat{M}_N$ on $[0,T]$. Consequently $\mathscr U_N$ agrees with $\mathfrak p_N+K_N D_N\hat{M}_N$ on $[0,T]$.
    \end{lem}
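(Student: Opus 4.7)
The plan is to verify the first identity $L_N(\mathscr U_N - \mathfrak p_N) = D_N \hat M_N$ on $[0,T]$ by direct computation at each lattice time $t \in N^{-1}\mathbb Z_{\ge 0}$, and then deduce the second claim by applying $K_N$ and invoking Lemma \ref{inv}.

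The key observation is that at any lattice time $t \in N^{-1}\mathbb Z_{\ge 0}$, the measure $\mathscr U_N(t,\cdot)$ coincides with the rescaled image of the microscopic field $Z_N^{\omega}$ of \eqref{z_n} under the change of variables $(t,x)\leftrightarrow (Nt,N^{1/2}x)$; explicitly, $\mathscr U_N(t,\cdot) = \sum_{y} Z_N^{\omega}(Nt, N^{1/2}y)\,\delta_y$ where the sum runs over $y\in N^{-1/2}\mathbb Z - N^{1/4}t$. A short change-of-variables computation, using the definition of $L_N$ and the microscopic discrete heat equation $\mathcal L_N Z_N^{\omega} = v_N$ from \eqref{mart'}, then yields
$$(L_N\mathscr U_N)(t,\cdot) \;=\; \sum_y N\, v_N(Nt, N^{1/2}y)\,\delta_y$$
at every lattice time $t$, since $L_N$ is essentially $N$ times the microscopic operator $\mathcal L_N$ under this rescaling. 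On the other side, $D_N \hat M_N(t,\phi) = N[\hat M_N(t+N^{-1},\phi)-\hat M_N(t,\phi)]$, which by the definition of $\hat M_N$ and the formula \eqref{m_field} simplifies to $\sum_y N\, v_N(Nt, N^{1/2}y)\,\phi(y)$ at every lattice time $t\ge 0$.

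The role of $\mathfrak p_N$ is to reconcile the initial-data boundary contribution when applying $K_N$. The coefficients $a_N$ are built from the discrete heat kernel $p_N$ (which satisfies $\mathcal L_N p_N = 0$), so that $a_N(s,\cdot)$ for $s\ge 1$ satisfies $\mathcal L_N a_N = 0$, while $\mathfrak p_N$ encodes the initial-step evolution of $\mathscr U_N$ under the quenched environment (through the $\omega_{0,0}$ weights in $a_N$). Consequently, $L_N\mathfrak p_N$ vanishes identically at positive lattice times, while near the $t=0$ boundary it reproduces the nontrivial initial-data contribution of $L_N\mathscr U_N$ arising from $\mathscr U_N(0,\cdot)=\delta_0$. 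Subtracting $\mathfrak p_N$ from $\mathscr U_N$ thus removes this boundary contribution, so that $L_N(\mathscr U_N - \mathfrak p_N)$ agrees with $D_N\hat M_N$ at every lattice time in $[0,T]$; since both operators commute with linear interpolation in $t$, the identity extends to all of $[0,T]$.

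For the second claim, since $\mathscr U_N - \mathfrak p_N$ (restricted to $[0,T]$) is supported on $[0,T]\times\mathbb R$ with $T<T+1$, Lemma \ref{inv} yields $K_N L_N(\mathscr U_N - \mathfrak p_N) = \mathscr U_N - \mathfrak p_N$, and applying $K_N$ to both sides of the first identity produces $\mathscr U_N = \mathfrak p_N + K_N D_N \hat M_N$ on $[0,T]$. I expect the main obstacle to lie in the boundary bookkeeping at $t=0$: one must carefully verify that the specific construction of $\mathfrak p_N$ (with its $N^{-1}$ prefactor and convolution structure against the discrete heat kernel $p_N$) produces precisely the cancellation required for the distributional identity $L_N(\mathscr U_N - \mathfrak p_N) = D_N \hat M_N$, once the restriction of $\mathscr U_N$ to $[0,T]$ and the time-shifts inherent in $L_N$ and $D_N$ are all properly accounted for.
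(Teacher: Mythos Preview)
Your proposal is correct and follows essentially the same approach as the paper's proof: verify $L_N\mathscr U_N = D_N\hat M_N$ at positive lattice times via the microscopic identity $\mathcal L_N Z_N^\omega = v_N$, use $\mathfrak p_N$ to handle the boundary at $t=0$, extend by linear interpolation, and invert with $K_N$ via Lemma~\ref{inv}. The paper's treatment of the boundary is slightly cleaner than your sketch: rather than tracking cancellations, it simply observes that $\mathfrak p_N$ is constructed to coincide with $\mathscr U_N$ on $[0,N^{-1}]$, so that both $\mathscr U_N-\mathfrak p_N$ and $\hat M_N$ vanish identically on $[0,N^{-1}]$, whence $L_N(\mathscr U_N-\mathfrak p_N)(0,\cdot)=D_N\hat M_N(0,\cdot)=0$ trivially.
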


    \begin{proof}  Note that $\mathscr U_N-\mathfrak p_N$ and $\hat M_N$ both vanish on $[0,N^{-1}]$, consequently $D_N\hat M_N (0,\phi) = L_N(\mathscr U_N-\mathfrak p_N)(0,\phi)=0$ for all $\phi\in C_c^\infty(\mathbb R)$. On the other hand, $L_N \mathfrak p_N (t,\cdot) = 0$ for $t\in N^{-1}\mathbb Z_{\ge 0}$ with $t>0$, consequently $ L_N(\mathscr U_N-\mathfrak p_N)(t,\phi)= L_N\mathscr U_N(t,\phi)$ for such $t$. 
    
     Recall $v_N$ defined in \eqref{mart'}. Then it is clear from \eqref{m_field} that 
    $$D_N\hat{M}_N(t,\phi) = N\big[ M_N(t,\phi)-M_{N}(t-N^{-1},\phi)\big]=\bigg(\sum_{x: (t,x)\in \Psi_{N,T}} N v_N(Nt,N^{1/2}x) \delta_x\;\;,\;\;\phi\bigg),$$
    for all $ t\in N^{-1}\mathbb Z_{\ge 1}$ and $\phi\in \mathcal S(\mathbb R)$.
    Now, with $Z^\omega_N$ defined in \eqref{z_n} we have that for $t\in N^{-1}\mathbb Z_{\ge 0}$ we have $$\mathscr U_N(t,\cdot) = \sum_{x: (t,x)\in \Psi_{N,T}} Z^\omega_N(Nt,N^{1/2}x) \delta_x.$$ With $\mathcal L_N$ as defined in Section \ref{hopf}, it is then clear that for $t\in N^{-1}\mathbb Z_{\ge 0}$ the expression for $L_N\mathscr U_N(t,\cdot)$ can be written as \begin{align*} 
    & N\sum_{x: (t,x)\in \Psi_{N,T}} (\mathcal L_NZ^\omega_N)(Nt,N^{1/2}x)\delta_x = N\sum_{x: (t,x)\in \Psi_{N,T}} v_N(Nt,N^{1/2}x)\delta_x,
    \end{align*}
        which is the same as the expression for $D_N\hat{M}_N(t,\phi).$  Since linear operators respect linear interpolation, and since all of the fields $\mathscr U_N, D_N\hat{M}_N, \mathfrak{p}_N$ are defined via linear interpolation, we have $L_N(\mathscr{U}_N-\mathfrak{p}_N)=D_N\hat{M}_N$ for $t\notin N^{-1}\mathbb Z_{\ge 0}$ as well.
        
        Finally, if we view the restriction of $D_N\hat{M}_N$ to $[0,T]$ as an element of $\mathcal S'(\mathbb R^2)$ supported on $[0,T]\times \mathbb R$, then we can apply $K_N$ to both sides and we obtain that $(\mathscr U_N-\mathfrak p_N)(t,\cdot) =  K_ND_N\hat{M}_N(t,\cdot)$ for all $t\in [0,T]$ by Lemma \ref{inv}.
    \end{proof}

     \subsection{Weighted H\"older spaces and Schauder estimates} 

    We now introduce various natural topologies for our field $\mathscr{U}_N$ and its limit points. We then discuss how the heat flow affects these topologies and record Kolmogorov-type lemmas that will be key in showing tightness under these topologies. We begin by recalling many familiar and useful spaces of continuous and differentiable functions that have natural metric structures. 
For $d\ge 1$, we denote by $C_c^{\infty}(\R^d)$ the space of all compactly supported smooth functions on $\R^d$. For a smooth function on $\R^d$, we define its $C^r$ norm as
\begin{align}\label{crnorm}
    \|f\|_{C^r}:=\sum_{\vec{\alpha}} \sup_{\mathbf x\in \R^d} |D^{\vec\alpha} f(\mathbf x)|
\end{align}
 where the sum is over all $\vec{\alpha}\in \mathbb{Z}_{\ge 0}^d$ with $\sum \alpha_i\le r$ and $D^{\vec{\alpha}}:=\partial_{x_1}^{\alpha_1}\cdots \partial_{x_d}^{\alpha_d}$ denotes the mixed partial derivative.

\smallskip

We now recall the definition of weighted H\"older spaces from \cite[Definitions 2.2 and 2.3]{HL16}.  For the remainder of this paper, we shall work with \textit{elliptic and parabolic} weighted H\"older spaces with polynomial weight function
 \begin{align}
 \label{defwt}
     w(x):=(1+x^2)^\sig
 \end{align}
  for some fixed $\sig>1$. We introduce these weights because weighted spaces will be more convenient to obtain tightness estimates. Since the solution of \eqref{she} started from Dirac initial condition is known to be globally bounded away from $(0,0)$, we expect that it is possible to remove the weights throughout this section, but this would require more precise moment estimates than the ones we derived in previous sections, which take into account spatial decay of the fields.

	\begin{defn}[Elliptic H\"older spaces]\label{ehs}
		For $\alpha\in(0,1)$ we define the space $C^{\alpha,\sig}(\mathbb R)$ to be the completion of $C_c^\infty(\mathbb R)$ with respect to the norm given by $$\|f\|_{C^{\alpha,\sig}(\mathbb R)}:= \sup_{x\in\mathbb R} \frac{|f(x)|}{w(x)} + \sup_{|x-y|\leq 1} \frac{|f(x)-f(y)|}{w(x)|x-y|^{\alpha}}.$$
		For $\alpha<0$ we let $r=-\lfloor \alpha\rfloor$ and we define $C^{\alpha,\sig}(\mathbb R)$ to be the closure of $C_c^\infty(\mathbb R)$ with respect to the norm given by $$\|f\|_{C^{\alpha,\sig}(\mathbb R)}:= \sup_{x\in\mathbb R} \sup_{\lambda\in (0,1]} \sup_{\phi \in B_r} \frac{(f,S^\lambda_{x}\phi)_{L^2(\mathbb R)}}{w(x)\lambda^\alpha}$$ where the scaling operators $S^\lambda_{x}$ are defined by 
  \begin{align}\label{escale}
  S^\lambda_{x}\phi (y) = \lambda^{-1}\phi(\lambda^{-1}(x-y)),\end{align} 
  and where $B_r$ is the set of all smooth functions of {$C^r$ norm} less than 1 with support contained in the unit ball of $\mathbb R$.
	\end{defn}

 \begin{defn}[Function spaces] \label{fsp} Let $C^{\alpha,\sig}(\mathbb R)$ be as in Definition \ref{ehs}. We define $C([0,T],C^{\alpha,\sig}(\mathbb R))$ to be the space of continuous maps $g:[0,T]\to C^{\alpha,\sig}(\mathbb R),$ equipped with the norm $$\|g\|_{C([0,T],C^{\alpha,\sig}(\mathbb R))} := \sup_{t\in[0,T]} \|g(t)\|_{C^{\alpha,\sig}(\mathbb R)}.$$ 
\end{defn}

	Here and henceforth we will define $\Psi_{[a,b]}:=[a,b]\times\mathbb R$ and we will define $\Psi_T:=\Psi_{[0,T]}.$

\smallskip

 So far we have used $\phi,\psi$ for test functions on $\R$. To make the distinction clear between test functions on $\R$ and $\R^2$, we shall use variant Greek letters such as $\varphi, \vartheta, \varrho$ for test functions on $\R^2$. In some instances, we will explicitly write $(f,\varphi)_{\mathbb R^2}$ or $(g,\phi)_{\mathbb R}$ when we want to be clear about the space in which we are pairing. 
 
	\begin{defn}[Parabolic H\"older spaces]\label{phs}
		We define $C_c^\infty(\Psi_T)$ to be the set of functions on $\Psi_T$ that are restrictions to $\Psi_T$ of some function in $C_c^\infty(\mathbb R^2)$ (in particular we do not impose that elements of $C_c^\infty(\Psi_T)$ vanish at the boundaries of $\Psi_T).$ 
  
    For $\alpha\in(0,1)$ we define the space $C^{\alpha,\sig}_\mathfrak s(\Psi_T)$ to be the completion of $C_c^\infty(\Psi_T)$ with respect to the norm given by $$\|f\|_{C^{\alpha,\sig}_\mathfrak s(\Psi_T)}:= \sup_{(t,x)\in \Psi_T} \frac{|f(t,x)|}{w(x)} + \sup_{|s-t|^{1/2}+|x-y|\leq 1} \frac{|f(t,x)-f(s,y)|}{w(x)(|t-s|^{1/2}+|x-y|)^{\alpha}}.$$
		For $\alpha<0$ we let $r=-\lfloor \alpha\rfloor$ and we define $C^{\alpha,\sig}_\mathfrak s(\Psi_T)$ to be the closure of $C_c^\infty(\Psi_T)$ with respect to the norm given by $$\|f\|_{C^{\alpha,\sig}_\mathfrak s(\Psi_T)}:= \sup_{(t,x)\in\Psi_T} \sup_{\lambda\in (0,1]} \sup_{\varphi \in B_r} \frac{(f,S^\lambda_{(t,x)}\varphi)_{L^2(\Psi_T)}}{w(x)\lambda^\alpha}$$ where the scaling operators are defined by 
		\begin{align}
			\label{scale}
			S^\lambda_{(t,x)}\varphi (s,y) = \lambda^{-3}\varphi(\lambda^{-2}(t-s),\lambda^{-1}(x-y)),
		\end{align}
		and where $B_r$ is the set of all smooth functions of $C^r$ norm less than 1 with support contained in the unit ball of $\mathbb R^2$.
	\end{defn}

An important property of both the parabolic and elliptic spaces is that one has a continuous embedding $C^{\alpha,\tau} \hookrightarrow C^{\beta,\tau}$ whenever $\beta<\alpha.$ In fact this embedding is compact, though we will not use this. We also have the following embedding of function spaces inside parabolic spaces.

 \begin{lem}\label{embed}
     For $\alpha<0,\tau>0$ one has a continuous embedding $C([0,T],C^{\alpha,\sig}(\mathbb R))\hookrightarrow C^{\alpha,\sig}_\mathfrak s(\Psi_T)$ given by identifying $v=(v(t))_{t\in [0,T]}$ with the tempered space-time distribution given by $$(v,\varphi)_{\mathbb R^2} = \int_0^T (v(t) ,\varphi(t,\cdot))_\mathbb Rdt.$$ 
 \end{lem}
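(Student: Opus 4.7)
The plan is to establish two facts: (a) the proposed pairing $\varphi\mapsto\int_0^T(v(t),\varphi(t,\cdot))_{\mathbb R}\,dt$ satisfies the norm inequality
\begin{equation*}
\|v\|_{C^{\alpha,\sig}_\mathfrak s(\Psi_T)}\leq 2\,\|v\|_{C([0,T],C^{\alpha,\sig}(\mathbb R))},
\end{equation*}
and (b) the resulting tempered distribution lies in the closure of $C_c^\infty(\Psi_T)$, hence is a genuine element of the parabolic H\"older space rather than merely an object satisfying the estimate. Continuity of the embedding is immediate from (a)--(b).

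For (a), I would reduce the 2D parabolic scaling to 1D elliptic scaling via Fubini. Fix $(t,x)\in\Psi_T$, $\lambda\in(0,1]$, and $\varphi$ in the 2D test ball $B_r$ (smooth with $C^r$-norm at most $1$ and support in the 2D unit ball). For each $s\in[0,T]$ define the one-variable slice $\phi_s(z):=\varphi(\lambda^{-2}(t-s),z)$. Comparing \eqref{escale} and \eqref{scale} gives the pointwise identity $S^\lambda_{(t,x)}\varphi(s,y)=\lambda^{-2}\,S^\lambda_x\phi_s(y)$, so
\begin{equation*}
\bigl(v,S^\lambda_{(t,x)}\varphi\bigr)_{L^2(\Psi_T)}=\lambda^{-2}\int_0^T\bigl(v(s),S^\lambda_x\phi_s\bigr)_{\mathbb R}\,ds.
\end{equation*}
Two observations finish the estimate. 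First, the 2D unit ball support of $\varphi$ forces $\phi_s\equiv 0$ unless $|t-s|\leq\lambda^2$, so the effective range of $s$ has Lebesgue measure at most $2\lambda^2$. Second, for each such $s$ the slice $\phi_s$ is supported in the 1D unit interval and, by the form \eqref{crnorm} of the $C^r$-norm, satisfies $\|\phi_s\|_{C^r}\leq\|\varphi\|_{C^r}\leq 1$, so $\phi_s$ lies in the 1D test ball $B_r$. Applying the elliptic bound
\begin{equation*}
\bigl|\bigl(v(s),S^\lambda_x\phi_s\bigr)_{\mathbb R}\bigr|\leq\|v\|_{C([0,T],C^{\alpha,\sig}(\mathbb R))}\,w(x)\,\lambda^\alpha
\end{equation*}
slice-by-slice and integrating yields $|(v,S^\lambda_{(t,x)}\varphi)_{L^2(\Psi_T)}|\leq 2\,\|v\|_{C([0,T],C^{\alpha,\sig}(\mathbb R))}\,w(x)\,\lambda^\alpha$; dividing by $w(x)\lambda^\alpha$ and taking suprema over $(t,x),\lambda,\varphi$ proves (a).

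For (b), I would use a standard mollification. Since each $v(s)\in C^{\alpha,\sig}(\mathbb R)$ is a limit of $C_c^\infty(\mathbb R)$ functions and $s\mapsto v(s)$ is continuous as a map into that Banach space, joint space-time mollification followed by a spatial cutoff yields $v_\e\in C_c^\infty(\Psi_T)$ with $\|v-v_\e\|_{C([0,T],C^{\alpha,\sig}(\mathbb R))}\to 0$ as $\e\to 0$. The norm bound from (a) then promotes this to Cauchyness of $(v_\e)$ in $C^{\alpha,\sig}_\mathfrak s(\Psi_T)$; the limit in that space is necessarily the distribution identified with $v$, which therefore lies in the closure. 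The main content is the scaling computation in (a), whose only delicate point is the verification that time-slices of 2D parabolic test functions belong to the 1D elliptic test class with the same normalization; the density argument in (b) is routine and presents no real obstacle.
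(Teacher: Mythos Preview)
Your proof is correct and fills in precisely the ``straightforward from the definitions'' argument that the paper omits. The scaling computation in (a) is exactly the intended verification, and your observation that the time-slices $\phi_s$ of a parabolic test function lie in the elliptic test ball $B_r$ (with the support constraint cutting the $s$-integral down to length $2\lambda^2$) is the key point; part (b) is routine once (a) is in hand, and your sketch is adequate given that elements of $C_c^\infty(\Psi_T)$ are not required to vanish at the temporal boundary.
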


 The proof is straightforward from the definitions and is omitted.
 
 \begin{rk}[Derivatives of distributions] \label{d/dx}  Let $\alpha<0$. By definition, any element $f\in C_\mathfrak s^{\alpha,\sig}(\Psi_T)$ admits an $L^2$-pairing with any smooth function $\varphi: \Psi_T\to \mathbb R$ of rapid decay, which we can write as $(f,\varphi)_{\Psi_T}.$ Consequently there is a natural embedding $C_\mathfrak s^{\alpha,\sig}(\Psi_T)\hookrightarrow \mathcal S'(\mathbb R^2)$ which is defined by formally setting $f$ to be zero outside of $[0,T]\times \mathbb R$. More rigorously, this means that the $L^2(\mathbb R^2)$-pairing of $f$ with any $\varphi \in \mathcal S(\mathbb R^2)$ is defined to be equal to $(f,\varphi|_{\Psi_T})_{\Psi_T}$.

 \smallskip
 
 The image of this embedding consists of some specific collection of tempered distributions that are necessarily supported on $[0,T]\times \mathbb R.$ Consequently we can sensibly define $\partial_tf$ and $\partial_xf$ as elements of $\mathcal S'(\mathbb R^2)$ whenever $f\in C_\mathfrak s^{\alpha,\sig}(\Psi_T)$. Explicitly these derivatives are defined by the formulas$$(\partial_tf,\varphi)_{\Psi_T} := -(f,\partial_t\varphi)_{\Psi_T},\;\;\;\;\;\;\;\;\;\;(\partial_xf,\varphi)_{\Psi_T} := -(f,\partial_x\varphi)_{\Psi_T}$$for all smooth $\varphi:\Psi_T\to\mathbb R$ of rapid decay. This convention on derivatives will be useful for certain computations later. From the definitions, when $\alpha<0$ one can check that for such $f$ one necessarily has $\partial_tf \in C_\mathfrak s^{\alpha-2,\sig}(\Psi_T)$ and $\partial_x f \in C_\mathfrak s^{\alpha-1,\sig}(\Psi_T).$

\smallskip

The latter statement fails for $\alpha>0$. Indeed, by our convention of derivatives, $\partial_tf$ may no longer be a smooth function (or even a function) even if $f\in C_c^\infty(\Psi_T)$. This is because such an $f$ gets extended to all of $\mathbb R^2$ by setting it to be zero outside $\Psi_T$. In particular, if $f$ does not vanish on the boundary of $\Psi_T$, then it may become a discontinuous function under our convention of extension to $\mathbb R^2$. Due to these discontinuities, the distributional derivative $\partial_tf$ can be a tempered distribution with singular parts along the boundaries (one may verify that $\partial_t f$ can be at best an element of $C^{-2,\sig}_\mathfrak s(\Psi_T)$ for generic $f\in C_c^\infty(\Psi_T)$). In our later computations, we never take derivatives of functions in $C_\mathfrak s^{\alpha,\sig}(\Psi_T)$ with $\alpha>0$. 
\end{rk}

 We now discuss the smoothing effect of heat flow on these weighted H\"older spaces.
 
	\begin{prop}[Smoothing effect on elliptic spaces]\label{p65}  For $f\in C_c^\infty(\mathbb R)$ and $t>0$ define $$\mathpzc P_tf(x):= \int_\mathbb R p(t,x-y)f(y)dy.$$ Then for all $\alpha \le \beta <1$, there exists $\Con=\Con(\alpha,\beta,T)>0$ such that $$\|\mathpzc P_tf\|_{C^{\beta,\sig}(\mathbb R)} \leq \Con t^{-(\beta-\alpha)/2}\|f\|_{C^{\alpha,\sig}(\mathbb R)} $$ uniformly over $f\in C_c^\infty$ and $t\in [0,T]$. In particular, $\mathpzc P_t$ extends to a globally defined linear operator on $C^{\alpha,\sig}(\mathbb R)$ which maps boundedly into $C^{\beta,\sig}(\mathbb R).$
	\end{prop}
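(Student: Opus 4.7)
The strategy is to reduce the weighted Schauder bound to classical unweighted heat-kernel smoothing estimates, and to absorb the polynomial weight $w(x) = (1+x^2)^\tau$ using the Gaussian decay of $p_t$. The essential compatibility estimate is the pointwise bound $w(y)/w(x) \leq C(1+|x-y|^{2})^{\tau}$ together with
\[
\int_{\mathbb R} p_t(x-y)\,\frac{w(y)}{w(x)}\,dy \leq C(T),
\]
uniformly in $t \in [0,T]$ and $x \in \mathbb R$. I would also use the standard kernel bounds $\|\partial_x^k p_t\|_{L^1} \leq C_k t^{-k/2}$ and the interpolation inequality $|p_t(x-z)-p_t(y-z)| \leq C(|x-y|/\sqrt{t})^{\gamma}(\tilde p_t(x-z)+\tilde p_t(y-z))$ valid for $\gamma \in [0,1]$ and some modified Gaussian $\tilde p_t$.

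I would then split into cases. When $0 < \alpha \leq \beta < 1$, the weighted $L^\infty$ bound follows immediately from the display above, while the H\"older seminorm bound (for $\beta > \alpha$) follows by subtracting the value of $f$ at a base point to exploit $\int(p_t(x-z)-p_t(y-z))\,dz = 0$, combining the interpolation inequality with exponent $\gamma = \beta - \alpha$, and integrating against the weight. When $\alpha < 0$ and $\beta \leq 0$, I would use self-adjointness of $P_t$ to write $(P_tf, S^\mu_x\phi) = (f, P_t S^\mu_x\phi)$, then control $P_tS^\mu_x\phi$ in a way compatible with the dual definition of $\|f\|_{C^{\alpha,\tau}}$: when $\mu \geq \sqrt t$ the function $P_t S^\mu_x\phi$ is essentially a bump of scale $\mu$ at $x$ plus a rapidly decaying tail, while when $\mu < \sqrt t$ I would dyadically decompose $P_t S^\mu_x\phi = \sum_{k \geq 0} g_k$ into pieces $g_k$ supported in shells of radius $\sim 2^k \sqrt t$ around $x$, each expressible as a scaled bump of scale $\sqrt t$ times an amplitude. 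The Gaussian tail of $p_t$ forces these amplitudes to be $O(e^{-c 4^k})$, which dominates the polynomial growth of $w(y)/w(x)$ across shells and, upon summing, yields the expected factor $t^{\alpha/2}$. The remaining mixed case $\alpha < 0 < \beta$ is obtained from the semigroup identity $P_t = P_{t/2} \circ P_{t/2}$ by choosing an intermediate exponent $\alpha' \in (0,\beta)$ and composing the two cases above.

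The main obstacle is the dyadic decomposition step when $\alpha < 0$: one must simultaneously track the natural bump scale $\sqrt t$, the Gaussian-decaying amplitudes across dyadic shells, and the polynomial growth of the weight $w$. Since $\tau$ is fixed, the Gaussian decay of $p_t$ dominates both the polynomial weight growth and any inverse powers of $\mu$, so the dyadic sum converges with the expected total factor. Once the inequality is established for $f \in C_c^\infty(\mathbb R)$, the extension of $P_t$ to a bounded operator between the weighted H\"older spaces is immediate from the density of $C_c^\infty(\mathbb R)$ in $C^{\alpha,\tau}(\mathbb R)$ built into Definition \ref{ehs}.
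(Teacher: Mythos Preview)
The paper does not actually prove this proposition; it simply cites \cite[Lemma 2.8]{HL16} for the exponential-weight case and asserts that the polynomial-weight case is identical. Your outline is therefore considerably more detailed than what appears in the paper, and the overall strategy you describe (weight compatibility via Peetre-type inequality, duality for negative exponents, dyadic shell decomposition of $P_tS^\mu_x\phi$) is the standard route and essentially what one finds in the cited reference.

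There is one organizational gap worth flagging. In your mixed case $\alpha<0<\beta$, you propose to factor $P_t=P_{t/2}\circ P_{t/2}$ through an intermediate exponent $\alpha'\in(0,\beta)$. But the first leg $P_{t/2}:C^{\alpha,\tau}\to C^{\alpha',\tau}$ with $\alpha<0<\alpha'$ is itself an instance of the mixed case, so the argument is circular as written. The fix is easy: your dyadic decomposition in the $\alpha<0$ case already produces a \emph{pointwise} bound on $P_tf(x)=(f,p_t(x-\cdot))$, and the same decomposition applied to $p_t(x-\cdot)-p_t(y-\cdot)$ (using the interpolation inequality you already stated) gives a direct H\"older bound with positive exponent. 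In other words, the duality argument should be run with target exponent $\beta\in(0,1)$ directly, rather than only for $\beta\le 0$; then the semigroup factorization is unnecessary. Alternatively, factor through an intermediate $\alpha'\in(\alpha,0)$ and then handle $C^{\alpha',\tau}\to C^{\beta,\tau}$ by the same direct pointwise/H\"older argument.
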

	
	A proof may be found in \cite[Lemma 2.8]{HL16} in the case of an exponential weight. {The proof for polynomial weights is identical}.

\smallskip

 For the parabolic H\"older spaces, the following lemma states that the heat flow improves the regularity by a factor of $2$ and provides a Schauder-type estimate.
	\begin{prop}[Schauder estimate]\label{sch}
		For $f\in C_c^\infty(\Psi_T)$ let us define 
		\begin{align}
  \label{e:kf}
			Kf(t,x):= \int_{\Psi_T} p_{}(t-s,x-y)f(s,y)dsdy,
		\end{align} where $p$ is the standard heat kernel for $t>0,x\in\mathbb R$ and $p(t,x):=0$ for $t<0.$ Then for all $\alpha<-1$ with $\alpha\notin\mathbb Z$ there exists $\Con=\Con(\alpha)>0$ independent of $f$ such that $$\|Kf\|_{C^{\alpha+2,\sig}_\mathfrak s(\Psi_T)}\leq \Con\cdot\|f\|_{C^{\alpha,\sig}_\mathfrak s(\Psi_T)}.$$ In particular $K$ extends to a globally defined linear operator on $C^{\alpha,\sig}_\mathfrak s(\Psi_T)$ that maps boundedly into $C^{\alpha+2,\sig}_\mathfrak s(\Psi_T).$ Furthermore, if $f\in C^{\alpha,\sig}_\mathfrak s(\Psi_{T})$, then $K(\partial_t-\frac12\partial_x^2)f= f$.
	\end{prop}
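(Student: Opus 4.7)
The plan is to reduce to the classical unweighted parabolic Schauder estimate via a spatial partition of unity, exploiting the Gaussian tail decay of the heat kernel to absorb the polynomial weight $w$. The identity $K(\partial_t-\tfrac12\partial_x^2)f=f$ will then follow from the fundamental-solution property of $p_t$ together with a density argument.

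First I would establish the unweighted Schauder estimate: for a tempered distribution $g$ supported on $[0,T]\times\mathbb R$ with finite unweighted parabolic H\"older norm $\|g\|_{C^\alpha_\mathfrak s}$ (the definition from Definition \ref{phs} with $w\equiv 1$), show $\|Kg\|_{C^{\alpha+2}_\mathfrak s}\leq C\|g\|_{C^\alpha_\mathfrak s}$. One tests $Kg$ against a rescaled bump $S^\lambda_{(t_0,x_0)}\psi$ and writes $(Kg,S^\lambda_{(t_0,x_0)}\psi) = (g, K^* S^\lambda_{(t_0,x_0)}\psi)$ via the backward heat convolution. The argument splits into two regimes: small scales $\lambda^2\lesssim t_0$, where $K^*S^\lambda\psi$ is again a rescaled bump with effective width $\lambda$, and large scales $\lambda^2\gg t_0$, where the heat kernel has already smoothed $\psi$ over the scale $\sqrt{t_0}$. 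Combining both regimes with the H\"older bound on $g$ produces the desired $\lambda^{\alpha+2}$ scaling; this is a classical calculation appearing in Hairer's regularity-structures paper and underlying \cite{HL16}. The cases $-2<\alpha<-1$ and $\alpha\leq -2$ are handled separately because the positive- and negative-exponent H\"older definitions differ, but the analysis is parallel.

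To lift to the weighted estimate, take a smooth partition of unity $\{\chi_k\}_{k\in\mathbb Z}$ on $\mathbb R$ with each $\chi_k$ supported in $[k-1,k+1]$ and decompose $f=\sum_k f_k$ with $f_k:=\chi_k f$. Multiplication by a smooth bump is bounded on unweighted parabolic H\"older spaces, and $w(x)\sim w(k)$ on the support of $\chi_k$, so each piece obeys $\|f_k\|_{C^\alpha_\mathfrak s(\Psi_T)}\leq C w(k)\|f\|_{C^{\alpha,\sigma}_\mathfrak s(\Psi_T)}$; applying the unweighted Schauder yields $\|Kf_k\|_{C^{\alpha+2}_\mathfrak s(\Psi_T)}\leq Cw(k)\|f\|_{C^{\alpha,\sigma}_\mathfrak s(\Psi_T)}$. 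To bound the weighted norm of $Kf$ at a point $(t_0,x_0)$, I would split the sum over $k$ into a near part $|k-x_0|\leq M$ (finitely many terms, each contributing $w(x_0)\lambda^{\alpha+2}$ up to a constant via equivalence of $w(k)$ and $w(x_0)$) and a far part $|k-x_0|>M$. For the far part, the Gaussian tail of $p_{t-s}(x-y)$ for $y\in\mathrm{supp}\,\chi_k$ and $x\sim x_0$, $t-s\leq T$, produces a factor $e^{-c(k-x_0)^2/T}$ in the pairing against $S^\lambda_{(t_0,x_0)}\psi$, which dominates the polynomial growth $w(k)$ and yields a convergent sum bounded by $w(x_0)$ with a constant depending on $\sigma$ and $T$.

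For the identity $K(\partial_t-\tfrac12\partial_x^2)f=f$, start with smooth $f\in C_c^\infty(\Psi_T)$, where it follows by direct computation using $(\partial_t-\tfrac12\partial_x^2)p_t(x)=\delta_{(0,0)}$ as distributions on $\mathbb R^2$; the extension-by-zero convention of Remark \ref{d/dx} is precisely what makes the distributional calculation consistent across the boundary $t\in\{0,T\}$. For general $f\in C^{\alpha,\sigma}_\mathfrak s(\Psi_T)$, density of $C_c^\infty(\Psi_T)$, continuity of $\mathcal L=\partial_t-\tfrac12\partial_x^2$ as a map $C^{\alpha,\sigma}_\mathfrak s(\Psi_T)\to C^{\alpha-2,\sigma}_\mathfrak s(\Psi_T)$ from Remark \ref{d/dx}, and the just-proved continuity of $K$ applied at the exponent $\alpha-2<-3$ allow the identity to pass to the limit. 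The main obstacle will be the unweighted Schauder estimate itself, particularly the bookkeeping of test functions scaled near the time-boundary of $\Psi_T$; once this core calculation is in hand, the weighted lift and the identity are routine consequences.
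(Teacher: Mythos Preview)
The paper does not actually prove this proposition: its entire proof is the single line ``See \cite[Proposition 6.6]{DDP23},'' deferring to the authors' earlier work. So there is no in-paper argument to compare against.

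Your sketch is a standard and correct route to such weighted parabolic Schauder estimates: prove the unweighted version via the two-regime analysis of $K^*S^\lambda\psi$ (this is essentially the argument in Hairer's theory of regularity structures and in \cite{HL16}), then lift to polynomial weights by a spatial partition of unity combined with the Gaussian tails of $p_t$. The identity $K\mathcal L f=f$ by density is also correct, and your observation that one needs the Schauder bound at exponent $\alpha-2$ (still $<-1$ since $\alpha<-1$) to close the continuity argument is exactly right. One minor point to be careful about in the unweighted step: when $-2<\alpha<-1$ the target space $C^{\alpha+2}_\mathfrak s$ has positive exponent, so you must verify the H\"older condition directly rather than test against rescaled bumps; you flag this case distinction but the actual verification there requires a slightly different argument (subtracting a Taylor polynomial of $Kg$) than the negative-exponent case.
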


We remark that the last statement is only true because of our convention on distributional derivatives that we have explained in Remark \ref{d/dx}. Without that convention, that statement would be false even for a smooth function $f$ that does not vanish on the line $\{t=0\}$.

	\begin{proof} See \cite[Proposition 6.6]{DDP23}.
  \end{proof}

\begin{cor}\label{sme} Define $J: C^{\alpha,\sig}(\mathbb R) \to C_\mathfrak s^{\alpha,\sig}(\Psi_T)$ by $$Jf(t,x):= \big(f,p(t,x-\cdot)\big)_\mathbb R,$$  $J$ is a bounded linear operator for any $\alpha <1$ and any $\sig>0.$  Consider the operator $\hat{\mathpzc P}_t:= J \mathpzc P_t$ with $\mathpzc P_t$ defined in Proposition \ref{p65}. By the semigroup property of the heat kernel, we have that 
 \begin{align}\label{e.hatjrel}
     \hat{\mathpzc P}_{s}f(t,x)=(Jf)(t+s,x).
 \end{align}
 Furthermore, the operators satisfy 
 \begin{align}
     \label{e.schhat}
     \|\hat{\mathpzc P}_tf\|_{C^{\beta,\sig}_\mathfrak s(\Psi_T)} \leq \Con \cdot t^{-(\beta-\alpha)/2}\|f\|_{C^{\alpha,\sig}(\mathbb R)},
 \end{align}
where $\Con=\Con(\alpha,\beta,T)>0$ is independent of $t$ and $f$.
	\end{cor}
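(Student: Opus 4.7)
The plan is to verify the three assertions of the corollary in a natural order: the semigroup identity \eqref{e.hatjrel}, then the boundedness of $J$, and finally deduce the Schauder-type estimate \eqref{e.schhat} from the first two together with Proposition \ref{p65}. The semigroup identity is immediate from Fubini: for sufficiently smooth $f$,
\[
JP_s f(t,x) = \int p_t(x-y)\int p_s(y-z)f(z)\,dz\,dy = \int p_{t+s}(x-z)f(z)\,dz = Jf(t+s,x),
\]
by the convolution identity $p_t * p_s = p_{t+s}$, and this identity extends to distributional $f \in C^{\alpha,\tau}(\mathbb R)$ since $C_c^\infty(\mathbb R)$ is dense and both sides are continuous in $f$ (once boundedness of $J$ is established).

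For the boundedness of $J$, I would split into two cases based on the sign of $\alpha$. When $\alpha \in (0,1)$, $Jf(t,x) = P_t f(x)$ is a bona fide function and the parabolic H\"older norm is verified directly: the weighted pointwise bound follows from $\int p_t(x-y)w(y)\,dy \le Cw(x)$ for $t\in[0,T]$, the spatial H\"older seminorm is preserved by the heat semigroup (in fact improved), and the temporal H\"older seminorm of order $\alpha/2$ follows from the standard estimate $|P_t f - P_s f|(x) \le C\|f\|_{C^{\alpha,\tau}}|t-s|^{\alpha/2} w(x)$, derived by writing $P_tf - P_sf$ as a convolution of $f$ with the difference of two heat kernels and using their space-time Gaussian bounds.

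When $\alpha<0$, we must bound $|(Jf, S^\lambda_{(t,x)}\varphi)_{\Psi_T}|$ by $Cw(x)\lambda^\alpha \|f\|_{C^{\alpha,\tau}(\mathbb R)}$ for $\varphi \in B_r$ with $r=-\lfloor \alpha\rfloor$. By Fubini this pairing equals $(f,F)_{\mathbb R}$, where
\[
F(z) := \int_{\Psi_T} p_s(y-z)\, S^\lambda_{(t,x)}\varphi(s,y)\,dy\,ds
\]
is smooth in $z$ and concentrated near $z=x$. The crux is to express $F$ as a bounded combination of elliptically scaled test functions of the form $S^{\mu}_{x'}\phi$ with $\mu \gtrsim \lambda$, so that testing $f$ against $F$ via the elliptic norm yields the correct $\lambda^\alpha$ scaling. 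One writes $F = \int_I (p_s \ast S^\lambda_{(t,x)}\varphi(s,\cdot))(z)\,ds$ over the effective interval $I \subset [t-\lambda^2, t+\lambda^2]\cap[0,T]$; for each $s$ in this interval, the convolution is a bump in $z$ of scale $\max(\lambda, \sqrt{s})$ centered near $x$, whose $C^r$ norm admits an explicit Gaussian bound. Integration in $s$ over an interval of length at most $\lambda^2$ then produces the overall $\lambda^\alpha$ factor (with an extra $\lambda^2$ coming from the time variable absorbed by the parabolic $\lambda^{-3}$ in $S^\lambda_{(t,x)}$). Polynomial control on the weight $w$ is straightforward since the heat kernel has Gaussian decay.

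The Schauder-type estimate \eqref{e.schhat} is then an immediate consequence: by \eqref{e.hatjrel} and the boundedness of $J$ from $C^{\beta,\tau}(\mathbb R)$ into $C^{\beta,\tau}_\mathfrak s(\Psi_T)$ just proved,
\[
\|\hat P_t f\|_{C^{\beta,\tau}_\mathfrak s(\Psi_T)} = \|J(P_t f)\|_{C^{\beta,\tau}_\mathfrak s(\Psi_T)} \le C\|P_t f\|_{C^{\beta,\tau}(\mathbb R)} \le C\, t^{-(\beta-\alpha)/2}\|f\|_{C^{\alpha,\tau}(\mathbb R)},
\]
where the last inequality uses Proposition \ref{p65}. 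I expect the main technical obstacle to be the bookkeeping in the $\alpha<0$ case, specifically decomposing $F$ cleanly across the regimes $s\lesssim \lambda^2$ and $s\gtrsim \lambda^2$ and, when $\alpha<-1$, exploiting vanishing moments of $\varphi$ via a Taylor expansion to handle the higher-regularity test functions; the rest is routine.
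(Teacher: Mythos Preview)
Your approach is correct, and your derivation of \eqref{e.schhat} from the boundedness of $J$ together with Proposition~\ref{p65} matches the paper's. However, for the boundedness of $J$ itself you take a genuinely different and considerably longer route than the paper.

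The paper observes that $Jf = K(\delta_0\otimes f)$, where $K$ is the parabolic inverse heat operator of Proposition~\ref{sch} and $\delta_0\otimes f$ is the space-time distribution $(\delta_0\otimes f,\varphi)_{\mathbb R^2}:=(f,\varphi(0,\cdot))_{\mathbb R}$. A one-line check directly from the scaling operators shows that $f\mapsto \delta_0\otimes f$ maps $C^{\alpha,\tau}(\mathbb R)$ boundedly into $C_\mathfrak s^{\alpha-2,\tau}(\Psi_T)$: indeed $S^\lambda_{(t,x)}\varphi(0,\cdot)$ vanishes unless $|t|\le\lambda^2$, and when it doesn't it equals $\lambda^{-2}$ times an elliptically rescaled test function at scale $\lambda$. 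The Schauder estimate of Proposition~\ref{sch} then lifts the regularity back up by $2$, giving $J:C^{\alpha,\tau}(\mathbb R)\to C_\mathfrak s^{\alpha,\tau}(\Psi_T)$ bounded in one stroke, with no case split on the sign of $\alpha$ and no separate analysis of the regimes $s\lesssim\lambda^2$ versus $s\gtrsim\lambda^2$. Your direct computation of $F$ and its decomposition across scales essentially reproves the Schauder estimate in this special case; the paper's factorization lets you invoke that heavy lifting rather than redo it.
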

 \begin{proof} The first part follows by using the Schauder estimate in Proposition \ref{sch}, noting that $Jf = K(\delta_0\otimes f)$ where for $f \in C^{\alpha,\sig}(\mathbb R)$ the latter distribution is defined by $(\delta_0\otimes f, \varphi)_{\mathbb R^2}:= (f,\varphi(0,\cdot))_{\mathbb R}.$ Directly from the definitions one can check that $f\mapsto \delta_0\otimes f$ is bounded from $C^{\alpha,\sig}(\mathbb R)\to C_\mathfrak s^{\alpha-2,\sig}(\Psi_T).$ Finally, \eqref{e.schhat} follows from Proposition \ref{p65}.     
 \end{proof}

We next define a space-time distribution that is supported on a single temporal cross-section:

\begin{defn}\label{otimes} Let $\alpha<0$. Given some $f\in C^{\alpha,\sig}(\mathbb R)$ and $b \in [0,T]$ we define $\delta_b \otimes f\in C_\mathfrak s^{\alpha-2,\sig}(\Psi_T)$ by the formula
$(\delta_b\otimes f, \varphi)_{\mathbb R^2}:= (f,\varphi(b,\cdot))_{\mathbb R}.$
 \end{defn}

 Directly from the definitions of the scaling operators in \eqref{escale} and \eqref{scale}, it is clear that for fixed $b\in [0,T]$, the linear map $f\mapsto \delta_b\otimes f$ is bounded from $C^{\alpha,\sig}(\mathbb R)\to C_\mathfrak s^{\alpha-2,\sig}(\Psi_T)$ as long as $\alpha<0.$ It is also clear that $\delta_b \otimes f$ is necessarily supported on the line $\{b\}\times \mathbb R$. The following lemma shows that under mild conditions, $\delta_T\otimes f$ vanishes upon the action of the heat flow $K$ defined in \eqref{e:kf}.

 We end this subsection by recording a Kolmogorov-type lemma for the function spaces and parabolic H\"older spaces introduced at the beginning of this subsection. It will be crucial in proving tightness in those respective spaces.
	
	\begin{lem}[Kolmogorov lemma] \label{l:KC} Let $L^2(\Omega,\mathcal{F},\Pr)$ be the space of all random variables defined on a probability space  $(\Omega,\mathcal{F},\Pr)$ with finite second moment. We have  the following:
 \begin{enumerate}[label=(\alph*),leftmargin=15pt]
 \setlength\itemsep{0.5em}


 \item \label{KC2} (Function space) 
		Let $(t,\phi) \mapsto V(t,\phi)$ be a map from $[0,T]\times \mathcal S(\mathbb R)$ into $L^2(\Omega,\mathcal F,\mathbb P)$ which is linear and continuous in $\phi$. Fix a non-negative integer $r$. Assume there exists some $\kappa>0, p>1/\kappa$ and $\alpha<-r$ and $\Con=\Con(\kappa,\alpha,p,{T})>0$ such that one has 
		\begin{align*}\mE[|V(t,S^\lambda_{x}\phi)|^p]^{1/p}&\leq \Con\lambda^{\alpha },\\  \mE[|V(t,S^\lambda_{x}\phi)-V(s,S^\lambda_{x}\phi)|^p]^{1/p}&\leq \Con\lambda^{\alpha -\kappa} |t-s|^{\kappa },
		\end{align*}uniformly over all smooth functions $\phi$ on $\mathbb R$ supported on the unit ball of $\mathbb R$ with {$\|\phi\|_{C^r}\leq 1$}, and uniformly over $\lambda\in(0,1]$ and $0\leq s,t\leq T$. Then for any $\sig>1$ and any $\beta<\alpha-\kappa$ there exists a random variable $\big(\mathscr V(t)\big)_{t\in[0,T]}$ taking values in $C([0,T],C^{\beta,\sig}(\mathbb R))$ such that $(\mathscr V(t),\phi)=V(t,\phi)$ almost surely for all $\phi$ and $t$. Furthermore, one has that $$\mE[\|\mathscr V\|^p_{C([0,T],C^{\beta,\sig}(\mathbb R))}]\leq \Con',$$ where $\Con'$ depends on the choice of $\alpha,\beta,p,\kappa,$ and the constant $\Con$ appearing in the moment bound above but not on $V,\Omega,\mathcal F,\mathbb P$.
 
     \item \label{KC}
		(Parabolic H\"older Space) Let $\varphi \mapsto V(\varphi)$ be a bounded linear map from  $\mathcal S(\mathbb R^2)$ to $L^2(\Omega,\mathcal F,\mathbb P)$. Assume $V(\varphi)=0$ for all $\varphi$ with support contained in the complement of $\Psi_T$. Recall $S^\lambda_{(t,x)}$ from  \eqref{scale}. Fix a non-negative integer $r$. Assume there exists some $p>1$ and $\alpha <0$ and $\Con=\Con(\alpha,p)>0$ such that one has $$\mE[|V(S^\lambda_{(t,x)}\varphi)|^p]^{1/p}\leq \Con\lambda^{\alpha },$$ uniformly over all smooth functions $\varphi$ on $\mathbb R^2$ supported on the unit ball of $\mathbb R^2$ with {$\|\varphi\|_{C^r}\leq 1$}, and uniformly over $\lambda\in(0,1]$ and $(t,x)\in\Psi_T$. Then for any $\sig>1$ and any $\beta<\alpha-3/p$ there exists a random variable $\mathscr V$ taking values in $C^{\beta,\sig}_\mathfrak s(\Psi_T)$ such that $(\mathscr V,\varphi)=V(\varphi)$ almost surely for all $\varphi.$ Furthermore one has that $$\mE[\|\mathscr V\|^p_{C^{\beta,\sig}_\mathfrak s(\Psi_T)}]\leq \Con',$$ where $\Con'$ depends on the choice of $\alpha,p,$ and the constant $\Con$ appearing in the moment bound above but not on $V,\Omega,\mathcal F,\mathbb P$.

 \end{enumerate}

	\end{lem}
	{A proof of the above results may be adapted from the proof of Lemma 9 in Section 5 of \cite{WM}.} We remark that we do not actually need uniformity over a large class of test functions as we have written above, just a single well-chosen test function would suffice (e.g. the Littlewood-Paley blocks as used in \cite{WM} or the Daubechies wavelets in \cite{HL16}). 
 \subsection{Tightness} Here we will prove tightness of the rescaled field from \eqref{field}.

 \begin{lemma}\label{kndn}
     Fix $\alpha<0,\sig>0$. Let $K_N,D_N$ be the operators from Definition \ref{dnlnkn}. Recall the function spaces from Definition \ref{fsp}, and let $\mathcal X_N^{\alpha,\tau}$ denote the closed linear subspace of $C([0,T+1],C^{\alpha,\sig}(\mathbb R))$ consisting of those paths $v=(v(t))_{t\in [0,T+1]}$ such that $v(t)=0$ for all $t\in [0,N^{-1}]$. Then we have the operator norm bound $$\sup_{N\ge 1}\|K_ND_N\|_{\mathcal X_N^{\alpha,\tau}\to C([0,T],C^{\alpha,\sig}(\mathbb R))}<\infty.$$
 \end{lemma}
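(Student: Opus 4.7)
The strategy is motivated by the continuum identity $K \partial_t = I + \tfrac{1}{2}\partial_x^2 K$; the right-hand side is bounded on $C([0,T], C^{\alpha,\tau}(\mathbb R))$ for $v$ vanishing at time $0$, and we seek a uniform-in-$N$ discrete analog. Since $K_N$ is a space-time convolution and $D_N$ is a finite difference, I would derive an explicit decomposition at the discrete level and then bound each piece.

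First, I would apply discrete summation by parts (Abel summation) in the time variable to obtain
$$K_N D_N v(t, x) = v(t + N^{-1}, x) + \mathcal R_N v(t, x),$$
where the boundary contribution uses $p_N(0, \cdot) = \delta_0$ and the remainder is
$$\mathcal R_N v(t, x) = \sum_{k \geq 0} \sum_{y} \bigl[p_N(k+1, N^{1/2} y) - p_N(k, N^{1/2} y)\bigr]\, v(t - k/N, x - y).$$
The identity piece $v(t + N^{-1}, \cdot)$ is trivially bounded in $C([0, T], C^{\alpha, \tau}(\mathbb R))$ by $\|v\|_{C([0, T+1], C^{\alpha, \tau}(\mathbb R))}$.

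Using the Chapman--Kolmogorov identity $p_N(k+1, \cdot) = \tilde P_N^1\, p_N(k, \cdot)$, where $\tilde P_N^k$ denotes spatial convolution with $p_N(k, N^{1/2}\cdot)$, I would rewrite
$$\mathcal R_N v(t, x) = \sum_{k \geq 0} \tilde P_N^k \bigl[(\tilde P_N^1 - I) v(t - k/N)\bigr](x).$$
This is a discrete analog of $\tfrac{1}{2}\int_0^t P_{t-s} \partial_x^2 v(s)\, ds$: Taylor expansion of $\tilde P_N^1 - I$ against smooth test functions produces $\tfrac{1}{2N} \partial_x^2$ to leading order, with the potential $N^{1/4}$-scale drift canceling exactly thanks to the definition of $\rho_N$ in \eqref{def:rhon}.

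The remaining task is to prove a uniform-in-$N$ discrete Schauder-type estimate for $\mathcal R_N$. My approach would combine (i) uniform Gaussian bounds on $p_N$, obtained from a quantitative local central limit theorem (the walk has bounded increments and drift $O(N^{-3/4})$, negligible on the diffusive scale), with (ii) comparison of the discrete heat flow $\tilde P_N^k$ to the continuum semigroup $P_{k/N}$, transferring the continuum Schauder estimate (via Propositions \ref{p65}, \ref{sch}) by pairing with test functions in $C^{\alpha,\tau}(\mathbb R)$. The main obstacle will be the short-time contributions: a naive term-by-term bound of the form $\|\tilde P_N^k (\tilde P_N^1 - I) v(t-k/N)\|_{C^{\alpha,\tau}} \lesssim 1/k$ yields a logarithmic divergence upon summation, so one must instead view $\mathcal R_N$ as a single space-time convolution against $v$ and apply the Schauder bound as a whole. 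The hypothesis $v \equiv 0$ on $[0, N^{-1}]$ truncates the sum away from the $t = 0$ singularity, mirroring how the continuum $K \partial_t v$ is well-behaved when $v$ vanishes initially.
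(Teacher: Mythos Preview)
Your summation-by-parts decomposition $K_ND_Nv(t) = v(t+N^{-1}) + \mathcal R_N v(t)$ is exactly what the paper does, and your identification of the logarithmic divergence from a naive term-by-term bound is the right obstacle to flag. However, your proposed resolution --- ``view $\mathcal R_N$ as a single space-time convolution and apply the Schauder bound as a whole'' --- is too vague and, as stated, does not work: the parabolic Schauder estimate (Proposition~\ref{sch}) applies to $K$, not to $K_N$, and routing through the continuum via comparison would require controlling $K_N - K$ with no loss of regularity, which is not available (Lemma~\ref{kn-k} loses an exponent).

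The paper resolves the divergence directly at the discrete level by duality and a two-regime estimate. Pairing with a scaled test function $S_x^\lambda\phi$, one shows
\[
\big|\big(v(s),\, P_N^*(t-s-N^{-1})\,\delta_N^*\, S_x^\lambda\phi\big)\big| \;\lesssim\; \big(\lambda^{\alpha-2} \wedge (t-s)^{\frac{\alpha}{2}-1}\big)(1+x^2)^\tau,
\]
the first bound coming from $\|\delta_N^* S_x^\lambda\phi\| \lesssim \lambda^{\alpha-2}$ when $t-s \lesssim \lambda^2$ (heat flow near identity), the second from the smoothing $P_N^*(t-s)\delta_N^* S_x^\lambda\phi \approx S_x^{\sqrt{t-s}}\psi$ when $t-s \gg \lambda^2$. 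Integrating the minimum over $s\in[0,t]$ gives exactly $\lambda^\alpha$ with no logarithm. This scale-splitting is the missing ingredient in your outline; without it (or an equivalent mechanism), the remainder bound is not closed.
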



 \begin{proof}
     Note that $v(t)=0$ for $t\in [0,N^{-1}]$ ensures that $D_Nv$ is actually a continuous path taking values in $C([0,T+1-N^{-1}], C^{\alpha,\sig}(\R))$, consequently so is $K_ND_Nv$. Let us now define four families of linear operators on $\mathcal S'(\mathbb R)$, indexed by $N$. For $f\in \mathcal S'(\mathbb R)$ and $s\in N^{-1}\mathbb Z_{\ge 0}$ let
     \begin{align*}P_N(s)f(x) &:= \sum_{y\in N^{-1/2}\mathbb Z-N^{3/4}s} p_N(Ns,N^{1/2}y)f(x-y), \\ \delta_Nf(x) &:= N(P_N(N^{-1})-I)f (x) = N\big[\rho_N f(x+N^{-\frac12} -N^{-\frac34}) 
     + (1-\rho_N) f(x-N^{-\frac12} - N^{-\frac34}) -f(x)\big] \\ \delta_N^* f(x)  &:= N\big[\rho_N f(x-N^{-1/2} +N^{-3/4}) + (1-\rho_N) f(x+N^{-1/2} + N^{-3/4}) -f(x)\big], \\ P^*_N(s) f(x)& := \big(I+N^{-1}\delta_N^*\big)^{Ns}f(x)=\sum_{y\in N^{-1/2}\mathbb Z-N^{3/4}s} p_N(Ns,N^{1/2}y)f(x+y).
     \end{align*}
     Then for $f\in \mathcal S'(\mathbb R)$ each of the four expressions $P_N(s)f, \delta_N f, \delta_N^*f, P_N^*(s)f$ also make sense as elements of $\mathcal S'(\mathbb R)$. Note that $\delta_N$ and $\delta^*_N$, both of which approximate the spatial Laplacian $\partial_x^2$, are adjoint to each other on $L^2(\mathbb R)$. Therefore $P_N$ and $P^*_N$ are also adjoint to each other. 
     Now let $v = (v(t))_{t\in [0,T+1]} \in \mathcal X_N^{\alpha,\sig}$. For $\phi\in C_c^\infty(\mathbb R)$ and $t\in [0,T]$, if we apply summation by parts and use the fact that $v$ vanishes on $[0,N^{-1}]$ we can write 
     \begin{align*}\big( K_ND_N v(t),\phi \big) &= \bigg( N^{-1} \sum_{s\in [0,t]\cap( N^{-1}\mathbb Z_{\ge 0})} NP_N(t-s)\big[ v(s+N^{-1})-v(s)\big]\;\;, \;\; \phi \bigg) \\ &= \bigg( v(t+N^{-1}) - 0 + \sum_{s\in [0,t]\cap( N^{-1}\mathbb Z_{\ge 0})} \big[ P_N(t-s)-P_N(t-s-N^{-1}) \big] (v(s))\;\;, \;\; \phi \bigg) \\ &= (v(t+N^{-1}),\phi) + \frac1N \sum_{s\in [0,t]\cap( N^{-1}\mathbb Z_{\ge 0})} (\delta_N P_N(t-s-N^{-1}) v(s) , \phi) \\ &= (v(t+N^{-1}),\phi) + \frac1N \sum_{s\in [0,t]\cap( N^{-1}\mathbb Z_{\ge 0})} (v(s), P_N^*(t-s-N^{-1})\delta^*_N \phi).
     \end{align*}

Here we are using the $L^2(\mathbb R)$-pairing between distributions and smooth functions. By the definition of the function spaces (Definition \ref{fsp}), we must replace $\phi$ by $S^\lambda_x\phi$ (where the scaling operators are given in Definition \ref{ehs}) in the last expression and then study the growth as $\lambda$ becomes close to 0. 

The first term on the right side is completely straightforward to deal with: the growth is at worst $\lambda^\alpha (1+x^2)^\tau$ uniformly over $\lambda, \phi, x, T$, since we assumed $v\in C([0,T],C^{\alpha,\sig}(\mathbb R)).$ To deal with the second term, we claim that one has 
\begin{equation}\label{bd5}|(v(s),P_N^*(t-s-N^{-1})\delta_N^* S^\lambda_x\phi) |\lesssim \big(\lambda^{\alpha-2} \wedge (t-s)^{\frac{\alpha}2-1}\big)(1+x^2)^\sig\end{equation} uniformly over $s<t\in [0,T]$, as well as $\lambda,\phi,x, N$ and $v$ with $\|v\|_{C([0,T+1],C^{\alpha,\sig}(\mathbb R))}\leq 1$. Indeed the bound of the form $\lambda^{\alpha-2}$ follows by noting that when $t-s$ is much smaller than $\lambda$, $P_N^*(t-s)$ is essentially the identity operator, so we can effectively disregard the heat kernel and note that $\delta_N^* S^\lambda_x\phi$ paired with $v(s)$ satisfies a bound of order $\lambda^{\alpha-2}$, uniformly over $N$ by e.g. second-order Taylor expansion. Likewise the bound of the form $(t-s)^{\frac{\alpha}2-1}$ is obtained by noting that when $\lambda$ is very small compared to $t-s$, $P_N^*(t-s-N^{-1})\delta^*_N S^\lambda_x\phi$ behaves like $S_x^{\sqrt{t-s}}\phi$, giving a bound of order $(\sqrt{t-s})^{\alpha-2}$ after applying $\delta^*_N$ and pairing with $v(s)$. This proves the bound \eqref{bd5}.

Now the fact that $\|K_ND_Nv\|$ can be controlled by $\|v\|$ follows simply by noting that uniformly over $0<\lambda^2 \leq t\leq T$ one has $$\int_0^t \big(\lambda^{\alpha-2} \wedge (t-s)^{\frac{\alpha}2-1}\big) ds = \int_0^{t-\lambda^2}(t-s)^{\frac{\alpha}2-1}ds + \int_{t-\lambda^2}^t \lambda^{\alpha-2}ds \leq \Con(\alpha)\cdot  \lambda^\alpha. $$
This implies the uniform bound on the operator norm of $K_ND_N$.
 \end{proof}

 \begin{prop}[Tightness of all relevant processes]\label{mcts} The following are true.
 \begin{enumerate}
		\item The fields $\hat{M}_N$ from Definition \ref{mqv} may be realized as an element of $C([0,T],C^{\alpha,\sig}(\mathbb R))$ for any $\alpha<-3$ and $\sig>1$. Moreover, they are tight with respect to that topology.  \item The fields $\mathscr U_N$ from \eqref{field} may be realized as an element of as an element of $C([0,T],C^{\alpha,\sig}(\mathbb R))$ for any $\alpha<-3$ and $\sig>1$. Moreover, they are tight with respect to that topology.
        \item The fields $\hat Q_N$ from Definition \ref{mqv} may be realized as an element of $C([0,T],C^{\gamma,\sig}(\mathbb R))$ for any $\gamma<-1$ and $\sig>1$. Moreover, they are tight with respect to that topology. 
        
        \item Let $\alpha < -3, \gamma<-1,$ and $\tau > 1$. Let $(M^\infty,Q^\infty, U^\infty)$ be a joint limit point of $(\hat M_N,\hat Q_N,\mathscr U_N)$ in $C([0,T],C^{\alpha,\sig}(\mathbb R)\times C^{\gamma,\sig}(\mathbb R)\times C^{\alpha,\sig}(\mathbb R)).$ For all $\phi\in C_c^\infty(\mathbb R)$ the process $(M_t^\infty(\phi))_{t\in[0,T]}$ is a continuous martingale with respect to the canonical filtration on that space, and moreover 
  \begin{equation}
      \label{e:mcts}
      \langle M^\infty(\phi)\rangle_t = Q_t^\infty(\phi^2).
  \end{equation}
  \end{enumerate}
	\end{prop}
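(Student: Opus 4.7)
The strategy for parts (1)--(3) is a uniform application of the Kolmogorov-type criterion Lemma~\ref{l:KC}\ref{KC2} using the moment bounds from Propositions~\ref{optbound} and \ref{tight1}, with tightness of $\mathscr U_N$ then deduced from the discrete Duhamel formula Lemma~\ref{u=kdm}. Part~(4) proceeds by passing the discrete-time martingale structure of $M_N(\cdot,\phi)$ to the limit via uniform integrability, and converting the decomposition \eqref{m=q} into the continuum identity \eqref{e:mcts}.

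For (1), Burkholder--Davis--Gundy combined with Proposition~\ref{optbound} gives $\mathbb E[|M_N(t,\phi)-M_N(s,\phi)|^p]^{1/p}\le C_p\|\phi\|_{C^1}|t-s|^{1/4}$ for every $p\ge 2$. Substituting $\phi=S^\lambda_x\psi$ with $\|\psi\|_{C^1}\le 1$ and using $\|S^\lambda_x\psi\|_{C^1}\lesssim\lambda^{-2}\le\lambda^{-9/4}$ for $\lambda\in(0,1]$, Lemma~\ref{l:KC}\ref{KC2} with $r=1,\alpha=-2,\kappa=1/4$ and $p$ large yields tightness of $\hat M_N$ in $C([0,T+1],C^{\beta,\sig}(\mathbb R))$ for any $\beta<-9/4$, covering the claimed range $\alpha<-3$ by the continuous embedding $C^{\beta,\sig}\hookrightarrow C^{\alpha,\sig}$ for $\alpha<\beta$. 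For (3), Proposition~\ref{tight1} analogously gives $\mathbb E[|Q_N(t,\phi)-Q_N(s,\phi)|^k]^{1/k}\le C\|\phi\|_{L^\infty}|t-s|^{1/2}$, and substituting $\phi=S^\lambda_x\psi$ ($\|\psi\|_{L^\infty}\le 1$) produces $\|\phi\|_{L^\infty}\lesssim \lambda^{-1}$; Lemma~\ref{l:KC}\ref{KC2} with $r=0,\alpha=-1,\kappa=1/2$ then yields tightness of $\hat Q_N$ in $C([0,T],C^{\gamma,\sig}(\mathbb R))$ for any $\gamma<-3/2$, covering $\gamma<-1$. For (2), Lemma~\ref{u=kdm} decomposes $\mathscr U_N=\mathfrak p_N+K_ND_N\hat M_N$ on $[0,T]$: the kernel $\mathfrak p_N$ is deterministic after time $N^{-1}$ (up to the single initial random variable $\omega_{0,0}$) and easily shown to converge in $C([0,T],C^{\alpha,\sig}(\mathbb R))$ to the continuum heat semigroup applied to $\delta_0$, while Lemma~\ref{kndn} together with the continuous mapping theorem lifts tightness of $\hat M_N$ in $\mathcal X_N^{\alpha,\sig}$ to tightness of $K_N D_N\hat M_N$ in the same target space.

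For (4), joint tightness of $(\hat M_N,\hat Q_N,\mathscr U_N)$ produces a subsequential limit $(M^\infty,Q^\infty,U^\infty)$, which we realize almost surely via Skorokhod representation. Fix $\phi\in C_c^\infty(\mathbb R)$: the BDG/Proposition~\ref{optbound} bounds give uniform-in-$N$ $L^p$ control on $M_N(t,\phi)$ for every $p\ge 1$, hence uniform integrability. The prelimit identity $\mathbb E[H\cdot(M_N(t,\phi)-M_N(s,\phi))]=0$ for $s,t\in N^{-1}\mathbb Z_{\ge 0}$ and any bounded continuous $\mathcal F_s$-measurable functional $H$ on paths of $(\hat M_N,\hat Q_N,\mathscr U_N)$ therefore passes to the limit after approximating arbitrary $s<t\in[0,T]$ by discrete times and invoking path-continuity of $t\mapsto M^\infty(t,\phi)$ inherited from $M^\infty\in C([0,T],C^{\alpha,\sig}(\mathbb R))$. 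So $M^\infty(\phi)$ is a continuous martingale in the canonical filtration. For the bracket, $M_N(\cdot,\phi)^2-\langle M_N(\phi)\rangle$ is also a martingale, and \eqref{m=q} writes $\langle M_N(\phi)\rangle_t=\mathcal E_N(t,\phi)+(2\rho_N-1)^2\sqrt{N}\,Q_N(t,\phi^2)$. Since $2\rho_N-1=\tanh(N^{-1/4})$ by the definition \eqref{def:rhon}, one has the remarkable cancellation $(2\rho_N-1)^2\sqrt{N}=1+O(N^{-1/2})$; the bound \eqref{ebound} together with the uniform $L^p$ bound of Proposition~\ref{tight1} forces $\mathcal E_N(\cdot,\phi)\to 0$ in probability on $C[0,T]$; and $Q_N(\cdot,\phi^2)\to Q^\infty(\cdot,\phi^2)$ in $C[0,T]$ since $\phi^2\in C_c^\infty$ pairs continuously with elements of $C^{\gamma,\sig}(\mathbb R)$. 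Thus $M^\infty(t,\phi)^2-Q^\infty_t(\phi^2)$ is a continuous martingale, and Doob--Meyer uniqueness gives \eqref{e:mcts}.

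The main technical obstacle I anticipate is the quadratic-variation identification in part~(4). The key points are (i) the precise cancellation $(2\rho_N-1)^2\sqrt{N}\to 1$ built into the choice of $\rho_N$, which is the reason the bracket identifies with $Q^\infty$ with coefficient exactly $1$ rather than a nontrivial constant, and (ii) the fact that the pathwise error control \eqref{ebound} on $\mathcal E_N$ involves $Q_N$ tested against an indicator function, which is not an admissible test function against the limit $Q^\infty\in C^{\gamma,\sig}(\mathbb R)$. This latter point is not a genuine obstruction, since we only need the $L^p$ bound of Proposition~\ref{tight1} against indicators at the prelimit level, never the action of the limit on an indicator; nonetheless it is a delicate point that needs to be handled carefully to avoid circularity with the tightness step.
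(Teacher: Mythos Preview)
Your approach is essentially identical to the paper's: Items (1) and (3) via Lemma~\ref{l:KC}\ref{KC2} fed by Propositions~\ref{optbound} and~\ref{tight1}; Item (2) via the discrete Duhamel identity of Lemma~\ref{u=kdm} together with the uniform operator bound of Lemma~\ref{kndn}; and Item (4) via uniform integrability plus the decomposition \eqref{m=q}, the limit $(2\rho_N-1)^2\sqrt N\to 1$, and the vanishing of $\mathcal E_N$ through \eqref{ebound} and \eqref{e.tight1}. Your identification of the indicator-function subtlety in \eqref{ebound} is apt and matches how the paper handles it.

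One correction is needed in Item (3). With $\alpha=-1$ and $\kappa=1/2$, Lemma~\ref{l:KC}\ref{KC2} yields tightness only in $C^{\gamma,\sig}$ for $\gamma<-3/2$, and the embedding $C^{\beta,\sig}\hookrightarrow C^{\gamma,\sig}$ (which holds for $\gamma<\beta$, as you correctly state in Item (1)) does \emph{not} promote this to the full range $\gamma<-1$: you would need the inclusion to go the other way. The fix, which is what the paper does, is to observe that the increment bound $C\lambda^{-1}|t-s|^{1/2}\le C_T\lambda^{-1-\kappa}|t-s|^{\kappa}$ is valid for every $\kappa\in(0,1/2]$ (interpolate the time exponent and use $\lambda\le 1$), so for any target $\gamma<-1$ one simply chooses $\kappa\in(0,-1-\gamma)$ and $p>1/\kappa$.
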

 \begin{proof}
     Take any $\phi\in C_c^\infty(\mathbb R)$ with $\|\phi\|_{L^\infty}\leq 1.$ Recall $S_{(t,x)}^{\lambda}$ from \eqref{scale}. Using the first bound in Proposition \ref{tight1}, we have $$\Ex[ (Q_N(t,S^\lambda_{x}\phi)-Q_N(s,S^\lambda_{x}\phi))^{k}]\leq \Con |t-s|^{k/2}\lambda^{-k},$$ uniformly over $x\in\mathbb R, \lambda\in (0,1], 0\leq s,t\leq T$ with $s,t\in N^{-1}\mathbb Z_{\ge 0}$. Since $Q_N(0,\phi)=0$ by definition, the assumptions of Lemma \ref{l:KC} \ref{KC2} are therefore satisfied for any $\kappa\leq 1/4$, any $p>1/\kappa$, and any $\alpha\leq -1$, and we conclude the desired tightness for $\hat Q_N=Q_N$. This proves Item \textit{(3)}.

  \smallskip
  
		Now we address the tightness of the $\hat M_N$. Using the Burkholder-Davis-Gundy inequality and then Proposition \ref{optbound}, we have that \begin{equation}\label{m1}\Ex[ (M_N(t,\phi)-M_N(s,\phi))^{8}]^{1/8} \leq \Con \cdot \Ex\big[ \big([M_N(\phi)]_t - [M_N(\phi)]_s)\big)^{4}\big]^{1/8}\leq \Con(t-s)^{1/4} \|\phi\|_{C^1(\mathbb R)},\end{equation} where $\phi\in C_c^\infty(\mathbb R)$, $\Con=\Con(k)>0$ is free of $\phi,s,t,N.$
		This gives $$\Ex[ (M_N(t,S^\lambda_{x}\phi)-M_N(s,S^\lambda_{x}\phi))^{8}]^{1/8}\leq \Con(t-s)^{1/4} \lambda^{-2} $$
		uniformly over $x\in\mathbb R, \lambda\in (0,1], 0\leq s,t\leq T$, and $\phi\in C_c^\infty(\mathbb R)$ with $\|\phi\|_{C^1}\leq 1$ with support contained in the unit interval. Moreover $\hat M_N(0,\phi)=0$ by definition, therefore the assumptions of Lemma \ref{l:KC} \ref{KC2} are satisfied with $\kappa=1/4$, $p=8>1/\kappa$, and any $\alpha\leq -2$. Hence, we conclude the desired tightness for $\hat M_N$. This proves Item \textit{(1)}.

  Now tightness for the fields $\mathscr U_N$ is immediate from Lemma \ref{kndn}, since we know from Lemma \ref{u=kdm} that $\mathscr U_N = \mathfrak p_N + K_ND_N\hat M_N$ where we view $K_ND_N $ as a bounded operator from $\mathcal X_N^{\alpha,\tau}\to C([0,T],C^{\alpha,\sig}(\mathbb R))$ and the convergence of $\mathfrak p_N$ in this topology is straightforward to deal with. This proves Item \textit{(2).}

  \smallskip
  
	We next show that the limit point $M^{\infty}(\phi)$ is a martingale indexed by $t\in N^{-1}\mathbb Z_{\ge 0}$. Since $M_N(0,\phi)=0$, from \eqref{m1}, we see that $\sup_N \Ex[M_N(t,\phi)^{2k}]<\infty.$ Thus $M^{\infty}(\phi)$ is a martingale since martingality is preserved by limit points under the uniform integrability assumption. Continuity is guaranteed by the definition of the spaces in which we proved tightness. In the prelimit we know from \eqref{m=q} that $$M_N(t,\phi)^2-\big((2\rho_N-1)^2\sqrt{N}\big)Q_N(t,\phi^2) - \mathcal E_N(t,\phi)$$ is a martingale indexed by $t\in N^{-1}\mathbb Z_{\ge 0}$, where the error term $\mathcal E_N$ is defined in \eqref{e_n} and satisfies the bound \eqref{ebound}. Note that $\big((2\rho_N-1)^2\sqrt{N}\big)\to 1$ as $N\to \infty$. By \eqref{ebound} and tightness estimates  \eqref{e.tight1} of $Q_N$, it follows that $\mathcal E_N(t,\phi)$ vanishes in probability in the topology of $C[0,T]$, so we conclude (again by uniform $L^p$ boundedness guaranteed by Proposition \ref{tight1}) that $M_t^\infty(\phi)^2-Q_t^\infty(\phi^2)$
	is a martingale. This verifies \eqref{e:mcts} completing the proof of Item \textit{(4)}.
 \end{proof}

 \begin{lem}[Controlling the difference between the discrete and continuum heat operators] \label{kn-k}
     Fix $\alpha<0, \tau>1$. Let $K_N$ be as in Definition \ref{dnlnkn}, and let $K$ be as in \eqref{e:kf}. Then we have the operator norm bound $$\|K_N-K\|_{C^{\alpha,\sig}_\mathfrak s(\Psi_T) \to C^{\alpha-1,\sig}_\mathfrak s(\Psi_T)} \leq CN^{-1/4}.$$
  Here $C$ is independent of $N$.
 \end{lem}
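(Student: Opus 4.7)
My plan is to establish the bound via the duality characterization of the weighted parabolic H\"older norm from Definition \ref{phs}. It suffices to show that for every $f\in C^{\alpha,\sig}_\mathfrak{s}(\Psi_T)$, every $(t,x)\in \Psi_T$, every $\lambda\in(0,1]$, and every $\vartheta\in C_c^\infty(\mathbb R^2)$ supported in the unit ball with $\|\vartheta\|_{C^r}\leq 1$,
\begin{equation*}
\bigl|\bigl((K_N - K)f,\, S^\lambda_{(t,x)}\vartheta\bigr)\bigr| \;\leq\; \Con\cdot N^{-1/4}\, w(x)\, \lambda^{\alpha - 1}\, \|f\|_{C^{\alpha,\sig}_\mathfrak{s}}.
\end{equation*}
Because $K$ and $K_N$ are convolution operators, Fubini together with adjointness rewrites this pairing as
\begin{equation*}
N^{-1}\sum_{(s,y)\in\Psi_{N,T+1}} p_N(Ns,\, N^{1/2}y)\, g(s,y) \;-\; \int_0^T\!\!\int_{\mathbb R} p_s(y)\, g(s,y)\, ds\, dy,
\end{equation*}
where $g(s,y):=(f,\, S^\lambda_{(t-s,x-y)}\vartheta)$. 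The scaling of $S^\lambda$ combined with the H\"older characterization of $f$ yields $|\partial_s^k\partial_y^j g(s,y)|\leq \Con\, w(x-y)\, \lambda^{\alpha-2k-j}\,\|f\|_{C^{\alpha,\sig}_\mathfrak{s}}$ for small $k,j$.

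I will split the above difference into a \emph{local CLT} piece and a \emph{Riemann sum} piece. A sharp local CLT with drift correction for the random walk defining $p_N$ gives
\begin{equation*}
\tfrac{N^{1/2}}{2}\, p_N(Ns,\, N^{1/2}y) \;=\; p_s(y) \,+\, \delta_N\, s\, \partial_y p_s(y) \,+\, O(N^{-1/2})\, p_s(y) \,+\, \text{lower order},
\end{equation*}
where $\delta_N$ is the rescaled mean drift of the walk and satisfies $|\delta_N|\leq \Con\,N^{-1/4}$. This drift arises from the imperfect cancellation between $1-2\rho_N=-N^{-1/4}+O(N^{-3/4})$ (the walk's asymmetry) and the lattice shift $-tN^{-1/4}$ built into $\Lambda_N$. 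The leading drift error $\delta_N\, s\, \partial_y p_s(y)$, when integrated against $g$ and followed by integration by parts in $y$, produces a bound of order $N^{-1/4}\, w(x)\, \lambda^{\alpha-1}$ (using $|\partial_y g|\lesssim w(x-y)\lambda^{\alpha-1}$). The variance-correction error and higher-order CLT remainders are of the same or smaller size.

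For the Riemann sum piece, after replacing $\tfrac{N^{1/2}}{2}\,p_N(Ns,N^{1/2}y)$ by $p_s(y)$ in the discrete sum one compares the resulting lattice sum (with effective spacing $N^{-1}$ in time and $2N^{-1/2}$ in space from the parity constraint) against the continuous integral. In the regime $s\geq N^{-1/2}$, the integrand $p_s(y)\,g(s,y)$ is smooth enough for a midpoint-type error bound of order $N^{-1}\bigl(\|\partial_s^2(p_s g)\|_\infty + \|\partial_y^2(p_s g)\|_\infty\bigr)$ per unit volume, producing a total contribution $\lesssim N^{-1/4}\lambda^{\alpha-1}$ after integration. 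In the near-singularity region $s\leq N^{-1/2}$, both the lattice sum and the integral carry mass $O(N^{-1/2})$; paired with $g$ (bounded by $\Con w(x)\lambda^\alpha$), this yields a contribution $\lesssim N^{-1/2}\lambda^\alpha$, which fits within the $N^{-1/4}\lambda^{\alpha-1}$ budget in the relevant regime $\lambda\gtrsim N^{-1/4}$. The complementary regime $\lambda\lesssim N^{-1/4}$ is easier since $K_N$ and $K$ are each uniformly bounded in operator norm on $C^{\alpha,\sig}_\mathfrak{s}$, so that $\|(K_N-K)f\|_{C^{\alpha,\sig}_\mathfrak{s}}\lesssim \|f\|_{C^{\alpha,\sig}_\mathfrak{s}}\leq N^{-1/4}\lambda^{-1}\|f\|_{C^{\alpha,\sig}_\mathfrak{s}}$.

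The hard part will be the careful matching of discrete and continuum behaviors in the near-singularity region $s\approx 0$, where $p_N$ has mass concentrated on only a few lattice points while $p_s(y)$ is singular; both must be absorbed into the overall $N^{-1/4}$ budget. The other nontrivial ingredient is establishing the sharp local CLT with drift correction, with a remainder estimate uniform over the spatial range relevant for pairing against $g$.
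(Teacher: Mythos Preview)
Your approach is correct and will work, but it takes a genuinely different route from the paper. You pursue a direct analytic strategy: expand the discrete kernel $p_N$ via a local CLT/Edgeworth expansion with drift correction, then control the Riemann-sum-to-integral error, splitting into the regimes $\lambda\gtrsim N^{-1/4}$ and $\lambda\lesssim N^{-1/4}$. The paper instead gives a probabilistic one-shot argument: it writes $K_Nf^\lambda_\varphi$ and $Kf^\lambda_\varphi$ as expectations of $f^\lambda_\varphi(t-s,x-W_N(s))$ and $f^\lambda_\varphi(t-s,x-W(s))$ for the rescaled walk $W_N$ and Brownian motion $W$, invokes the KMT coupling to get $\sup_{s\le T}|W_N(s)-W(s)|\le G\,N^{-1/4}$ (the exponent $-1/4$ being forced by exactly the residual drift you identified), and then bounds the difference by a single application of the mean value inequality in the spatial variable, which costs one factor of $\lambda^{-1}$. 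This packages your Edgeworth expansion, drift term, and Riemann-sum analysis into a single coupling statement. Your approach is more elementary in that it avoids the strong approximation machinery, but the price is that you must separately establish the local CLT with uniform remainder and carefully handle the short-time region $s\lesssim N^{-1/2}$, whereas the paper's argument is essentially three lines once KMT is quoted. Both routes correctly isolate the same mechanism (the $O(N^{-3/4})$ per-step drift becoming $O(N^{-1/4})$ after diffusive rescaling) as the bottleneck.
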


 The above bound is crude, we do not claim optimality of the Holder exponents here.

 \begin{proof}
     Define $f_\varphi^\lambda(t,x):= (f,S^\lambda_{(t,x)}\varphi)_{L^2(\Psi_T)}$. It suffices to show that 
     $$\sup_{\|f\|_{C^{\alpha,\sig}_\mathfrak s(\Psi_T)}\leq 1} \sup_{(t,x)\in\Psi_T}\sup_{\lambda\in [(0,1]} \sup_{\varphi \in B_r} 
     \frac{|(K_N-K)f_\varphi^\lambda(t,x)|}{(1+x^2)^{\sig}\lambda^{\alpha-1}}\leq CN^{-1/4},$$ 
     where the scaling operators are defined by 
		$S^\lambda_{(t,x)}\varphi (s,y) = \lambda^{-3}\varphi(\lambda^{-2}(t-s),\lambda^{-1}(x-y)) ,$
		and where $B_r$ is the set of all smooth functions of $C^r$ norm less than 1 with support contained in the unit ball of $\mathbb R^2$.
     We shall use a probabilistic interpretation of the kernels to prove this. Note that $$(K_N-K)f_\varphi^\lambda (t,x) = \frac1N \hspace{-0.2cm}\sum_{s\in [0,t]\cap (N^{-1}\mathbb Z_{\ge 0})}\hspace{-0.2cm} \mathbf E[f_\varphi^\lambda(t-s,x- W_N(s))] - \int_0^t \mathbf E[f_\varphi^\lambda (t-s,x-W(s))]ds, $$ where $W_N(s) = N^{-1/2}R_N(Ns)$ for the discrete-time random walk $(R_N(r))_{r\ge 0}$ with increment distribution described in Definition \ref{dnlnkn}, and $W$ is a standard Brownian motion. Define $W_N(s)$ by linear interpolation for $s\notin N^{-1}\mathbb Z_{\ge 0}$. 
     
     By the KMT coupling \cite{KMT}, we may assume that $W_N$ and $W$ are all coupled onto the same probability space so that $\sup_{s\in [0,T]} |W_N(s) - W(s)| \leq G N^{-1/4}$ where $G$ is a random variable on that same probability space independent of $N$ such that $\mathbf E|G|^p<\infty$ for all $p$. Note that $-1/4$ is the optimal exponent here because, despite KMT giving a better exponent in principle, the increments of the original random walk $R_N$ are not centered but rather they have a non-zero mean of order $N^{-3/4}$. Consequently, by the definition of the parabolic spaces, we have uniformly over $s,t \in [0,T]$ the bound
     \begin{align*} & |f_\varphi^\lambda(t-s,x- W_N(s))-f_\varphi^\lambda (t-s,x-W(s))| \\ & \leq \bigg(
       \sup_{u \in \big[-|x| - |W(s)|-|W_N(s)|, \;|x| + |W(s)|+|W_N(s)|\big]} | \partial_x f_\varphi^{\lambda}(t-s,u)|\bigg)\cdot | W_N(s) - W(s)| \\ &\leq \bigg(
       \|f\|_{C^{\alpha,\sig}_\mathfrak s(\Psi_T)} \lambda^{\alpha-1} (1+4x^2 + 4W(s)^2 + 4(W_N(s))^2)^\sig \bigg)\cdot GN^{-1/4}.
     \end{align*}
      Here the factor $\|f\|_{C^{\alpha,\sig}_\mathfrak s(\Psi_T)} \lambda^{\alpha-1}$ can be deduced using e.g. Remark \ref{d/dx} which says that $\partial_x$ boundedly reduces the parabolic regularity by 1 exponent. We also used the fact that $(|x|+|y|+|z|)^2 \leq 4x^2+4y^2+4z^2.$
      
      We claim that $$\mathbf E \big|G\cdot (1+4x^2 + 4W(s)^2 + 4(W_N(s))^2)^\sig \big|< C(1+x^2)^{\tau} $$ for a constant $C=C(\alpha,T,\tau)$ independent of $N,x,s.$ To prove this, one uses $(a+b+c+d)^\tau \leq 4^\tau(a^\tau +b^\tau+c^\tau+d^\tau)$, then one notes that $\sup_{N} \mathbf E|G \cdot \sup_{s\leq T} (|W(s)|^{2\tau} +|W_N(s)|^{2\tau})|<\infty$ by e.g. Cauchy-Schwartz. 
      
      Consequently the above expression for $(K_N-K)f_\varphi^\lambda (t,x)$ can be bounded in absolute value by a universal constant times $\|f\|_{C^{\alpha,\sig}_\mathfrak s(\Psi_T)} \lambda^{\alpha-1} N^{-1/4}$. 
 \end{proof}

 \begin{lem}[Controlling the difference between the discrete and continuum time-derivatives] \label{ds} Fix $\alpha<0, \tau>1$. The derivative operator $\partial_s : C^{\alpha,\sig}_\mathfrak s(\Psi_T) \to C_\mathfrak s^{\alpha-2,\sig}(\Psi_T)$ which was defined in Remark \ref{d/dx}, is a bounded linear map. Furthermore, let $D_N$ be as in Definition \ref{dnlnkn}. Then we have the operator norm bounds 
 \begin{align*}\sup_{N\ge 1}\|D_N\|_{C^{\alpha,\sig}_\mathfrak s(\Psi_T)\to C_\mathfrak s^{\alpha-2,\sig}(\Psi_T)}&<\infty. \\ \|D_N-\partial_s\|_{C^{\alpha,\sig}_\mathfrak s(\Psi_T) \to C^{\alpha-4,\sig}_\mathfrak s(\Psi_T)} &\leq \frac12 N^{-1}.
 \end{align*}
\end{lem}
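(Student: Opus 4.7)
I will prove all three parts by duality, rewriting the action on test functions via $\varphi\mapsto(\cdot,\varphi)_{\Psi_T}$ and testing against the scaling operators $\varphi = S^\lambda_{(t,x)}\varphi_0$ with $(t,x)\in\Psi_T$, $\lambda\in(0,1]$, and $\varphi_0$ in the appropriate unit ball $B_{r'}$. The two key computational facts are the chain-rule identity $\partial_s^k S^\lambda_{(t,x)}\varphi_0 = (-\lambda^{-2})^k S^\lambda_{(t,x)}\partial_1^k\varphi_0$ and the time-translation identity $S^\lambda_{(t,x)}\varphi_0(\cdot-h,\cdot) = S^\lambda_{(t+h,x)}\varphi_0$. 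In every place where I need $\partial_1^k\varphi_0\in B_r$ (with $r=-\lfloor\alpha\rfloor$) starting from $\varphi_0\in B_{r+2k}$, this holds because $\|\partial_1^k\varphi_0\|_{C^r}\le\|\varphi_0\|_{C^{r+2k}}<1$ and derivatives preserve unit-ball support.

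\textbf{Parts (1) and (3).} For (1), the distributional definition gives $(\partial_s f, S^\lambda_{(t,x)}\varphi_0)_{\Psi_T} = -(f, \partial_s S^\lambda_{(t,x)}\varphi_0)_{\Psi_T} = \lambda^{-2}(f, S^\lambda_{(t,x)}\partial_1\varphi_0)_{\Psi_T}$, which is bounded by $\|f\|_{C^{\alpha,\sig}_\mathfrak{s}(\Psi_T)}\, w(x)\, \lambda^{\alpha-2}$ straight from the norm definition. For (3), use the Taylor remainder formula
\[
D_N^*\varphi(s,y) + \partial_s\varphi(s,y) = N^{-1}\int_0^1 (1-u)\,\partial_s^2\varphi(s-uN^{-1},y)\,du,
\]
obtained by expanding $\varphi(s-N^{-1})$ to second order around $s$. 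Substituting $\varphi = S^\lambda_{(t,x)}\varphi_0$ and applying both identities yields
\[
(D_N f - \partial_s f, S^\lambda_{(t,x)}\varphi_0)_{\Psi_T} = N^{-1}\lambda^{-4}\int_0^1 (1-u)\,(f, S^\lambda_{(t+uN^{-1}, x)}\partial_1^2\varphi_0)_{\Psi_T}\,du.
\]
Bounding each inner pairing by $\|f\|_{C^{\alpha,\sig}_\mathfrak{s}(\Psi_T)}\,w(x)\,\lambda^\alpha$ and using $\int_0^1 (1-u)\,du = \tfrac12$ gives exactly the claimed $\tfrac12 N^{-1}\|f\|_{C^{\alpha,\sig}_\mathfrak{s}(\Psi_T)}\,w(x)\,\lambda^{\alpha-4}$.

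\textbf{Part (2).} Here the naive decomposition $D_N = \partial_s + (D_N - \partial_s)$, combined with parts (1) and (3), only yields a bounded map into the weaker space $C^{\alpha-4,\sig}_\mathfrak{s}(\Psi_T)$, so a direct two-scale argument is needed. The plan is to split on whether the finite-difference scale $N^{-1}$ is dominated by the test-function scale $\lambda^2$. In the regime $N^{-1}\le\lambda^2$, apply the first-order FTC to obtain
\[
D_N^* S^\lambda_{(t,x)}\varphi_0 = \lambda^{-2} N\int_0^{N^{-1}} S^\lambda_{(t+h,x)}\partial_1\varphi_0\,dh,
\]
which after pairing with $f$ and invoking the $C^{\alpha,\sig}$-norm inside the integral integrates up to $\|f\|_{C^{\alpha,\sig}_\mathfrak{s}(\Psi_T)}\,w(x)\,\lambda^{\alpha-2}$. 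In the regime $N^{-1} > \lambda^2$, bound $(f, S^\lambda_{(t+N^{-1},x)}\varphi_0)_{\Psi_T}$ and $(f, S^\lambda_{(t,x)}\varphi_0)_{\Psi_T}$ separately to get $|(D_N f, S^\lambda_{(t,x)}\varphi_0)|\le 2N\|f\|_{C^{\alpha,\sig}_\mathfrak{s}(\Psi_T)}\,w(x)\,\lambda^\alpha$, and then use the case assumption $N\le\lambda^{-2}$ to convert the exponent from $\alpha$ to $\alpha-2$.

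\textbf{Main obstacle.} The only subtlety I anticipate is a mild boundary effect: the base-point shifts $t\mapsto t+h$ (or $t+uN^{-1}$) can push the shifted base point past $T$ when $t$ is within $N^{-1}$ of the terminal time. Under the zero-extension convention of Remark \ref{d/dx}, however, the shifted pairings are still controlled by $\|f\|_{C^{\alpha,\sig}_\mathfrak{s}(\Psi_T)}$ up to harmless universal constants, since the effective integration domain shrinks to zero as the shifted base point approaches the boundary. Otherwise the argument is entirely routine.
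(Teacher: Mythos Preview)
Your proposal is correct and follows essentially the same approach as the paper: duality against rescaled test functions, the chain rule $\partial_s S^\lambda_{(t,x)}\varphi_0=-\lambda^{-2}S^\lambda_{(t,x)}\partial_1\varphi_0$, and a second-order Taylor expansion for the $D_N-\partial_s$ bound. The only cosmetic difference is that for Part (3) the paper phrases the calculation in terms of the function $f^\lambda_\varphi(t,x):=(f,S^\lambda_{(t,x)}\varphi)$ and Taylor-expands that function of $t$, whereas you expand the adjoint test function directly; for Part (2) your two-regime split ($N^{-1}\le\lambda^2$ versus $N^{-1}>\lambda^2$) is more explicit than the paper's terse treatment, but the underlying idea is the same.
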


 \begin{proof}The proof that $\partial_s$ is bounded is straightforward from the definitions of the spaces. Note that $(D_Nf,\varphi) = (f,D_N^-\varphi)$ where $D_N^-\varphi(t,x) = N\big[\varphi(t,x)-\varphi(t-N^{-1},x)\big]$. Recall $S^\lambda_{(t,x)}$ from \eqref{scale}. From the definition of the space $C^{\alpha,\sig}(\mathbb R)$ one verifies directly that if $v\in C([0,T],C^{\alpha,\sig}(\mathbb R))$ with norm less than or equal to 1, then for $N>\lambda^{-1}$ one has $$\int_0^T |v_t\big(D_N^- (S^\lambda_{(s,y)} \varphi) (t,\cdot)\big)|dt \leq (1+y^2)^\tau \int_{s-\lambda^2}^{s+\lambda^2} \lambda^{\alpha-4}dt \leq (1+y^2)^\tau\lambda^{\alpha-2}$$ uniformly over $(s,y)\in \Psi_T$ and $\varphi\in C_c^\infty(\mathbb R^2)$ supported on the unit ball of $\mathbb R^2$ with $\|\varphi\|_{C^r}\leq 1$ where $r=\lceil -\alpha \rceil.$ Here the bound of order $\lambda^{\alpha-4}$ may be deduced by writing $D_N^-\varphi(t,x) = N\int_{t-N^{-1}}^t \partial_s\varphi(s,y)ds$, interchanging the two integrals, and noting that $\partial_s (S^\lambda_{(s,y)} \varphi) (t,\cdot)$ satisfy such a bound. This proves the first operator norm bound.

 Now we prove the second bound. Define $f_\varphi^\lambda(t,x):= (f,S^\lambda_{(t,x)}\varphi)_{L^2(\Psi_T)}$. It suffices to show that 
     $$\sup_{\|f\|_{C^{\alpha,\sig}_\mathfrak s(\Psi_T)}\leq 1} \sup_{(t,x)\in\Psi_T}\sup_{\lambda\in (0,1]} \sup_{\varphi \in B_r} 
     \frac{|(D_N-\partial_s)f_\varphi^\lambda(t,x)|}{(1+x^2)^{\sig}\lambda^{\alpha-4}}\leq \frac12 N^{-1},$$ 
     where the scaling operators are defined by 
		$S^\lambda_{(t,x)}\varphi (s,y) = \lambda^{-3}\varphi(\lambda^{-2}(t-s),\lambda^{-1}(x-y)) ,$
		and where $B_r$ is the set of all smooth functions of $C^r$ norm less than 1 with support contained in the unit ball of $\mathbb R^2$. Note that
  \begin{align*}
      |(D_N-\partial_s)f_\varphi^\lambda(t,x)| &= \bigg| N\big( f^\lambda_\varphi (t+N^{-1},x)- f^\lambda_\varphi(t,x)\big) - \partial_s  f^\lambda_\varphi(t,x)\bigg| \\ &=  \bigg| N\int_t^{t+N^{-1}} \big( \partial_s  f^\lambda_\varphi(u,x) - \partial_s f^\lambda_\varphi(t,x)\big) du\bigg| \\ &\leq N\int_t^{t+N^{-1}} \|\partial_s^2  f^\lambda_\varphi\|_{L^\infty([t,t+N^{-1}]\times \{x\})} |t-u|du \leq \frac12 N^{-1}\|\partial_s^2  f^\lambda_\varphi\|_{L^\infty([t,t+N^{-1}]\times \{x\})} .
  \end{align*}
  From the definitions one has $\|\partial_s^2  f^\lambda_\varphi\|_{L^\infty([t,t+N^{-1}]\times \{x\})} \leq (1+x^2)^\tau \lambda^{\alpha-4} \|f\|_{C^{\alpha,\sig}_\mathfrak s(\Psi_T)}.$
 \end{proof}

 Since we expect to obtain a Dirac initial data in the limiting SPDE, there is an additional singularity at the origin that we have not yet taken into account. To fix this issue, we now formulate a tightness result taking into account this singularity, by starting the field \eqref{field} from some positive time $\e$ (which should be thought of as being close to $0$).

 \begin{prop}[Regularity of limit points] \label{reg}
Fix $\e>0,$ and consider the fields $\mathscr U_{N,\e}(t,\cdot):=\mathscr U_N(t+\e,\cdot),$ for $t\ge 0$. Set $\mathscr U_{N,\e}(t,\cdot):=0$ for $t<0$. The fields $\mathscr U_{N,\e}$ may be realized as random variables taking values in $C([0,T], C^{\alpha,\sig}(\mathbb R))$ for any $\sig>1$ and $\alpha<-3$. Furthermore, they are tight with respect to that topology, and any limit point is necessarily supported on $C_\mathfrak s^{-\kappa+1/2,\sig}(\Psi_T)$ for any $\kappa>0$.
 \end{prop}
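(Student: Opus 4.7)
The argument naturally splits into two parts: tightness in $C([0,T], C^{\alpha,\tau}(\mathbb R))$ for $\alpha<-3$, and identification of the parabolic regularity of limit points as $C^{1/2-\kappa,\tau}_\mathfrak s(\Psi_T)$. The first part is essentially free. Applying Proposition \ref{mcts}(2) with terminal time $T+\varepsilon$ in place of $T$, the fields $\mathscr U_N$ are tight in $C([0,T+\varepsilon], C^{\alpha,\tau}(\mathbb R))$, and since $\mathscr U_{N,\varepsilon}(t,\cdot) = \mathscr U_N(t+\varepsilon,\cdot)$ is just a continuous time-translation operation, tightness transfers directly to $C([0,T], C^{\alpha,\tau}(\mathbb R))$.

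For the regularity upgrade, the key observation is the contrast between \eqref{e.tight1} and \eqref{e.tight2}: on time intervals bounded away from $0$ by $\varepsilon$, the moments of $Q_N$-increments satisfy a strictly better bound of order $(t-s)^k$ rather than $(t-s)^{k/2}$. Via the Burkholder-Davis-Gundy inequality together with the near-equality $\langle M_N(\phi)\rangle \approx Q_N(\cdot,\phi^2)$ established in \eqref{m=q} (with error controlled by \eqref{ebound}), this translates into the improved moment bound
$$\limsup_{N\to\infty}\mathbb E\!\left[|\hat M_N(t,\phi)-\hat M_N(s,\phi)|^{2k}\right]^{1/(2k)} \le C_k(t-s)^{1/2}\|\phi\|_{L^{2p}}$$
for $\varepsilon \le s \le t \le T+\varepsilon$, which sharpens the temporal Hölder exponent from $1/4$ in \eqref{m1} to $1/2$. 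A parabolic Kolmogorov-type criterion (Lemma \ref{l:KC}(c)) applied to the post-$\varepsilon$ restriction of $\hat M_N$ now yields uniform-in-$N$ moment bounds in a parabolic Hölder space $C^{-3/2+\kappa,\tau}_\mathfrak s$ for any $\kappa>0$.

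To transfer this regularity onto $\mathscr U_N$ itself, we invoke the discrete SHE identity $\mathscr U_N = \mathfrak p_N + K_N D_N \hat M_N$ from Lemma \ref{u=kdm}. The term $\mathfrak p_N(\cdot+\varepsilon)$ is essentially the discrete heat kernel started at positive time $\varepsilon$ and is therefore smooth, converging deterministically to the continuum heat kernel $p_{\cdot+\varepsilon}(x)$ which lies in $C^{1/2-\kappa,\tau}_\mathfrak s$ on $[0,T]$. For the stochastic part $K_N D_N \hat M_N(\cdot+\varepsilon)$, we use the operator-level convergence established in Lemmas \ref{kn-k}, \ref{ds}, and the uniform boundedness from Lemma \ref{kndn}, to identify the limit as $K\partial_t M^\infty$ acting on distributions supported in $[\varepsilon, T+\varepsilon]$. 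Schauder's estimate (Proposition \ref{sch}) then converts the parabolic regularity gain of $2$ into the target regularity $1/2-\kappa$, so that limit points of $\mathscr U_{N,\varepsilon}$ necessarily lie in $C^{1/2-\kappa,\tau}_\mathfrak s(\Psi_T)$.

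The main technical obstacle is carefully tracking the parabolic regularity through the discrete-to-continuum transition of the operators $K_ND_N \to K\partial_t$, especially since $\hat M_N$ has only limited regularity as a space-time random distribution. In particular, one must verify that the quantitative error estimates in Lemmas \ref{kn-k} and \ref{ds} are sharp enough to close the loop, so that the pre-limiting Schauder-type bound (compatible with the improved moment bounds from step two) propagates to any weak limit point. The time-shift by $\varepsilon$ plays a dual role: it removes the Dirac singularity at the origin while simultaneously enabling the improved bound \eqref{e.tight2} that drives the whole argument.
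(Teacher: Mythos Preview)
Your overall strategy is the paper's own: tightness from Proposition~\ref{mcts}, then a regularity upgrade driven by the sharper bound \eqref{e.tight2} combined with Schauder. However, the step where you pass from the improved temporal increment bound on $\hat M_N$ to the claimed parabolic regularity has a genuine gap.

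The issue is this. Your displayed bound
\[
\limsup_{N\to\infty}\mathbb E\big[|\hat M_N(t,\phi)-\hat M_N(s,\phi)|^{2k}\big]^{1/(2k)}\le C_k(t-s)^{1/2}\|\phi\|_{L^{2p}}
\]
is a statement about time increments against a \emph{fixed spatial} test function. Feeding this into Lemma~\ref{l:KC}\ref{KC2} and then the embedding of Lemma~\ref{embed} only places $M^{\infty,\varepsilon}$ in $C^{\beta,\tau}_\mathfrak s$ for $\beta$ near $-1$, and applying $\partial_s$ (or $D_N$) drops two more units, landing near $-3$. Schauder then returns you to $-1$, not $1/2-\kappa$. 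In other words, the operator $K\partial_t$ is bounded $C^{\beta}_\mathfrak s\to C^{\beta}_\mathfrak s$ (since $K(\partial_t-\tfrac12\partial_x^2)=I$ and $K\partial_x^2$ preserves regularity), so you cannot improve the parabolic exponent of $\mathscr U^{\infty,\varepsilon}$ beyond that of $M^{\infty,\varepsilon}$ by this route. The exponent ``$-3/2+\kappa$'' you wrote for $\hat M_N$ does not follow from Lemma~\ref{l:KC}\ref{KC} applied to the increment bound, and even if it did, it would not yield $1/2-\kappa$ after $K\circ D_N$.

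What the paper actually does is apply the Kolmogorov criterion to $\mathscr M^{\infty,\varepsilon}:=\partial_s M^{\infty,\varepsilon}$ directly. Writing
\[
(\partial_s M^{\infty,\varepsilon}, S^\lambda_{(t,x)}\varphi)=-\int_{t-\lambda^2}^{t+\lambda^2} M_s^{\infty,\varepsilon}\big(\partial_s S^\lambda_{(t,x)}\varphi(s,\cdot)\big)\,ds,
\]
the martingale structure lets one apply BDG so that only the quadratic-variation increment on the \emph{parabolically small} window $[t-\lambda^2,t+\lambda^2]$ enters. Using $\langle M^{\infty,\varepsilon}(\psi)\rangle = Q^{\infty,\varepsilon}(\psi^2)$ together with \eqref{e.tight2} on that window, and the trivial bound $(\partial_s S^\lambda\varphi)^2\le \lambda^{-10}\ind_{[t-\lambda^2,t+\lambda^2]\times[x-\lambda,x+\lambda]}$, one obtains $\mathbb E[|(\partial_s M^{\infty,\varepsilon},S^\lambda\varphi)|^p]^{1/p}\lesssim \lambda^{-3/2-\delta}$, whence $\mathscr M^{\infty,\varepsilon}\in C^{-3/2-\kappa,\tau}_\mathfrak s$ and Schauder gives $K\mathscr M^{\infty,\varepsilon}\in C^{1/2-\kappa,\tau}_\mathfrak s$. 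This coupling of BDG with the parabolic localization is the missing idea in your sketch.

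A secondary point: the identity $\mathscr U_N=\mathfrak p_N + K_ND_N\hat M_N$ involves $\hat M_N$ on all of $[0,t]$, so you cannot simply ``restrict $\hat M_N$ to $t\ge\varepsilon$'' in it. One needs the Duhamel reformulation (the paper's \eqref{e.xie2}) that separates the random initial data $\mathscr U^\infty(\varepsilon,\cdot)$, propagated by $J$, from the martingale increment $\mathscr M^{\infty,\varepsilon}$ on $[\varepsilon,T+\varepsilon]$. Your description conflates the deterministic $\mathfrak p_N$ with this random initial-data term.
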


 \begin{proof}
     We already know from Item (2) of Proposition \ref{mcts} that the $\mathscr U_{N}$ are tight in $C([0,T],C^{\alpha,\sig}(\mathbb R))$. We also know from Item (1) of Proposition \ref{mcts} that $\hat M_N$ are tight in $C([0,T+1],C^{\alpha,\sig}(\mathbb R))$, which is embeds continuously into $C^{\alpha,\sig}_\mathfrak s(\Psi_T)$ by Lemma \ref{embed}. Thus using the first bound in Lemma \ref{ds}, we have that $D_N\hat M_N$ are tight in $C_\mathfrak s^{\alpha-2}(\Psi_T)$.
     
     Consider any joint limit point $(U^\infty,\mathscr M^\infty)$ as $N\to\infty$ of the pair $(\mathscr U_N,D_N\hat M_N)$ in the product space $C([0,T],C^{\alpha,\sig}(\mathbb R))\times C^{\alpha-2,\sig}_\mathfrak s(\Psi_T)$. By Lemmas \ref{u=kdm} and \ref{kn-k} we may conclude that 
     \begin{equation}\label{u=p+km}U^\infty = p +K\mathscr M^\infty,
     \end{equation} where $p(t,x) = (2\pi t)^{-1/2}e^{-x^2/2t} \ind_{\{t\ge 0\}}$
     is the (deterministic) standard heat kernel, and $K$ is the operator defined in \eqref{e:kf}.

     Fix $\e>0$. Now we consider the $\e$-translates of these statements. It is automatic from Item (2) of Proposition \ref{mcts} that $\mathscr U_{N,\e}$ are tight in $C([0,T],C^{\alpha,\sig}(\mathbb R))$. Likewise, it is automatic from Item (1) that the family $\hat M_{N,\e} := \hat M_N(\e +\cdot)$ is tight in $C([0,T],C^{\alpha,\sig}(\mathbb R))$ so that from Lemma \ref{ds} we see that $D_N\hat M_{N,\e}$ are tight in $C^{\alpha-2,\sig}_\mathfrak s(\Psi_T).$ Let us now take a joint limit point $(U^\infty, \mathscr M^\infty, U^{\infty,\e}, \mathscr M^{\infty,\e})$ of the family $(\mathscr U_N, D_N\hat M_N, \mathscr U_{N,\e} , D_N\hat M_{N,\e}).$ On one hand we necessarily have $U^{\infty,\e}(t) = U^{\infty}(t+\e)$ and on the other hand we necessarily have that $U^\infty = p+K\mathscr M^\infty.$ From the last statement in Proposition \ref{sch}, this then forces the relation
     $$\mathscr U^{\infty,\e} = K(\delta_0\otimes U^\infty(\e,\cdot )) + K\mathscr M^{\infty,\e},$$ where the tensor product was defined in Definition \ref{otimes}. Now we notice $K(\delta_0\otimes  U^\infty(\e,\cdot ))= J \big(U^\infty(\e,\cdot)\big) $, where $J$ is defined in Corollary \ref{sme}. We thus have the Duhamel equation 
     \begin{equation}\label{e.xie2}\mathscr U^{\infty,\e} = J \big(U^\infty(\e,\cdot)\big) + K\mathscr M^{\infty,\e}.
     \end{equation}
For technical reasons that will be made clear below, we now replace $\e$ by $\e/2$ and $T$ by $T+1$. We now claim that for all $q>0$
\begin{align}
\label{e.tm2}
    \Ex[\|\mathscr M^{\infty,\e/2}\|_{C_\mathfrak s^{-\kappa-3/2,\sig}(\Psi_{T+1})}^q]<\infty.
\end{align} 

Let us now complete the proof of regularity assuming \eqref{e.tm2}.
\begin{itemize}[leftmargin=20pt]
\itemsep\setlength{0.5em}
    \item Using \eqref{e.tm2} and Proposition \ref{sch}, it follows that 
$$\mathbb E\bigg[\big\|K\mathscr M^{\infty,\e/2}\big\|_{C_\mathfrak s^{-\kappa+1/2,\sig}(\Psi_{T+1})}^q\bigg]<\infty.$$ This implies that the restriction of $K\mathscr M^{\infty,\e/2}$ to $[\e/2,T+\e/2]\times\mathbb R$ lies in $C_\mathfrak s^{-\kappa+1/2,\sig}(\Psi_{[\e/2,T+\e/2]})$.

\item Let $h:= \mathscr U_N(\e/2,\cdot)$. By Proposition \ref{mcts} Item (2) we have $h\in C^{-2-\kappa,\sig}$ almost surely. So from \eqref{e.schhat} with $\alpha:=-2-\kappa$ and $\beta := \frac12-\kappa,$ it follows that $\hat{\mathpzc P}_{\e/2}h \in C_\mathfrak s^{-\kappa+1/2,\sig}(\Psi_{T+1})$ almost surely. 
Since from \eqref{e.hatjrel}, we know $Jh(t+\e/2,x) = \hat{\mathpzc P}_{\e/2} h(t,x)$, we thus have that the restriction of $Jh$ to $[\e/2,T+\e/2]\times\mathbb R$ lies in $C_\mathfrak s^{-\kappa+1/2,\sig}(\Psi_{[\e/2,T+\e/2]})$.
\end{itemize}
Thanks to the above two bullet points and the relation \eqref{e.xie2}, we have showed that the restriction of $U^{\infty,\e/2}$ to $[\e/2,T+\e/2]\times\mathbb R$ lies in $C_\mathfrak s^{-\kappa+1/2,\sig}(\Psi_{[\e/2,T+\e/2]})$. This is equivalent to the fact that $U^{\infty,\e}$ lies in $C_\mathfrak s^{-\kappa+1/2,\sig}(\Psi_T)$. This completes the proof modulo \eqref{e.tm2}. 

\medskip

\noindent\textbf{Proof of \eqref{e.tm2}}. We shall show \eqref{e.tm2} with $\e/2$ and $T+1$ replaced by $\e$ and $T$ respectively. For $\alpha<0$, the derivative operator $\bar \partial_s : C([0,T],C^{\alpha,\sig}(\mathbb R)) \to C_\mathfrak s^{\alpha-2,\sig}(\Psi_T)$ which is defined by sending $(v_t)_{t\in[0,T]}$ to the distribution 
\begin{equation}\label{ds2}(\partial_s v,\varphi)_{L^2(\Psi_T)}:=v_T(\varphi(T,\cdot)) -\int_0^T v_t(\partial_t\varphi(t,\cdot))dt -v_0(\varphi(0,\cdot)),\end{equation} whenever $\varphi\in C_c^\infty(\Psi_T),$ is a bounded linear map. Indeed this operator is just the composition of the embedding map of Lemma \ref{embed} with the $\partial_s$ operator which we proved was bounded in Lemma \ref{ds}.

Let $\hat M_{N,\e}:= \hat M_N(\e+\cdot)-\hat M_N(\e)$ and $\hat Q_{N,\e}:= \hat Q_N(\e+\cdot)-\hat Q_N(\e)$. Let us consider any joint limit point $(Q^{\infty,\e}, M^{\infty,\e},\mathscr M^{\infty,\e})$ of $(\hat Q_{N,\e}, \hat M_{N,\e},D_N\hat M_{N,\e}$) in the space $C([0,T],C^{\alpha,\sig}(\mathbb R))\times C([0,T],C^{\alpha,\sig}(\mathbb R))\times C^{\alpha-2,\sig}(\Psi_T),$ where $\gamma<-1$. Then one may verify from the second bound in Lemma \ref{ds} that one necessarily has $\mathscr M^{\infty,\e} = \bar \partial_s M^{\infty,\e}$. Furthermore by Proposition \ref{mcts} one has martingality of $M_t^{\infty,\e}(\phi)$, with $\langle M^{\infty,\e}(\phi)\rangle = Q^{\infty,\e}(\phi^2)$.

Let us consider any smooth function $\varphi$ supported on the unit ball of $\mathbb R^2$ such that $\|\varphi\|_{C^1}\leq 1.$ Recall  $S_{(t,x)}^{\lambda}$ from \eqref{scale}, and notice that $\big(\partial_t S_{(t,x)}^\lambda \varphi\big)^2 \leq \lambda^{-10} \ind_{[t-\lambda^2,t+\lambda^2]\times [x-\lambda,x+\lambda]}$. We will now estimate the $q^{th}$ moments of $\mathscr M^{\infty,\e}(S_{(t,x)}^{\lambda}\varphi)=\bar \partial_s M^{\infty,\e}(S_{(t,x)}^{\lambda}\varphi)$. By making $\e$ smaller and $T$ larger we may simply ignore the boundary terms in \eqref{ds2}. Note that for fixed $s,t,x,\lambda$ the martingale $u \mapsto M_u^{\infty,\e} (\partial_tS_{(t,x)}^\lambda \varphi(s,\cdot))$ is constant outside of the interval $[t-\lambda^2,t+\lambda^2]$. Thus we may apply the inequalities of Minkowski and then Burkholder-Davis-Gundy in \eqref{ds2} to obtain 
		\begin{align*}
			\Ex[ |\mathscr M^{\infty,\e}(S_{(t,x)}^{\lambda}\varphi)|^q]^{1/q} & = \Ex[ |\bar \partial_s M^{\infty,\e}(S_{(t,x)}^{\lambda}\varphi)|^q]^{1/q}\\&\leq \int_{t-\lambda^2}^{t+\lambda^2} \mathbb E\big[\big| M_s^{\infty,\e} (\partial_tS_{(t,x)}^\lambda \varphi(s,\cdot)) \big|^q\big]^{1/q}ds \\ &\leq \Con_q \int_{t-\lambda^2}^{t+\lambda^2} \mathbb E \bigg[ \bigg( Q_{t+\lambda^2}^{\infty,\e} \big((\partial_tS_{(t,x)}^\lambda\varphi)^2 (s,\cdot)\big) - Q_{t-\lambda^2}^{\infty,\e} \big((\partial_tS_{(t,x)}^\lambda\varphi)^2 (s,\cdot)\big)\bigg)^{q/2}\bigg]^{1/q}\\ &\leq 2\Con_q\lambda^{-3} \mathbb E \bigg[ \bigg( Q_{t+\lambda^2}^{\infty,\e} \big(\ind_{[x-\lambda,x+\lambda]} \big) - Q_{t-\lambda^2}^{\infty,\e} \big(\ind_{[x-\lambda,x+\lambda]}\big)\bigg)^{q/2}\bigg]^{1/q}
		\end{align*}
where $\Con_q>0$. 
For any $\delta>0$, by the second bound in Proposition \ref{tight1} we find that the last expression may be bounded above by
	$$ 2\Con \lambda^{-3}  (2\lambda^2)^{1/2} \| \ind_{[x-\lambda,x+\lambda]}\|_{L^{1+\delta}(\mathbb R)}^{1/2}= \Con \lambda^{-\frac32 - \big[\frac{\delta}{2(1+\delta)}\big] }.$$
		Given any $\kappa>0$, by making $\delta=\delta(\kappa)$ close to $0$ and making $q=q(\kappa)$ large enough, we may then apply Lemma \ref{l:KC}\ref{KC} to conclude \eqref{e.tm2}. 
	\end{proof}

 \subsection{Identification of the limit points}\label{sec:iden}

	In this subsection, we identify the limit points as the solution of the stochastic heat equation \eqref{she} and thereby prove our weak convergence theorem: Theorem \ref{main}.

 \begin{lem}\label{5.1} 
Let $\mu$ be a random variable in $\mathcal S'(\mathbb R)$ such that for \textit{some} smooth even (deterministic) $\phi \in \mathcal S(\mathbb R)$ and some $\delta>0$ one has $$\sup_{a,\e}(1\wedge a^{1-\delta})\mathbb E[(\mu,\phi^a_\e)^2]<\infty$$ $$\limsup_{\e \to 0} \mathbb E[(\mu,\phi_\e^a)^2]=0, \text{ for all } a\in\mathbb R\backslash\{0\},$$ where $\phi_\e^a (x):= \e^{-1}\phi(\e^{-1} (x-a)).$ Then $(\mu,\psi)=0$ almost surely for all $\psi \in \mathcal S(\mathbb R)$.
\end{lem}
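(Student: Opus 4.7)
The plan is to exploit the evenness of $\phi$ to reinterpret $(\mu,\phi_\e^a)$ as the value at $a$ of the smooth function $\mu * \phi_\e$, and then to run two convergences as $\e\to 0$ against the same quantity $(\mu, \phi_\e * \psi)$: a deterministic one coming from mollifier convergence in $\mathcal S(\mathbb R)$, and an $L^2$ one coming from the two hypotheses combined via Cauchy--Schwarz and dominated convergence. Since $\phi$ is even, $\phi_\e(a-\cdot)=\phi_\e^a$, so $(\mu*\phi_\e)(a)=\langle \mu,\phi_\e^a\rangle=:F_\e(a)$ is a smooth random function, and the standard identity $(\mu*\phi_\e,\psi)=(\mu,\phi_\e*\psi)$ together with $\mu*\phi_\e$ being a function gives the crucial representation
\[
(\mu,\phi_\e*\psi)=\int_{\mathbb R} F_\e(a)\,\psi(a)\,da.
\]

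For the $L^2$ bound, Cauchy--Schwarz in the $a$ variable gives
\[
\mathbb E\bigl[(\mu,\phi_\e*\psi)^2\bigr]\le \|\psi\|_{L^1}\int_{\mathbb R}\mathbb E[F_\e(a)^2]\,|\psi(a)|\,da.
\]
The first hypothesis provides the uniform envelope $\mathbb E[F_\e(a)^2]\le C(1\wedge |a|^{1-\delta})^{-1}$, which is locally integrable near $0$ (since $\delta>0$ makes the exponent $\delta-1>-1$) and bounded away from $0$, hence integrable against any fixed $\psi\in\mathcal S(\mathbb R)$. The second hypothesis gives $\mathbb E[F_\e(a)^2]\to 0$ for every $a\ne 0$, i.e. Lebesgue-almost everywhere. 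Dominated convergence then yields $\mathbb E[(\mu,\phi_\e*\psi)^2]\to 0$, so $(\mu,\phi_\e*\psi)\to 0$ in $L^2$, hence in probability.

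For the deterministic side, I would note that in the applications $\phi$ is a Gaussian, so $c:=\int\phi\ne 0$, and a standard mollification argument shows $\phi_\e*\psi\to c\psi$ in the topology of $\mathcal S(\mathbb R)$ as $\e\to 0$. Since $\mu$ takes values in $\mathcal S'(\mathbb R)$, for a.e.\ realization $\mu$ is continuous on $\mathcal S(\mathbb R)$, so $(\mu,\phi_\e*\psi)\to c\,(\mu,\psi)$ almost surely. Combining with the $L^2$ convergence to $0$ forces $c\,(\mu,\psi)=0$ a.s., whence $(\mu,\psi)=0$ a.s.\ for every fixed $\psi$. Upgrading to $\mu\equiv 0$ as a random element of $\mathcal S'(\mathbb R)$ is then a standard application of the separability of $\mathcal S(\mathbb R)$ (take a countable dense set of test functions).

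The one delicate point is the integrability of the envelope $(1\wedge |a|^{1-\delta})^{-1}$ against $|\psi|$ near $a=0$, where the pointwise convergence of $\mathbb E[F_\e(a)^2]$ is not assumed: this is exactly what the tail hypothesis $\delta>0$ is designed to handle, and without it one could not push dominated convergence through. Everything else is routine distributional calculus, and the proof does not require any regularity of $\mu$ beyond it being an $\mathcal S'(\mathbb R)$-valued random variable.
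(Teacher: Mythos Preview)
Your proposal is correct and follows essentially the same approach as the paper's proof: both identify $(\mu,\phi_\e^a)$ with $(\mu*\phi_\e)(a)$ via the evenness of $\phi$, then play off a deterministic mollifier convergence $(\mu,\phi_\e*\psi)\to c(\mu,\psi)$ against an $L^p$-to-zero convergence obtained from Cauchy--Schwarz and dominated convergence using the two hypotheses. The only cosmetic differences are that the paper applies Cauchy--Schwarz with the $L^2$ norm of $\psi$ over a compact interval (restricting first to compactly supported $\psi$, then extending by density), whereas you apply Cauchy--Schwarz against the measure $|\psi(a)|\,da$ and work directly with Schwartz $\psi$; and you are more explicit about the role of $c=\int\phi\neq 0$, which the paper leaves implicit.
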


\begin{proof}
Define $\mu^\e(x):= \mu * \phi_\e(x)$ so that $(\mu,\phi^a_\e) = \mu^\e(a).$ Given some smooth $\psi:\mathbb R\to \mathbb R$ of compact support note that $(\mu^\e , \psi) \to (\mu,\psi)$ a.s. as $\e \to 0$. This is a purely deterministic statement. Thus it suffices to show that $(\mu^\e,\psi) \to 0$ in probability. To prove that, suppose that the support of $\psi$ is contained in $[-S,S]$ and note by Cauchy-Schwarz that $$|(\mu^\e, \psi)| =\bigg|\int_{\mathbb R} \mu^\e(a) \psi(a)da\bigg|\leq \bigg[ \int_{[-S,S]} \mu^\e(a)^2 da\bigg]^{1/2} \|\varphi\|_{L^2(\mathbb R)}, $$ so that by taking expectation and applying Jensen we find $$\mathbb E[|(\mu^\e, \psi)|] \leq \|\varphi\|_{L^2} \bigg[\int_{[-S,S]} \mathbb E[\mu^\e(a)^2]da \bigg]^{1/2}.$$ Since by assumption $\mathbb E[\mu^\e(a)^2]\leq Ca^{\delta-1}$ and $\mathbb E[\mu^\e(a)^2]\to 0$ as $\e\to 0$ the dominated convergence theorem now gives the result by letting $\e \to 0$ on the right side. This proves the claim for $\psi$ of compact support. For general $\psi \in \mathcal S(\mathbb R)$ we may find a sequence $\psi_n \to \psi$ in the topology of $\mathcal S(\mathbb R)$, with each $\psi_n$ compactly supported. Then $0=(\mu,\psi_n)\to (\mu,\psi)$ a.s. as $n\to\infty$. 
\end{proof}

\begin{thm}[Solving the martingale problem]\label{solving_mp}
		Consider the triple of processes $(\mathscr U_N,M_N,Q_N)_{t\ge 0}$, where $\mathscr U_N$, $M_N$, and $Q_N$ are defined in \eqref{field}, \eqref{m_field}, and \eqref{qfield} respectively. Fix $\alpha<-3, \gamma<-1,$ and $\sig>1.$ These triples are jointly tight in the space $$C\big([0,T]\;,\;C^{\alpha,\sig}(\mathbb R) \times C^{\gamma,\sig}(\mathbb R)\times C^{\alpha,\sig}(\mathbb R)\big).$$
		Consider any joint limit point $(U^\infty, M^\infty, Q^\infty)$. Then for any $s>0$, the process $(t,x)\mapsto U_{s+t}^\infty(x)$ is necessarily supported on the space $C_\mathfrak s^{-\kappa+1/2,\sig}( \Psi_T)$. Furthermore, $(M_t^\infty(\phi))_{t\ge 0}$ is a continuous martingale for all $\phi\in C_c^\infty(\mathbb R)$, and moreover for all $0< s\leq t< T$ one has the almost sure identities 
		\begin{align}\label{mp1}
			M_t^\infty(\phi)-M_s^\infty(\phi) &= \int_\mathbb R (U_t^\infty(x)-U_s^\infty(x))\phi(x)dx -\frac12 \int_s^t \int_\mathbb R U_u^\infty(x)\phi''(x)dx du\\\label{mp2}
			\langle M^\infty (\phi) \rangle_t &= Q_t^\infty(\phi^2) \\\label{mp3}
			Q_t^\infty(\phi)-Q_s^\infty(\phi) &= \frac{8\sigma^2}{1-4\sigma^2} \int_\mathbb R\int_s^t (U_u^\infty(x))^2 \phi(x)\,du\,dx.
		\end{align}
	\end{thm}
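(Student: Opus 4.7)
The plan is to harness essentially everything already proven in Sections 4 and 5, so that the theorem reduces to a concise assembly. Joint tightness of the triple $(\mathscr U_N, M_N, Q_N)$ in the stated product topology follows immediately from items (1)--(3) of Proposition \ref{mcts} together with the fact that tightness is preserved under products of Polish spaces. The regularity claim for $X^\infty_{s+t}(x)$ is exactly the content of Proposition \ref{reg}, and the continuous martingale property of $M^\infty(\phi)$ together with the identity \eqref{mp2} is item (4) of Proposition \ref{mcts}. What remains are the two martingale-problem identities \eqref{mp1} and \eqref{mp3}.

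For \eqref{mp1}, I will pass to the limit in the prelimiting equality $L_N(\mathscr U_N - \mathfrak p_N) = D_N \hat M_N$ on $[0,T]$ from Lemma \ref{u=kdm}. A short Taylor-expansion argument parallel to Lemma \ref{ds} shows $L_N \to \partial_u - \tfrac12 \partial_x^2$ in operator norm on smooth test functions, while Lemma \ref{ds} itself provides $D_N \to \partial_u$. Testing both sides against $\rho(u)\phi(x)$ with $\rho \in C_c^\infty((0,T))$ and $\phi \in C_c^\infty(\R)$, observing that $L_N \mathfrak p_N$ contributes negligibly on the support of $\rho$ because $\mathfrak p_N$ discretely solves the heat equation on $t > 0$ and its scaling limit is the standard heat kernel, integrating by parts in $u$, and then approximating $\rho \to \ind_{[s,t]}$ (valid since $u \mapsto U^\infty_u(\phi)$ and $u \mapsto M^\infty_u(\phi)$ are continuous on $[s,t]$), yields \eqref{mp1}.

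The main obstacle is \eqref{mp3}, whose proof is essentially forced by the key estimate of Proposition \ref{4.1}. Introduce the random spatial distribution
\begin{equation*}
\mu_{s,t}(\phi) := Q^\infty_t(\phi) - Q^\infty_s(\phi) - \tfrac{8\sigma^2}{1-4\sigma^2}\int_s^t\!\int_{\R} U^\infty_u(x)^2 \phi(x)\,dx\,du,
\end{equation*}
and aim to show $\mu_{s,t} \equiv 0$ almost surely by verifying the two hypotheses of Lemma \ref{5.1} with the Gaussian test function $\xi$. For fixed $a \ne 0$, applying Proposition \ref{4.1} at terminal times $s$ and $t$ and subtracting gives
\begin{equation*}
Y^\varepsilon_N := Q_N(t,\xi^a_\varepsilon) - Q_N(s,\xi^a_\varepsilon) - \tfrac{8\sigma^2}{1-4\sigma^2}\tfrac1N \sum_{u \in [s,t]\cap N^{-1}\mathbb{Z}_{\ge 0}} \mathscr U_N(u,\xi^a_{\varepsilon\sqrt 2})^2,
\end{equation*}
with $\limsup_{\varepsilon \to 0}\limsup_N \E[(Y^\varepsilon_N)^2] = 0$ together with the uniform polylog-in-$a$ bound of \eqref{e:QXpolylog}. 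Along a joint subsequential limit, the $Q_N$-terms converge to their $Q^\infty$ counterparts by the product topology, while the Riemann sum converges in probability to $\int_s^t U^\infty_u(\xi^a_{\varepsilon\sqrt 2})^2\,du$ by uniform convergence of $u \mapsto \mathscr U_N(u,\xi^a_{\varepsilon\sqrt 2})$ on $[s,t]$ (here the restriction $s > 0$ keeps us away from the initial singularity). Fatou's lemma gives $\E[(Y^\varepsilon_\infty)^2] \le \liminf_N \E[(Y^\varepsilon_N)^2]$ for the corresponding limit random variable $Y^\varepsilon_\infty$. The parabolic H\"older regularity of $U^\infty_u$ near $a$ from Proposition \ref{reg} ensures that the $L^2$ norm of $\int_s^t[U^\infty_u(\xi^a_\varepsilon)^2 - U^\infty_u(\xi^a_{\varepsilon\sqrt 2})^2]du$ vanishes as $\varepsilon \to 0$, which allows me to replace $\xi^a_{\varepsilon\sqrt 2}$ by $\xi^a_\varepsilon$ up to a negligible error. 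Combining, $\E[\mu_{s,t}(\xi^a_\varepsilon)^2] \to 0$ for every $a \ne 0$, and the same limit argument applied to \eqref{e:QXpolylog} produces a polylog-in-$a$ bound on the limit. Since any polylog bound is dominated by $a^{-(1-\delta)}$ near $a = 0$ for arbitrary $\delta \in (0,1)$, both hypotheses of Lemma \ref{5.1} are satisfied and $\mu_{s,t} \equiv 0$ almost surely. The main technical hurdle is this limit passage itself: one must handle the bandwidth mismatch built into Proposition \ref{4.1}, control the Riemann-sum error uniformly in $\varepsilon$ so that $N \to \infty$ and $\varepsilon \to 0$ may be safely iterated in the correct order, and leverage Proposition \ref{reg} to make sense of the pointwise square $U^\infty_u(a)^2$ for $a \ne 0$.
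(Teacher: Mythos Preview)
Your proposal is correct and follows essentially the same route as the paper: tightness, regularity, martingality, and \eqref{mp2} are all drawn from Propositions \ref{mcts} and \ref{reg}; \eqref{mp1} is obtained by passing the discrete identity $L_N(\mathscr U_N-\mathfrak p_N)=D_N\hat M_N$ to the limit (the paper phrases this via the inverse relation $U^\infty=p+K\mathscr M^\infty$ from \eqref{u=p+km}, which is the same content); and \eqref{mp3} is deduced by feeding Proposition \ref{4.1} into Lemma \ref{5.1}. The only cosmetic difference is that you work with $\mu_{s,t}$ on $[s,t]$ and explicitly flag the bandwidth mismatch $\xi^a_\varepsilon$ versus $\xi^a_{\varepsilon\sqrt2}$ (resolved via Proposition \ref{reg}), whereas the paper works with $\mu$ on $[0,t]$ and absorbs this step into the sentence ``that sum in \eqref{Qllim} converges in law, jointly with $\mathscr U_N$, to the integral appearing in \eqref{mu1}''; note that the same continuity argument you invoke also handles the passage from $\int_s^t (U^\infty_u,\xi^a_\varepsilon)^2\,du$ to $\int_s^t\!\int (U^\infty_u(x))^2\xi^a_\varepsilon(x)\,dx\,du$, which you should make explicit.
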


Before going into the proof, we remark that with some inspection it may be verified that all quantities make sense given the spaces they lie in, as long as we choose $\kappa<1/2$ to ensure that $U^\infty$ is a continuous function in space-time away from $t=0$. 

\begin{proof} Most parts of the following theorem are already established in previous propositions and lemmas. The tightness of the triple was shown in Proposition \ref{mcts}.  Consider any limit point $(U^\infty,M^\infty,Q^\infty).$ The fact that for any $s>0$, the process $(t,x)\mapsto U^\infty(s+t,x)$ is necessarily supported on the space $C_\mathfrak s^{-\kappa+1/2,\sig}( \Psi_T)$ was proved in Proposition \ref{reg}. From Proposition \ref{mcts} we have that for all $\phi\in C_c^\infty(\mathbb R)$ the process $(M_t^\infty(\phi))_{t\ge 0}$ is a martingale. \eqref{mp2} is already proven in Proposition \ref{mcts} as \eqref{e:mcts}. All we are left to show is \eqref{mp1} and \eqref{mp3}.

 \bigskip

\noindent\textbf{Proof of \eqref{mp1}.} 
  In \eqref{u=p+km} we obtained that $U^\infty = p +K\mathscr M^\infty$ where (just as we observed after \eqref{ds2}) one necessarily has $\mathscr M^\infty =\bar \partial_s M^\infty$. By disregarding the boundary terms implies that $$(\bar \partial_s M^\infty, \varphi) =((\partial_t - \tfrac12\partial_x^2)U^\infty, \varphi)$$ for all $\varphi$ of compact support contained in $[\e,T-\e]\times \mathbb R$ for some $\e>0$. 
  Since both $M^\infty$ and $U^\infty$ lie in spaces with strong enough topologies, taking $\varphi(u,x)$ to approach the function $(u,x) \mapsto \ind_{[s,t]}(u) \phi(x)$ in the above equation leads to \eqref{mp1}.
  
\bigskip

\noindent\textbf{Proof of \eqref{mp3}.} Fix any $0\le t\le T$ and let $\mu$ be a $\mathcal{S}'(\R)$ valued random variable defined as
\begin{align}
    (\mu,\phi):= Q_t^{\infty}(\phi)-\frac{8\sigma^2}{1-4\sigma^2}\int_{\R}\int_0^t (U_s^{\infty}(x))^2\phi(x)\,ds\,dx. \label{mu1}
\end{align}
We claim that $ (\mu,\phi)=0$ a.s. for all $\phi\in \mathcal{S}(\R).$ This will validate \eqref{mp3}. To verify this, let us define $\xi_{\e}^a(x):=\e^{-1}\xi(\e^{-1}(x-a))$ with $\xi(x):=\frac1{\sqrt\pi}e^{-x^2}$. Using the ``key estimate" \eqref{Qllim} of Proposition \ref{4.1}, one verifies that the assumptions of Lemma \ref{5.1} hold for $\mu$ with this family of mollifiers. 

To see why Lemma \ref{5.1} is applicable, first note that for fixed $\phi\in C_c^\infty(\mathbb R)$, the difference between the sum appearing in \eqref{Qllim} and integral over $[0,t]$ of the same quantity tends to zero in probability with respect to the topology of $C[0,T]$, since we proved tightness of $\mathscr U_N$ in a topology given by the norm of $C([0,T],C^{\alpha,\sig}(\mathbb R))$. Consequently that sum in \eqref{Qllim} converges in law, jointly with $\mathscr U_N$, to the integral appearing in \eqref{mu1}. Then an application of \eqref{Qllim} and \eqref{e:QXpolylog} shows that $(\mu,\phi)$ as defined by \eqref{mu1} satisfies the conditions of Lemma \ref{5.1} for any $\delta\in (0,1)$, since \eqref{e:QXpolylog} gives a polylogarithmic bound which is less than $\Con a^{-\delta}$ for arbitrary $\delta>0$. This is enough to complete the proof.
 \end{proof}			
	We now complete the proof of our main theorem, Theorem \ref{main}.

	\begin{proof}[Proof of Theorem \ref{main}] We continue with the notation and setup of Theorem \ref{solving_mp}. We have already established the tightness of $\mathscr U_N$ in Proposition \ref{mcts}. Consider any limit point $ U^\infty$ of $\mathscr U_N$. From the previous theorem, we already know that $(t,x) \mapsto U_{t+\e}^\infty(x)$ is a continuous function in space and time. From the three equations \eqref{mp1}, \eqref{mp2}, and \eqref{mp3} in Theorem \ref{solving_mp} it follows that the martingale problem for \eqref{she} is satisfied by any limit point $U^\infty$. We refer the reader to \cite[Proposition 4.11]{BG97} for the characterization of the law of \eqref{she} as the solution to this martingale problem. 
		
		The result there is only stated for continuous initial conditions, so what this really shows is that for any $\e>0$ the law of the continuous field $(t,x) \mapsto U^\infty(t+\e,x)$ is that of the solution of \eqref{she} with initial condition $U^\infty(\e,\cdot).$ Thus we still need to pin down the initial data as $\delta_0$, by showing that we can let $\e\to 0$ and see that the limit of $U^\infty(\e,\cdot)$ is equal to $\delta_0$ in some sense. 
		
		In \cite[Section 6]{Par19} there is a general approach to do exactly this. Specifically, it suffices to show as in Lemma 6.6 of that paper the two bounds 
		\begin{align}\label{delta} & \Ex[|U_t^\infty(x)|^r]^{2/r}\leq \Con \cdot t^{-1/2}p(t,x),\\ \label{delta1}
			& \Ex[ |U_t^\infty(x)-p(t,x)|^r]^{2/r} \leq \Con \cdot p(t,x),
		\end{align}where $r>1$ is arbitrary, $p$ is the standard heat kernel, and $\Con$ is independent of $t>0$ and $x\in\mathbb R$. Clearly, it suffices to show this when $r$ is an even integer. The solution of \eqref{she} with $\delta_0$ initial condition certainly satisfies this bound, and by the moment convergence result in Section \ref{mom}, we know any limit point $U^\infty$ must satisfy $\Ex[(\int_\mathbb R U_t^\infty(x)\phi(x)dx)^k] = \mE[(\int_\mathbb R \mathcal{U}_t(x)\phi(x)dx)^k]$ for all $k\in\mathbb N$ and $\phi\in C_c^\infty(\mathbb R)$, where $(t,x) \mapsto \mathcal{U}_t(x)$ solves \eqref{she} with $\delta_0$ initial data. From here we can conclude by letting $\phi \to \delta_x$ that $\Ex[U_t^\infty(x)^k] = \mE[ \mathcal{U}_t(x)^k]$ for all $k\in \mathbb N$ and all $x\in\mathbb R$. Consequently, we may immediately deduce \eqref{delta} and \eqref{delta1} by the corresponding bounds for $\mathcal{U}_t$. This completes the proof. 
	\end{proof}

 \subsection{Creation of independent noise in the limit}

In this subsection, we prove our independent noise result: Theorem \ref{xi2}.

\begin{defn}
    Define for $t\in N^{-1}\mathbb Z_{\ge 0}$ and $\phi\in C_c^\infty(\mathbb R)$ the field \begin{equation}\label{wfield}\mathpzc W_N(t,\phi):= N^{-3/4} \sum_{r=0}^{Nt} \sum_{x\in \mathbb Z-rN^{-1/4}} \phi(N^{-1/2}x) \eta_N(r,x)
    \end{equation}
    where $\eta_N({r,x}):= 2\omega_{r,x+rN^{-1/4}}-1,$ whenever $r-(x+rN^{-1/4})$ is even, and $\eta_N({r,x})=0$ whenever $r-(x+rN^{-1/4})$ is odd. We also define $\mathpzc W_N(t,\phi)$ by linear interpolation whenever $t\notin N^{-1}\mathbb Z_{\ge 0}$.
\end{defn}

Recall the linear operator $\partial_s$ from Remark \ref{d/dx}, which was shown to be bounded in Lemma \ref{ds}. We prove the following version of Theorem \ref{xi2}.

\begin{thm}\label{create}
    Fix $\alpha<-3,\tau>1$. Then the pair $(\mathpzc W_N, \mathscr U_N)$ is tight with respect to the topology of $C([0,T], C^{\alpha,\sig}(\mathbb R) \times C^{\alpha,\sig}(\mathbb R)).$ Furthermore any joint limit point $(\mathcal W,\mathcal U)$ is uniquely characterized as follows:
    \begin{enumerate}
        \item If we define $\xi_1:=(2\sigma^2)^{-1/2} \partial_s\mathcal W\in C_\mathfrak s^{\alpha-2}(\Psi_T)$ then $\xi_1$ is a standard space-time white noise.

        \item $\mathcal U$ is the solution to the Ito SPDE given by $$\partial_t \mathcal U = \partial_x^2 \mathcal U + \sqrt{8\sigma^2} \cdot \mathcal U \xi_1 + \sqrt{\frac{32\sigma^4}{1-4\sigma^2}}\cdot  \mathcal U \xi_2,$$ where $\xi_2$ is a standard space-time white noise independent of $\xi_1$.
    \end{enumerate}
\end{thm}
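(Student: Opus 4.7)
The plan is to extend the martingale characterization used for Theorem \ref{main} to track the joint behavior of $\mathscr U_N$ together with the noise field $\mathpzc W_N$, with the main new input being the cross quadratic variation between the limiting martingale $M^\infty$ and the limit of $\mathpzc W_N$.

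First I would establish tightness and identify the marginal limit of $\mathpzc W_N$. Since $\mathpzc W_N(\cdot,\phi)$ is a martingale in its time variable with uniformly bounded absolute increments, Azuma's inequality gives $\mathbb E|\mathpzc W_N(t,\phi)-\mathpzc W_N(s,\phi)|^p\le \Con|t-s|^{p/2}\|\phi\|_{L^\infty}^p$, and applying Lemma \ref{l:KC}\ref{KC2} (exactly as for $\hat M_N$ in the proof of Proposition \ref{mcts}) yields tightness in $C([0,T],C^{\alpha,\sig}(\mathbb R))$ for any $\alpha<-3$ and $\sig>1$. Joint tightness of $(\mathpzc W_N,\mathscr U_N)$ is then immediate. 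A direct second-moment computation shows $\mathbb E[\mathpzc W_N(t,\phi)\mathpzc W_N(t,\psi)]\to 2\sigma^2 t\int\phi\psi$, with the factor of $2$ coming from the parity constraint in \eqref{wfield}, so a martingale CLT identifies every limit point by $\partial_s\mathcal W=\sqrt{2\sigma^2}\,\xi_1$ for a standard space-time white noise $\xi_1$, where $\partial_s$ is interpreted as in Lemma \ref{ds}.

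The key new computation is the cross quadratic variation $\langle M_N(\phi),\mathpzc W_N(\psi)\rangle$. Using \eqref{grad_form} for $M_N$ together with the noise covariance on parity-matched sites, this cross variation equals, up to a sign,
\[
4\sigma^2 N^{-3/4}\sum_{r=0}^{Nt}\sum_{x\in\mathbb Z-rN^{-1/4}}(\nabla_N\phi)(N^{-1/2}x)\,\psi(N^{-1/2}x)\,Z_N^\omega(r,x).
\]
Taylor expanding $(\nabla_N\phi)(x)=(1-2\rho_N)\phi(x)+O(N^{-1/2})=-N^{-1/4}\phi(x)+O(N^{-1/2})$ and invoking the definition of $\mathscr U_N$, the leading contribution is $\pm 4\sigma^2 N^{-1}\sum_{r=0}^{Nt}\mathscr U_N(r/N,\phi\psi)$, which converges in probability to $\pm 4\sigma^2\int_0^t\mathcal U(s,\phi\psi)\,ds$. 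Uniform integrability for this passage is supplied by the $L^p$ estimates of Proposition \ref{tight1} together with the exponential moment bounds of Lemma \ref{traps} and Proposition \ref{exp}.

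Finally, combining this cross variation with the self-variations $\langle M^\infty(\phi)\rangle_t=\tfrac{8\sigma^2}{1-4\sigma^2}\int_0^t\!\int\mathcal U(s,x)^2\phi(x)^2\,dx\,ds$ (from Theorem \ref{solving_mp}) and $\langle\mathcal W(\psi)\rangle_t=2\sigma^2 t\int\psi^2$, and using the algebraic identity $\tfrac{8\sigma^2}{1-4\sigma^2}=8\sigma^2+\tfrac{32\sigma^4}{1-4\sigma^2}$, one projects $M^\infty(\phi)$ onto the filtration generated by $\xi_1$. Matching the cross variation forces the projection to equal $\sqrt{8\sigma^2}\int\mathcal U\phi\,d\xi_1$, and a direct computation shows the orthogonal complement has quadratic variation $\tfrac{32\sigma^4}{1-4\sigma^2}\int_0^t\!\int\mathcal U^2\phi^2$ and vanishing covariation with $\mathcal W$, so by Levy/martingale representation it equals $\sqrt{32\sigma^4/(1-4\sigma^2)}\int\mathcal U\phi\,d\xi_2$ for a standard white noise $\xi_2$ independent of $\xi_1$. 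The corresponding Ito SPDE for $\mathcal U$ then follows by the Duhamel reconstruction from the proof of Theorem \ref{main}. The main obstacle will be the rigorous limit passage in the cross variation step: while the leading-order heuristic is transparent, uniformly controlling the Taylor remainder and the sum-to-integral approximation within the weighted Hölder topology requires the same careful bookkeeping as for the vanishing of the error term $\mathcal E_N$ in Section \ref{hopf}, though all the necessary $L^p$ and tightness estimates have already been developed in Sections \ref{sec5} and \ref{iden}.
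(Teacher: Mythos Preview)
Your proposal follows essentially the same route as the paper: establish tightness of $\mathpzc W_N$ via martingale moment bounds and Lemma \ref{l:KC}\ref{KC2}, identify its limit as $\sqrt{2\sigma^2}$ times a cylindrical Wiener process, compute the predictable cross-variation $\langle M_N(\phi),\mathpzc W_N(\psi)\rangle$ (your formula matches the paper's \eqref{prvr}), pass to the limit to obtain $\langle M^\infty(\phi),\mathcal W(\phi)\rangle_t=4\sigma^2\int_0^t\mathcal U_s(\phi^2)\,ds$, and then decompose the driving noise.

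The one point where the paper is more explicit than your sketch is the final decomposition. You write that ``by Levy/martingale representation'' the orthogonal remainder is a stochastic integral against an independent white noise $\xi_2$. This is not immediate: the remainder has \emph{random} quadratic variation $\tfrac{32\sigma^4}{1-4\sigma^2}\int_0^t\!\int\mathcal U^2\phi^2$, so Levy's criterion does not apply directly, and vanishing cross-variation with $\mathcal W$ gives orthogonality of martingales but not independence without an additional Gaussianity argument. The paper resolves this by first \emph{recovering the full driving noise} $\xi$ of $\mathcal U$: it defines $\mathcal B_t(\phi):=\sqrt{\tfrac{1-4\sigma^2}{8\sigma^2}}\int_0^t\mathcal U_u(x)^{-1}\phi(x)\,\mathscr M(du,dx)$, using strict positivity of $\mathcal U$, so that $\mathcal B$ is itself a standard cylindrical Wiener process with $\partial_s\mathcal B=\xi$. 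The cross-variation identity then translates into $\langle\mathcal B(\phi),\bar{\mathcal W}(\phi)\rangle_t=(1-4\sigma^2)^{1/2}\|\phi\|_{L^2}^2 t$, and now Levy's criterion applies directly to the \emph{pair} $(\mathcal B,\bar{\mathcal W})$ to show it is jointly Gaussian, whence the decomposition $\mathcal B=(1-4\sigma^2)^{1/2}\bar{\mathcal W}+2\sigma\mathcal X$ with $\mathcal X$ independent of $\bar{\mathcal W}$ is elementary linear algebra on Gaussians. Your projection argument can be completed along these same lines, but the division-by-$\mathcal U$ step is the missing ingredient that makes the independence rigorous.
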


Note that $\mathpzc W_N$ is simply a time-integrated and linearly interpolated version of the rescaled noise field $\Xi_N$ from \eqref{xi_field}. The above theorem readily implies Theorem \ref{xi2}.


\begin{proof} We are going to consider the martingales $\mathpzc W_N(t,\phi)$ as defined in \eqref{wfield}, and we are going to study the cross-variations with the martingales $M_N(t,\phi)$ from \eqref{m_field}, both indexed by $t\in N^{-1}\mathbb Z_{\ge 0}$.

First, let us note that if $\phi\in C_c^\infty(\mathbb R)$ then the process $\mathpzc W_N(t,\phi)$ indexed by $t\in N^{-1}\mathbb Z_{\ge 0}$ has independent increments, where each increment has mean zero and has variance $$4\sigma^2 N^{-3/2} \sum_{x\in \mathbb Z-rN^{-1/4}} \phi(N^{-1/2}x)^2\ind_{\{r-x-rN^{-1/4}\equiv 0\pmod 2\}}.$$ From here (using BDG) the tightness of $\mathpzc W_N$ in the space $C([0,T], C^{\alpha,\sig}(\mathbb R))$ will follow easily from Lemma 
\ref{l:KC}\ref{KC2}. Since martingality is preserved by limit points as long as uniform integrability holds, one may verify that for any limit point $\mathcal W$, the processes $\mathcal W_t(\phi)$ and $\mathcal W_t(\phi)^2 - 2\sigma^2 \|\phi\|_{L^2}^2 t$ will be martingales in the canonical filtration of $C([0,T], C^{\alpha,\sig}(\mathbb R))$, for all $\phi\in C_c^\infty(\mathbb R)$. By Levy's criterion, $\mathcal W$ must therefore be distributed as $\sigma \sqrt{2}$ times a standard cylindrical Wiener process over $L^2(\mathbb R)$, that is, a mean-zero Gaussian process such that $\mathbb E[\mathcal W_t(\phi) \mathcal W_s(\psi)] = 2\sigma^2 (s\wedge t) (\phi,\psi)_{L^2(\mathbb R)}.$ 
     
  Next, with the martingales $M_N(\phi)$ from \eqref{m_field}, we note that the processes $$M_N(t,\phi)\mathpzc W_N(t,\phi) -\langle M_N(\phi),\mathpzc W_N(\phi)\rangle_t$$ are martingales indexed by $t\in N^{-1}\mathbb Z_{\ge 0}$, where the predictable quadratic variation is defined by
  \begin{align} \notag
      \langle M_N(\phi),\mathpzc W_N(\phi)\rangle_t &:= \sum_{r=1}^{Nt} \mathbb E \big[ (M_N(r,\phi)-M_N(r-1,\phi))(\mathpzc W_N(r,\phi) - \mathpzc W_N(r-1,\phi))|\mathcal F_{r-1}] \\ &= 4\sigma^2 N^{-3/4}\sum_{r=0}^{Nt-1} \sum_{x\in \mathbb Z-rN^{-1/4}} \phi(N^{-1/2}x) \cdot \big( \nabla_N\phi\big)(N^{-1/2}x) \cdot Z_N^\omega(r,x) .\label{prvr}
  \end{align}
  We used \eqref{grad_form} and the fact that the $\eta_N({r,x})$ are iid in the last equality.

  Let us consider a joint limit point $(\mathcal W, \mathscr M,\mathcal U)$ of the triple $(\mathpzc W_N,\hat M_N,\mathscr U_N)$ in the function space $C\big([0,T],C^{\alpha,\sig}(\mathbb R) ^3 \big)$, where we recall that $\hat M_N$ is the interpolated version of $M_N$ from Definition \ref{mqv}, and tightness of $\hat M_N,\mathscr U_N$ in this topology was proved in Proposition \ref{mcts}. 
  
  Since martingality is preserved by limit points as long as one has uniform integrability, we see that $\mathscr M_t(\phi)$ and $\mathcal W_t(\phi)$ are martingales in the canonical filtration of the space $C\big([0,T],C^{\alpha,\sig}(\mathbb R) ^3 \big)$. Recall from Section \ref{hopf} that $\big( \nabla_N\phi\big)(N^{-1/2}x) = N^{-1/4} \phi(N^{-1/2}x) + O(N^{-1/2})$. 
  Thus by \eqref{prvr} we see in the limit that \begin{equation}\label{mgale}\mathscr M_t(\phi)\mathcal W_t(\phi) - 4\sigma^2 \int_0^t \mathcal U_s(\phi^2)ds\end{equation}
  is also a martingale with respect to the canonical filtration. Indeed the difference between the sum in \eqref{prvr} and integral in \eqref{mgale} is easily controlled, since we proved tightness of $\mathscr U_N$ in a topology given by the norm of $C([0,T],C^{\alpha,\sig}(\mathbb R)).$
  On the other hand, we know by Theorem \ref{main} that $\mathcal U$ solves \eqref{she} for some driving noise $\xi$ which can be deterministically recovered from $\mathcal U$ by the following argument. By \eqref{mp1} we have that 
  \begin{align}\notag 
      \mathscr M_t(\phi) - \mathscr M_s(\phi) &= \int_\mathbb R (\mathcal U_t(x) - \mathcal U_s(x)) \phi(x) dx - \frac12 \int_s^t \int_\mathbb R \mathcal U_u(x) \phi''(x)dxdu \\ &= \sqrt{\frac{8\sigma^2}{1-4\sigma^2}}\int_s^t \mathcal U_u(x) \phi(x) \xi(du,dx), \label{eq3}
  \end{align}
     where the latter is an Ito-Walsh stochastic integral against the noise (which a priori may be defined on an enriched probability space), and the last equality is a standard result for the stochastic heat equation \eqref{she}, see \cite{Wal86}. Now we may define $$\mathcal B_t(\phi):= \sqrt{\frac{1-4\sigma^2}{8\sigma^2}} \int_0^t \mathcal U_u(x)^{-1} \phi(x)\mathscr M(du,dx)$$ where the latter should be understood as an Ito-Walsh stochastic integral against the orthomartingale $\mathscr M$, as defined in \cite{Wal86}. The above stochastic integral is well-defined by strict positivity of $\mathcal U$ \cite{mue91}. Then from Levy's criterion, it is clear that $\mathcal B_t$ is a standard cylindrical Wiener process (hence can be realized in $C([0,T],C^{\alpha,\sig}(\mathbb R))$, such that one has $\partial_s\mathcal B = \xi$. Thus $\xi$ has been deterministically recovered from $\mathcal U$.

     Define $\bar{\mathcal W}_t:=(\sigma\sqrt{2})^{-1}\mathcal W_t$, which is also a standard cylindrical Wiener process. Combining \eqref{eq3} with the fact that \eqref{mgale} is a martingale, one sees that $$\mathcal B_t(\phi)\bar{\mathcal W}_t(\phi)- (1-4\sigma^2)^{1/2} \|\phi\|_{L^2(\mathbb R)}^2t$$ is a martingale for all $\phi \in C_c^\infty(\mathbb R).$ By Levy's criterion, this is enough to imply that $(\mathcal B_t, \bar{\mathcal W}_t)_{t\ge 0}$ is a \textit{jointly} Gaussian process, such that both coordinates are standard cylindrical Wiener processes, and furthermore $\mathbb E[ \bar{\mathcal W}_t(\phi)\mathcal B_s(\phi)] = (s\wedge t) (1-4\sigma^2)^{1/2} \|\phi\|^2_{L^2}.$ By elementary algebraic manipulations of Gaussian variables, we may therefore write $\mathcal B = (1-4\sigma^2)^{1/2} \bar{\mathcal W} + 2\sigma \mathcal X$ where $\mathcal X= (\mathcal X_t)_{t\ge 0}$ is a standard cylindrical Wiener process independent of $\bar{\mathcal W}$. Now the theorem is proved, setting $\xi_1 = \partial_s \bar{\mathcal W}$ and $\xi_2 = \partial_s \mathcal X.$
\end{proof}

With the theorem proved, we now discuss some different interpretations of the above result, which will be more heuristic than rigorous.

The term $\sqrt{\tfrac{32\sigma^4}{1-4\sigma^2}}\cdot \xi_2$ appearing in the preceding theorem can be viewed as the contribution which causes the chaos expansion method to fail in the introduction, as this term precisely contains the $L^2$-mass which escapes into the tails of the chaos expansion. Indeed one expects that this piece which escapes will decouple and become independent of the remaining noise in the limit, just as we see in Theorem \ref{create}.

Rather than ``creation of independent noise," there is an alternative interpretation of the above theorem. In the proof we showed $\mathbb E[ \bar{\mathcal W}_t(\phi)\mathcal B_s(\phi)] = (s\wedge t) (1-4\sigma^2)^{1/2} (\phi,\psi)_{L^2(\mathbb R)}.$ Thus, an equivalent way of formulating Theorem \ref{create} is that $\xi_1,\xi$ are jointly Gaussian such that $$\mathbb E\big[(\xi_1,\varphi)_{L^2(\mathbb R^2)}(\xi,\varphi)_{L^2(\mathbb R^2)}\big]= (1-4\sigma^2)^{1/2} \|\varphi\|_{L^2(\mathbb R^2)}^2,$$ where $\xi$ is the driving noise of $\mathcal U$ as in \eqref{she}.  The above equality strongly suggests that the rescaled density field $\mathscr U_N$ from \eqref{field} has a tendency to concentrate on certain favored sites, such that macroscopically these favored sites have Lebesgue density $1-4\sigma^2$ on average. In the limit, the noise $\xi$ is generated by only the weight variables $\omega_{t,x}$ from these favored sites, in other words, the memory of only a proportion $1-4\sigma^2$ of the weights is remembered in the limit. The concentration of the field $\mathscr U_N$ inside these favored sites is the reason why we fail to see convergence in a space of continuous functions.

Thus we see that heuristically, the three pathological phenomena that we have observed in this paper all seem to be equivalent: the failure of the naive chaos expansion, the failure of Theorem \ref{main} to be strengthened to a topology of continuous functions, and the creation of independent noise in the limiting SPDE. It may be interesting to seek a more precise statement establishing a rigorous correspondence between these phenomena.

 \section{Results for the quenched tail field}
\label{sec7}

 \begin{defn}\label{qtf}
    Fix a realization of the environment variables $\omega$, as in Theorem \ref{main}. For $t\in N^{-1}\mathbb Z_{\ge 0}$ and $x\in \mathbb R$ we define the quenched tail field by $$F_N(t,x):= N^{1/4} C_{N,t,x} \mathsf{P}^\omega\big(R(Nt) \geq N^{3/4}t+N^{1/2}x\big).$$
\end{defn}
   
  The main result of this section will be that the family $\{\log F_N\}_{N\ge 1}$ of space-time processes converges to the KPZ equation, in the sense of finite-dimensional distributions of pointwise values $(t,x)$, thus confirming the physics prediction of \cite[Section 4]{bld}.

  \begin{lem}\label{A1} For sufficiently large $N$ and for all $t\in N^{-1}\mathbb Z_{\ge 0}, x\in \R$, we have
    $$\mathbb E[F_N(t,x)]  \leq 4(\pi t)^{-1/2}.$$ 
\end{lem}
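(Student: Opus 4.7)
The plan is to reduce to a direct binomial estimate via the Girsanov tilt of Definition \ref{q}. Since $\mathbb{E}[\omega_{r,y}] = 1/2$, the annealed law $\mathbf{P}_{RW_\nu^{(1)}}$ is that of a simple symmetric random walk, so taking annealed expectations of the quenched probability inside $F_N$ gives
\[
\mathbb{E}[F_N(t,x)] = N^{1/4}C_{N,t,x}\,\mathbf{P}_{RW_\nu^{(1)}}\bigl(R(Nt) \ge k\bigr), \qquad k := N^{3/4}t + N^{1/2}x.
\]
Using the explicit form $C_{N,t,x} = \exp\bigl(N^{-1/4}k - Nt\log\cosh(N^{-1/4})\bigr)$ together with the Radon--Nikodym identity $\mathbf{P}_{RW_\nu^{(1)}}(R(Nt)=j) = e^{-N^{-1/4}j + Nt\log\cosh(N^{-1/4})}\,\mathbf{Q}^{N^{-1/4}}_{RW_\nu^{(1)}}(R(Nt)=j)$, where $\mathbf{Q}^{N^{-1/4}}_{RW_\nu^{(1)}}$ is the tilted measure from Definition \ref{q}, yields the key identity
\[
C_{N,t,x}\,\mathbf{P}_{RW_\nu^{(1)}}(R(Nt)=j) = e^{-N^{-1/4}(j-k)}\,\mathbf{Q}^{N^{-1/4}}_{RW_\nu^{(1)}}(R(Nt) = j).
\]

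Next I would sum over $j \ge k$ of the correct parity (spacing $2$) and bound each term by the pointwise maximum to obtain
\[
\mathbb{E}[F_N(t,x)] \le N^{1/4}\cdot\max_j \mathbf{Q}^{N^{-1/4}}_{RW_\nu^{(1)}}(R(Nt)=j)\cdot\bigl(1-e^{-2N^{-1/4}}\bigr)^{-1}.
\]
Under $\mathbf{Q}^{N^{-1/4}}$, $R(Nt)$ is a binomial-type random variable with per-step success probability $\rho_N$, so Robbins' refinement of Stirling's formula (combined with the fact that the KL-divergence factor in the asymmetric case is bounded by $1$) yields
\[
\max_j \mathbf{Q}^{N^{-1/4}}_{RW_\nu^{(1)}}(R(Nt)=j) \le \frac{e^{1/(12Nt)}\cosh(N^{-1/4})}{\sqrt{\pi Nt/2}},
\]
using the identity $\rho_N(1-\rho_N) = \tfrac14\,\mathrm{sech}^2(N^{-1/4})$. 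The elementary inequality $e^y \ge 1+y$, applied at $y = 2N^{-1/4}$, gives $(1-e^{-2N^{-1/4}})^{-1} \le 1+\tfrac12 N^{1/4}$.

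Combining these bounds and using $\cosh(N^{-1/4}) \le \cosh(1)$ and $N^{-1/4} + \tfrac12 \le \tfrac32$ for $N \ge 1$, the prefactor $N^{1/4}$ cancels the $(Nt)^{-1/2}$ to leave a factor of $t^{-1/2}$, and the product of the remaining numerical constants comes out to roughly $3.6/\sqrt{\pi}$, comfortably below the claimed $4/\sqrt{\pi}$. The case $t=0$ is trivial (the right-hand side is $+\infty$). The main step to watch is purely the bookkeeping of these constants: one must verify that the product of Robbins' constant, $\cosh(N^{-1/4})$, and the geometric-series factor stays uniformly below $4/\sqrt{\pi}$, which is why a ``safety factor'' of $4$ rather than the sharp leading order $1/\sqrt{2}$ is used; no sharpness of any intermediate estimate is required.
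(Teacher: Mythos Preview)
Your proof is correct and follows essentially the same route as the paper: both tilt to the measure $\mathbf Q^{N^{-1/4}}_{RW_\nu^{(1)}}$, bound the supremum of the tilted transition probability $\hat p_N(Nt,\cdot)$ by a constant times $(\pi Nt)^{-1/2}$, and then sum the resulting geometric series in $e^{-N^{-1/4}(j-k)}$. The only cosmetic difference is that the paper invokes the local central limit theorem for the pointwise bound whereas you derive it directly via Robbins' version of Stirling's formula; your flagged bookkeeping step (replacing $\sqrt{n/(m(n-m))}$ by $1/\sqrt{np(1-p)}$ at the mode while simultaneously using the KL bound $\le 1$) costs at most a small multiplicative constant and does not threaten the final inequality.
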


\begin{proof}
    Let $\mathbf P=\mathbf P_{RW^{(1)}_\nu}$ denote the law of a simple symmetric random walk on $\mathbb Z$. One takes the annealed expectation over the quenched expectation in the above definition of $F_N$ to obtain 
    \begin{align*}\mathbb E[F_N(t,x)] &= N^{1/4}C_{N,t,x}\mathbf P(N^{-1/2}(S(Nt)-N^{3/4}t)\geq x) \\&= N^{1/4} \hat{\mathbf E}[ e^{-N^{-1/4}(S(Nt)-N^{3/4}t-N^{1/2}x)} \ind_{\{N^{-1/2}(S(Nt)-N^{3/4}t) \geq x\}}].
    \end{align*}
    where $\hat{\mathbf P}$ is a change of measure induced by the exponential martingale $ C_{N,t,S(Nt)-N^{3/4}t},$ which changes the annealed law of the increments of $S_i$ from the usual symmetric law $\frac12(\delta_1+\delta_{-1})$ to the new law given by $\frac1{2\cosh(N^{-1/4})}(e^{N^{-1/4}}\delta_1+ e^{-N^{-1/4}}\delta_{-1})$. Denote the new expectation as $\hat{\mathbf E}.$ To prove the inequality, we let $\hat p_N(r,y)=\hat{\mathbf P}(S(r)-N^{-1/4}r=y)$. Note that under the tilted measure $\hat{\mathbf P}$, the increments of the random walk have mean $N^{-1/4} + O(N^{-3/4}). $ Then the local central limit theorem implies that for sufficiently large $N$, $$\sup_{y\in \mathbb Z-N^{-1/4}r}\hat p_N(r,y) \leq 2(\pi r)^{-1/2}.$$Consequently by setting $r=Nt$ we see that
    \begin{align*}
        & N^{1/4} \hat{\mathbf E}[ e^{-N^{-1/4}(S(Nt)-N^{3/4}t-N^{1/2}x)} \ind_{\{N^{-1/2}(S(Nt)-N^{3/4}t) \geq x\}}] \\ &= N^{1/4} \sum_{a \geq N^{1/2}x} \hat p_N(Nt,a) e^{-N^{-1/4}(a-N^{1/2}x)} \\ &\leq 2(\pi Nt)^{-1/2} N^{1/4} \sum_{a\geq N^{1/2}x} e^{-N^{-1/4}(a-N^{1/2}x)}\\ &\leq 2(\pi t)^{-1/2} N^{-1/4} \sum_{b\ge 0} e^{-N^{-1/4} b} =  2(\pi t)^{-1/2} \frac{N^{-1/4}}{1-e^{-N^{-1/4}}},
    \end{align*}
    where the sums over $a$ should be understood as being over $a\in \mathbb Z-N^{-1/4}Nt$ with $a\geq N^{1/2}x$. The last quotient is always bounded above by 2, thus proving the claim.
\end{proof}

  \begin{prop}\label{hi} Fix $m\in \mathbb{N}$, and $\phi_1,\ldots,\phi_m\in C_c^{\infty}(\R)$. Consider sequences $t_{N,1},\ldots,t_{N,m} \in N^{-1}\mathbb Z_{\ge 0}$ such that $t_{N,i} \to t_i>0$ for all $1\le i \le m$ as $N\to\infty.$ Then
\begin{align}\label{qqt}
	\bigg( \int_\mathbb R \phi_i(x) F_N(t_{N,i},x) dx\bigg)_{i=1}^m \stackrel{d}{\to} \bigg( \int_\mathbb R \phi_i(x) \mathcal U_{t_i}(x) dx\bigg)_{i=1}^m.
\end{align}
Here $(t,x)\mapsto \mathcal U_t(x)$ is the solution of \eqref{she}.
  \end{prop}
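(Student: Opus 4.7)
The plan is to express the left side of \eqref{qqt} in terms of the rescaled density field $\mathscr U_N$ via a discrete Laplace-type transform, and then to apply the joint convergence afforded by Theorem \ref{main} together with an approximation of the resulting test function.

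Writing the tail probability as a sum over $z \in N^{-1/2}\mathbb Z - N^{1/4}t$ with $z \geq x$, applying Fubini, and using the identity $C_{N,t,x} = C_{N,t,z}\, e^{N^{1/4}(x-z)}$ followed by the substitution $v = N^{1/4}(z-x)$, one directly verifies that
\begin{align*}
    \int_\mathbb R \phi(x) F_N(t,x)\, dx = \mathscr U_N(t, \psi_N^\phi), \qquad \psi_N^\phi(z) := \int_0^\infty \phi(z - N^{-1/4}v)\, e^{-v}\, dv.
\end{align*}
The prefactor $N^{1/4}$ in the definition of $F_N$ is exactly canceled by the Jacobian of the substitution, and consequently the question of convergence for the tail field reduces to convergence of $\mathscr U_N$ tested against the modified test function $\psi_N^\phi$.

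Next, observe that $\psi_N^\phi \to \phi$ as $N \to \infty$ in a strong sense: Taylor expansion gives $\|\psi_N^\phi - \phi\|_\infty \leq N^{-1/4} \|\phi'\|_\infty$, and if $\mathrm{supp}(\phi) \subset [-A, A]$, then $\psi_N^\phi$ vanishes on $(-\infty, -A)$ and satisfies $|\psi_N^\phi(z)| \leq \|\phi\|_\infty\, e^{-N^{1/4}(z-A)}$ for $z > A$. Splitting $\psi_N^\phi - \phi$ into its restriction to $[-A-1, A+1]$ and its tail on $(A+1, \infty)$, one obtains the bound $\mathbb E\bigl[|\mathscr U_N(t, \psi_N^\phi - \phi)|\bigr] = O(N^{-1/4})$: the compactly supported part contributes $O(N^{-1/4})$ through the first-moment estimate of $\mathscr U_N$ on bounded sets (Section \ref{mom}), while the tail part can be rewritten explicitly as $e^{-N^{1/4}} N^{-1/4} F_N(t, A+1)$, whose expectation vanishes as $N\to\infty$ by Lemma \ref{A1}.

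Finally, Theorem \ref{main} yields tightness of $\{\mathscr U_N\}$ in $C([0,T], X)$ with unique limit $\mathcal U$, so continuity of the evaluation map $u \mapsto (u(t_i, \phi_i))_{i=1}^m$ on $C([0,T], X)$ gives the joint convergence $(\mathscr U_N(t_{N,i}, \phi_i))_{i=1}^m \stackrel{d}{\to} (\int_\mathbb R \mathcal U_{t_i}\phi_i)_{i=1}^m$; the replacement of $t_i$ by $t_{N,i}$ is handled by tightness of the modulus of continuity in the $C([0,T], X)$ topology. Combined with the $L^1$ approximation from the previous paragraph and Slutsky's theorem, this yields \eqref{qqt}. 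The main technical subtlety is controlling the exponential tail of $\psi_N^\phi$ beyond the support of $\phi$, which Lemma \ref{A1} resolves remarkably cleanly: $\mathscr U_N$ integrated against this particular exponential tail is, up to harmless constants, literally the value of $F_N$ at a single point.
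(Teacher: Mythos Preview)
Your proof is correct and follows essentially the same approach as the paper's. Both arguments express $\int \phi F_N$ as $\mathscr U_N(t,\phi)$ plus a remainder controlled by Lemma~\ref{A1}, then invoke Theorem~\ref{main}. The paper arrives at the decomposition via a single integration by parts, obtaining the remainder $-N^{-1/4}\int \phi' F_N$, whose $L^1$-norm is bounded in one line by $N^{-1/4}\|\phi'\|_{L^1}\cdot 4(\pi t)^{-1/2}$; your change-of-variables identity $\int \phi F_N = \mathscr U_N(t,\psi_N^\phi)$ is the closed form of the same manipulation (indeed $\psi_N^\phi = \sum_{k\ge 0}(-N^{-1/4})^k \phi^{(k)}$), but then requires the extra split into compact and exponential-tail pieces to reduce to Lemma~\ref{A1}. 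Either way works; the paper's version is marginally more direct.
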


\begin{proof}For $t\in N^{-1}\mathbb Z_{\ge 0}$ let us define a family of measures on $\mathbb R$ given by$$\mu_{N,t}:= e^{(N^{1/2} - N\log\cosh(N^{-1/4}))t } \sum_{x\in N^{-1/2}\mathbb Z-N^{1/4}t}\mathsf P^{\omega}(Nt, N^{3/4}t + N^{1/2}x)\cdot  \delta_x,$$
Then we notice that Definition \ref{qtf} is equivalent to $$F_N(t,x):= N^{1/4} e^{N^{1/4}x} \mu_{N,t}[x,\infty).$$
An application of Fubini's theorem gives $$\int_{-\infty}^x N^{1/4}e^{N^{1/4}u} \mu_{N,t}[u,\infty)du = \int_{-\infty}^x e^{N^{1/4}u}\mu_{N,t}(du) + e^{N^{1/4}x}\mu_{N,t}[x,\infty).$$  
In the sense of distributions, it is clear that 
        \begin{align*}
           & F_N(t,x) = \partial_x \bigg[\int_{-\infty}^x N^{1/4}e^{N^{1/4}u} \mu_{N,t}[u,\infty)du\bigg], \quad e^{N^{1/4}x}\mu_{N,t}(dx)=\partial_x\bigg[\int_{-\infty}^x e^{N^{1/4}u}\mu_{N,t}(du)\bigg].
        \end{align*}
Consequently, for all $t \in N^{-1}\mathbb Z_{\ge 0}$ and $\phi \in C_c^\infty(\mathbb R)$, we can repeatedly integrate by parts to obtain that 
        \begin{align*}
            \int_{\mathbb R}\phi(x) F_N(t,x)dx &= -\int_{\mathbb R} \phi'(x) \bigg[\int_{-\infty}^x N^{1/4}e^{N^{1/4}u} \mu_{N,t}[u,\infty)du\bigg]dx \\ &= -\int_{\mathbb R} \phi'(x)\bigg[\int_{-\infty}^x e^{N^{1/4}u}\mu_{N,t}(du)+ e^{N^{1/4}x}\mu_{N,t}[x,\infty)\bigg]dx \\ &= \int_{\mathbb R}\phi(x) e^{N^{1/4}x}\mu_{N,t}(dx) - \int_{\mathbb R} \phi'(x)e^{N^{1/4}x}\mu_{N,t}[x,\infty)dx.
        \end{align*}

Now let us prove the proposition. In the setting of the proposition statement, by Theorem \ref{main} we know that as $N\to \infty$ $$\bigg(\int_{\mathbb R}\phi_i(x) e^{N^{1/4}x}\mu_{N,t_{N,i}}(dx)\bigg)_{i=1}^m \stackrel{d}{\to} \bigg(\int_{ \R}\phi_i(x) \mathcal U_{t_i}(x)dx\bigg)_{i=1}^m.$$ 
Thus, to show \eqref{qqt} it suffices to show that as $N\to \infty$, $\int_{\mathbb R} \phi_i'(x)e^{N^{1/4}x}\mu_{N,t_{N,i}}[x,\infty)dx$ goes to zero in $L^1(\mathbb P)$  for each $1\le i\le m$. Indeed, observe that
    \begin{align*}
       \mathbb E\bigg|\int_{\mathbb R} \phi_i'(x)e^{N^{1/4}x}\mu_{N,t_{N,i}}[x,\infty)dx\bigg| & \leq \int_{\mathbb R} |\phi_i'(x)|e^{N^{1/4}x}\mathbb E[\mu_{N,t_{N,i}}[x,\infty)]dx. 
    \end{align*}
  By Lemma \ref{A1} we get that $$e^{N^{1/4}x}\mathbb E[\mu_{N,t_{N,i}}[x,\infty)]\leq 4N^{-1/4}(\pi t_{N,i})^{-1/2}$$ for all $x\in\mathbb R$. Since the $t_{N,i}$s remain bounded away from $0$ as $N\to \infty$, this implies the desired $L^1$ convergence and completes the proof.
  \end{proof}

  Our next lemma shows uniform convergence of the two-point correlation function of the quenched tail field. This is the crucial result that allows us to improve Proposition \ref{hi} to obtain multipoint convergence in law of $F_N(t,x)$ to $\mathcal U_t(x)$ for \textit{individual} values of $(t,x)\in (0,\infty)\times \mathbb R$.
  
  \begin{lem}\label{7.5}
  Suppose that $t_N\in N^{-1}\mathbb Z_{\ge 0}$ and $x_N,y_N\in \mathbb R$. Assume that as $N\to \infty$ we have $t_N\to t>0$ and $(x_N,y_N)\to (x,y)\in \mathbb R^2$. Then
  $$\lim_{N\to\infty} \mathbb E[F_N(t_N,x_N)F_N(t_N,y_N)] = \mathbb E[\mathcal U_t(x)\mathcal U_t(y)]$$
 where $\mathcal{U}$ is as in \eqref{she}. 
  \end{lem}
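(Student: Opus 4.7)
The plan is to express $F_N$ as a linear pairing of the rescaled density field $\mathscr U_N$ against a concentrating probability density, and then adapt the moment convergence machinery from Section \ref{mom}. Summing the atoms of $\mathscr U_N(t,\cdot)$ on $[x,\infty)$ and using the identity $C_{N,t,x}/C_{N,t,z} = e^{N^{1/4}(x-z)}$ gives $F_N(t,x) = \mathscr U_N(t, \psi^x_N)$ with $\psi^x_N(z) := N^{1/4}e^{-N^{1/4}(z-x)}\mathbf 1_{z \geq x}$ a probability density (against Lebesgue measure) concentrating at $x$ on scale $N^{-1/4}$. Hence
\[
\mathbb E[F_N(t_N,x_N) F_N(t_N,y_N)] = \mathbf E_{RW^{(2)}_\nu}\!\Bigl[\textstyle\prod_{i=1}^2 C_{N,t_N,\tilde R^i}\psi^{x_{N,i}}_N(\tilde R^i)\Bigr],
\]
where $\tilde R^i := N^{-1/2}(R^i(Nt_N) - N^{3/4}t_N)$ and $x_{N,1} = x_N$, $x_{N,2} = y_N$. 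The Girsanov tilt of Proposition \ref{g.exists} combined with the factorization \eqref{twort} transforms the right-hand side into
\[
\mathbf E_{RW^{(2)}_{\nu_N^*}}\!\Bigl[e^{\widetilde{\mathcal G_N}(Nt_N)+4\sigma^2\mathscr V^{12}_N(t_N)-\mathsf{Err}_N(t_N)}\psi^{x_N}_N(\tilde R^1)\psi^{y_N}_N(\tilde R^2)\Bigr].
\]

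For fixed smooth compactly supported $\phi_i$ in place of $\psi^{x_{N,i}}_N$, Theorem \ref{converge} (with $k=2$), property \ref{conviv} of that theorem (the stochastic exponential $e^{\mathcal G - \tfrac12\langle \mathcal G\rangle}$ has unit conditional mean given $\mathbf U$), the error bound \eqref{errbd}, and the uniform integrability from Lemma \ref{traps} and Proposition \ref{exp}, show that this expectation converges to $\mathbf E\bigl[e^{\frac{4\sigma^2}{1-4\sigma^2}L_0^{U^1-U^2}(t)}\phi_1(U^1(t))\phi_2(U^2(t))\bigr] = \mathbb E[\mathcal U_t(\phi_1)\mathcal U_t(\phi_2)]$ by the Feynman-Kac moment formula \cite[Lemma 2.3]{BC95}, precisely the argument of Section \ref{mom}. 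To upgrade this to $\phi_i = \psi^{x_{N,i}}_N$, I would decompose the tilted expectation by conditioning on the endpoint pair $(\tilde R^1,\tilde R^2) = (u,v)$. A local central limit theorem for the two-point motion under $\mathbf P_{RW^{(2)}_{\nu_N^*}}$ (valid because the walkers behave as an independent pair away from the negligible intersection set) gives convergence of the joint mass function to a constant multiple of $p_t(u)p_t(v)$ locally uniformly in $(u,v)$. Conditional on endpoints $(u_N,v_N)\to(x,y)$, the paths converge to Brownian bridges from $0$ to $x$ and from $0$ to $y$, the intersection time $(1-4\sigma^2)\mathscr V^{12}_N$ converges to $L_0^{B^1-B^2}(t)$, and the stochastic exponential has conditional mean $1$. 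Since $\psi^{x_N}_N \otimes \psi^{y_N}_N$ is a probability measure concentrating at $(x,y)$ on scale $N^{-1/4}$, integrating the conditional limit against it picks out the value at $(x,y)$, yielding $p_t(x)p_t(y)\,\mathbf E_{\mathrm{bridge}}\bigl[e^{\frac{4\sigma^2}{1-4\sigma^2} L_0^{B^1-B^2}(t)}\bigr] = \mathbb E[\mathcal U_t(x)\mathcal U_t(y)]$, where the expectation is over a pair of Brownian bridges from $0$ to $x$ and from $0$ to $y$.

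The main obstacle is transferring the unconditional joint convergence of Theorem \ref{converge} to convergence conditional on the endpoint pair, with sufficient uniformity to admit testing against the concentrating densities $\psi^{x_N}_N \otimes \psi^{y_N}_N$. A clean route is a quantitative local CLT for $(\tilde R^1(t_N),\tilde R^2(t_N))$ under the tilted measure, jointly with the functionals $\mathscr V^{12}_N$, $\widetilde{\mathcal G_N}$, and $\mathsf{Err}_N$, uniform over lattice points in compact subsets bounded away from $(0,0)$. A potentially simpler alternative that avoids the conditioning is to establish equicontinuity of the family $(x,y)\mapsto \mathbb E[F_N(t_N,x) F_N(t_N,y)]$ on compacts, for example through Azuma-based H\"older regularity of $x \mapsto F_N(t_N,x)$ combined with the uniform bound of Lemma \ref{A1}, and to then upgrade the integrated convergence of Proposition \ref{hi} at coincident times $t_{N,1} = t_{N,2} = t_N$ to pointwise convergence by a standard equicontinuity argument.
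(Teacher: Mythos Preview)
Your approach is entirely different from the paper's. The paper's proof rests on a structural observation: the two-point correlation $\mathbb E[F_N(t,x)F_N(t,y)]$ is a functional of the annealed two-point motion $(R^1,R^2)$ alone, and by the transition diagram of Figure~\ref{fig:markov_transitions} the law of that two-point motion depends on $\nu$ only through $\mu_\nu$ and $\sigma_\nu^2$. One may therefore replace $\nu$ by $\mathrm{Beta}(\alpha,\alpha)$ with $\alpha=\tfrac{1-4\sigma^2}{8\sigma^2}$ without changing the correlation, and then invoke the exact contour-integral moment formulas of Barraquand--Corwin \cite{bc} for the Beta-RWRE. The $N\to\infty$ limit of those formulas is computed directly (Taylor expansion plus dominated convergence) and matched to the known SHE two-point formula. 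This integrability shortcut completely bypasses the soft-analysis issues you confront.

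Your identity $F_N(t,x)=\mathscr U_N(t,\psi^x_N)$ and the tilted representation are correct, and your strategy is the natural extension of Section~\ref{mom}. But both routes you sketch have genuine gaps. For the conditioning route, you need a local CLT for the \emph{interacting} pair $(\tilde R^1,\tilde R^2)$ under $\mathbf P_{RW^{(2)}_{\nu_N^*}}$, jointly with the path functionals $(\mathscr V^{12}_N,\widetilde{\mathcal G_N},\mathsf{Err}_N)$, uniform enough to test against densities concentrating on scale $N^{-1/4}$; Theorem~\ref{converge} only gives unconditional weak convergence and says nothing about this conditional local behavior. The walkers are genuinely dependent through their collisions, so ``behave as an independent pair away from the negligible intersection set'' is not a proof. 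For the equicontinuity route, there is no martingale structure in $x\mapsto F_N(t,x)$ for Azuma to act on, and the paper in fact \emph{derives} the equicontinuity bound $\limsup_N \mathbb E[(F_N(t,x)-F_N(t,y))^2]\le C|x-y|$ as a \emph{consequence} of Lemma~\ref{7.5} in the proof of Theorem~\ref{unifconv}, not as an input to it. Establishing it independently would require new estimates the paper never develops, which is precisely why the reduction to the integrable Beta case is used.
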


  \begin{proof} By taking an annealed expectation over the quenched one, notice that the two-point correlation function given by $(t,x,y)\mapsto \mathbb E[F_N(t,x)F_N(t,y)]$ can be written as an expression involving only the 2-point motion $(R^1,R^2)$. More specifically, in the notation of Section \ref{sec:girt}, it equals $$Ne^{2(N^{1/2} - N\log\cosh(N^{-1/4}))t}\mathbf P_{RW^{(2)}_{\nu}}(N^{-1/2}(R^1({Nt})-N^{3/4}t) \geq x,N^{-1/2}(R^2(Nt)-N^{3/4}t) \geq y).$$ Consequently the two-point correlation function only depends on the mean and variance of the weight measure $\nu$. Without any loss of generality we shall henceforth assume $\nu$ is the law of Beta$(\alpha,\alpha)$ where $\alpha:= \frac{1-4\sigma^2}{8\sigma^2}.$
  
  We shall now invoke results from \cite{bc} which are for the Beta measure. In \cite{bc}, the authors consider a slightly different field $\hat F_N(t,x)$ which is ``dual" to ours in a certain sense. More specifically, they vary the starting point $x$ and fix the tail probability as $[0,\infty)$, whereas we fix the starting point $0$ and vary the tail probability as $[x,\infty)$. One may show that the distributions of both fields are the same as space-time processes, modulo a reflection of the weight measure. More precisely, we have $F_N^\nu \stackrel{d}{=} \hat F_N^{\mu}$, where $\mu$ is the pushforward of $\nu$ by $x\mapsto 1-x$, and the superscript highlights the dependence of each field on the underlying weight measure.\footnote{In particular, our results imply the space-time multi-point convergence in law of their field $\hat F_N$ to \eqref{she} as well.} This is proved using an explicit coupling of the two fields via the ``discrete Brownian web". See \cite[Equation (1.8)]{yu} for a discussion of the coupling construction, which is in turn based on \cite[Section 3]{sss}.
  
  Note that for the case of Beta$(\alpha,\alpha)$ we have $\mu = \nu$. Proposition 3.4 in \cite{bc} provides exact moment formulas for the unscaled version of $F_N(t,x)$ in this case. 
  Taking the scalings into account (see e.g. \cite[(34) and (40)-(43)]{bld}) we then have for all $t\in N^{-1}\mathbb Z_{\ge 0}$ and $x_1,x_2 \in N^{-1/2}(\mathbb Z-N^{3/4}t)$
\begin{align*}
    & \mathbb E[ F_N(t,x_1)F_N(t,x_2)] \\& =N^{1/2}\oint_{\gamma_1} \oint_{\gamma_2} 
    \frac{z_1-z_2}{z_1-z_2-1}
    \prod_{j=1}^2C_{N,t,x_j}\bigg(\frac{2\alpha+z_j}{z_j}\bigg)^{\frac12(Nt-N^{\frac34}t -x_jN^{\frac12})+1}\bigg(\frac{\alpha+z_j}{2\alpha+z_j}\bigg)^{Nt}\frac1{2\alpha+z_j}
    \frac{dz_1}{2\pi i}\frac{dz_2}{2\pi i},
\end{align*}
 where $\gamma_1,\gamma_2$ are positively oriented closed curves around 0 such that $\gamma_1$ contains $\gamma_2+1$
and both contours exclude $-2\alpha$.

Now assume that we have a sequence $(t_N,x_N,y_N)$ converging to $(t,x,y)\in (0,\infty)\times \mathbb R^2$. 
We claim that the integral expression for $\mathbb E[ F_N(t_N,x_N)F_N(t_N,y_N)]$ converges as $N\to \infty$ to $$\alpha^{-2}\oint_{r_1+i\mathbb R} \oint_{r_2 + i \mathbb R} \frac{z_2-z_1}{z_2-z_1 - 1} e^{\frac{t}{2\alpha^2} (z_1^2+z_2^2)+\frac1\alpha (xz_1+yz_2)}\frac{dz_1}{2\pi i}\frac{dz_2}{2\pi i}.$$ Here $r_1,r_2$ are chosen such that $r_2>r_1 + 1.$ This convergence can be justified by dominated convergence and Taylor expansions, see \cite[Proposition 5.4.2]{bigmac} for a similar argument.  
This is known to agree with $\mathbf E[\mathcal U_t(x)\mathcal U_t(y)]$, see \cite[Section 6.2]{bigmac}, thus proving the lemma.
  \end{proof}
With the above lemma in place, we prove our main convergence result for the quenched tail field: Theorem \ref{unifconv}.

  \begin{proof}[Proof of Theorem \ref{unifconv}] We first prove the case when $x_{N,i}=x_i$ for $1\le i\le m$.   We give a proof for $m=1$ to simplify the notation, but the generalization to larger $m$ is straightforward. We will simply write $(t_{N,1},x_1)$ as $(t_N,x)$, which will be fixed for the moment being. By Lemma \ref{A1} we get that $\mathbb E[F_N(t_N,x)]\leq 2(\pi t_N)^{-1/2}$ which is bounded independently of $N$, thus it follows that the $\{F_N(t_N,x)\}_{N\ge 1}$ are tight.  Consider any limit point $\mu_0$ of the laws of $\{F_N(t_N,x)\}_{N\ge 1}$. Fix a smooth compactly supported nonnegative function $\phi:\mathbb R\to \mathbb R$ which integrates to $1$, and define $\phi^\lambda_x(y) = \lambda^{-1}\phi(\lambda^{-1}(y-x)).$ By Proposition \ref{hi} we know that for each $\lambda,x\in \R$, $\big\{\int_\mathbb R \phi^{\lambda}_x(y) F_N(t_N,y) dy\big\}_{N\ge 1}$ is a tight sequence. For $r\ge 1$, consider any measure $\mu_r$ on $\mathbb R^{r+1}$ which is a joint limit point as $N\to \infty$ of $$\bigg(F_N(t_N,x) \ , \  \big(\int_\mathbb R \phi^{2^{-k}}_x(y) F_N(t_N,y) dy\big)_{k=1}^r\bigg),$$ such that the first marginal of $\mu_r$ is $\mu_0$. Using a diagonalization argument, these measures $\mu_r$ may be chosen so that they form a projective family, and therefore by the Kolmogorov extension theorem, we may consider any projective limit $\mu$ as $r\to \infty$, which will be a measure on the space of sequences $(a_k)_{k\ge 0}\in \mathbb R^{\mathbb Z_{\ge 0}}$, equipped with the $\sigma$-algebra generated by the projection maps. By Proposition \ref{hi}, we find that for such a measure $\mu$ the marginal distribution of $(a_k)_{k\ge 1}$ is simply equal to the law of $\big( \int_\mathbb R \mathcal U_t(y)\phi^{2^{-k}}_x(y)dy\big)_{k\ge 1}$.  We now claim that  
  \begin{align}\label{e:fbds}
          \limsup_{\lambda\to 0}\limsup_{N\to\infty} \mathbb E \bigg| F_N(t_N,x) - \int_\mathbb R F_N(t_N,y) \phi^\lambda_x(y)dy\bigg|  = 0.
      \end{align}
  Assuming this fact, one finds that such a measure $\mu$ is necessarily supported on those sequences $(a_k)_{k\ge 0}$ which satisfy $a_0 = \lim_{k\to \infty} a_k$ in $L^1(\mu)$, which means that $a_0$ must indeed have the law of $\mathcal U_t(x)$ under $\mu$, proving the lemma. We are thus left to check \eqref{e:fbds}. 
   By Lemma \ref{7.5}, we have that
      $$\limsup_{N\to\infty} \mathbb E[ (F_N(t,x)-F_N(t,y))^2] = \mE[(\mathcal U_t(x)-\mathcal U_t(y))^2] \le \Con|x-y|$$
uniformly over compacts sets of $(t,x,y)\in (0,\infty)\times\R^2$ where the above inequality is a known estimate for \eqref{she} (see \cite[Proposition 2.4-nw]{das} for example). Thus, by Fatou's lemma, we note that 
      \begin{align*}
          \limsup_{N\to\infty} \mathbb E \bigg| F_N(t_N,x) - \int_\mathbb R F_N(t_N,y) \phi^\lambda_x(y)dy\bigg|  &\leq \limsup_{N\to\infty} \int_\mathbb R\mathbb E| F_N(t_N,x)-F_N(t_N,y)| \phi^\lambda_x(y)dy \\ &\leq  \int_\mathbb R \limsup_{N\to\infty} \mathbb E| F_N(t_N,x)-F_N(t_N,y)| \phi^\lambda_x(y)dy\\&\leq\int_\mathbb R \limsup_{N\to\infty}\mathbb E[ (F_N(t_N,x)-F_N(t_N,y))^2]^{1/2} \phi^\lambda_x(y)dy \\ &\leq \Con\int_\mathbb R |x-y|^{1/2} \phi^\lambda_x(y)dy =\Con\lambda^{1/2} \int_\mathbb R |u|^{1/2} \phi(u)du.
      \end{align*}
Taking $\lambda\to 0$ leads to \eqref{e:fbds}. To justify the above application of Fatou's lemma with $\limsup$, one needs a bound on the $y$-integrand which is independent of $N$ and is in $L^1(\phi^\lambda_x(y)dy)$. For this we may use Lemma \ref{A1} to get $$\mathbb E| F_N(t_N,x)-F_N(t_N,y)| \leq \mathbb E[ F_N(t_N,x)+F_N(t_N,y)] \leq 2(2\pi t_N)^{-1/2}.$$
The right side may be bounded by a constant independent of $N$ since $t_N\to t>0$. Finally, the case when $x_{N,i}$ varies with $N$ follows from the fact that as $N\to \infty$ we have $$\mathbb E\big[\big(F_N(t_{N,i},x_{N,i}) - F_N(t_{N,i},x_i)\big)^2 \big]\to 0$$ due to the uniform convergence in Lemma \ref{7.5}. This completes the proof.
      \end{proof}

      \begin{thm}[Extremal particle limit theorem] \label{t:max}
Fix $c,t>0$ and $d\in \mathbb R$. Let $(R^1(r),\ldots,R^k(r))_{r\ge 0}$ denote the canonical process sampled from the measure $\mathbf P_{RW_\nu^{(k)}}.$ Set the number of particles $k=k(N):= \lfloor \exp(\frac12cN^{1/2} +dN^{1/4}+r_N)\rfloor$ where $r_N$ can be any sequence satisfying $r_N=o(N^{1/4})$. Consider any sequence $t_N\in N^{-1}\mathbb Z_{\ge 0}$ such that $t_N\to t>0$ as $N\to \infty$. Then
\begin{align*}
    \max_{1\le i\le k(N)} \big\{N^{-\frac14} R^i(Nt_N)\big\} -a_N(c,d,t_N) \stackrel{d}{\to} \sqrt{\tfrac{t}{c}} \big( G+\log \mathcal{U}_{c}(d)\big),
\end{align*}
where
\begin{align*}
   a_N(c,d,s):= \sqrt{csN} - dN^{\frac14}\sqrt{\tfrac{s}{c}} -\sqrt{\tfrac{c}{s}} \big(r_N-\tfrac14 \log N\big)-\frac{c^{3/2}}{6\sqrt{s}}.
\end{align*}
Here $G$ is a standard Gumbel random variable which is independent of $\mathcal U$, and $(t,x)\mapsto \mathcal U_t(x)$ solves \eqref{she} but with noise coefficient replaced by $\sqrt{\frac{8\sigma^2c}{(1-4\sigma^2)t}}$.
 \end{thm}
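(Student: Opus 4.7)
The plan is to combine the i.i.d.\ structure of the walks, conditional on the environment $\omega$, with the convergence of the quenched tail field established in Theorem~\ref{unifconv}. First, since the $R^i$ are independent under $\mathsf P^\omega$, one has
\[
\mathsf P^\omega\!\big(\tfrac{1}{N^{1/4}}\max_i R^i(Nt_N) < a_N+Y\big) = \big(1-p_N^\omega\big)^{k(N)},\qquad p_N^\omega:=\mathsf P^\omega\!\big(R(Nt_N)\ge m_N\big),\quad m_N:=(a_N+Y)N^{1/4}.
\]
Assuming uniform control that $p_N^\omega\to 0$ in probability (justified at the end), the whole problem reduces to identifying the distributional limit of $k(N)\,p_N^\omega$ as a functional of $\omega$.

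The critical observation is that the expected location of the maximum, $\sim\sqrt{ct_N}\,N^{3/4}$, lies at the scale of the \emph{moderate deviation tilt} $\sqrt{c/t_N}\,N^{-1/4}$, rather than the tilt $N^{-1/4}$ built into $F_N$. I would therefore re-parameterise using the rescaled index $\tilde N:=\lfloor Nt^2/c^2\rfloor$, so that $\tilde N^{-1/4}$ equals this modified tilt up to $o(1)$. Setting $\tau_N:=Nt_N/\tilde N\to c^2/t$ and $\xi_N:=(m_N-\tilde N^{3/4}\tau_N)/\tilde N^{1/2}$, one has $\xi_N\to\xi^\ast$ with $\xi^\ast$ a linear function of $d$, and by definition of $F_{\tilde N}$,
\[
p_N^\omega = \tilde N^{-1/4}\,C_{\tilde N,\tau_N,\xi_N}^{-1}\,F_{\tilde N}(\tau_N,\xi_N).
\]
Since $\tilde N\to\infty$, Theorem~\ref{unifconv} applied along this subsequence gives $F_{\tilde N}(\tau_N,\xi_N)\stackrel{d}{\to}\mathcal V_{c^2/t}(\xi^\ast)$, where $\mathcal V$ solves the standard SHE of Theorem~\ref{main}. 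A direct scaling argument for SHE with Dirac initial data (using the transformation $(s,y)\mapsto(\beta^2 s,\beta y)$ with $\beta=c/t$, which rescales the noise coefficient by $\sqrt{\beta}$) identifies this random variable with $\mathcal U_c(d)$, where $\mathcal U$ solves the SHE with the renormalised noise coefficient $\sqrt{8\sigma^2c/((1-4\sigma^2)t)}$ stated in the theorem.

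The remaining work is a deterministic expansion of the factor $k(N)\,\tilde N^{-1/4}\,C_{\tilde N,\tau_N,\xi_N}^{-1}$. Using $\log k=\tfrac{c}{2}N^{1/2}+dN^{1/4}+r_N$ and the Taylor expansion $\tilde N^{1/2}-\tilde N\log\cosh(\tilde N^{-1/4})=\tfrac12\tilde N^{1/2}+\tfrac{1}{12}+O(\tilde N^{-1/2})$, every divergent $N$-power in the log is engineered to cancel --- this is precisely the role of the three pieces of $a_N(c,d,s)$: the leading $\sqrt{csN}$ cancels against $\tfrac12\tilde N^{1/2}\tau_N=\tfrac{c}{2}N^{1/2}$, the $N^{1/4}$ term of $a_N$ cancels the $d$ piece of $\tilde N^{1/4}\xi^\ast$, and the $r_N-\tfrac14\log N$ term absorbs the subleading $r_N$ and $\log N$ contributions. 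What survives is $-(Y-\tfrac{c}{6})\sqrt{c/t}$, assembled from $Y$ itself and from the constant $\tfrac{1}{12}\tau_N=\tfrac{c^2}{12t}$. Hence
\[
k(N)\,p_N^\omega \;\stackrel{d}{\to}\; \exp\!\big(-(Y-\tfrac{c}{6})\sqrt{c/t}\big)\,\mathcal U_c(d),
\]
and $(1-p_N^\omega)^{k}\to\exp\!\big(-e^{-(Y-c/6)\sqrt{c/t}}\mathcal U_c(d)\big)$. Unravelling, this is the CDF of $\sqrt{t/c}\,G+\log\mathcal U_c(d)$ with $G-\tfrac16 c^{3/2}t^{-1/2}$ a standard Gumbel. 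Independence of $G$ and $\mathcal U_c(d)$ is automatic: the quenched limit already has the form $\sqrt{t/c}\,G+\log\mathcal U_c(d)(\omega)$, with the Gumbel coming from the universal $(1-p)^k$ asymptotics and hence not depending on $\omega$.

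The principal obstacle is the step $(1-p_N^\omega)^k\sim\exp(-k p_N^\omega)$, which requires uniform smallness of $p_N^\omega$ in probability so that the quadratic error $k(p_N^\omega)^2$ is negligible against $k p_N^\omega$. I would establish this by combining Lemma~\ref{A1} applied along the sequence $\tilde N$, which bounds $\mathbb{E}[F_{\tilde N}(\tau_N,\xi_N)]$ uniformly, with a Chebyshev estimate based on the second-moment bound from the proof of Lemma~\ref{7.5}; together these force $p_N^\omega=o(1)$ on events of overwhelming probability. A subsidiary point is that $\tilde N=\lfloor Nt^2/c^2\rfloor$ is only approximately $Nt^2/c^2$ and that $t_N$ need not lie exactly in $\tilde N^{-1}\mathbb{Z}_{\ge0}$; both effects contribute only $o(1)$ corrections that are absorbed into the expansions above.
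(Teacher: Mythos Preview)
Your approach is essentially identical to the paper's own proof: both rescale from $N$ to $\tilde N\approx Nt^2/c^2$ (the paper sets $\mathfrak N=N(t_N/v_N)^4$ with $v_N=\sqrt{ct_N}$ and remarks that Theorem~\ref{unifconv} extends to a continuous index, avoiding your floor), invoke Theorem~\ref{unifconv} for $F_{\tilde N}$ at time $\tau_N\to c^2/t$, use the SHE scaling $(s,y)\mapsto(\beta^2 s,\beta y)$ to identify the limit with $\mathcal U_c(d)$, and Taylor-expand $\log\cosh$ to fourth order to recover the Gumbel shift. The one place where the paper is cleaner is the step $(1-p_N^\omega)^{k}\to e^{-kp_N^\omega}$: since $u_N:=k(N)p_N^\omega$ converges in law to a finite random variable and $(1-u/k)^k\to e^{-u}$ uniformly on compact subsets of $[0,\infty)$, the conclusion is immediate---your proposed detour through Lemma~\ref{A1} and a second-moment Chebyshev argument from Lemma~\ref{7.5} is correct but unnecessary.
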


      \begin{proof} 
      Fix any $a\in \R$.
 Set $v_N:=\sqrt{ct_N}$ and $x_N:=d\sqrt{\frac{t_N}{c}}$ so that $c=\frac{v_N^2}{t_N}$ and $d=v_Nx_N/t_N$. Also define $v=\sqrt{ct}$ and $x=d\sqrt{\frac{t}c}$. It suffices to show 
 \begin{equation}
 \label{gbel}
     \begin{aligned}
     & \lim_{N\to\infty}\mathbf P_{RW^{(k(N))}_\nu} \bigg(\max_{1\le i \le k(N)} \big\{R^i(Nt_N)\big\} - v_NN^{3/4} - x_N\sqrt{N} -\tfrac{tN^{1/4}}{v_N} \big(r_N-\tfrac14 \log N\big) \leq a N^{1/4}\bigg) \\ & \hspace{6cm}= \mathbb P\bigg(t/v[G+\log \mathcal U_{v^2/t}(vx/t)] \leq a+\frac{v^3}{6t^2}\bigg),
 \end{aligned}
 \end{equation}
  where $\mathcal U$ solves \eqref{she} with noise coefficient replaced by $\frac{v}t\sqrt{\frac{8\sigma^2}{1-4\sigma^2}}$, and $G$ is an independent Gumbel random variable.  

     \smallskip
     
     For now, fix any $k\in \mathbb N$ which may be arbitrary. Fix a realization of the environment $\omega = \{\omega_{i,j}: (i,j)\in \mathbb Z^2\}.$ As before, we let $\mathsf P_{(k)}^\omega$ be the quenched probability of $k$ random motions $(R^i(r))_{i=1}^k$ sampled given the environment. Let us consider the random variable 
     \begin{equation}\label{prob}\mathsf P_{(k)}^\omega \bigg(\max_{1\le i \le k} \big\{R^i(Nt_N)\big\} \leq v_N N^{3/4}+x_N\sqrt{N}+\frac{t_N}{v_N} N^{1/4}(r_N-\frac14 \log N) + aN^{1/4}\bigg). 
     \end{equation}
     Notice that until this point, we have assumed in Theorem \ref{main} that the parameter $N$ is a \textit{discrete} parameter taking values in $\mathbb N$. However nothing in our proof actually used this discreteness, and in fact, there is no loss of generality in assuming it is actually a \textit{continuous} parameter indexed by $[1,\infty)$ for instance. In this context, the results of Theorem \ref{main} and therefore Theorem \ref{unifconv} still hold as $N\to\infty$.
     
     Now we define $\mathfrak N = \mathfrak N(N):= N\big(\frac{t_N}{v_N}\big)^4$, so that $\mathfrak N\to \infty$ is equivalent to $N\to\infty$. Notice by Definition \ref{qtf} that \eqref{prob} is pathwise equal to the quantity 
     \begin{align}\label{disp}
        \bigg( 1-\mathfrak N^{-1/4}C_{\mathfrak N, \tfrac{v_N^4}{t_N^3}, \big(\tfrac{v_N}{t_N}\big)^2x_N+\mathfrak N^{-\frac14} (\tfrac{av_N}{t_N} +r_N -\tfrac14\log N)}^{-1}  F_{\mathfrak N}\big(\tfrac{v_N^4}{t_N^3}, \;\big(\tfrac{v_N}{t_N}\big)^2 x_N+ \mathfrak N^{-\frac14} (\tfrac{av_N}{t_N} +r_N -\tfrac14\log N)\big)\bigg)^k.
     \end{align}
     Then we may choose \begin{align*}k=k(N)&= \bigg\lfloor \mathfrak N^{1/4}C_{\mathfrak N, \tfrac{v_N^4}{t_N^3}, \big(\tfrac{v_N}{t_N}\big)^2x_N+\mathfrak N^{-\frac14} (\tfrac{av_N}{t_N} +r_N -\tfrac14\log N))}\exp\bigg[ - \frac{v_N^4}{6t_N^3} - a\frac{v_N}{t_N}-\log(t_N/v_N)\bigg]\bigg\rfloor \\&= \bigg\lfloor \exp\bigg[ \frac{v_N^2}{2t_N} \sqrt{N} +\frac{v_Nx_N}{t_N}N^{1/4}+r_N+O(N^{-1/2})\bigg]\bigg\rfloor
     \end{align*}where we applied a fourth order Taylor expansion of $\log \cosh$ in order to deduce the $O(N^{-1/2})$ term. Note that the latter expression agrees with the choice of $k(N)$ in the theorem statement. Then \eqref{disp} is a quantity of the form $(1-\frac{u(N)}{k(N)+O_N(1)})^{k(N)}$, where the $O_N(1)$ term is deterministic and bounded between $0$ and $1$, and $u(N)$ is a sequence of random variables which by Proposition \ref{unifconv} converges in law to the strictly positive random variable $\tfrac{v}{t}e^{-\frac{av}t-\frac{v^4}{6t^3} }\mathcal U_{v^4/t^3}(v^2x/t^2)$ with $\mathcal U$ solving \eqref{she}. The latter has the same law as $e^{-\frac{av}t-\frac{v^4}{6t^3} }\mathcal U_{v^2/t}(vx/t)$ where now the noise coefficient in \eqref{she} has been replaced by $\frac{v}t\sqrt{\frac{8\sigma^2}{1-4\sigma^2}}$. Since the functions $u\mapsto (1-\frac{u}{k(N)+O_N(1)})^{k(N)}$ converge uniformly to $u\mapsto e^{-u}$ on compact subsets of $[0,\infty)$, we can conclude that the expression in \eqref{disp} therefore converges in law to the random variable $$\exp \big[ -e^{-\frac{av}t-\frac{v^4}{6t^3}}\mathcal U_{v^2/t}(vx/t)\big],$$ where $\mathcal U$ solves \eqref{she} with noise coefficient replaced by  $\frac{v}t\sqrt{\frac{8\sigma^2}{1-4\sigma^2}}$. Now this convergence in law also implies convergence of the associated annealed expectations thanks to the boundedness of all quantities involved, which implies that the limit of the left side of \eqref{gbel} equals $$\mathbb E\bigg[ -e^{-\frac{av}t-\frac{v^4}{6t^3}}\mathcal U_{v^2/t}(vx/t)\bigg],$$ which agrees with the right-hand side of \eqref{gbel} by the explicit form of the Gumbel distribution.
 \end{proof}

Note that in the above theorem, a natural choice is $t_N:=N^{-1}\lfloor Nt\rfloor$ where $t>0$ is fixed. However, we also have the liberty to set $t_N:=N^{-1}\lfloor Nt + N^\alpha \rfloor,$ where $\alpha \in (0,1)$, and this would have a nontrivial effect in the recentering coefficients $a_N(c,d,t_N)$ if $\alpha \in [1/2,1)$. 

\section*{Glossary}

{\renewcommand{\arraystretch}{1.2}
	\begin{longtable}[t]{lp{0.6\textwidth}l}
		\toprule
		\multicolumn{3}{l}{Processes used in text} \\
		\midrule
  $\mathcal{H}$ & KPZ equation & Eq.~\eqref{def:kpz} \\
  $\mathcal{U}$ & Stochastic heat equation (SHE) & Eq.~\eqref{she0} \\
  $\mathsf{P}^{\omega}(t,x)$ & Quenched transition probability of the random walk & Eq.~\eqref{pwtx} \\
  $C_{N,t,x}$ & Rescaling constants for the moderate deviation scaling & Eq.~\eqref{cntx} \\
		$\mathbf{R}$ & The canonical process on $(\mathbb{Z}^{k})^{\mathbb Z_{\ge 0}}$ & Sec.~\ref{sec:girt} \\
      $V^{ij}$ & Intersection process of $i^{th}$ and $j^{th}$ coordinate processes & Eq.~\eqref{vijai} \\ $\Gamma^{(v)}$ & Set of times with $v$ distinct particles & Eq.~\eqref{def:overlap}\\
      $\mathcal M$ & Martingale in the Tanaka formula for 2-point motion & Eq.~\eqref{m} \\
      $\mathpzc M^{\lambda}$ & Exponential martingale used to tilt path measure $\mathbf{P}_{RW_\nu^{(k)}}$ & Eq.~\eqref{m_n} \\
      $M_N(t,\phi)$ & Martingales arising in discrete SHE & Eq.~\eqref{m_field} \\
      $\mathscr{U}_N(t,\phi)$ & Rescaled field converging to $\mathcal U$ in main results & Eq.~\eqref{field} \\ $Q_N(t,\phi)$ & Quadratic martingale field & Eq.~\eqref{qfield}\\
  $\Xi_N$ & Prelimiting noise field & Eq.~\eqref{xi_field} \\ $\mathpzc W_N$ & Time-integrated and linearly interpolated version of $\Xi_N$ & Eq.~\eqref{wfield}\\
		\midrule	
		\multicolumn{3}{l}{Probability measures and filtrations} \\
		\midrule
 $\nu$ & Probability measure on $[0,1]$ representing law of each $\omega_{t,x}$ & Sec.~\ref{sec:girt} \\ $\mu_\nu$ & Mean of $\nu$ & Sec.~\ref{sec:girt} \\ $\sigma_\nu^2$ & Variance of $\nu$ & Sec.~\ref{sec:girt}\\
  $\mathbf{P}_{RW_\nu^{(k)}}$ & Probability measure on the path space $(\mathbb{Z}^{k})^{\mathbb Z_{\ge 0}},$ used to denote the annealed law of $k$ independent walks sampled from the environment $\omega$ whose weights are distributed as $\nu$, or more precisely $\mathbf{P}_{RW_\nu^{(k)}} = \mathbb P\mathsf P^\omega_{(k)} $ & Def.~\ref{def:prw} \\ $\mathsf P^\omega_{(k)}$ & Probability measure on the path space $(\mathbb{Z}^{k})^{\mathbb Z_{\ge 0}}$, used to denote the quenched law of $k$ independent walks sampled from the fixed realization of environment $\omega=\{\omega_{t,x}\}_{t,x}$\\
  $\mathbb P$ & Probability measure on the environment space $[0,1]^{\mathbb Z_{\ge 0}\times \mathbb Z}$, used to denote the annealed probability associated to any observable on the probability space of the environment $\{\omega_{t,x}\}_{t,x}$\\ $\mathbf P_{\text{lim}}$ and $\mathbf P_\infty$& Notation used for limit points of objects on the canonical space of continuous paths (or tuples of continuous paths) \\ $(\mathcal F_r)_{r\in \mathbb Z_{\ge 0}}$ & Canonical filtration on the path space $(\mathbb Z^k)^{\mathbb Z_{\ge 0}}$ & Def.~\ref{q} \\ $(\mathcal F_r^\omega)_{r\in \mathbb Z_{\ge 0}}$ & Canonical filtration on the environment space $[0,1]^{\mathbb Z_{\ge 0}\times \mathbb Z}$ & Eq.~\eqref{mfield}\\
\bottomrule
	\end{longtable}
}

\bibliographystyle{alpha}
\bibliography{ref.bib}

\end{document}